\numberwithin{equation}{section}
\newtheorem{thm}{\bf Theorem}[section]
\newtheorem{cor}[thm]{\bf Corollary}
\newtheorem{lem}[thm]{\bf Lemma}
\newtheorem{prop}[thm]{\bf Proposition}
\theoremstyle{remark}
\newtheorem{remark}[thm]{\it Remark}
\theoremstyle{definition}
\newtheorem{notation}[thm]{\bf Notation}
\newtheorem{example}[thm]{\bf Example}
\newtheorem{defn}[thm]{\bf Definition}
\newtheorem{hypothesis}[thm]{\bf Hypothesis}
\renewcommand{\leq}{\leqslant}
\renewcommand{\geq}{\geqslant}
\newcommand{\norm}{\trianglelefteq}
\newcommand{\gen}[1]{\langle #1 \rangle}
\newcommand{\foc}{\mathfrak{foc}}
\newcommand{\D}{\mathbf{D}}
\newcommand{\W}{\mathbf{W}}
\newcommand{\A}{\mathcal{A}}
\newcommand{\F}{\mathcal{F}}
\newcommand{\M}{\mathcal{M}}
\newcommand{\C}{\mathcal{C}}
\newcommand{\E}{\mathcal{E}}
\newcommand{\EE}{\mathscr{E}}
\newcommand{\K}{\mathcal{K}}
\renewcommand{\O}{\mathcal{O}}
\renewcommand{\L}{\mathcal{L}}
\renewcommand{\H}{\mathcal{H}}
\newcommand{\T}{\mathcal{T}}
\renewcommand{\phi}{\varphi}
\newcommand{\Hom}{\operatorname{Hom}}
\newcommand{\Mor}{\operatorname{Mor}}
\newcommand{\Spin}{\operatorname{Spin}}
\newcommand{\Sol}{\operatorname{Sol}}
\newcommand{\SL}{\operatorname{SL}}
\newcommand{\Aut}{\operatorname{Aut}}
\newcommand{\Out}{\operatorname{Out}}
\newcommand{\Inn}{\operatorname{Inn}}
\newcommand{\Syl}{\operatorname{Syl}}
\newcommand{\<}{\langle}
\renewcommand{\>}{\rangle}
\newcommand{\ov}{\overline}
\def\Mor{\mathrm{Mor}}
\def\Iso{\mathrm{Iso}}
\def\Aut{\mathrm{Aut}}
\def\Obj{\mathrm{Obj}}
\def\obj{\mathrm{obj}}
\def\mor{\mathrm{mor}}
\def\RR{\ensuremath{\mathbb{R}}}
\newcommand{\widebar}[1]
      {\overset{{\mskip3mu\leaders\hrule height0.4pt\hfill\mskip3mu}}{#1}
      \vphantom{#1}}
\newcommand{\ol}{\widebar}
\newcommand{\sS}{\mathscr{S}}
\renewcommand{\emptyset}{\varnothing}
\newcommand{\One}{\mathbf{1}}
\newcommand{\im}{\operatorname{Im}}
\let\csname active@char\string'\endcsname\relax
\let\csname active@char\string!\endcsname\relax
\let\csname active@char\string:\endcsname\relax
\def\PBtc{(PB $\times_{\amalg}$)}
\def\vp{\varphi}
\def\al{\alpha}
\def\be{\beta}
\def\ga{\gamma}
\def\De{\Delta}
\def\La{\Lambda}
\def\la{\lambda}
\def\de{\delta}
\def\si{\sigma}
\def\Ga{\Gamma}
\def\B{\ensuremath{\mathcal{B}}}
\def\C{\ensuremath{\mathcal{C}}}
\def\D{\ensuremath{\mathcal{D}}}
\def\E{\ensuremath{\mathcal{E}}}
\def\F{\ensuremath{\mathcal{F}}}
\def\H{\ensuremath{\mathcal{H}}}
\def\I{\ensuremath{\mathcal{I}}}
\def\K{\ensuremath{\mathcal{K}}}
\def\G{\ensuremath{\mathcal{G}}}
\def\L{\ensuremath{\mathcal{L}}}
\def\M{\ensuremath{\mathcal{M}}}
\def\N{\ensuremath{\mathcal{N}}}
\def\O{\ensuremath{\mathcal{O}}}
\def\P{\ensuremath{\mathcal{P}}}
\def\T{\ensuremath{\mathcal{T}}}
\def\FF{\ensuremath{\mathbb{F}}}
\def\ZZ{\ensuremath{\mathbb{Z}}}
\def\fff{\mathbf{f}}
\def\ggg{\mathbf{g}}
\def\XXX{\mathbf{X}}
\def\YYY{\mathbf{Y}}
\def\vp{\varphi}
\def\al{\alpha}
\def\be{\beta}
\def\ga{\gamma}
\def\De{\Delta}
\def\La{\Lambda}
\def\la{\lambda}
\def\de{\delta}
\def\si{\sigma}
\def\Ga{\Gamma}
\def\Top{\mathbf{\operatorname{Top}}}
\def\Out{\operatorname{Out}}
\def\Aut{\mathrm{Aut}}
\def\Hom{\mathrm{Hom}}
\def\incl{\text{incl}}
\def\op{\ensuremath{\mathrm{op}}}
\def\res{\operatorname{res}}
\def\tr{\operatorname{tr}}
\def\hocolim{\ensuremath{\mathrm{hocolim}}}
\def\Ab{\mathbf{Ab}}
\def\Inn{\operatorname{Inn}}
\def\defeq{\overset{\operatorname{def}}{=}}
\def\Fun{\operatorname{Fun}}
\def\hocolim{\operatorname{hocolim}}
\newcommand{\xto}[1]{\xrightarrow{#1}}
\newcommand{\pcomp}[1]{{#1}^\wedge_p}
\newcommand{\hhocolim}[1]{\underset{#1}{\hocolim}}
\newcommand{\modl}[1]{#1\text{-}\mathfrak{mod}}
\begin{document}
\title[]{Punctured groups for exotic fusion systems}
\author{Ellen Henke}
\author{Assaf Libman}
\author{Justin Lynd}
\thanks{J.L. was partially supported by NSF Grant DMS-1902152.}

\date{\today}

\begin{abstract}
The transporter systems of Oliver and Ventura and the localities of Chermak are
classes of algebraic structures that model the $p$-local structures of finite
groups. Other than the transporter categories and localities of finite groups,
important examples include centric, quasicentric, and subcentric linking
systems for saturated fusion systems. These examples are however not defined in
general on the full collection of subgroups of the Sylow group.  We study here
\emph{punctured groups}, a short name for transporter systems or localities on
the collection of nonidentity subgroups of a finite $p$-group.  As an
application of the existence of a punctured group, we show that the subgroup
homology decomposition on the centric collection is sharp for the fusion
system.  We also prove a Signalizer Functor Theorem for punctured groups and
use it to show that the smallest Benson-Solomon exotic fusion system at the
prime $2$ has a punctured group, while the others do not.  As for exotic fusion
systems at odd primes $p$, we survey several classes and find that in
almost all cases, either the subcentric linking system is a punctured group for
the system, or the system has no punctured group because the normalizer of some
subgroup of order $p$ is exotic. Finally, we classify punctured groups
restricting to the centric linking system for certain fusion systems on
extraspecial $p$-groups of order $p^3$.  
\end{abstract}

\maketitle

\section{Introduction}\label{S:intro}

Let $\F$ be a fusion system over the finite $p$-group $S$.  Thus, $\F$ is a
category with objects the subgroups of $S$, and with morphisms injective group
homomorphisms which contain among them the conjugation homomorphisms induced by
elements of $S$ plus one more weak axiom. A fusion system is said to be
saturated if it satisfies two stronger ``saturation'' axioms which were
originally formulated by L. Puig \cite{Puig2006} and reformulated by
Broto, Levi, and Oliver \cite{BrotoLeviOliver2003}. Those axioms hold whenever
$G$ is a finite group, $S$ is a Sylow $p$-subgroup of $G$, and $\Hom_\F(P,Q)
=\Hom_G(P,Q)$ is the set of conjugation maps $c_g$ from $P$ to $Q$ that are
induced by elements $g \in G$. The fusion system of a finite group is denoted
$\F_S(G)$. 

A saturated fusion system $\F$ is said to be \emph{exotic} if it is not of the
form $\F_S(G)$ for any finite group $G$ with Sylow $p$-subgroup $S$. The
Benson-Solomon fusion systems at $p = 2$ form one family of examples of exotic
fusion systems \cite{LeviOliver2002, AschbacherChermak2010}. They are
essentially the only known examples at the prime $2$, and they are in some
sense the oldest known examples, having been studied in the early 1970s by
Solomon in the course of the classification of finite simple groups (although
not with the more recent categorical framework in mind) \cite{Solomon1974}. In
contrast with the case $p = 2$, a fast-growing literature describes many exotic
fusion systems on finite $p$-groups when $p$ is odd.

In replacing a group by its fusion system at a prime, one retains information
about conjugation homomorphisms between $p$-subgroups, but otherwise loses
information about the group elements themselves.  It is therefore natural that
a recurring theme throughout the study of saturated fusion systems is the
question of how to ``enhance'' or ``rigidify'' a saturated fusion system to
make it again more group-like, and also to study which fusion systems have such
rigidifications.  

The study of the existence and uniqueness of centric linking systems was a
first instantiation of this theme of rigidifying saturated fusion systems. A
centric linking system is an important extension category of a fusion system
$\F$ which provides just enough algebraic information to recover a $p$-complete
classifying space. For example, it recovers the homotopy type of the
$p$-completion of $BG$ in the case where $\F = \F_S(G)$. Centric linking
systems of finite groups are easily defined, and Oliver proved that the centric
linking systems of finite groups are unique \cite{Oliver2004, Oliver2006}.
Then, Chermak proved that each saturated fusion system, possibly exotic, has a
unique associated centric linking system \cite{Chermak2013}. A proof which does
not rely on the classification of finite simple groups can be obtained through
\cite{Oliver2013,GlaubermanLynd2016}.

More generally, there are at least two frameworks for considering extensions,
or rigidifications, of saturated fusion systems: the \emph{transporter systems}
of Oliver and Ventura \cite{OliverVentura2007} and the \emph{localities} of
Chermak \cite{Chermak2013}. In particular, one can consider centric linking
systems in either setting. While centric linking systems in either setting have
a specific set of objects, the object sets in transporter systems and
localities can be any conjugation-invariant collection of subgroups which is
closed under passing to overgroups. The categories of transporter systems and
isomorphisms and of localities and isomorphisms are equivalent
\cite[Appendix]{Chermak2013} and \cite[Theorem~2.11]{GlaubermanLynd2021}.
However, depending on the intended application, it is sometimes advantageous to
work in the setting of transporter systems, and sometimes in localities. The
reader is referred to Section~\ref{S:localities} for an introduction to
localities and transporter systems.

In this paper we study \emph{punctured groups}.  These are transporter systems,
or localities, with objects the {\em nonidentity} subgroups of a finite
$p$-group $S$.  To motivate the terminology, recall that every finite group $G$
with Sylow $p$-subgroup $S$ admits a transporter system $\T_S(G)$ whose objects
are {\em all} subgroups of $S$ and $\Mor_\T(P,Q)=N_G(P,Q)$, the
transporter set consisting of all $g \in G$ which conjugate $P$ into $Q$.
Conversely, \cite[Proposition 3.11]{OliverVentura2007} shows that a transporter
system $\T$ whose set of objects consists of {all} the subgroups of $S$ is
necessarily the transporter system $\T_S(G)$ where $G=\Aut_\T(1)$, and the
fusion system $\F$ with which $\T$ is associated is $\F_S(G)$.  Thus, a
punctured group $\T$ is a transporter system whose object set is missing the
trivial subgroup, an object whose inclusion forces $\T$ to be the transporter
system of a finite group.

\iffalse
For example, let $G$ be a finite group with Sylow $p$-subgroup $S$, and denote
by $\Delta := \sS^*(S)$ the collection of nonidentity subgroups of $S$. The
\emph{transporter category} of $G$ at $p$, or the \emph{punctured group} of
$G$, is the category $\T^*_S(G)$ with object set $\sS^*(S)$ and with morphism
sets $\Mor_{\T^*_S(G)}(P,Q) = N_G(P,Q)$ the transporter sets consisting of the
group elements $g$ conjugating $P$ into $Q$.  By contrast, the transporter
category $\T_S(G)$ of $G$ with object set all subgroups of $S$ (including the
identity) is essentially $G$, since $\Aut_{\T_S(G)}(1) = G$.  Moreover, (see
\cite[p.134]{AschbacherKessarOliver2011}) $BG \simeq |\T_S(G)|$.
\fi

If we consider localities rather than transporter systems, then the punctured
group of $G$ is the \emph{locality} $\L_{\sS^*(S)}(G) \subseteq G$ consisting
of those elements $g \in G$ which conjugate a nonidentity subgroup of $S$ back
into $S$. This is equipped with the multivariable partial product $w :=
(g_1,\dots,g_n) \mapsto g_1\cdots g_n$, defined only when each initial subword
of the word $w$ conjugates some fixed nonidentity subgroup of $S$ back into
$S$.  Thus, the product is defined on words which correspond to sequences of
composable morphisms in the transporter category $\T^*_S(G)$. See
Definition~\ref{E:LDeltaM} for more details.

By contrast with the existence and uniqueness of linking systems, we will see
that punctured groups for exotic fusion systems do not necessarily exist. The
existence of a punctured group for an exotic fusion system seems to indicate
that the fusion system is ``close to being realizable'' in some sense.
Therefore, considering punctured groups might provide some insight into how
exotic systems arise. 

It is also not reasonable to expect that a punctured group is unique when it
does exist.  To give one example, the fusion systems $PSL_2(q)$ with $q \equiv
9 \pmod{16}$ all have a single class of involutions and equivalent fusion
systems at the prime $2$. On the other hand, the centralizer of an involution
is dihedral of order $2(q-1)$, so the associated punctured groups are distinct
for distinct $q$. Examples like this one occur systematically in groups of Lie
type in nondefining characteristic. Later we will give examples of realizable
fusion systems with punctured groups which do not occur as a full subcategory
of the punctured group of a finite group.

We will now describe our results in detail. To start, we present a result which
gives some motivation for studying punctured groups.

\subsection{Sharpness of the subgroup homology decomposition}
As an application of the existence of the structure of a punctured group for a
saturated fusion system $\F$, we prove that it implies the sharpness of the
subgroup homology decomposition for that system.  Recall from \cite[Definition
1.8]{BrotoLeviOliver2003} that given a $p$-local finite group $(S,\F,\L)$ its
classifying space is the Bousfield-Kan $p$-completion of the geometric
realisation of the category $\L$.  This space is denoted by $\pcomp{|\L|}$.

The orbit category of $\F$, see \cite[Definition 2.1]{BrotoLeviOliver2003}, is
the category $\O(\F)$ with the same objects as $\F$ and whose morphism sets
$\Mor_{\O(\F)}(P,Q)$ is the set of orbits of $\F(P,Q)$ under the action of
$\Inn(Q)$.  The full subcategory of the $\F$-centric subgroups is denoted
$\O(\F^c)$.  For every $j \geq 0$ there is a functor $\H^j \colon \O(\F^c)^\op
\to \modl{\ZZ_{(p)}}$:
\[
\H^j \colon P \mapsto H^j(P;\FF_p), \qquad (P \in \O(\F^c)).
\]
The stable element theorem for $p$-local finite groups \cite[Theorem B, see
also Theorem 5.8]{BrotoLeviOliver2003} asserts that for every $j \geq 0$,
\[
H^j(\pcomp{|\L|};\FF_p) \cong \underset{\O(\F^c)}{\varprojlim} \, \H^j =
\underset{P \in \O(\F^c)}{\varprojlim} \, H^j(P;\FF_p).
\]
The proof of this theorem in \cite{BrotoLeviOliver2003} is indirect and
requires heavy machinery such as Lannes's $T$-functor theory.  From the
conceptual point of view, the stable element theorem is only a shadow of a more
general phenomenon.  By \cite[Proposition 2.2]{BrotoLeviOliver2003} there is a
functor
\[
\tilde{B} \colon \O(\F^c) \to \Top
\]
with the property that $\tilde{B}(P)$ is homotopy equivalent to the classifying
space of $P$ (denoted $BP$) and moreover there is a natural homotopy
equivalence
\[
|\L| \simeq \hhocolim{\O(\F^c)} \tilde{B}.
\]
The Bousfield-Kan spectral sequence for this homotopy colimit \cite[Ch. XII,
Sec. 4.5]{BousfieldKan1972} takes the form
\[
E_2^{i,j} = \underset{\O(\F^c)^\op}{\varprojlim{}^i} \H^j \, \Rightarrow \,
H^{i+j}(\pcomp{|\L|};\FF_p)
\]
and is called the \emph{subgroup decomposition} of $(S,\F,\L)$.  We call the
subgroup decomposition \emph{sharp}, see \cite{Dwyer1998}, if the spectral sequence
collapses to the vertical axis, namely $E_2^{i,j}=0$ for all $i>0$.  When this
is the case, the stable element theorem is a direct consequence.  Indeed,
whenever $\F$ is induced from a finite group $G$ with a Sylow $p$-subgroup $S$,
the subgroup decomposition is sharp (and the stable element theorem goes back
to Cartan-Eilenberg \cite[Theorem XII.10.1]{CartanEilenberg1956}).  This
follows immediately from Dwyer's work \cite[Sec. 1.11]{Dwyer1998} and
\cite[Lemma 1.3]{BrotoLeviOliver2003b}, see for example \cite[Theorem
B]{DiazPark2015}.

It is still an open question as to whether the subgroup decomposition is sharp
for every saturated fusion system.  We will prove the following theorem.

\begin{thm}\label{T:sharpness for punctured groups}
Let $\F$ be a saturated fusion system which affords the structure of a punctured group.
Then the subgroup decomposition on the $\F$-centric subgroups is sharp.
In other words, 
\[
\underset{\O(\F^c)^\op}{\varprojlim{}^i} \H^j =0
\]
for every $i \geq 1$ and  $j \geq 0$.
\end{thm}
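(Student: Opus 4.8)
The plan is to reduce the vanishing of higher limits over $\O(\F^c)^\op$ to a statement about higher limits over the orbit category $\O(\T)$ of a punctured group $\T$ realizing $\F$, and then to exploit the fact that $\T$ (being a transporter system on the nonidentity subgroups) behaves enough like an actual finite group to force acyclicity. The key technical input is a comparison: if $\T$ is a transporter system associated to $\F$ with object set a collection $\Delta$ of subgroups, there is a functor $\H^j$ (cohomology of the ambient group object at each subgroup, pulled back along the projection $\T\to\F$) on the orbit category $\O(\T^c)$, and standard arguments (as in \cite{BrotoLeviOliver2003}, using that $\F$-centric subgroups are $\F$-radical-closed in the relevant sense) identify $\varprojlim^i_{\O(\F^c)^\op}\H^j$ with $\varprojlim^i_{\O(\T^c)^\op}\H^j$. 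So the first step is to set up this identification and reduce to proving $\varprojlim^i_{\O(\T^c)^\op}\H^j=0$ for $i\geq 1$.

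\textbf{Main step: acyclicity via the transporter structure.} The heart of the argument is to show that for a transporter system $\T$ on $\Delta=\sS^*(S)$, the subgroup decomposition restricted to the centric collection is sharp. Here I would mimic Dwyer's argument for finite groups: a transporter system on \emph{all} subgroups of $S$ is $\T_S(G)$ for $G=\Aut_\T(1)$, and sharpness then follows from \cite{Dwyer1998, BrotoLeviOliver2003b}. The punctured group is "one object short" of this, so the plan is to build a Mackey-functor/transfer argument directly on $\T$: the functor $\H^j$ on $\O(\T^c)^\op$ extends to a functor on the orbit category of the full transporter-like category on $\Delta$ (objects the nonidentity subgroups), and because every morphism set $\Mor_\T(P,Q)$ is a genuine (bi)set with free $Q$-action, one gets transfer maps making $\H^j$ into a cohomological Mackey functor on $\O(\Delta)$. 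The standard fact that higher limits of cohomological Mackey functors over an orbit category with suitable properties vanish — combined with the observation that the centric subcollection is "large enough" inside $\Delta$ (every $\F$-centric subgroup, and more importantly the relevant chains, are accounted for) — yields $\varprojlim^i=0$. Concretely, I expect to use that the inclusion $\O(\F^c)\hookrightarrow\O(\F^\Delta)$ (or the corresponding statement for $\T$) induces isomorphisms on higher limits of $\H^j$ because subgroups in $\Delta\setminus\F^c$ contribute acyclically, reducing sharpness on $\F^c$ to sharpness on all of $\Delta$, which in turn is governed by the group $\Aut_\T(1)$ in the completed/extended picture — but since $1\notin\Delta$, one instead runs the transfer argument intrinsically on $\T$.

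\textbf{Execution details.} In order: (1) fix a punctured group $(\T,\Delta,\cdot)$ — equivalently a locality — associated to $\F$ with $\Delta=\sS^*(S)$; (2) recall the orbit category $\O(\T^c)$ and the functor $\H^j\colon\O(\T^c)^\op\to\modl{\ZZ_{(p)}}$ obtained by pulling back the functor of the same name on $\O(\F^c)^\op$ along $\T^c\to\F^c$, and note these have the same inverse limits and derived functors since $\O(\T^c)\to\O(\F^c)$ is a bijection on objects and a quotient on morphisms by the same normal subgroups, so the Grothendieck spectral sequence for the composite is trivial; (3) extend $\H^j$ to the orbit category $\O(\T)$ on all of $\Delta$ and show, via the explicit bar-resolution/chain-complex computation of $\varprojlim^*$ over an EI-category together with the biset/transfer structure on the $\Mor_\T(P,Q)$, that $\varprojlim^i_{\O(\T)^\op}\H^j=0$ for $i\geq 1$; (4) compare $\O(\F^c)$ with $\O(\T)$: the objects in $\Delta$ that are not $\F$-centric form an ideal-like subcategory, and a standard "shift" / Bousfield-Kan argument (as in the proof of the stable element theorem in \cite{BrotoLeviOliver2003}) shows these extra objects do not change the higher limits of $\H^j$ — this uses that $\H^j(P)=H^j(P;\FF_p)$ and centrality to kill the relevant contributions. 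Steps (3)–(4) are where all the work lies.

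\textbf{Expected main obstacle.} The hard part will be step (3): establishing that $\H^j$, viewed on the orbit category of the punctured group, is acyclic in positive degrees. For an honest finite group this is Cartan–Eilenberg/Dwyer, but here $\T$ has no "total object" $1$, so I cannot simply invoke the group case; I must instead show that the transfer maps built from the bisets $\Mor_\T(P,Q)$ give $\H^*$ enough of a Mackey-functor structure on the truncated collection $\Delta$ for the standard vanishing-of-higher-limits-of-Mackey-functors argument to apply, or else directly produce a contracting chain homotopy on the normalized cochain complex computing $\varprojlim^*$ using an averaging/norm element supported on $\Delta$. A subtle point is that the norm element one would use in the group case involves the trivial subgroup; one must check the required identities survive when one only has subgroups of order $\geq p$ available, which is exactly the place where "punctured" (rather than "all subgroups") is used and where the argument could fail — so I would spend the most care verifying that the centric collection $\F^c$ together with $\Delta=\sS^*(S)$ is rich enough to support the transfer/norm calculation.
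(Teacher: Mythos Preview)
Your strategy matches the paper's almost exactly: reduce from $\O(\F^c)$ to $\O(\T^c)$ to the full $\O\T$, then prove acyclicity on $\O\T$ via a Mackey/transfer argument. You have also correctly located the crux: the Mackey double-coset formula for $\tr_Q^R \circ \res_P^R$ involves \emph{all} double cosets $QxP$ in $R$, but in a punctured group only those with $Q^x\cap P\neq 1$ are objects of $\T$, so the pullback $U(\iota_P^R,\iota_Q^R)$ in $\O\T_\amalg$ is indexed by $(Q\backslash R/P)_\T$, not by all of $Q\backslash R/P$.

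The gap is that you have not identified the resolution of this obstacle. The paper's key observation is elementary but decisive: for $j\geq 1$ one has $H^j(\{1\};\FF_p)=0$, so the missing summands contribute nothing and Mackey's formula holds verbatim with the sum restricted to nontrivial intersections. This is precisely what makes $\H^j$ a proto-Mackey functor on $\O\T$ for $j\geq 1$. The paper then invokes the Jackowski--McClure acyclicity criterion (Proposition~\ref{P:JM acyclicity}), after first establishing that $\O\T$ satisfies \PBtc\ (existence of products and pullbacks in $\O\T_\amalg$, Propositions~\ref{P:products in OT} and~\ref{P:pullbacks in OT}) and condition~(B2) that $|\Mor_{\O\T}(P,S)|\not\equiv 0\pmod p$ (Lemma~\ref{L:morphisms in OT mod p}). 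Your vague references to ``averaging/norm elements'' and ``contracting chain homotopies'' are not the mechanism; the machinery is the \PBtc/proto-Mackey framework of Jackowski--McClure applied to $\O\T$.

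You have also overlooked that the argument above fails for $j=0$: here $\H^0$ is the constant functor $\FF_p$ and $H^0(1;\FF_p)\neq 0$, so one cannot verify the Mackey condition this way. The paper handles $j=0$ separately and directly on $\O(\F^c)$ via $\Lambda$-functors: $\Out_\F(P)$ acts trivially on $\FF_p$, so $\Lambda^i(\Out_\F(S),\FF_p)=0$ for $i>0$ and $\Lambda^*(\Out_\F(P),\FF_p)=0$ when $P<S$ since $\Out_\F(P)$ then contains an element of order $p$. A filtration by atomic functors finishes it. Your proposal should separate the case $j=0$.
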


We will prove this theorem in Section \ref{Sec:sharpness} below.  We remark
that our methods apply to any functor $\H$ which in the language of
\cite{DiazPark2015} is the pullback of a Mackey functor on the orbit category
of $\F$ denoted $\O(\F)$ such that $\H(e)=0$ where $e \leq S$ is the trivial
subgroup.  In the absence of applications in sight for this level of generality
we have confined ourselves to the functors $\H=\H^j$.

\subsection{Signalizer functor theorem for punctured groups} 
It is natural to ask for which exotic fusion systems punctured groups exist. We
will answer this question for specific families of exotic fusion systems. As a
tool for proving the non-existence of punctured groups we define and study
signalizer functors for punctured groups thus mirroring a concept from finite
group theory.

\begin{defn}\label{D:SignalizerFunctor}
Let $(\L,\Delta,S)$ be a punctured group. If $P$ is a subgroup of $S$, write
$\I_p(P)$ for the set of elements of $P$ of order $p$. A \emph{signalizer
functor of $(\L,\Delta,S)$ on elements of order $p$} is a map $\theta$ from
$\I_p(S)$ to the set of subgroups of $\L$, which associates to each element
$a\in \I_p(S)$ a normal $p^\prime$-subgroup $\theta(a)$ of $C_\L(a)$ such that
the following two conditions hold:
\begin{itemize}
\item (Conjugacy condition) $\theta(a^g)=\theta(a)^g$ for any $g\in\L$ and
$a\in\I_p(S)$ such that $a^g$ is defined and an element of $S$.
\item (Balance condition) $\theta(a)\cap C_\L(b)\leq \theta(b)$ for all
$a,b\in\I_p(S)$ with $[a,b]=1$.
\end{itemize}
\end{defn}

Notice in the above definition that, since $(\L,\Delta,S)$ is a punctured
group, for any $a\in S$, the normalizer $N_\L(\gen{a})$ and thus also the
centralizer $C_\L(a)$ is a subgroup.

\begin{thm}[Signalizer functor theorem for punctured groups] \label{T:mainSignalizerFunctor}
Let $(\L,\Delta,S)$ be a punctured group and suppose $\theta$ is a signalizer
functor of $(\L,\Delta,S)$ on elements of order $p$. Then
\[
\widehat{\Theta}:=\bigcup_{x\in\I_p(S)}\theta(x)
\]
is a partial normal subgroup of $\L$ with $\widehat{\Theta}\cap S=1$. In
particular, the canonical projection $\rho\colon \L\rightarrow
\L/\widehat{\Theta}$ restricts to an isomorphism $S\rightarrow S^\rho$. Upon
identifying $S$ with $S^\rho$, the following properties hold:
\begin{itemize}
\item [(a)] $(\L/\widehat{\Theta},\Delta,S)$ is a locality and
$\F_S(\L/\widehat{\Theta}) = \F_S(\L)$.
\item [(b)] For each $P\in\Delta$, the projection $\rho$ restricts to an
epimorphism $N_\L(P)\rightarrow N_{\L/\widehat{\Theta}}(P)$ with kernel
$\Theta(P)$ and thus induces an isomorphism $N_\L(P)/\Theta(P)\cong
N_{\L/\widehat{\Theta}}(P)$.
\end{itemize}
\end{thm}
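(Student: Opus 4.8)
The plan is to build $\widehat{\Theta}$ as a union of subgroups, verify it is a partial normal subgroup using the two signalizer conditions, and then transport structure through Chermak's quotient-by-a-partial-normal-subgroup machinery. First I would check that $\widehat{\Theta}$ is a genuine subset with good closure: for $x,y\in \I_p(S)$, any product of an element of $\theta(x)$ with an element of $\theta(y)$ that is defined in $\L$ lies again in some $\theta(z)$. The key leverage is the balance condition. Given $g\in\theta(x)\le C_\L(x)$ and $h\in\theta(y)\le C_\L(y)$, if $gh$ is defined then it normalizes (indeed centralizes, after the usual argument) enough to land in $C_\L(z)$ for a suitable $z\in\I_p(S)$; intersecting with the balance condition $\theta(x)\cap C_\L(z)\le\theta(z)$ and $\theta(y)\cap C_\L(z)\le\theta(z)$ forces $g,h\in\theta(z)$, and since $\theta(z)$ is a group, $gh\in\theta(z)\subseteq\widehat{\Theta}$. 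I would package this as: whenever $g_1,\dots,g_n$ is a word in $\D$ with each $g_i\in\widehat{\Theta}$, there is a single $z\in\I_p(S)$ with all $g_i\in\theta(z)$, hence $\Pi(g_1,\dots,g_n)\in\theta(z)$. Closure under inversion is immediate since each $\theta(x)$ is a subgroup.

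Next I would establish partial normality. Let $g\in\widehat{\Theta}$, say $g\in\theta(x)$, and let $f\in\L$ with $f^{-1}gf$ defined in $\L$. Using that $(\L,\D,S)$ is a locality and the standard fact that one may choose $f$ so that $x$, $g$, and the conjugating data all lie over a common $P\in\D$, we get that $f^{-1}xf$ is defined; writing $a:=f^{-1}xf\in S$ (replacing $x$ by an $S$-conjugate if necessary, which is harmless), the conjugacy condition gives $f^{-1}\theta(x)f=\theta(a)$, hence $f^{-1}gf\in\theta(a)\subseteq\widehat{\Theta}$. The subtlety here, and what I expect to be the main obstacle, is the bookkeeping with the domain of the partial product: one must show that whenever $f^{-1}gf$ is defined in the locality, it is defined as a product of elements each lying in a subgroup of the form $N_\L(\gen a)$, so that the conjugacy condition (which is stated for conjugation of the element $a$ of order $p$) actually applies. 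This is where one uses that $\D$ consists of all nonidentity subgroups — in particular $\gen a\in\D$ — together with the axioms of a locality guaranteeing $C_\L(a)$ and $N_\L(\gen a)$ are subgroups, as noted in the remark after Definition~\ref{D:SignalizerFunctor}. The condition $\widehat{\Theta}\cap S=1$ is then clear: each $\theta(x)$ is a $p'$-group, so $\theta(x)\cap S=1$, and the union statement above shows $\widehat{\Theta}\cap S$ is contained in a single $\theta(z)$.

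With $\widehat{\Theta}\trianglelefteq\L$ a partial normal subgroup meeting $S$ trivially, the quotient locality $\L/\widehat{\Theta}$ exists by Chermak's theory, and the projection $\rho$ restricts to an isomorphism $S\to S^\rho$ precisely because $\widehat{\Theta}\cap S=1$; identify $S$ with its image. For (a), that $(\L/\widehat{\Theta},\D,S)$ is again a locality on the same object set $\D$ is part of the general quotient construction, and $\F_S(\L/\widehat{\Theta})=\F_S(\L)$ follows because conjugation maps between subgroups of $S$ are unchanged: $\rho$ is injective on $S$ and surjective on morphisms, and $\widehat{\Theta}\cap S=1$ means no fusion is collapsed or created. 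For (b), fix $P\in\D$. The projection $\rho$ always restricts to a surjective homomorphism $N_\L(P)\to N_{\L/\widehat{\Theta}}(P)$ (this is a general feature of locality quotients, using the partial normality to lift), so the only point is to identify the kernel as $\Theta(P):=\widehat{\Theta}\cap N_\L(P)$ — equivalently $\ker(\rho|_{N_\L(P)})=N_\L(P)\cap\widehat{\Theta}$, which is immediate from $\ker\rho=\widehat{\Theta}$ in the appropriate sense — and then the first isomorphism theorem for groups gives $N_\L(P)/\Theta(P)\cong N_{\L/\widehat{\Theta}}(P)$. Throughout, the two genuinely new inputs are the closure-into-a-single-$\theta(z)$ lemma (balance) and the conjugacy-compatibility (conjugacy condition); everything after "$\widehat{\Theta}$ is a partial normal subgroup" is an invocation of the established quotient formalism.
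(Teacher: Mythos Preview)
Your overall architecture---show $\widehat{\Theta}$ is a partial normal $p'$-subgroup and then invoke Chermak's quotient machinery---matches the paper. But the paper does \emph{not} work directly with the union $\bigcup_{x}\theta(x)$; it first manufactures a signalizer functor \emph{on objects}, $\Theta(P):=\bigl(\bigcap_{x\in\I_p(P)}\theta(x)\bigr)\cap C_\L(P)$, verifies the balance condition for $\Theta$ by an induction along subnormal chains reducing to the case $P\leq Z(Q)$, and then applies a separately-proved proposition (Proposition~\ref{P:SignalizerFunctorObjects}) whose key step is the maximality argument showing every $g\in\widehat{\Theta}$ lies in $\Theta(S_g)$---in particular $g$ centralizes $S_g$ and $g\in\theta(z)$ for \emph{every} $z\in\I_p(S_g)$. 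Only at the very end does one check $\bigcup_P\Theta(P)=\bigcup_x\theta(x)$.

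Your direct route has a genuine gap at exactly the point where this object-based detour does the work. In the closure argument you assert that for $g\in\theta(x)$, $h\in\theta(y)$ with $gh$ defined, one can find $z\in\I_p(S)$ with $g,h\in\theta(z)$, appealing to balance $\theta(x)\cap C_\L(z)\leq\theta(z)$. But to apply balance you need \emph{both} $[x,z]=1$ \emph{and} $g\in C_\L(z)$; neither is automatic. Knowing $g\in\theta(x)$ only gives $x\in S_g$, not that $g$ centralizes any particular $z\in S_{(g,h)}$, nor that such a $z$ commutes with $x$. The same problem recurs in your normality argument: from $(f^{-1},g,f)\in\D$ you get $X:=S_{(f^{-1},g,f)}\in\Delta$, but there is no reason the particular $x$ with $g\in\theta(x)$ lies in $X^{f^{-1}}$, nor that $x^f$ is defined and in $S$; ``replacing $x$ by an $S$-conjugate'' does not fix this. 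What one actually needs is precisely the fact $g\in\Theta(S_g)$: then $g\in\theta(z)$ for any $z\in\I_p(X^{f^{-1}})$ (since $X^{f^{-1}}\leq S_g$), and the conjugacy condition for $\theta$ with that $z$ gives $g^f\in\theta(z^f)\subseteq\widehat{\Theta}$. Establishing $g\in\Theta(S_g)$ requires the subnormal balance verification plus the maximality argument---the substance of the paper's two-stage proof---so your ``usual argument'' placeholder is hiding the main content.
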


\subsection{Punctured groups for families of exotic fusion systems}

Let $\F$ be a saturated fusion system on the $p$-group $S$. If $\L$ is a
locality or transporter system associated with $\F$, then for each fully
$\F$-normalized object $P$ of $\L$, the normalizer fusion system $N_\F(P)$ is
the fusion system of the group $N_\L(P)$ if $\L$ is a locality, and of the
group $\Aut_\L(P)$ if $\L$ is a transporter system. This gives an easy
necessary condition for the existence of a punctured group: for each fully
$\F$-normalized nonidentity subgroup $P \leq S$, the normalizer $N_\F(P)$ is
realizable.

Conversely, there is a sufficient condition for the existence of a punctured
group: $\F$ is of characteristic $p$-type, i.e. for each fully $\F$-normalized
nonidentity subgroup $P \leq S$, the normalizer $N_\F(P)$ is constrained. This
follows from the existence of linking systems (or similarly linking localities)
of a very  general kind, a result which was shown in
\cite[Theorem~A]{Henke2019} building on the existence and uniqueness of centric
linking systems. 

The Benson-Solomon fusion systems $\F_{\Sol}(q)$ at the prime $2$ have the
property that the normalizer fusion system of each nonidentity subgroup $P$ is
realizable, and moreover, $C_\F(Z(S))$ is the fusion system at $p = 2$ of
$\Spin_7(q)$, and hence not constrained. So $\F_{\Sol}(q)$ satisfies the
obvious necessary condition for the existence of a punctured group, and does
not satisfy the sufficient one.  

Based on results of Solomon \cite{Solomon1974}, Levi and Oliver showed that
$\F_{\Sol}(q)$ is exotic \cite[Theorem~3.4]{LeviOliver2002}, i.e., it has no
locality with objects \emph{all} subgroups of a Sylow $2$-group.  In
Section~\ref{S:sol}, we show the following theorem.

\begin{thm}\label{T:sol} For any odd prime power $q$, the Benson-Solomon fusion
system $\F_{\Sol}(q)$ has a punctured group if and only if $q \equiv \pm 3
\pmod{8}$.  \end{thm}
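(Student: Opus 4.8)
The plan is to split into the two directions, treating the existence of a punctured group for $q\equiv\pm3\pmod 8$ and its non-existence otherwise as largely independent arguments. For the non-existence direction, I would use the obvious necessary condition recorded in the text: if $\F_{\Sol}(q)$ has a punctured group, then for every fully normalized nonidentity $P\le S$ the normalizer $N_\F(P)$ is realizable. So I would look for a single subgroup $P$ whose normalizer fusion system fails to be realizable when $q\equiv\pm1\pmod 8$. The natural candidate is a subgroup of order $2$, or more precisely an elementary abelian subgroup on which $\F_{\Sol}(q)$ concentrates its exotic behaviour. Using the description of $\F_{\Sol}(q)$ via $\Spin_7(q)$ and the Solomon $2$-fusion data, I would identify a fully normalized $P$ of order $2$ (or $4$) for which $N_\F(P)$, computed inside the ambient $\Spin_7(q)$-picture, is one of the small Benson-Solomon systems $\F_{\Sol}(q_0)$ for a smaller $q_0$, or otherwise visibly exotic; the congruence $q\equiv\pm1\pmod 8$ should be exactly what makes the relevant Sylow subgroup large enough and the normalizer exotic, whereas $q\equiv\pm3\pmod8$ collapses it to something realizable. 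I expect to invoke the Signalizer Functor Theorem (Theorem~\ref{T:mainSignalizerFunctor}) here only indirectly: the cleanest route to non-existence is probably to assume a punctured group $(\L,\Delta,S)$ exists, construct a signalizer functor $\theta$ from the $2'$-cores of the centralizers $C_\L(a)$ of involutions $a$ (checking the conjugacy and balance conditions against the known centralizer structure in $\F_{\Sol}(q)$), and then use the theorem to pass to a quotient locality with the same fusion system but trivial $2'$-core data, contradicting the fact that $C_{\F_{\Sol}(q)}(Z(S))$ is the fusion system of $\Spin_7(q)$ and hence has nontrivial such structure after any honest group realization. Verifying the balance condition for $\theta$ will be the main obstacle, since it requires detailed control of how the $2'$-parts of centralizers of commuting involutions sit inside one another in the Solomon system.

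For the existence direction, with $q\equiv\pm3\pmod8$, the strategy is to build the punctured group by hand as a locality $(\L,\Delta,S)$ with $\Delta=\sS^*(S)$, by gluing the centric linking system of $\F_{\Sol}(q)$ (which exists and is unique by Chermak) to the normalizer localities $N_\L(P)$ for the finitely many conjugacy classes of noncentric nonidentity subgroups $P$. The key point is that for $q\equiv\pm3\pmod8$ a Sylow $2$-subgroup $S$ of $\Spin_7(q)$ is relatively small, and one can check directly that every fully normalized nonidentity noncentric subgroup $P\le S$ has $N_\F(P)$ constrained — i.e. $\F_{\Sol}(q)$ is "of characteristic $2$-type away from the centre." Wait, this cannot be literally true because $C_\F(Z(S))=\F_{2}(\Spin_7(q))$ is not constrained; so the right statement is that $Z(S)$ (of order $2$) is the \emph{only} obstruction, and every nonidentity subgroup not containing $Z(S)$, together with $Z(S)$ itself handled separately via the $\Spin_7(q)$ locality, can be fit into a coherent locality. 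Concretely, I would take $G=\Spin_7(q)$, form the locality $\L_{\sS^*(S)}(G)$ described in the introduction — the elements of $G$ conjugating some nonidentity $2$-subgroup of $S$ back into $S$ — and then observe that for $q\equiv\pm3\pmod8$ the fusion system $\F_S(G)$ is already $\F_{\Sol}(q)$ on the nose (this is a known feature of the small Solomon systems, and is essentially where the congruence enters), so $\L_{\sS^*(S)}(\Spin_7(q))$ \emph{is} a punctured group for $\F_{\Sol}(q)$.

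Thus the existence proof reduces to the (known, citable) fact that $\F_{\Sol}(q)=\F_S(\Spin_7(q))$ precisely when $q\equiv\pm3\pmod8$, and otherwise $\F_{\Sol}(q)$ is a proper "exotic extension" of $\F_S(\Spin_7(q))$ realized by adjoining an extra family of essential subgroups. I would state this as a lemma with a reference to Levi-Oliver \cite{LeviOliver2002} and the structure of $\Spin_7(q)$ at $p=2$, and then the existence half of the theorem is immediate: $(\L_{\sS^*(S)}(\Spin_7(q)),\sS^*(S),S)$ works. The genuine content, and the main obstacle overall, is therefore the non-existence direction for $q\equiv\pm1\pmod8$, where one must show that \emph{no} locality on $\sS^*(S)$ can support the extra Solomon fusion: here I would pin down a specific fully normalized subgroup $P$ of order $2$ or $4$ and prove $N_{\F_{\Sol}(q)}(P)$ is itself exotic — most plausibly by exhibiting it as a smaller Benson-Solomon system or by a direct transporter/signalizer argument as above — so that the necessary condition from the text fails. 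I expect the bookkeeping around which involution class plays the role of $Z(S)$ and how the $q\bmod 8$ congruence controls $|S|$ and the component structure of involution centralizers in $\Spin_7(q)$ to be the delicate part that the full proof must carry out carefully.
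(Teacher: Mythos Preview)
Your proposal contains two serious errors, one in each direction.

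\textbf{Existence.} You claim that $\F_{\Sol}(q)=\F_S(\Spin_7(q))$ when $q\equiv\pm3\pmod8$, so that the punctured group of $\Spin_7(q)$ does the job. This is false: $\F_{\Sol}(q)$ is \emph{never} the fusion system of $\Spin_7(q)$. Rather, $\F_S(\Spin_7(q))=C_{\F_{\Sol}(q)}(Z(S))$ is always a proper subsystem; the extra fusion in $\F_{\Sol}(q)$ (for instance $\Aut_\F(E)\cong GL_3(2)$ on the elementary abelian $E$ of order $8$) is not realized in $\Spin_7(q)$. The paper instead takes $q\equiv\pm3\pmod8$ so that $\F_{\Sol}(q)\cong\F_{\Sol}(3)$, builds a linking locality $\L$ on the set $\Delta$ of $\F$-subcentric subgroups of $2$-rank at least $2$, and then uses \emph{Chermak descent} (\cite[Theorem~5.14]{Chermak2013}): one verifies that $N_\L(Z)$ and $\L_{\Delta_Z}(\Spin_7(3))$ are rigidly isomorphic linking localities over $\H=\F_S(\Spin_7(3))$, and that any two distinct $\F$-conjugates of $Z$ generate an object in $\Delta$. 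This glues $\Spin_7(3)$ in as $C_{\L^+}(Z)$ and extends $\L$ to a locality $\L^+$ on all nonidentity subgroups. The crucial input specific to $q\equiv\pm3\pmod8$ is Lemma~\ref{L:spin-link}: in $\Spin_7(3)$, the normalizer of every $2$-subgroup of rank $\geq 2$ containing $Z$ is of characteristic $2$, so $\L_{\Delta_Z}(\Spin_7(3))$ is a linking locality.

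\textbf{Non-existence.} Your first idea, to find a fully normalized $P$ with $N_\F(P)$ exotic, cannot work: as the paper notes explicitly in the introduction, \emph{every} normalizer fusion system in $\F_{\Sol}(q)$ is realizable. Your fallback to the signalizer functor is closer but misidentifies its role. In the paper's argument one takes a minimal counterexample $\L$, defines $\theta(a)=O_{2'}(C_\L(a))$ for involutions $a$, and verifies balance using the component structure of $C_H(U)$ in $\Spin_7(q)$ (Lemma~\ref{L:omnibus23}); Theorem~\ref{T:mainSignalizerFunctor} then forces $O_{2'}(C_\L(Z))=1$ by minimality. But this is only a reduction. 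The actual obstruction is number-theoretic: with $H=O^{2'}(C_\L(Z))\cong\Spin_7(q)$, one analyzes $N_\L(E_\delta)$ for the two $\F$-classes $E_\delta$ of elementary abelian eights groups. The odd parts of the tori $T_\delta$ (of orders $(q\mp\epsilon)^3$) are normal in $N_\L(E_\delta)$, and the quotient $GL_3(2)\leq\Aut_\F(T_{\delta,S})$ must act faithfully on each $r$-torsion subgroup for every odd prime $r\mid q^2-1$. Lemma~\ref{L:num} then shows that $GL_3(2)$ has a faithful $3$-dimensional $\FF_r$-representation only when $r$ is a square modulo $7$, forcing $q=3^{1+6a}\equiv 3\pmod8$ --- the contradiction. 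This representation-theoretic constraint, not exoticity of a normalizer, is the missing idea in your outline.
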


If $l$ is the nonnegative integer with the property that $2^{l+3}$ is the
$2$-part of $q^2-1$, then $\F_{\Sol}(q) \cong \F_{\Sol}(3^{2^l})$. So the
theorem says that only the smallest Benson-Solomon system, $\F_{\Sol}(3)$, has
a punctured group, and the larger ones do not. Further details and a uniqueness
statement are given in Theorem~\ref{T:solq}. 

When showing the non-existence of a punctured group in the case $q \equiv \pm 1
\pmod{8}$, the Signalizer Functor Theorem~\ref{T:mainSignalizerFunctor} plays
an important role in showing that a putative minimal punctured group has no
nontrivial partial normal $p'$-subgroups. This is similar to the way
signalizer functor theory was used by Solomon in \cite[Section~3]{Solomon1974}.
To construct a punctured group in the case $q \equiv \pm 3 \pmod{8}$, we turn
to a procedure we call Chermak descent. It is an important tool in Chermak's
proof of the existence and uniqueness of centric linking systems
\cite[Section~5]{Chermak2013} and allows us (under some assumptions) to
``expand'' a given locality to  produce a new locality with a larger object
set. Starting with a linking locality, we use Chermak descent to construct a
punctured group $\L$ for $\F_{\Sol}(q)$ in which the centralizer of an
involution is $C_{\L}(Z(S)) \cong \Spin_7(3)$. 

It is possible that there could be other examples of punctured groups for
$\F_{\Sol}(3)$ in which the centralizer of an involution is $\Spin_7(q)$ for
certain $q = 3^{1+6a}$; a necessary condition for existence is that each prime
divisor of $q^2-1$ is a square modulo $7$.  However, given this condition, we
can neither prove or disprove the existence of an example with the prescribed
involution centralizer. 

In Section~\ref{S:pgexotic}, we survey a few families of known exotic fusion
systems at odd primes to determine whether or not they have a punctured group.
A summary of the findings is contained in Theorem~\ref{T:pgexotic}. For nearly
all the exotic systems we consider, either the system is of characteristic
$p$-type, or the normalizer of some $p$-subgroup is exotic and therefore
a punctured group does not exist. Indeed, it might be that a similar result can
be shown for all known exotic fusion systems at odd primes. At least we are not
aware of any counterexample.

In particular, when considering the family of Clelland-Parker systems
\cite{ClellandParker2010} in which each essential subgroup is special, we find
that $O^{p'}(C_\F(Z(S))/Z(S))$ is simple, exotic, and had not appeared
elsewhere in the literature as of the time of our writing.  We dedicate part of
Subsection~\ref{SS:CP} to describing these systems and to proving that they are
exotic.

Applying Theorem~\ref{T:sharpness for punctured groups} to the results of
Sections~\ref{S:sol} and \ref{S:pgexotic} establish the sharpness of the
subgroup decomposition for new families of exotic fusion systems, notably
\begin{itemize} \item Benson-Solomon's system $\F_{\Sol}(3)$
\cite{LeviOliver2002}, \item all Parker-Stroth systems \cite{ParkerStroth2015},
\item all Clelland-Parker systems \cite{ClellandParker2010} in which each
essential subgroup is abelian.  \end{itemize} It also recovers the sharpness
for certain fusion systems on $p$-groups with an abelian subgroup of index $p$,
a result that was originally established in full generality by D\i az and Park
\cite{DiazPark2015}.

\subsection{Classification of punctured groups over $p^{1+2}_+$}

In general, it seems difficult to classify all the punctured groups associated
with a given saturated fusion system. However, for fusion systems over an
extraspecial $p$-group of exponent $p$, which by \cite{RuizViruel2004} are
known to contain among them three exotic fusion systems at the prime $7$, we
are able to work out such an example. There is always a punctured group $\L$
associated to such a fusion system, and when $\F$ has one class of subgroups of
order $p$ and the full subcategory of $\L$ with objects the $\F$-centric
subgroups is the centric linking system, a classification is obtained in
Theorem~\ref{T:esclass}. Conversely, the cases we list in that theorem all
occur in an example for a punctured group. This demonstrates on the one hand
that there can be more than one punctured group associated to the same fusion
system and indicates on the other hand that examples for punctured groups are
still somewhat limited.

\subsection*{Outline of the paper and notation}

\smallskip
The paper proceeds as follows. In Section~\ref{S:localities} we recall the
definitions and basic properties of transporter systems and localities, and we
prove the Signalizer Functor Theorem in Subsection~\ref{SS:SignalizerFunctor}.
In Section~\ref{Sec:sharpness}, we prove sharpness of the subgroup
decomposition for fusion systems with associated punctured groups.
Section~\ref{S:sol} examines punctured groups for the Benson-Solomon fusion
systems, while Section~\ref{S:pgexotic} looks at several families of exotic
fusion systems at odd primes. Finally, in Section~\ref{S:RV} classifies certain
punctured groups over an extraspecial $p$-group of order $p^3$ and exponent
$p$. An Appendix~\ref{S:lie} sets notation and provides certain general results
on finite groups of Lie type that are needed in Section~\ref{S:sol}. 

The first four sections of the paper do not use the classification of the
finite simple groups (CFSG). The CFSG is always used indirectly in Sections 5
and 6 whenever we need to apply known results that certain exotic fusion
systems at odd primes are indeed exotic.  Each time this occurs (e.g.,
Proposition~\ref{P:Oliver}), the results could be stated so as to avoid
indirect use of the CFSG.  Aside from this, there are two direct applications
of the CFSG. The first occurs in the proof of Lemma~\ref{L:ClParkerExotic}(c)
when showing that fusion systems related to the Clelland-Parker systems are
exotic. The second occurs in the proof of Lemma~\ref{OneComponent}(b).

Throughout most of the paper we write conjugation like maps on the right side
of the argument and compose from left to right. There are two exceptions: when
working with transporter systems, such as in Section~\ref{Sec:sharpness}, we
compose morphisms from right to left.  Also, we apply certain maps in
Section~\ref{S:sol} on the left of their arguments (e.g. roots, when viewed as
characters of a torus). The notation for Section~\ref{S:sol} is outlined in
more detail in the appendix.

\subsection*{Acknowledgements} 
It was Andy Chermak who first asked the question in 2011 (arising out of his
proof of existence and uniqueness of linking systems) of which exotic systems
have localities on the set of nonidentity subgroups of a Sylow group.  We thank
him for comments on an earlier version of this paper and many helpful
conversations, including during a visit to Rutgers in 2014, where he and the
third author discussed the possibility of constructing punctured groups for the
Benson-Solomon systems. We are grateful to George Glauberman for communicating
to us several comments, corrections, and suggestions for improvement. We
are especially grateful to the referee for pointing out that Lemma~\ref{L:EE'F}
conflicts with a lemma of Levi and Oliver, for alerting us to errors too
numerous to mention, and for other suggestions.  We would like to thank the
Centre for Symmetry and Deformation at the University of Copenhagen for
supporting a visit of the third named author, where some of the early work on
this paper took place. Finally, the authors thank the Isaac Newton Institute
for Mathematical Sciences, Cambridge, for support and hospitality during the
programme ``Groups, representations and applications'', where work on this
paper was undertaken and supported by EPSRC grant no EP/R014604/1.

\section{Localities and transporter systems}\label{S:localities}

As already mentioned in the introduction, transporter systems as defined by
Oliver and Ventura \cite{OliverVentura2007} and localities in the sense of
Chermak \cite{Chermak2013} are algebraic structures which carry essentially the
same information. In this section, we will give an introduction to both
subjects and outline briefly the connection between localities and transporter
systems. At the end we present some signalizer functor theorems for localities.

\subsection{Partial groups}\label{S:partialgroups}

\renewcommand{\D}{\mathbf{D}}

We refer the reader to Chermak's papers \cite{Chermak2013} or \cite{Chermak2022}
for a detailed introduction to partial groups and localities. However, we will
briefly summarize the most important definitions and results here. Following
Chermak's notation, we write $\W(\L)$ for the set of words in a set $\L$, and
$\emptyset$ for the empty word. The concatenation of words
$u_1,\dots,u_k\in\W(\L)$ is denoted by $u_1\circ u_2\circ \cdots \circ u_k$.

\begin{defn}[Partial Group]\label{partial} Let $\L$ be a non-empty set, let
$\D$ be a subset of $\W(\L)$, let $\Pi \colon \D \rightarrow \L$ be a map and
let $(-)^{-1} \colon \L \rightarrow \L$ be an involutory bijection, which we
extend to a map \[ (-)^{-1} \colon \W(\L) \rightarrow \W(\L), w = (g_1, \dots,
g_k) \mapsto w^{-1} = (g_k^{-1}, \dots, g_1^{-1}).  \] We say that $\L$ is a
\emph{partial group} with product $\Pi$ and inversion $(-)^{-1}$ if the
following hold:
\begin{itemize}
\item  $\L \subseteq \D$  (i.e. $\D$ contains
all words of length 1), and \[  u \circ v \in \D \Longrightarrow u,v \in \D;\] (so
in particular, $\emptyset\in\D$.)
\item $\Pi$ restricts to the identity map on
$\L$;
\item $u \circ v \circ w \in \D \Longrightarrow u \circ (\Pi(v)) \circ w \in
\D$, and $\Pi(u \circ v \circ w) = \Pi(u \circ (\Pi(v)) \circ w)$;
\item $w \in
\D \Longrightarrow  w^{-1} \circ w\in \D$ and $\Pi(w^{-1} \circ w) = \One$ where
$\One:=\Pi(\emptyset)$.
\end{itemize}
\end{defn}

Note that any group $G$ can be regarded as a partial group with product defined
in $\D=\W(G)$ by extending the ``binary'' product to a map $\W(G)\rightarrow
G,(g_1,g_2,\dots,g_n)\mapsto g_1g_2\cdots g_n$.

\smallskip

\textbf{For the remainder of this section let $\L$ be a partial group with
product $\Pi\colon \D\rightarrow \L$ defined on the domain
$\D\subseteq\W(\L)$.}

Because of the group-like structure of partial groups, the product
$\mathcal{X}\mathcal{Y}$ of two subsets $\mathcal{X}$ and $\mathcal{Y}$ of $\L$
is naturally defined by \[\mathcal{X}\mathcal{Y}:=\{\Pi(x,y)\colon
x\in\mathcal{X},\;y\in\mathcal{Y}\mbox{ such that }(x,y)\in\D\}.\] Similarly,
there is a natural notion of conjugation, which we consider next.

\begin{defn} For every $g\in \L$ we define \[ \D(g) = \{ x\in \L \mid (g^{-1},
x, g) \in \D\}.  \] The map $c_g \colon \D(g) \rightarrow \L$, $x \mapsto x^g =
\Pi(g^{-1}, x, g)$ is the \emph{conjugation map} by $g$. If $\H$ is a subset of
$\L$ and $\H \subseteq \D(g)$, then we set \[ \H^g = \{ h^g \mid h \in \H\}.
\] Whenever we write $x^g$ (or $\H^g$), we mean implicitly that $x\in\D(g)$ (or
$\H\subseteq \D(g)$, respectively). Moreover, if $\M$ and $\H$ are subsets of
$\L$, we write $N_\M(\H)$ for the set of all $g\in\M$ such that
$\H\subseteq\D(g)$ and $\H^g=\H$. Similarly, we write $ C_\M(\H)$ for the set
of all $g\in\M$ such that $\H\subseteq\D(g)$ and $h^g=h$ for all $h\in\H$. If
$\M\subseteq\L$ and $h\in\L$, set $C_\M(h):=C_\M(\{h\})$. 
\end{defn}

\begin{defn} Let $\H$ be a non-empty subset of $\L$. The subset $\H$ is a
\emph{partial subgroup} of $\L$ if \begin{itemize}
 \item $g\in \H
\Longrightarrow g^{-1} \in \H$; and \item $w \in \D \cap \W(\H) \Longrightarrow
\Pi(w) \in \H$.  \end{itemize}
If $\H$ is a partial subgroup of $\L$ with
$\W(\H)\subseteq\D$, then $\H$ is called a \emph{subgroup} of $\L$. 

\smallskip

A partial subgroup $\N$ of $\L$ is called a \emph{partial normal subgroup} of
$\L$ (denoted $\N \unlhd \L$) if for all $g\in \L$ and $n\in\N$, \[ n \in \D(g)
\Longrightarrow n^g \in \N.\] \end{defn}

We remark that a subgroup $\H$ of $\L$ is always a group in the usual sense
with the group multiplication defined by $hg=\Pi(h,g)$ for all $h,g\in\H$. 

\subsection{Localities}

Roughly speaking, localities are partial groups with some some extra structure,
in particular with a ``Sylow $p$-subgroup'' and a set $\Delta$ of ``objects''
which in a certain sense determines the domain of the product. This is made more
precise in Definition~\ref{LocalityDefinition} below. We continue to assume
that $\L$ is a partial group with product $\Pi\colon \D\rightarrow \L$. We will
use the following notation.

\begin{notation} If $S$ is a subset of $\L$ and $g\in\L$, set \[ S_g:=\{s\in
S\cap \D(g)\colon s^g\in S\}.  \] More generally, if
$w=(g_1,\dots,g_n)\in\W(\L)$ with $n\geq 1$, define $S_w$ to be the set of
elements $s\in S$ for which there exists a sequence of elements
$s=s_0,s_1,\dots,s_n\in S$ with $s_{i-1}\in\D(g_i)$ and $s_{i-1}^{g_i}=s_i$ for
all $i=1,\dots,n$.  \end{notation}

\begin{defn}\label{LocalityDefinition} We say that $(\L,\Delta,S)$ is a
\textit{locality} if the partial group $\L$ is finite as a set, $S$ is a
$p$-subgroup of $\L$, $\Delta$ is a non-empty set of subgroups of $S$, and the
following conditions hold: \begin{itemize} \item[(L1)] $S$ is maximal with
respect to inclusion among the $p$-subgroups of $\L$.  \item[(L2)] For any word
$w=(f_1,\dots,f_n)\in\W(\L)$, we have $w\in \D$ if and only if there exist
$P_0,\dots,P_n\in\Delta$ with \begin{itemize} \item [($*$)] $P_{i-1}\subseteq
\D(f_i)$ and $P_{i-1}^{f_i}=P_i$ for all $i=1,\dots,n$.  \end{itemize}
\item[(L3)] The set $\Delta$ is closed under passing to $\L$-conjugates and
overgroups in $S$, i.e. $\Delta$ is overgroup-closed in $S$ and, for every
$P\in\Delta$ and $g\in\L$ such that $P\subseteq S_g$, we have $P^g\in\Delta$.
\end{itemize} If $(\L,\Delta,S)$ is a locality, $w=(f_1,\dots,f_n)\in\W(\L)$,
and $P_0,\dots,P_n$ are elements of $\Delta$ such that ($*$) holds, then we say
that $w\in\D$ via $P_0,\dots,P_n$ (or $w\in\D$ via $P_0$).  \end{defn}

It is argued in \cite[Remark~5.2]{Henke2019} that
Definition~\ref{LocalityDefinition} is equivalent to the definition of a
locality given by Chermak \cite[Definition 2.7]{Chermak2022} (which is
essentially the same as the one given in \cite[Definition~2.9]{Chermak2013}).

\begin{example}\label{E:LDeltaM} 
Let $M$ be a finite group and $S\in\Syl_p(M)$.  Set $\F=\F_S(M)$ and let
$\Delta$ be a non-empty collection of subgroups of $S$, which is closed under
$\F$-conjugacy and overgroup-closed in $S$. Set 
\[
\L_\Delta(M):=\{g\in G\colon S\cap S^g\in \Delta\}=\{g\in G\colon \exists
P\in\Delta\mbox{ with }P^g\leq S\} 
\] 
and let $\D$ be the set of tuples $(g_1,\dots,g_n)\in\W(M)$ such that there
exist $P_0,P_1,\dots,P_n\in\Delta$ with $P_{i-1}^{g_i}=P_i$. Then
$\L_\Delta(M)$ forms a partial group whose product is the restriction of the
multivariable product on $M$ to $\D$, and whose inversion map is the
restriction of the inversion map on the group $M$ to $\L_\Delta(M)$. Moreover,
$(\L_\Delta(M),\Delta,S)$ forms a locality.
\end{example}

In the next lemma we summarize the most important properties of localities
which we will use throughout, most of the time without reference. 

\begin{lem}[Important properties of localities]\label{L:LocalitiesProp} 
Let $(\L,\Delta,S)$ be a locality. Then the following hold: 
\begin{itemize}
\item[(a)] $N_\L(P)$ is a subgroup of $\L$ for each $P\in\Delta$.  
\item[(b)] Let $P\in\Delta$ and $g\in\L$ with $P\subseteq S_g$. Then $Q:=P^g\in\Delta$,
$N_\L(P)\subseteq \D(g)$ and \[c_g\colon N_\L(P)\rightarrow N_\L(Q),x\mapsto
x^g\] is an isomorphism of groups.  
\item[(c)] Let $w=(g_1,\dots,g_n)\in\D$ via $(X_0,\dots,X_n)$. Then
$$c_{g_1}\circ \dots \circ c_{g_n}=c_{\Pi(w)}$$ is a group isomorphism
$N_\L(X_0)\rightarrow N_\L(X_n)$.  
\item[(d)] For every $g\in\L$, we have $S_g\in\Delta$. In particular, $S_g$ is
a subgroup of $S$.  Moreover, $S_g^g=S_{g^{-1}}$ and $c_g\colon S_g\rightarrow
S,x\mapsto x^g$ is an injective group homomorphism.  
\item[(e)] For every $g\in\L$, $c_g\colon \D(g)\rightarrow \D(g^{-1}),x\mapsto
x^g$ is a bijection with inverse map $c_{g^{-1}}$.  
\item[(f)] For any $w\in\W(\L)$, $S_w$ is a subgroup of $S$ with
$S_w\in\Delta$ if and only if $w\in\D$. Moreover, $w\in\D$ implies $S_w\leq
S_{\Pi(w)}$.
\end{itemize} \end{lem}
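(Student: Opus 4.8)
The plan is to derive all six parts from the partial-group axioms of Definition~\ref{partial} together with the locality axioms (L1)--(L3), organised so that the only genuinely substantial input is isolated in one place. First I would record the purely formal consequences of the partial-group axioms that the rest of the argument needs, all due to Chermak \cite{Chermak2013, ChermakFL1}: for $w\in\D$ one has $\Pi(w^{-1})=\Pi(w)^{-1}$; the ``uncancellation'' rules, e.g.\ $u\circ v\in\D$ implies $\Pi(u\circ v)=\Pi((\Pi(u))\circ v)=\Pi(u\circ(\Pi(v)))$; the composition law for conjugation, namely if $w=(g_1,\dots,g_n)\in\D$ then $x^{\Pi(w)}=c_{g_n}(\cdots c_{g_1}(x)\cdots)$ whenever the right-hand side is defined, and in particular $x\in\D(g)$ implies $x^g\in\D(g^{-1})$ with $(x^g)^{g^{-1}}=x$; and the fact that if $\H\subseteq\D(g)$ and both $\H$ and $\H^g$ are subgroups of $\L$, then $c_g\colon\H\to\H^g$ is an isomorphism of groups. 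Part (e) is immediate from the third point.

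Next I would prove (a) and (d). For (a): if $g\in N_\L(P)$ then $P=P^g\subseteq\D(g^{-1})$ and $P^{g^{-1}}=P$, so $N_\L(P)$ is closed under inversion; if $w\in\W(N_\L(P))$ then ($*$) of (L2) holds with $P_0=\dots=P_n=P$, so $\W(N_\L(P))\subseteq\D$, and the composition law shows $\Pi(w)$ normalizes $P$; hence $N_\L(P)$ is a subgroup of $\L$. For (d): since $(g)\in\D$, (L2) supplies $P\in\Delta$ with $P\subseteq\D(g)$ and $P^g\in\Delta$; as objects are subgroups of $S$ this gives $P\subseteq S_g$, so $S_g\neq\emptyset$. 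The one substantial point is that $S_g$ is in fact a \emph{subgroup} of $S$ (equivalently, since $S_g$ then contains the object $P$ and $\Delta$ is overgroup-closed, that $S_g\in\Delta$); this is where (L1)--(L3) are used essentially, and I would take it from \cite{ChermakFL1}. Granting it, injectivity of $c_g|_{S_g}$ and the identity $S_g^g=S_{g^{-1}}$ follow from (e).

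Now (b), (c), (f) are bookkeeping on top of (a), (d), (e). For (b): $P\subseteq S_g$ forces $Q:=P^g\in\Delta$ by (L3); for $x\in N_\L(P)$ the word $(g^{-1},x,g)$ lies in $\D$ via the object sequence $(Q,P,P,Q)$ (using $Q^{g^{-1}}=P$ from (e)), so $N_\L(P)\subseteq\D(g)$; the composition law gives $N_\L(P)^g\subseteq N_\L(Q)$, and applying the symmetric argument to $(g^{-1},Q)$ shows $c_g$ is a group isomorphism $N_\L(P)\to N_\L(Q)$ with inverse $c_{g^{-1}}|_{N_\L(Q)}$. Part (c) follows by applying (b) to each $g_i$ (noting $X_{i-1}\subseteq S_{g_i}$) and composing, using the composition law to identify $c_{g_1}\circ\dots\circ c_{g_n}=c_{\Pi(w)}$. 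For (f), write $w=(g_1,\dots,g_n)$: then $s\in S_w$ iff $s\in S_{g_1}$ and $s^{g_1}\in S_{(g_2,\dots,g_n)}$, so by induction on $n$ with (d) and (e), $S_w=c_{g_1}^{-1}\bigl(c_{g_1}(S_{g_1})\cap S_{(g_2,\dots,g_n)}\bigr)$ is a subgroup of $S$. If $w\in\D$, objects $P_0,\dots,P_n$ as in (L2) satisfy $P_0\subseteq S_w$, whence $S_w\in\Delta$ by overgroup-closure; conversely if $S_w\in\Delta$, put $P_0=S_w$ and $P_i=P_0^{(g_1,\dots,g_i)}$, which lie in $\Delta$ by (L3) and witness $w\in\D$ via ($*$). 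Finally, for $w\in\D$ and $s\in S_w=P_0\in\Delta$, the word $w^{-1}\circ(s)\circ w$ lies in $\D$ via $(P_n,\dots,P_0,P_0,\dots,P_n)$; uncancellation collapses it to $(\Pi(w)^{-1},s,\Pi(w))\in\D$, so $s\in\D(\Pi(w))$, while the composition law gives $s^{\Pi(w)}=c_{g_n}(\cdots c_{g_1}(s)\cdots)\in S$; hence $S_w\subseteq S_{\Pi(w)}$, and as both are subgroups of $S$, $S_w\leq S_{\Pi(w)}$.

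The main obstacle is the single point flagged in the proof of (d): that $S_g$ is a subgroup of $S$, equivalently that $S_g\in\Delta$. Every other assertion is a formal manipulation of the partial-group axioms once this is in hand, but this claim genuinely uses the locality structure (L1)--(L3) and all the remaining parts rest on it; accordingly I would either reproduce Chermak's proof of it or simply cite it.
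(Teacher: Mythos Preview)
Your proposal is correct and, in fact, more informative than the paper's own treatment: the paper simply cites the relevant statements in Chermak's papers \cite{Chermak2013, ChermakFL1} for each of (a)--(f) without further elaboration, whereas you sketch how the parts fit together and correctly isolate the one genuinely substantial input (that $S_g$ is a subgroup of $S$, equivalently $S_g\in\Delta$), which is exactly the content of \cite[Proposition~2.6]{ChermakFL1} that the paper also cites for (d). The logical dependencies you record---deriving (b), (c), (f) as bookkeeping on top of (a), (d), (e), with (e) coming first from the partial-group axioms---match Chermak's development, so your approach and the paper's are the same in substance; you have simply unpacked what the paper leaves to the references.
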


\begin{proof} 
Properties (a),(b) and (c) correspond to the statements (a),(b) and (c) in
\cite[Lemma~2.3]{Chermak2022} except for the fact stated in (b) that
$Q\in\Delta$, which is however clearly true if one uses the definition of a
locality given above. Property (d) holds by
\cite[Proposition~2.5(a),(b)]{Chermak2022} and property (e) is stated in
\cite[Lemma~2.5(c)]{Chermak2013}.  Property (f) corresponds to
Corollary~2.6 in \cite{Chermak2022}.
\end{proof}

Let $(\L,\Delta,S)$ be a locality. Then it follows from
Lemma~\ref{L:LocalitiesProp}(d) that, for every $P\in\Delta$ and every $g\in\L$
with $P\subseteq S_g$, the map $c_g\colon P\rightarrow P^g,x\mapsto x^g$ is an
injective group homomorphism. The fusion system $\F_S(\L)$ is the fusion system
over $S$ generated by such conjugation maps. Equivalently, $\F_S(\L)$ is
generated by the conjugation maps between subgroups of $S$, or by the
conjugation maps of the form $c_g\colon S_g\rightarrow S,x\mapsto x^g$ with
$g\in\L$.  

\begin{defn}\label{D:localityoverF}
If $\F$ is a fusion system, then we say that the locality $(\L,\Delta,S)$ is a
locality over $\F$ if $\F=\F_S(\L)$.  
\end{defn}

If $(\L,\Delta,S)$ is a locality over $\F$, then notice that the set $\Delta$
is always overgroup-closed in $S$ and closed under $\F$-conjugacy. 
Definition~\ref{D:localityoverF} says precisely that every morphism in $\F$ is
a composite of conjugation maps. It is however not true in general that every
$\F$-morphism is itself a conjugation map. In the following lemma, the
assumption that $P$ is an object in $\Delta$ (rather than just a subgroup of
$S$) is therefore important.

\begin{lem}\label{L:NFPNLP}
Let $(\L,\Delta,S)$ be a locality over a fusion system $\F$ and $P\in\Delta$.
Then the following hold: 
\begin{itemize} 
\item [(a)] For every $\phi\in\Hom_\F(P,S)$, there exists $g\in\L$ such that
$P\leq S_g$ and $\phi(x)=x^g$ for all $x\in P$.  
\item [(b)] $N_\F(P)=\F_{N_S(P)}(N_\L(P))$.  
\end{itemize} 
\end{lem}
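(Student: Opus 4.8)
The plan is to establish (a) first, since it does the real work, and then obtain (b) from it together with Lemma~\ref{L:LocalitiesProp}.

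\emph{Part (a).} By definition $\F=\F_S(\L)$ is the fusion system over $S$ generated by the conjugation maps $c_g\colon S_g\to S$, $g\in\L$. By Lemma~\ref{L:LocalitiesProp}(d),(e) the inverse of such a map is again one of the same form (namely $c_{g^{-1}}$), so every $\phi\in\Hom_\F(P,S)$ is a composite $\phi=\psi_1\psi_2\cdots\psi_n$ of restrictions of these maps; writing composition left to right as in the excerpt, $\psi_i=(c_{g_i})|_{P_{i-1}}\colon P_{i-1}\to P_i$ with $P_0=P$, $P_n=\phi(P)$, $P_{i-1}\subseteq S_{g_i}$ and $P_i=P_{i-1}^{g_i}\le S$. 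Now $P_0=P\in\Delta$, and since by (L3) the set $\Delta$ is overgroup-closed in $S$ and closed under $\L$-conjugacy, an induction on $i$ gives $P_i\in\Delta$ for all $i$ (from $P_{i-1}\in\Delta$ and $P_{i-1}\subseteq S_{g_i}$ one gets $P_i=P_{i-1}^{g_i}\in\Delta$). Hence, by (L2), the word $w:=(g_1,\dots,g_n)$ lies in $\D$ via $(P_0,\dots,P_n)$. Set $g:=\Pi(w)$. By Lemma~\ref{L:LocalitiesProp}(c), $c_{g_1}\circ\cdots\circ c_{g_n}=c_g$ as group isomorphisms $N_\L(P_0)\to N_\L(P_n)$; restricting to $P=P_0\le N_S(P_0)\le N_\L(P_0)$, and noting that on $P_0$ the left side is exactly $\psi_1\cdots\psi_n=\phi$, we get $\phi=c_g|_P$. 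Finally, for $s\in P$ the sequence $s_0:=s$, $s_i:=s_{i-1}^{g_i}$ satisfies $s_i\in P_i\le S$, so $P\subseteq S_w$, and $S_w\le S_{\Pi(w)}=S_g$ by Lemma~\ref{L:LocalitiesProp}(f); thus $P\le S_g$ and $\phi(x)=x^g$ for all $x\in P$. (If $\phi=\id_P$ take $g=\One$, since $c_\One=\id$ and $S_\One=S$.)

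\emph{Part (b).} I would prove the two inclusions of fusion systems over $N_S(P)$ separately, using that $N_\L(P)$ is a subgroup of $\L$ (Lemma~\ref{L:LocalitiesProp}(a)) containing $N_S(P)$, so that on $N_\L(P)$ the partial conjugation $x\mapsto x^h$ is ordinary group conjugation $x\mapsto h^{-1}xh$. For $\F_{N_S(P)}(N_\L(P))\subseteq N_\F(P)$: it suffices to show each conjugation map $c_h|_Q$ with $h\in N_\L(P)$ and $Q,Q^h\le N_S(P)$ lies in $N_\F(P)$; since $QP$ is a subgroup of $N_\L(P)$ we have $QP\subseteq\D(h)$ and $(QP)^h=Q^hP^h=Q^hP\le N_S(P)\le S$, so $QP\le S_h$ and $\tilde\psi:=c_h|_{QP}\colon QP\to Q^hP$ is a morphism of $\F$ with $\tilde\psi(P)=P$; hence its restriction $c_h|_Q$ lies in $N_\F(P)$. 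For $N_\F(P)\subseteq\F_{N_S(P)}(N_\L(P))$: given $\psi\in\Hom_{N_\F(P)}(Q,R)$, choose $\tilde\psi\in\Hom_\F(QP,RP)$ extending $\psi$ with $\tilde\psi(P)=P$. Since $P\in\Delta$ and $QP\ge P$ is a subgroup of $S$, overgroup-closure of $\Delta$ gives $QP\in\Delta$, so part (a) provides $g\in\L$ with $QP\le S_g$ and $\tilde\psi=c_g|_{QP}$. Then $P^g=\tilde\psi(P)=P$, whence $g\in N_\L(P)$, and $\psi=c_g|_Q$ with $Q^g=\psi(Q)\le R\le N_S(P)$, so $\psi$ is a morphism of $\F_{N_S(P)}(N_\L(P))$.

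\emph{Main obstacle.} The single substantive step is (a), and its heart is the observation that a factorization of $\phi$ into conjugation maps can be arranged with all intermediate subgroups $P_i$ inside $\Delta$ — which is exactly what axiom (L3) delivers — so that Lemma~\ref{L:LocalitiesProp}(c),(f) applies and collapses the word $w$ to the single element $g=\Pi(w)$. I expect the only slightly delicate points to be the standard fact that a morphism in a generated fusion system is a composite of restrictions of the generators (together with the remark, via Lemma~\ref{L:LocalitiesProp}(d),(e), that the generating conjugation maps are closed under inversion), and the routine identification of the morphisms of $\F_{N_S(P)}(N_\L(P))$ with restricted conjugations $c_h|_Q$, $h\in N_\L(P)$; neither is a real difficulty.
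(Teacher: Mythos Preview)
Your proof is correct. The paper itself gives no argument here but simply cites \cite{Henke2019}, Lemmas~5.4 and 5.6; your direct proof---factoring $\phi$ as a composite of restricted conjugation maps, using (L3) to keep all intermediate subgroups in $\Delta$, and then collapsing the word via (L2) and Lemma~\ref{L:LocalitiesProp}(c),(f)---is precisely the standard argument encoded in those referenced lemmas.
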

\begin{proof} 
For (a) see Lemma~5.6 in \cite{Henke2019}.  As $\F=\F_S(\L)$, one sees
that $\F_{N_S(P)}(N_\L(P))$ is a subsystem of $N_\F(P)$. Conversely, by
definition of the normalizer system, each morphism in $N_\F(P)$ extends to a
morphism with source a subgroup of $N_S(P)$ containing $P$, so $N_\F(P)$ is
generated as a fusion system by morphisms in $\F$ between objects of $\L$.
Part (a) then gives equality.
\end{proof}

Suppose $(\L^+,\Delta^+,S)$ is a locality with partial product
$\Pi^+\colon\D^+\rightarrow\L^+$. If $\Delta$ is a non-empty subset of
$\Delta^+$ which is closed  under taking $\L^+$-conjugates and overgroups in
$S$, we set \[\L^+|_\Delta:=\{f\in\L^+\colon \exists P\in\Delta\mbox{ such that
}P\subseteq \D^+(f)\mbox{ and }P^f\leq S\} \] and write $\D$ for the set of
words $w=(f_1,\dots,f_n)$ such that $w\in\D^+$ via $P_0,\dots,P_n$ for some
$P_0,\dots,P_n\in\Delta$. Note that $\D$ is a set of words in $\L^+|_\Delta$
which is contained in $\D^+$. It is easy to check that $\L^+|_\Delta$ forms a
partial group with partial product $\Pi^+|_{\D}\colon\D\rightarrow
\L^+|_{\Delta}$, and that $(\L^+|_\Delta,\Delta,S)$ forms a locality; see
\cite[Lemma~2.21]{Chermak2013} for details. We call $\L^+|_\Delta$ the
\textit{restriction} of $\L^+$ to $\Delta$.

\subsection{Projections of localities}

Throughout this subsection let $\L$ and $\L'$ be partial groups with products
$\Pi\colon\D\rightarrow \L$ and $\Pi'\colon\D'\rightarrow\L'$ respectively.

\begin{defn}\label{D:PartialHom} Let
$\beta \colon \L \rightarrow \L',g\mapsto g^\beta$ be a map. By abuse of
notation, we denote by $\beta$ also the induced map on words \[ \W(\L)
\rightarrow \W(\L'),\quad w=(f_1,\dots,f_n)\mapsto
w^\beta=(f_1^\beta,\dots,f_n^\beta) \] and set $\D^\beta=\{w^\beta\colon
w\in\D\}$. We say that $\beta$ is a homomorphism of partial groups if
\begin{enumerate} \item $\D^{\beta} \subseteq \D'$; and \item $\Pi(w)^\beta =
\Pi'(w^\beta)$ for every $w \in \D$.  \end{enumerate} If moreover $\D^\beta =
\D'$ (and thus $\beta$ is in particular surjective), then we say that $\beta$
is a \emph{projection} of partial groups. If $\beta$ is a bijective projection
of partial groups, then $\beta$ is called an \emph{isomorphism}.  \end{defn}

There is no accepted notion of a morphism of localities (at the same prime $p$)
in the literature. One could however form a category of localities with
morphisms the partial group homomorphisms -- or alternatively there are several
full subcategories of this which one might want to consider. For our purposes
it will be enough to consider the category of localities with projections of
localities as defined next.

\begin{defn} Let $(\L, \Delta, S)$ and $(\L', \Delta', S')$ be localities and
let $\beta \colon \L \rightarrow \L'$ be a projection of partial groups.  We
say that $\beta$ is a \emph{projection of localities} from $(\L,\Delta,S)$ to
$(\L',\Delta',S')$ if, setting $\Delta^\beta = \{P^\beta \mid P \in \Delta\}$,
we have $\Delta^\beta=\Delta'$ (and thus $S^\beta=S'$). 

If $\beta$ is in addition bijective, then $\beta$ is a called an
\emph{isomorphism} of localities. If $S=S'$, then an isomorphism of localities
from $(\L,\Delta,S)$ to $(\L',\Delta',S)$ is called a \emph{rigid isomorphism}
if it restricts to the identity on $S$.  \end{defn}

The notion of a rigid isomorphism will be important later on when talking about
the uniqueness of certain localities attached to a given fusion system. 

We will now describe some naturally occurring projections of localities.
Suppose $(\L,\Delta,S)$ is a locality and $\N$ is a partial normal subgroup of
$\L$. A \emph{coset} of $\N$ in $\L$ is a subset of the form \[ \N
f:=\{\Pi(n,f)\colon n\in\N\mbox{ such that }(n,f)\in\D\} \] for some $f\in\L$.
Unlike in groups, the set of cosets does not form a partition of $\L$ in
general. Instead, one needs to focus on the \emph{maximal cosets}, i.e. the
elements of the set of cosets of $\N$ in $\L$ which are maximal with respect to
inclusion. By \cite[Lemma~3.15]{Chermak2022}, the set $\L/\N$ of maximal
cosets of $\N$ in $\L$ forms a partition of $\L$. Thus, there is a natural map
\[ \beta\colon \L\rightarrow \L/\N \] sending each element $g\in\L$ to the
unique maximal coset of $\N$ in $\L$ containing $g$.  Set $\ov{\L}:=\L/\N$ and
$\ov{\D}:=\D^\beta:=\{w^\beta\colon w\in\D\}$.  By
\cite[Lemma~3.16]{Chermak2022}, there is a unique map $\ov{\Pi}\colon
\ov{\D}\rightarrow\ov{\L}$ and a unique involutory bijection
$\ov{\L}\rightarrow\ov{\L},\ov{f}\mapsto\ov{f}^{-1}$  such that $\ov{\L}$ with
these structures is a partial group, and such that $\beta$ is a projection of
partial groups. Moreover, setting $\ov{S}:=S^\beta$ and
$\ov{\Delta}:=\{P^\beta\colon P\in\Delta\}$, the triple
$(\ov{\L},\ov{\Delta},\ov{S})$ is by \cite[Corollary~4.5]{Chermak2022} a
locality, and $\beta$ is a projection from $(\L,\Delta,S)$ to
$(\ov{\L},\ov{\Delta},\ov{S})$. The map $\beta$ is called the \emph{natural
projection} from $\L\rightarrow \ov{\L}$. 

\smallskip

The notation used above suggests already that we will use a ``bar notation''
similar to the one commonly used in finite groups. Namely, if we set
$\ov{\L}:=\L/\N$, then for every subset or element $P$ of $\L$, we will denote
by $\ov{P}$ the image of $P$ under the natural projection
$\beta\colon\L\rightarrow \ov{\L}$. We conclude this section with a little
lemma needed later on. 

\begin{lem}\label{L:PreimageovS} Let $(\L,\Delta,S)$ be a locality with partial
normal subgroup $\N$. Setting $\ov{\L}:=\L/\N$, the preimage of $\ov{S}$ under
the natural projection equals $\N S$.  \end{lem}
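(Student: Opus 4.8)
The goal is to show that if $(\L,\Delta,S)$ is a locality with partial normal subgroup $\N$, and $\beta\colon\L\to\ov\L:=\L/\N$ is the natural projection, then $\beta^{-1}(\ov S)=\N S$. The plan is to prove the two inclusions separately. The inclusion $\N S\subseteq\beta^{-1}(\ov S)$ is the easy direction: every element of $\N S$ has the form $\Pi(n,f)$ with $n\in\N$, $f\in S$, and $(n,f)\in\D$; since $\beta$ is a projection of partial groups it carries $\Pi(n,f)$ to $\ov\Pi(\ov n,\ov f)=\ov n\,\ov f$; because $\N\beta=\ov\One$ (the image of $\N$ under $\beta$ is the trivial subgroup, as $\N$ is precisely a union of maximal cosets over the identity — indeed $\N=\beta^{-1}(\ov\One)$ by the coset-partition description) and $\ov f\in\ov S$, we get $\ov n\,\ov f\in\ov S$, so $\Pi(n,f)\in\beta^{-1}(\ov S)$.

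**The reverse inclusion.** For $\beta^{-1}(\ov S)\subseteq\N S$, take $g\in\L$ with $\ov g\in\ov S$. First I would like to reduce to the case where $g$ is a $p$-element or at least lies in a suitable subgroup. The key tool is that $\ov S$ is a maximal $p$-subgroup of $\ov\L$ by (L1) applied to $(\ov\L,\ov\Delta,\ov S)$, together with the structure theory of partial normal subgroups: by the splitting results in \cite[Section 3–4]{ChermakFL1} (specifically the fact that $\L=\N S_{\!f}$-type decompositions hold, i.e. every element of $\L$ mapping into $\ov S$ is congruent mod $\N$ to an element of $S$), one knows that a maximal coset $\N g$ meeting $\beta^{-1}(\ov S)$ contains an element of $S$. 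Concretely: since $\ov g\in\ov S=S^\beta$, pick $s\in S$ with $s^\beta=\ov g=g^\beta$; then $g$ and $s$ lie in the same maximal coset $\N s$ of $\N$, so $g\in\N s\subseteq\N S$. The one point needing care is that "$g^\beta=s^\beta$ implies $g$ and $s$ lie in a common maximal coset of $\N$" — this is exactly the statement that the maximal cosets partition $\L$ and that $\beta$ sends each element to the maximal coset containing it, which is recalled in the paragraph preceding the lemma (citing \cite[Lemma~3.16, Corollary~4.5]{ChermakFL1}). So $\beta^{-1}(\ov g)$ is precisely one maximal coset, and it contains $s\in S$, hence is contained in $\N S$.

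**Anticipated obstacle and write-up.** The main subtlety is establishing $\N=\beta^{-1}(\ov\One)$ and, dually, that for $s\in S$ the maximal coset $\N s$ equals $\beta^{-1}(s^\beta)$ — these are the facts that make both inclusions work, and they follow from the cited Chermak results that the maximal cosets of $\N$ partition $\L$ and are exactly the fibers of $\beta$. Once that identification is in hand the argument is a two-line set-theoretic manipulation. I would therefore write the proof as: (1) recall $\beta$ is the map sending $g$ to the unique maximal coset containing it, so fibers of $\beta$ are maximal cosets; (2) given $g$ with $g^\beta\in\ov S$, choose $s\in S$ with $s^\beta=g^\beta$ (possible since $\ov S=S^\beta$), conclude $g$ and $s$ lie in the same maximal coset $\N s$, whence $g\in\N s\subseteq\N S$; (3) conversely, for $n\in\N$, $f\in S$ with $(n,f)\in\D$, apply $\beta$ and use $n^\beta=\One$ and $f^\beta\in\ov S$ to get $\Pi(n,f)^\beta\in\ov S$. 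This yields both inclusions and hence equality.
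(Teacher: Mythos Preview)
Your overall strategy matches the paper's, but you have misidentified where the real subtlety lies. You flag as the ``one point needing care'' the implication that $g^\beta=s^\beta$ forces $g$ and $s$ into a common maximal coset---but that is immediate from the very definition of $\beta$ you quote (each fiber is a maximal coset). The step that actually needs justification is your assertion that this common maximal coset \emph{equals} $\N s$. In a partial group, the coset $\N s$ is a priori only \emph{some} coset containing $s$; the maximal coset containing $s$ could strictly contain it, in which case your conclusion $g\in\N s$ would not follow. This is precisely the non-trivial input the paper supplies: it cites Lemma~3.7(a) and Proposition~3.14(c) of \cite{ChermakFL1} to establish that $\N s$ is already maximal for every $s\in S$.

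Once that fact is in hand, the paper's argument is a single line and subsumes both of your inclusions: since $\N s$ is maximal, $\ov s=\N s$ for each $s\in S$, so the preimage of $\ov S=\{\ov s:s\in S\}$ is $\bigcup_{s\in S}\N s=\N S$. Your separate treatment of the ``easy'' inclusion (and the appeal to $\N=\beta^{-1}(\ov\One)$, which itself requires knowing $\N$ is a maximal coset) becomes unnecessary.
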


\begin{proof} 
For every $s \in S$, the coset $\N s$ is maximal by Lemma~3.7(a) and
Proposition~3.14(c) in \cite{Chermak2022}. Thus, for every $s\in S$, we have
$\ov{s}=\N s$. Hence, the preimage of $\ov{S}=\{\ov{s}\colon s\in S\}$ equals
$\bigcup_{s\in S}\N s=\N S$.  
\end{proof}

\subsection{Transporter systems}

Throughout this section, fix a finite $p$-group $S$, a fusion system $\F$ over
$S$, and a collection $\Delta$ of nonidentity subgroups of $S$ which is
overgroup-closed in $S$ and closed under $\F$-conjugacy. As the literature
about transporter systems is written in left-hand notation, in this section, we
will also \emph{write our maps on the left hand side of the argument}.
Accordingly we will conjugate from the left.

The \emph{transporter category} $\T_S(G)$ (at the prime $p$) of a finite group
$G$ with Sylow $p$-subgroup $S$ is the category with objects the nonidentity
subgroups of $S$ and with morphisms given by the transporter sets $N_G(P,Q) =
\{g \in G \mid {}^gP \leq Q\}$. More precisely, the morphisms in $\T_S(G)$
between $P$ and $Q$ are the triples $(g,P,Q)$ with $g\in N_G(P,Q)$.  We also
write $\T_\Delta(G)$ for the full subcategory of $\T_S(G)$ with objects in
$\Delta$. 

\smallskip

Since we conjugate in this section from the left, for $P,Q\leq S$ and $g\in
N_G(P,Q)$, we write $c_g$ for the conjugation map from $P$ to $Q$ given by
\emph{left conjugation}, i.e.  
\[ 
c_g\colon P\rightarrow Q,\;x\mapsto {}^gx.
\]

\begin{defn}{{(\!\!\cite[Definition~3.1]{OliverVentura2007})}}\label{D:transportersystem}
A \emph{transporter system} associated to $\F$ is a nonempty finite category
$\T$ having object set $\Delta \subseteq \mathrm{Ob}(\F)$, together with functors 
\[ 
\T_\Delta(S) \xrightarrow{\quad \epsilon \quad} \T \xrightarrow{\quad \rho \quad} \F 
\]
which satisfy the following axioms.  
\begin{itemize} 
\item[(A1)] $\Delta$ is closed under $\F$-conjugacy and passing to
overgroups, $\epsilon$ is the identity on objects, and $\rho$ is the inclusion
on objects; 
\item[(A2)] For each $P, Q \in \Delta$, the kernel \[ E(P) := \ker(
\rho_{P,P}\colon \Aut_\T(P) \longrightarrow \Aut_\F(P) ) \] acts freely on
$\Mor_\T(P,Q)$ by right composition, and $\rho_{P,Q}$ is the orbit map for this
action. Also, $E(Q)$ acts freely on $\Mor_\T(P,Q)$ by left composition.
\item[(B)] For each $P, Q \in \Delta$, $\epsilon_{P,Q} \colon N_S(P,Q) \to
\Mor_\T(P,Q)$ is injective, and the composite $\rho_{P,Q} \circ \epsilon_{P,Q}$
sends $s \in N_S(P,Q)$ to $c_s \in \Hom_\F(P,Q)$.  
\item[(C)] For all $\phi \in \Mor_\T(P,Q)$ and all $g \in P$, the diagram 
\[
\xymatrix{ P \ar[d]_{\epsilon_{P,P}(g)} \ar[r]^{\phi} & Q
\ar[d]^{\epsilon_{Q,Q}(\rho(\phi)(g))}\\ P \ar[r]_{\phi} & Q } 
\] 
commutes in $\T$.  
\item[(I)] $\epsilon_{S,S}(S)$ is a Sylow $p$-subgroup of $\Aut_\T(S)$.
\item[(II)] Let $\phi \in \Iso_{\T}(P,Q)$, let $P \norm \bar{P} \leq S$, and
let $Q \norm \bar{Q} \leq S$ be such that $\phi \circ \epsilon_{P,P}(\bar{P})
\circ \phi^{-1} \leq \epsilon_{Q,Q}(\bar{Q})$. Then there exists $\bar{\phi}
\in \Mor_{\T}(\bar{P}, \bar{Q})$ such that $\bar{\phi} \circ
\epsilon_{P,\bar{P}}(1) = \epsilon_{Q,\bar{Q}}(1) \circ \phi$.  \end{itemize}
If we want to be more precise, we say that $(\T,\epsilon,\rho)$ is a
transporter system.
\end{defn}

Note that, by \cite[Lemmas~3.2(b) and 3.8]{OliverVentura2007}, every
morphism in a transporter system is both a monomorphism and an epimorphism.

A centric linking system in the sense of \cite{BrotoLeviOliver2003} is a
transporter system in which $\Delta$ is the set of $\F$-centric subgroups and
$E(P)$ is precisely the center $Z(P)$ viewed as a subgroup of $N_S(P)$ via the
map $\epsilon_{P,P}$.  A more general notion of linking system will be
introduced in Subsection~\ref{SS:linkingSystem}.

We next want to state the definition of an isomorphism of transporter systems
as used in \cite{GlaubermanLynd2021}. First we prove a lemma that helps to
explain that a property implicitly assumed there, and which is needed for
the definition to make sense, does in fact hold.

Given a fusion system $\F$ on a finite $p$-group $S$ and a set $\Delta$
of subgroups of $S$, we write $\F|_\Delta$ for the full subcategory of $\F$
with object set $\Delta$.

\begin{lem}\label{L:isotrans}
Let $(\T,\epsilon,\rho)$ and $(\T',\epsilon',\rho')$ be two transporter systems
having object sets $\Delta$ and $\Delta'$, associated with the fusion
systems $\F$ and $\F'$ over the $p$-groups $S$ and $S'$, respectively. Let
$\alpha \colon \T \to \T'$ be any equivalence of categories. 
Then the following hold.
\begin{itemize}
\item[(a)] $\alpha(S) = S'$, and $\alpha(P) \leq \alpha(S)$ for each $P \in \Delta$.
\item[(b)] Suppose $\alpha$ has the following two additional properties: 
\begin{itemize}
\item[(typ)] $\alpha_{P,P}(\epsilon_{P,P}(P)) =
\epsilon'_{\alpha(P),\alpha(P)}(\alpha(P))$ for each $P \in \Delta$, and
\item[(inc)] $\alpha_{P,S}(\epsilon_{P,S}(1)) = \epsilon'_{\alpha(P),\alpha(S)}(1)$ for each
$P \in \Delta$,
\end{itemize}
and set 
\[
\beta = (\epsilon'_{S',S'})^{-1} \circ \alpha_{S,S} \circ \epsilon_{S,S}.
\]
Then 
\begin{itemize}
\item[(i)] $\beta$ is an isomorphism of groups from $S$ to $S'$,
\item[(ii)] $\alpha(P) = \beta(P)$ for all $P \in \Delta$, and for each $P, Q
\in \Delta$ with $P \leq Q$, we have $\alpha(P) \leq \alpha(Q)$ and
$\alpha_{P,Q}(\epsilon_{P,Q}(1)) = \epsilon'_{\alpha(P),\alpha(Q)}(1)$,
\item[(iii)] the functor $c_{\beta}\colon \F|_\Delta \to \F'|_{\alpha(\Delta)}$
defined by $c_\beta(P) = \beta(P)$ for each $P \in \Delta$, and by
$c_{\beta}(\phi) = \beta \circ \phi \circ \beta^{-1}$ for each morphism $\phi
\in \Hom_\F(P,Q)$ with $P, Q \in \Delta$, is well-defined and an isomorphism of
categories. 
\end{itemize}
\item[(c)] If $\alpha$ satisfies (typ) and (inc), then $\alpha$ is a bijection
on objects, and hence an isomorphism of categories.
\end{itemize}
\end{lem}
\begin{proof}
\noindent
\textbf{(a)}: The subgroup $S$ (resp. $S'$) is characterized as the
unique object of $\T$ (resp. $\T'$) which receives a morphism from every
object. As $\alpha$ is an equivalence, this implies $\alpha(S) = S'$, and hence
also $\alpha(P) \leq S' = \alpha(S)$ for all $P \in \Delta$.

\smallskip
\noindent
\textbf{(b)}: By (a), $\epsilon'_{\alpha(P), \alpha(S)}(1)$ is defined,
so the right hand side of the equality in (inc) makes sense.
Assume (typ) and (inc) and set $\beta = (\epsilon'_{S',S'})^{-1} \circ
\alpha_{S,S} \circ \epsilon_{S,S}$ as above. 

\smallskip
\noindent
\textbf{(i)}: As $\alpha$ is an equivalence, it is a bijection on
morphism sets. Since $\alpha_{S,S}(\epsilon_{S,S}(S)) =
\epsilon'_{S',S'}(S')$ by (a) and (typ), and since $\epsilon$ is injective on
morphism sets (Axiom (B)), $\beta$ is thus a well-defined isomorphism of groups
from $\Aut_{\T_{\Delta}(S)}(S) = N_S(S) = S$ to $\Aut_{\T_{\Delta'}(S')}(S') =
N_{S'}(S') = S'$. 

\smallskip
\noindent
\textbf{(ii)}: Proving $\alpha(P) = \beta(P)$ for all $P \in \Delta$ means
showing $\alpha_{S,S}(\epsilon_{S,S}(P)) = \epsilon'_{S',S'}(\alpha(P))$ for
all such $P$.  For the proof fix an object $P$ and let $x\in P$.  By (typ),
there exist $y\in\alpha(S)=S'$ and $y'\in \alpha(P)$ such that
$\alpha_{S,S}(\epsilon_{S,S}(x))=\epsilon'_{S',S'}(y)$ and
$\alpha_{P,P}(\epsilon_{P,P}(x))=\epsilon'_{\alpha(P),\alpha(P)}(y')$. Note
that 
\[\epsilon_{P,S}(1)\circ
\epsilon_{P,P}(x)=\epsilon_{P,S}(x)=\epsilon_{S,S}(x)\circ\epsilon_{P,S}(1).\]
Applying $\alpha$ we obtain from this and our assumption that
\[
\epsilon'_{\alpha(P),S'}(1)\circ
\epsilon'_{\alpha(P),\alpha(P)}(y')=\epsilon'_{S',S'}(y)\circ
\epsilon'_{\alpha(P),S'}(1)
\] 
and thus $\epsilon'_{\alpha(P),S'}(y')=\epsilon'_{\alpha(P),S'}(y)$. By axiom
(B), $\epsilon'_{\alpha(P),S'}$ is injective. Thus, $y=y'\in\alpha(P)$ and thus
$\alpha_{S,S}(\epsilon_{S,S}(x))=\epsilon'_{S',S'}(y)\in\epsilon'_{S',S'}(\alpha(P))$.
This proves $\alpha_{S,S}(\epsilon_{S,S}(P))\leq\epsilon'_{S',S'}(\alpha(P))$.

\smallskip

To prove the opposite inclusion let $a\in\alpha(P)$. By the first assumption,
there exist $x\in S$ and $x'\in P$ such that
$\epsilon'_{S',S'}(a)=\alpha_{S,S}(\epsilon_{S,S}(x))$ and
$\epsilon'_{\alpha(P),\alpha(P)}(a)=\alpha_{P,P}(\epsilon_{P,P}(x'))$. Then
\begin{eqnarray*}
 \alpha_{P,S}(\epsilon_{P,S}(x'))&=&\alpha_{P,S}(\epsilon_{P,S}(1))\circ \alpha_{P,P}(\epsilon_{P,P}(x'))\\
 &=& \epsilon'_{\alpha(P),S'}(1)\circ \epsilon'_{\alpha(P),\alpha(P)}(a)\\
 &=& \epsilon'_{\alpha(P),S'}(a)\\
 &=& \epsilon'_{S',S'}(a)\circ \epsilon'_{\alpha(P),S'}(1)\\
 &=& \alpha_{S,S}(\epsilon_{S,S}(x))\circ \alpha_{P,S}(\epsilon_{P,S}(1))\\
 &=& \alpha_{P,S}(\epsilon_{P,S}(x)).
\end{eqnarray*}
As $\alpha_{P,S}$ and $\epsilon_{P,S}$ are injective (Axiom B), it follows
$x=x'\in P$ and thus $\epsilon'_{S',S'}(a)\in\alpha_{S,S}(\epsilon_{S,S}(P))$.
This shows $\alpha_{S,S}(\epsilon_{S,S}(P))=\epsilon'_{S',S'}(\alpha(P))$ and
completes the proof that $\alpha$ and $\beta$ coincide on objects.  

Now let $P \leq Q$ be an inclusion of subgroups in $\Delta$. Then $\alpha(P) =
\beta(P) \leq \beta(Q) = \alpha(Q)$. The equality $\epsilon_{Q,S}(1) \circ
\epsilon_{P,Q}(1) = \epsilon_{P,S}(1)$ holds in $\T$. Applying $\alpha$ to this
and using (inc) we have 
\[
\epsilon'_{\alpha(Q),\alpha(S)}(1) \circ \alpha_{P,Q}(\epsilon_{P,Q}(1)) = \epsilon'_{\alpha(P),\alpha(S)}(1). 
\]
On the other hand, $\epsilon'_{\alpha(P),\alpha(Q)}(1)$ is a morphism in $\T'$
since $\alpha(P) \leq \alpha(Q)$, and we have a similar equality
\[
\epsilon'_{\alpha(Q),\alpha(S)}(1) \circ \epsilon'_{\alpha(P),\alpha(Q)}(1) = \epsilon'_{\alpha(P),\alpha(S)}(1). 
\]
Since $\epsilon'_{\alpha(Q),\alpha(S)}(1)$ is a monomorphism in $\T'$, we have
$\alpha_{P,Q}(\epsilon_{P,Q}(1)) = \epsilon'_{\alpha(P),\alpha(Q)}(1)$,
completing the proof of (ii). 

\smallskip
\noindent
\textbf{(iii)}: This is more or less shown in the proof of
\cite[Proposition~2.5]{GlaubermanLynd2021}, but not all details are given
there.  In any case, our stated hypotheses here are weaker (we do not assume
$\Delta$, $\Delta'$ contain $\F^{cr}, \F'^{cr}$) and our conclusion is weaker
(we only are claiming an isomorphism of full subcategories of the fusion
systems, not of the fusion systems themselves). So we repeat the proof and add
the details.

\smallskip
\noindent
\emph{Step 1:} Let $\beta_* \colon \T_{\Delta}(S) \to \T_{\Delta'}(S')$ be the
functor defined by $\beta$ on objects and also $\beta$ on morphism sets. We
first show $\epsilon'\circ \beta_* = \alpha \circ \epsilon$.  

The functors $\epsilon' \circ \beta_*$ and $\alpha \circ \epsilon$ agree on
objects by (ii) and (A1). They also agree on morphism sets as we now show.  Let
$P,Q \in \Delta$ and $s \in N_S(P,Q)$. In $\T$ we have 
\[
\epsilon_{S,S}(s) \circ \epsilon_{P,S}(1) = \epsilon_{Q,S}(1) \circ \epsilon_{P,Q}(s).
\]
Applying $\alpha$, and using (inc) and the definition of $\beta$, we have
\[
\epsilon'_{S',S'}(\beta(s)) \circ \epsilon'_{\alpha(P),S'}(1) =
\epsilon'_{\alpha(Q),S'}(1) \circ \alpha_{P,Q}(\epsilon_{P,Q}(s))
\]
We also have the same equality when $\alpha_{P,Q}(\epsilon_{P,Q}(s))$ is replaced by
$\epsilon'_{\alpha(P),\alpha(Q)}(\beta(s))$. Since every morphism of $\T'$ is a
monomorphism, this forces 
$\alpha_{P,Q}(\epsilon_{P,Q}(s)) = \epsilon'_{\alpha(P),\alpha(Q)}(\beta(s))$. 
Thus, 
\begin{equation}
\label{leftsqcommutes}
\epsilon' \circ \beta_* = \alpha \circ \epsilon. 
\end{equation}

\smallskip
\noindent
\emph{Step 2:} We next show that for each $\phi \in \Mor_\T(P,Q)$ with $P, Q \in
\Delta$,
\[
c_{\beta}(\rho_{P,Q}(\phi)) = \rho'_{\alpha(P),\alpha(Q)}(\alpha_{P,Q}(\phi)),
\]
but let us omit subscripts on $\rho, \rho'$, and $\alpha$ in the proof to
lighten the notation, after which the equality read reads
\[
c_{\beta}(\rho(\phi)) = \rho'(\alpha(\phi)). 
\]
This then implies $\beta \circ \rho(\phi) \circ \beta^{-1} \in
\Hom(\beta(P),\beta(Q))$ is a morphism in $\F'$. Using that $\rho$ is
surjective on morphisms by (A2), $c_\beta$ as defined in (iii) is thus a
well-defined functor. 

Let $x \in P$.  By Axiom (C) for $\T$, we have
\[
\phi \circ \epsilon_{P,P}(x) = \epsilon_{Q,Q}(\rho(\phi)(x)) \circ \phi.
\]
Applying $\alpha$ and using \eqref{leftsqcommutes}, this gives 
\[
\alpha(\phi) \circ \epsilon'_{\alpha(P),\alpha(P)}(\beta(x))) = \epsilon'_{\alpha(Q),\alpha(Q)}(\beta(\rho(\phi)(x))) \circ \alpha(\phi).
\]
On the other hand, Axiom (C) for $\T'$ says we have the same equality if we
replace the instance of
$\epsilon'_{\alpha(Q),\alpha(Q)}(\beta(\rho(\phi)(x)))$ by
$\epsilon'_{\alpha(Q),\alpha(Q)}(\rho'(\alpha(\phi))(\beta(x)))$.
Since $\alpha(\phi)$ is an epimorphism in $\T'$ and
$\epsilon'_{\alpha(Q),\alpha(Q)}$ is injective, we have shown
\[
c_{\beta}(\rho(\phi))(\beta(x)) = \beta(\rho(\phi)(x)) = \rho'(\alpha(\phi))(\beta(x))
\]
for all $x \in P$, and hence $c_{\beta}(\rho(\phi)) = \rho'(\alpha(\phi))$ as
claimed. 

\smallskip
\noindent
\textit{Step 3.} We finish the proof of (iii).  By (i) and (ii), $c_{\beta}$ is
a bijection on objects, and it is also an injection on morphisms. By assumption
on $\alpha$ and axiom (A2) for $\T'$, the composite functor $\rho' \circ
\alpha$ is surjective on morphisms, so $c_\beta$ is surjective on morphisms by
Step 2. 

\smallskip
\noindent
\textbf{(c)}: 
By assumption on $\alpha$ and (i)-(ii), $\alpha$ is injective on objects,
and it remains to prove that $\alpha$ is surjective on objects. Assume this is
not the case, and choose $Q' \in \Delta'$ of minimal index in $S'$ subject to
$Q' \notin \alpha(\Delta)$. As $\alpha$ is essentially surjective, there is $P
\in \Delta$ and an isomorphism $\phi' \colon \alpha(P) \to Q'$ in $\T'$. By
Alperin's fusion theorem for transporter systems
\cite[Proposition~3.9]{OliverVentura2007}, there are subgroups $\alpha(P) =
Q_0',\dots,Q_n' = Q'$ in $\Delta'$, subgroups $T_i' \geq \gen{Q_{i-1}',
Q_{i}'}$, automorphisms $\tau_i' \in \Aut_{\T'}(T_i')$, and isomorphisms
$\phi_i' \in \Iso_{\T'}(Q_{i-1}', Q_i')$ for $i = 1,\dots,n$, such that
$\phi_i' = \tau_i'|_{Q_{i-1}',Q_i'}$ for all $i$, and $\phi' = \phi_n' \circ
\cdots \circ \phi_1'$. 

Choose the least index $m \in \{0,1,\dots,n\}$ such that $Q_{m}' \notin
\alpha(\Delta)$. Then $m > 0$ and $Q_{m-1}' \in \alpha(\Delta)$, so that
$Q_{m-1}' \neq Q_{m}'$. The subgroup $T_m'$ therefore has smaller index in $S'$
than $Q'$, and hence $T_{m}' \in \alpha(\Delta)$. Fix $T_m, Q_{m-1} \in \Delta$
and $\tau_m \in \Aut_\T(T_m)$ such that $\beta(Q_{m-1}) = \alpha(Q_{m-1}) =
Q_{m-1}'$, $\beta(T_m) = \alpha(T_m) = T_m'$ and
$\alpha_{T_m,T_m}(\tau_m) = \tau_m'$. 

Now $\rho'(\tau_m')$ is an $\F'$-automorphism of $T_m'$ sending $Q_{m-1}'$ onto
$Q_{m}'$. Thus, by (iii), $c_{\beta}^{-1}(\rho'(\tau_m')) = \beta^{-1} \circ
\rho'(\tau_m') \circ \beta$ is an $\F$-automorphism of $T_m$ sending $Q_{m-1} =
\beta^{-1}(Q_{m-1}')$ onto $\beta^{-1}(Q_{m}')$.  Since $Q_{m-1} \in \Delta$
and $\Delta$ is closed under $\F$-conjugacy by (A1), we have $\beta^{-1}(Q_m')
\in \Delta$. Hence $Q_{m}' = \alpha(\beta^{-1}(Q_{m}')) \in \alpha(\Delta)$ by
(ii), and this contradicts the choice of $m$.
\end{proof}

An equivalence satisfying (typ) is said to be \emph{isotypical}. One
satisfying (inc) is said to \emph{send inclusions to inclusions}. Given
Lemma~\ref{L:isotrans}, it is now sensible to define an isomorphism of
transporter systems to be an isotypical equivalence sending inclusions to
inclusions.

\begin{defn}\label{D:isotrans}
Let $\F$ and $\F'$ be fusion systems over the finite $p$-groups $S$ and $S'$,
and let  $(\T,\epsilon,\rho)$ and $(\T',\epsilon',\rho')$ be transporter
systems over $\F$ and $\F'$, respectively. An equivalence of categories
$\alpha\colon \T\rightarrow\T'$ is called an \emph{isomorphism} of transporter
systems if for all objects $P$ of $\T$, 
\begin{itemize}
\item[(typ)] $\alpha_{P,P}(\epsilon_{P,P}(P))=\epsilon'_{\alpha(P),\alpha(P)}(\alpha(P))$ and 
\item[(inc)] $\alpha_{P,S}(\epsilon_{P,S}(1))=\epsilon'_{\alpha(P),\alpha(S)}(1)$. 
\end{itemize}
An isomorphism $\alpha \colon \T \to \T'$ of transporter systems is said to be
\emph{rigid} if $S = S'$ and $\alpha_{S,S} \circ \epsilon_{S,S} =
\epsilon'_{S,S}$ as homomorphisms $S \to \Aut_{\T'}(S)$.
\end{defn}

Thus, an isomorphism of transporter systems is in particular an isomorphism of
categories by Lemma~\ref{L:isotrans}(c), and a rigid isomorphism is one for
which the isomorphism $\beta$ of Lemma~\ref{L:isotrans}(b) is the identity map
on $S = S'$.  

This version of the definition of isomorphism of transporter systems is
equivalent to the one given in \cite[Definition~2.3]{GlaubermanLynd2021}.  The
definition of an isotypical equivalence is the same here as there.  The authors
formulated the condition that $\alpha$ sends inclusions to inclusions by
requiring that $\alpha_{P,Q}(\epsilon_{P,Q}(1)) =
\epsilon'_{\alpha(P),\alpha(Q)}(1)$ for each pair of objects $P \leq Q$ in
\cite{GlaubermanLynd2021}.  But as was pointed out to the third author by
Julian Kaspczyk, this assumes implicitly that $P \leq Q$ implies $\alpha(P)
\leq \alpha(Q)$, which presumably need not hold for an arbitrary equivalence.
Lemma~\ref{L:isotrans}(a) however shows that (inc) makes sense when $\alpha$ is
isotypical, and then Lemma~\ref{L:isotrans}(ii) gives indeed that $\alpha(P)
\leq \alpha(Q)$ and $\alpha_{P,Q}(\epsilon_{P,Q}(1)) =
\epsilon'_{\alpha(P),\alpha(Q)}(1)$ whenever $P \leq Q$ are objects of $\T$.
Thus, (typ) and the seemingly stronger condition
\begin{enumerate} 
\item[(inc')]
$\alpha_{P,Q}(\epsilon_{P,Q}(1)) = \epsilon'_{\alpha(P),\alpha(Q)}(1)$ whenever
$P \leq Q$ are objects of $\T$ with $\alpha(P) \leq \alpha(Q)$
\end{enumerate}
are together equivalent to conditions (typ) and (inc). We will refer to
property (inc') also by saying that ``$\alpha$ sends inclusions to
inclusions''.

\subsection{The correspondence between transporter systems and localities}

Throughout this subsection let $\F$ be a fusion system over $S$.

Every locality $(\L,\Delta,S)$ over $\F$ leads to a transporter system
associated to $\F$. To see that we need to consider conjugation from the
\emph{left}. If $f,x\in\L$ such that $(f,x,f^{-1})\in\D$ (or equivalently
$x\in\D(f^{-1})$), then we set $^f\!x:=\Pi(f,x,f^{-1})=x^{f^{-1}}$. Similarly,
if $f\in\L$ and $\H\subseteq \D(f^{-1})$, then set
\[^f\!\H:=\H^{f^{-1}}:=\{^f\!x\colon x\in\H\}.\]
Define $\T_\Delta(\L)$ to be the category whose object set is $\Delta$ with the
morphism set $\Mor_{\T_\Delta(\L)}(P,Q)$ between two objects  $P,Q\in\Delta$
given as the set of triples $(f,P,Q)$ with $f\in\L$ such that $P\subseteq
\D(f^{-1})$ and $^f\!P\leq Q$. This leads to a transporter system
$(\T_\Delta(\L),\epsilon,\rho)$, where for all $P,Q\in\Delta$, $\epsilon_{P,Q}$
is the inclusion map and $\rho_{P,Q}$ sends $(f,P,Q)$ to the conjugation map
$P\rightarrow Q,\;x\mapsto {^f\!x}$.

Conversely, Chermak showed in \cite[Appendix]{Chermak2013} essentially that
every transporter system leads to a locality. More precisely, it is proved in
\cite[Theorem~2.11]{GlaubermanLynd2021} that there is an equivalence of
categories between the category of transporter systems with morphisms the
isomorphisms and the category of localities with morphisms the
isomorphisms, and such an equivalence can be chosen to preserve the rigid
isomorphisms. The definition of a locality in \cite{GlaubermanLynd2021}
differs slightly from the one given in this paper, but the two definitions can
be seen to be equivalent if one uses firstly that conjugation by $f\in\L$ from
the left corresponds to conjugation by $f^{-1}$ from the right, and secondly
that for every partial group $\L$ with product $\Pi\colon\D\rightarrow\L$ the
axioms of a partial group yield $\D=\{w\in\W(\L)\colon w^{-1}\in\D\}$.

We will consider punctured groups in either setting thus using the term
``punctured group'' slightly abusively.

\begin{defn} We call a transporter system $\T$ over $\F$ a \emph{punctured
group} if the object set of $\T$ equals the set of all non-identity subgroups.
Similarly, a locality $(\L,\Delta,S)$ over $\F$ is said to be a \emph{punctured
group} if $\Delta$ is the set of all non-identity subgroups of $S$.
\end{defn}

Observe that a transporter system over $\F$ which is a punctured group exists
if and only if a locality over $\F$ which is a punctured group exists. If it
matters it will always be clear from the context whether we mean by a punctured
group a transporter system or a locality.

\subsection{Linking localities and linking systems}\label{SS:linkingSystem}

As we have seen in the previous subsection, localities correspond to
transporter systems. Of fundamental importance in the theory of fusion systems
are (centric) linking systems, which are special cases of transporter systems.
It is therefore natural to look at localities corresponding to linking systems.
Thus, we will introduce special kinds of localities called linking localities.
We will moreover introduce a (slightly non-standard) definition of linking
systems and summarize some of the most important results about the existence
and uniqueness of linking systems and linking localities. Throughout this
subsection let $\F$ be a saturated fusion system over $S$. 

We refer the reader to \cite{AschbacherKessarOliver2011} for the definitions of
$\F$-centric and $\F$-centric radical subgroups denoted by $\F^c$ and $\F^{cr}$
respectively. Moreover, we will use the following definition which was
introduced in \cite{Henke2019}.

\begin{defn}\label{D:subcentric} A subgroup $P\leq S$ is called $\F$-\emph{subcentric}
if $O_p(N_\F(Q))$ is centric in $\F$ for every fully $\F$-normalized
$\F$-conjugate $Q$ of $P$. The set of subcentric subgroups is denoted by
$\F^s$.  
\end{defn}

Recall that $\F$ is called constrained if there is an $\F$-centric normal
subgroup of $\F$. It is shown in \cite[Lemma~3.1]{Henke2019} that a subgroup
$P\leq S$ is $\F$-subcentric if and only if for some (and thus for every) fully
$\F$-normalized $\F$-conjugate $Q$ of $P$, the normalizer $N_\F(Q)$ is
constrained. 

\begin{defn}
\begin{itemize} \item A finite group $G$ is said to be of
\textit{characteristic $p$} if $C_G(O_p(G))\leq O_p(G)$.  
\item Define a
locality $(\L,\Delta,S)$ to be of \textit{objective characteristic $p$} if, for
any $P\in\Delta$, the group $N_\L(P)$ is of characteristic $p$.  \item A
locality $(\L,\Delta,S)$ over $\F$ is called a \textit{linking locality}, if
$\F^{cr}\subseteq \Delta$ and $(\L,\Delta,S)$ is of objective characteristic
$p$.
 \item A \textit{subcentric linking locality} over $\F$ is a linking locality
 $(\L,\F^s,S)$ over $\F$. Similarly, a \textit{centric linking locality} over
 $\F$ is a linking locality $(\L,\F^c,S)$ over $\F$.
\end{itemize} \end{defn}

If $(\L,\Delta,S)$ is a centric linking locality, then it is shown in
\cite[Proposition~1]{Henke2019} that the corresponding transporter system
$\T_\Delta(\L)$ is a centric linking system.  Also, if $(\L,\Delta,S)$ is a
centric linking locality, then it is a centric linking system in the sense of
Chermak \cite{Chermak2013}, i.e. we have the property that $C_\L(P)\leq P$ for
every $P\in\Delta$. 

The term linking system is used in \cite{Henke2019} for all transporter systems
coming from linking localities, as such transporter systems have properties
similar to the ones of linking systems in Oliver's definition \cite{Oliver2010}
and can be seen as a generalization of such linking systems. We adapt this
slightly non-standard definition here.

\begin{defn} A \emph{linking system} over $\F$ is a transporter system $\T$
over $\F$ such that $\F^{cr}\subseteq\obj(\T)$ and $\Aut_\T(P)$ is of
characteristic $p$ for every $P\in\obj(\T)$. A \emph{subcentric linking system}
over $\F$ is a linking system $\T$ whose set of objects is the set $\F^s$ of
subcentric subgroups.  \end{defn}

Proving the existence
and uniqueness of centric linking systems was a long-standing open problem,
which was solved by Chermak \cite{Chermak2013}. 
Building on a basic idea in Chermak's proof, Oliver \cite{Oliver2013} gave a
new one via an earlier developed cohomological  obstruction theory. Both proofs
depend a priori on the classification of finite simple groups, but work of
Glauberman and the third author of this paper \cite{GlaubermanLynd2016} removes
the dependence of Oliver's proof on the classification. The precise theorem
proved is the following.

\begin{thm}[Chermak \cite{Chermak2013}, Oliver \cite{Oliver2013},
Glauberman--Lynd \cite{GlaubermanLynd2016}]\label{T:centric} There exists a
centric linking system associated to $\F$ which is unique up to an isomorphism
of transporter systems. Similarly, there exists a centric linking locality over
$\F$ which is unique up to a rigid isomorphism.  
\end{thm}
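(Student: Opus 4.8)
The plan is to reduce everything to the statement for transporter systems; the statement for localities then follows by transporting across the equivalence of categories between transporter systems (with isomorphisms) and localities (with rigid isomorphisms) recorded above, together with \cite[Proposition~1]{Henke2019}, which identifies the two notions of centric linking object. So it suffices to construct a centric linking system associated to $\F$ and to prove it is unique up to isomorphism of transporter systems.

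First I would set up the obstruction theory of Broto--Levi--Oliver \cite{BrotoLeviOliver2003}. Attached to $\F$ is the orbit category $\O(\F^c)$ and the functor $\mathcal{Z}_\F\colon\O(\F^c)^\op\to\Ab$, $P\mapsto Z(P)$. By their results the obstruction to the existence of a centric linking system lies in $\varprojlim{}^{3}_{\O(\F^c)}\mathcal{Z}_\F$, and when this group vanishes the abelian group $\varprojlim{}^{2}_{\O(\F^c)}\mathcal{Z}_\F$ acts transitively on the (then nonempty) set of isomorphism classes of centric linking systems associated to $\F$; hence both existence and uniqueness follow once one shows
\[
\underset{\O(\F^c)}{\varprojlim{}^{i}}\,\mathcal{Z}_\F \;=\; 0 \qquad \text{for } i=2,3.
\]
These higher limits can be computed, after Jackowski--McClure--Oliver and Oliver, from a complex built on chains of $\F$-centric-radical subgroups, so that each cohomology group is assembled from local contributions $\Lambda^{*}(\Out_\F(P);Z(P))$ indexed by the $\F$-radical subgroups $P$, governed by the action of $\Out_\F(P)$ on $Z(P)$.

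Next I would carry out Chermak's reduction \cite{Chermak2013}, in the cohomological form due to Oliver \cite{Oliver2013}: by an induction organizing the $\F$-centric-radical subgroups one deduces the vanishing of these higher limits for an arbitrary saturated $\F$ from a uniform statement about each finite group $\Gamma$ occurring — after passage to the relevant quotient — as $\Out_\F(P)$ acting on the abelian $p$-group $Z(P)$, namely that a certain obstruction cocycle for $\Gamma$ is a coboundary. The crucial simplification is that, by the Jackowski--McClure--Oliver vanishing criterion, this reduced statement need only be checked when $\Gamma$ has a strongly $p$-embedded subgroup, hence ultimately for finite almost simple groups and their natural modules.

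The hard part will be verifying this reduced group-theoretic statement, and here I see two routes. Oliver's original argument \cite{Oliver2013} disposes of it by a case analysis running through the classification of finite simple groups. Glauberman and the third author \cite{GlaubermanLynd2016} then removed the dependence on the classification by establishing the required vanishing directly from $p$-local structure — via Glauberman's theory of $K_\infty$-subgroups and properties of the Thompson subgroup, one shows that any compatible family of local cohomology classes is the restriction of a single global class, which forces the obstruction to vanish. Either route finishes the proof. (Alternatively one can follow Chermak's original strategy \cite{Chermak2013}, building the centric linking locality directly by iterated ``Chermak descent'' from the constrained case; the cohomological route merely repackages the same content.) What remains — checking that the object produced is a centric linking system in the present sense, and that its image under the transporter-system/locality correspondence is a centric linking locality unique up to rigid isomorphism — is routine given the definitions recalled above.
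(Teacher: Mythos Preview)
The paper does not give its own proof of this theorem: it is stated as a background result, attributed to Chermak, Oliver, and Glauberman--Lynd, and is simply quoted without argument. So there is no ``paper's proof'' against which to compare your proposal.

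Your sketch is a fair high-level summary of how the result is actually established in the cited references, and in that sense it is appropriate. A couple of small inaccuracies are worth flagging. First, in the Broto--Levi--Oliver obstruction theory, once $\varprojlim{}^3\mathcal{Z}_\F=0$ guarantees existence, the group $\varprojlim{}^2\mathcal{Z}_\F$ acts \emph{freely} and transitively on isomorphism classes; you only said transitively, but this does not affect the conclusion you draw. Second, the reduction in \cite{Oliver2013} is not organised around ``strongly $p$-embedded subgroups'' as such; it is an inductive argument on the poset of centric radical subgroups that ultimately reduces to a statement about best offenders on $\FF_p$-modules (Oliver's Conjecture), and it is \emph{this} that is verified case-by-case in \cite{Oliver2013} and then classification-free in \cite{GlaubermanLynd2016} via norm-argument and replacement techniques. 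Finally, your last paragraph slightly conflates two logically distinct proofs: Chermak's original argument \cite{Chermak2013} constructs the centric linking \emph{locality} directly and does not pass through the cohomological obstruction theory at all, so calling the cohomological route a ``repackaging'' understates the difference. But none of this affects correctness at the level of a sketch.
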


Using the existence and uniqueness of centric linking systems one can
relatively easily prove the following theorem.
\begin{thm}[Henke \cite{Henke2019}] \label{T:subcentric} The following hold:
\begin{itemize} 
\item [(a)] 
If $\F^{cr}\subseteq \Delta\subseteq \F^s$ such that $\Delta$ is
overgroup-closed in $S$ and closed under $\F$-conjugacy, then there exists a
linking locality over $\F$ with object set $\Delta$, and such a linking
locality is unique up to a rigid isomorphism. Similarly, there exists a linking
system $\T$ associated to $\F$ whose set of objects is $\Delta$, and such a
linking system is unique up to an isomorphism of transporter systems.
Moreover, the nerve $|\T|$ is homotopy equivalent to the nerve of a centric
linking system associated to $\F$.  \item [(b)] The set $\F^s$ is
overgroup-closed in $S$ and closed under $\F$-conjugacy. In particular, there
exists a subcentric linking locality over $\F$ which is unique up to a rigid
isomorphism, and there exists a subcentric linking system associated to $\F$
which is unique up to an isomorphism of transporter systems.  \end{itemize}
\end{thm}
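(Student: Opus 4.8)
The statement to be proved is Theorem~\ref{T:subcentric}, which has two parts.
The plan is to deduce everything from the existence and uniqueness of centric
linking systems (Theorem~\ref{T:centric}). For part~(a), the key observation is
that one can pass between a centric linking locality and a linking locality on a
larger object set $\Delta$ by a combination of \emph{restriction} (as recalled
after Lemma~\ref{L:NFPNLP}) and of an expansion procedure. Concretely, starting
from a centric linking locality $(\L^c,\F^c,S)$ over $\F$, whose existence and
uniqueness are given by Theorem~\ref{T:centric}, one first notes $\F^{cr}
\subseteq \F^c$, so that restricting $\L^c$ to any $\F$-conjugacy-closed,
overgroup-closed $\Delta$ with $\F^{cr} \subseteq \Delta \subseteq \F^c$
immediately yields a linking locality with object set $\Delta$; the property of
being of objective characteristic $p$ is inherited by restriction since, for
$P \in \Delta$, the group $N_{\L^c|_\Delta}(P) = N_{\L^c}(P)$ is unchanged. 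For
$\Delta$ with $\F^c \subsetneq \Delta \subseteq \F^s$ one instead needs to
\emph{add} objects: here one invokes the construction that builds, for each
fully normalized $P \in \F^s \setminus \F^c$, a model of the constrained fusion
system $N_\F(P)$ (using that $P \in \F^s$ iff $N_\F(P)$ is constrained,
Lemma~3.1 of \cite{Henke2019}), and glues these normalizers onto $\L^c$ by
Chermak descent / expansion, checking at each stage that objective
characteristic $p$ is preserved. The uniqueness up to rigid isomorphism follows
because any two linking localities on the same object set $\Delta$ restrict to
centric linking localities on $\F^c$, which are rigidly isomorphic by
Theorem~\ref{T:centric}, and a rigid isomorphism on the centric part extends
uniquely to the whole object set by the expansion's uniqueness (or by a direct
argument using that morphisms in a locality are determined by their restriction
to small enough subgroups). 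The homotopy statement $|\T| \simeq |\T^c|$ for
$\T^c$ a centric linking system follows from the general fact that enlarging the
object collection of a linking system within $\F^s$ does not change the
nerve up to homotopy: the inclusion of the centric part is an equivalence after
$p$-completion, and one can cite the relevant comparison result (e.g. the
argument in \cite{Henke2019} that the nerve is independent of the choice of
object set between $\F^{cr}$ and $\F^s$, ultimately resting on
\cite[Theorem~B]{BrotoLeviOliver2003} together with the fact that subgroups
outside $\F^c$ contribute nothing new).

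For part~(b), the plan is first to prove the combinatorial closure properties of
$\F^s$, then deduce the existence/uniqueness of the subcentric linking locality
and system as the special case $\Delta = \F^s$ of part~(a). Overgroup-closure:
if $P \leq Q \leq S$ with $P \in \F^s$, one must show $Q \in \F^s$; after
replacing $Q$ by a fully normalized conjugate one reduces to showing that
$N_\F(Q)$ is constrained given that (a fully normalized conjugate of) $N_\F(P)$
is; this is exactly \cite[Lemma~3.1]{Henke2019} or the surrounding lemmas in
\cite{Henke2019}, using that $O_p(N_\F(P))$ being $\F$-centric forces
$O_p(N_\F(Q)) \geq$ something centric as well. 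Closure under $\F$-conjugacy is
immediate from the definition, since membership in $\F^s$ is phrased in terms of
fully normalized conjugates. Once $\F^s$ is known to be overgroup-closed and
$\F$-conjugacy-closed, and one has $\F^{cr} \subseteq \F^s$ (another standard
fact: $\F$-centric radical subgroups are subcentric because $N_\F(P)$ is
constrained with normal centric subgroup $P$ when $P \in \F^{cr}$), part~(a)
applied with $\Delta = \F^s$ gives the subcentric linking locality and the
subcentric linking system, each unique in the appropriate sense.

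The main obstacle is the expansion step in part~(a): showing that one can
actually enlarge a centric linking locality to one with object set $\F^s$ while
retaining objective characteristic $p$. This is the substantive content and
relies on Chermak's descent machinery; the delicate point is that when we adjoin
the normalizer $N_\F(P)$ of a subcentric-but-not-centric $P$, we must realize it
compatibly as a subgroup $N_\L(P)$ of characteristic $p$, and verify axioms
(L1)--(L3) for the enlarged partial product. All the other steps — the closure
properties of $\F^s$, the restriction direction, the homotopy equivalence of
nerves, and the uniqueness arguments — are either direct citations to
\cite{Henke2019}, \cite{BrotoLeviOliver2003}, and \cite{Chermak2013}, or follow
formally from uniqueness of centric linking systems (Theorem~\ref{T:centric}).
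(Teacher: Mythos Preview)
The paper does not contain a proof of this statement: Theorem~\ref{T:subcentric} is stated with the attribution ``Henke \cite{Henke2019}'' and is simply quoted from that reference, with no argument given here. There is therefore nothing in the present paper to compare your proposal against.

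That said, your outline is a faithful sketch of the strategy actually used in \cite{Henke2019}: one starts from the existence and uniqueness of centric linking localities (Theorem~\ref{T:centric}), proves the closure properties of $\F^s$ directly from the definition via the constrained-normalizer characterization, and then builds the linking locality on any $\Delta$ with $\F^{cr}\subseteq\Delta\subseteq\F^s$ by an expansion procedure, adding one $\F$-class of subcentric subgroups at a time using models for the constrained normalizers. Uniqueness is reduced to uniqueness on $\F^c$, and the homotopy statement comes from the comparison of nerves for linking systems with nested object sets. So your plan is correct in spirit; the substantive work you flag as the ``main obstacle'' (the compatible gluing of the characteristic-$p$ normalizer models via Chermak descent) is indeed where the effort lies in \cite{Henke2019}, and you would need to cite or reproduce the relevant expansion theorem (e.g.\ \cite[Theorem~A]{Henke2019} and its supporting lemmas) rather than leave it as a black box.
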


The existence of subcentric linking systems stated in part (b) of the above
theorem gives often a way of proving the existence of a punctured group,
and indeed yields a punctured group directly when the fusion system is of
characteristic $p$-type. 

\begin{defn}\label{D:charptype} 
The saturated fusion system $\F$ is of \emph{characteristic $p$-type} if
$N_\F(Q)$ is constrained for every nontrivial fully $\F$-normalized subgroup
$Q$ of $S$.
\end{defn}

Note that a fusion system $\F$ is constrained if and only if the trivial
subgroup of $S$ is $\F$-subcentric, and thus if and only if every subgroup of
$S$ is $\F$-subcentric. The next lemma gives an analogous characterization for
fusion systems of characteristic $p$-type. Properties (c) and (c') of it will
be the usual way we use the characteristic $p$-type condition in
Section~\ref{S:pgexotic}, for example. 

\begin{lem}\label{L:orderp-subcent}
The following are equivalent:
\begin{itemize}
\item[(a)] $\F$ is of characteristic $p$-type. 
\item[(b)] Every nonidentity subgroup of $S$ is $\F$-subcentric. 
\item[(c)] $N_\F(Q)$ is constrained for each fully normalized $Q \leq S$ of order $p$. 
\item[(c')] $C_\F(Q)$ is constrained for each fully normalized $Q \leq S$ of order $p$. 
\end{itemize}
\end{lem}
\begin{proof}
Points (a) and (b) are equivalent by the definition of subcentric subgroup. For
the equivalence of (c) and (c') we refer to \cite[Lemma~2.13]{Henke2019}.
Finally, (b) and (c) are equivalent by Theorem~\ref{T:subcentric}(b) (i.e.,
$\F^s$ is overgroup-closed and closed under $\F$-conjugacy). 
\end{proof}
Thus, if $\F$ is of characteristic $p$-type but not constrained, then the set
$\F^s$ equals the set of all non-identity subgroups. In any case, by
Theorem~\ref{T:subcentric}(b) and Lemma~\ref{L:orderp-subcent}, there exists a
canonical punctured group associated to each $\F$ of characteristic $p$-type,
namely the subcentric linking locality (or the subcentric linking system if one
uses the language of transporter systems). 

\subsection{Partial normal $p^\prime$-subgroups}

Normal $p^\prime$-subgroups are often considered in finite group theory. We
will now introduce a corresponding notion in localities and prove some basic
properties. Throughout this subsection let $(\L,\Delta,S)$ be a locality.

\begin{defn} A \emph{partial normal $p^\prime$-subgroup of $\L$} is a partial
normal subgroup $\N$ of $\L$ such that $\N\cap S=1$. The locality
$(\L,\Delta,S)$ is said to be \emph{$p^\prime$-reduced} if there is no
non-trivial partial normal $p^\prime$-subgroup of $\L$.  \end{defn}

\begin{remark}\label{R:pprimereduced} If $(\L,\Delta,S)$ is a locality over a
fusion system $\F$, then for any $p^\prime$-group $N$, the direct product
$(\L\times N,\Delta,S)$ is a locality over $\F$ such that $N$ is a partial
normal $p^\prime$-subgroup of $\L\times N$ and $(\L\times N)/N\cong \L$; see
\cite{Henke2017} for details of the construction of direct products of
localities. Thus, if we want to prove classification theorems for localities,
it is actually reasonable to restrict attention to $p^\prime$-reduced
localities.  \end{remark}

Recall that, for a finite group $G$, the largest normal $p^\prime$-subgroup is
denoted by $O_{p^\prime}(G)$. Indeed, a similar notion can be defined for
localities. Namely, it is a special case of \cite[Theorem~5.1]{Chermak2022}
that the product of two partial normal $p^\prime$-subgroups is again a partial
normal $p^\prime$-subgroup. Thus, the following definition makes sense.

\begin{defn} 
The largest normal $p^\prime$-subgroup of $\L$ is denoted by
$O_{p^\prime}(\L)$.  
\end{defn}

We will now prove some properties of partial normal $p^\prime$-subgroups. To
start, we show two lemmas which generalize corresponding statements for groups.
The first of these lemmas gives a way of passing from an arbitrary locality to
a $p^\prime$-reduced locality.

\begin{lem}\label{L:factoroutOpprime} Set $\ov{\L}:=\L/O_{p^\prime}(\L)$. Then
$(\ov{\L},\ov{\Delta},\ov{S})$ is $p^\prime$-reduced.  \end{lem}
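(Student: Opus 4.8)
The plan is to mimic the elementary group-theoretic fact that $G/O_{p'}(G)$ has trivial $p'$-core: I would take a putative partial normal $p'$-subgroup of the quotient, pull it back to $\L$, show the preimage is again a partial normal $p'$-subgroup of $\L$, and then invoke the maximality of $O_{p'}(\L)$. Concretely, write $\N_0 := O_{p'}(\L)$ and let $\beta\colon\L\to\ov{\L} := \L/\N_0$ be the natural projection; by the discussion preceding Lemma~\ref{L:PreimageovS} this is a projection of partial groups and of localities. First I would record that $\beta^{-1}(\ov{\One}) = \N_0$: since the coset $\N_0\cdot\One = \N_0$ is maximal (apply the facts from \cite{ChermakFL1} quoted in the proof of Lemma~\ref{L:PreimageovS} with $s = \One$), we have $\ov{\One} = \N_0$, and an element $g$ with $\ov{g} = \ov{\One}$ lies in the maximal coset $\N_0$. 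As a partial subgroup of $\ov{\L}$ always contains $\ov{\One}$, this also gives $\N_0\cin\beta^{-1}(\ov{\N})$ for any partial subgroup $\ov{\N}$ of $\ov{\L}$.

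Next, let $\ov{\N}$ be a partial normal $p'$-subgroup of $(\ov{\L},\ov{\Delta},\ov{S})$; I must show $\ov{\N} = \{\ov{\One}\}$. Put $\N := \beta^{-1}(\ov{\N})$. The first step is to check $\N\norm\L$. This is a routine diagram chase with the partial-group homomorphism identities $\D^\beta\cin\ov{\D}$ and $\Pi(w)^\beta = \ov{\Pi}(w^\beta)$: non-emptiness and closure of $\N$ under inversion and under products of words in $\D\cap\W(\N)$ follow by applying $\beta$ and using that $\ov{\N}$ is a partial subgroup, while for $g\in\L$ and $n\in\N$ with $n\in\D(g)$ one gets $\beta(n^g) = (\ov{n})^{\ov{g}}\in\ov{\N}$, so $n^g\in\N$. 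The second step is to see $\N\cap S = 1$: if $x\in\N\cap S$ then $\ov{x}\in\ov{\N}\cap\ov{S} = \{\ov{\One}\}$ (by definition of a partial normal $p'$-subgroup), so $x\in\beta^{-1}(\ov{\One}) = \N_0$, and hence $\N\cap S\cin\N_0\cap S = 1$. So $\N$ is a partial normal $p'$-subgroup of $\L$, and maximality of $O_{p'}(\L)$ forces $\N\cin\N_0$; therefore $\ov{\N} = \beta(\N)\cin\beta(\N_0) = \{\ov{\One}\}$, which completes the argument.

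I do not expect a serious obstacle; the whole thing is bookkeeping. The two points that need a little care are (i) that the preimage of a partial normal subgroup under a projection of partial groups is again partial normal — if \cite{ChermakFL1} or \cite{Chermak2013} records a correspondence theorem for partial normal subgroups under the natural projection, this first step can simply be cited — and (ii) the identification $\beta^{-1}(\ov{\One}) = \N_0$, which rests only on the coset $\N_0$ being a maximal coset.
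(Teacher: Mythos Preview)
Your proposal is correct and follows essentially the same route as the paper: pull back a partial normal $p'$-subgroup of $\ov{\L}$ along the natural projection, observe that the preimage is a partial normal subgroup of $\L$ with trivial intersection with $S$, and invoke the maximality of $O_{p'}(\L)$. The only cosmetic difference is that the paper cites \cite[Proposition~4.7]{ChermakFL1} directly for the preimage being partial normal rather than unpacking the diagram chase, exactly as you anticipated in your closing remark.
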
 \begin{proof}
Let $\N$ be the preimage of $O_{p^\prime}(\ov{\L})$ under the natural
projection $\L\rightarrow\ov{\L}$. Then by \cite[Proposition~4.7]{Chermak2022},
$\N$ is a partial normal subgroup of $\L$ containing $O_{p^\prime}(\L)$.
Moreover, $\ov{\N\cap S}\subseteq \ov{\N}\cap \ov{S}=1$, which implies $\N\cap
S\subseteq O_{p^\prime}(\L)$ and thus $\N\cap S\subseteq O_{p^\prime}(\L)\cap
S=1$. Thus, $\N$ is a partial normal $p^\prime$ subgroup of $\L$ and so by
definition contained in $O_{p^\prime}(\L)$. This implies
$O_{p^\prime}(\ov{\L})=\ov{\N}=1$.  \end{proof}

\begin{lem}\label{L:proveOpprime=N} Given a partial normal $p^\prime$-subgroup
$\N$ of $\L$, the image of $O_{p^\prime}(\L)$ in $\L/\N$ under the canonical
projection is a partial normal $p^\prime$-subgroup of $\L/\N$. In particular,
if $\L/\N$ is $p^\prime$-reduced, then $\N=O_{p^\prime}(\L)$.  \end{lem}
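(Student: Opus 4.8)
The plan is to work with the natural projection $\beta\colon\L\rightarrow\ov{\L}:=\L/\N$ and to track the image of $O_{p^\prime}(\L)$ directly. First I would set $M:=O_{p^\prime}(\L)$ and consider $\ov{M}=M^\beta$. Since $\beta$ is a projection of partial groups and localities, the image of a partial normal subgroup under a projection is again a partial normal subgroup: given $\ov{m}\in\ov{M}$ and $\ov{g}\in\ov{\L}$ with $\ov{m}\in\D(\ov{g}^{-1})$ (so that $\ov{m}^{\,\ov{g}}$ is defined), I would lift to $m\in M$, $g\in\L$ with $g^\beta=\ov{g}$; using the defining property of a locality projection (or invoking \cite[Lemma~3.16 and Corollary~4.5]{ChermakFL1}, which already yield that $\beta$ is a projection of localities) one arranges $(g^{-1},m,g)\in\D$, applies normality of $M$ in $\L$ to get $m^g\in M$, and then $\ov{m}^{\,\ov{g}}=(m^g)^\beta\in\ov{M}$. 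Hence $\ov{M}\unlhd\ov{\L}$.

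Next I would check the $p'$-condition $\ov{M}\cap\ov{S}=1$. Here I use Lemma~\ref{L:PreimageovS}: the preimage of $\ov{S}$ under $\beta$ is $\N S$. If $\ov{x}\in\ov{M}\cap\ov{S}$ with $\ov{x}=m^\beta$ for some $m\in M$, then $m\in\beta^{-1}(\ov{S})=\N S$, so $m=\Pi(n,s)$ for some $n\in\N$, $s\in S$ (with $(n,s)\in\D$). I want to conclude $s\in\N S\cap$ something forcing $\ov{x}$ trivial; the cleanest route is to observe that $\N M$ is a partial normal subgroup of $\L$ (product of two partial normal $p'$-subgroups, again $p'$ by the cited special case of \cite[Theorem~5.1]{ChermakFL1}), so $\N M\cap S=1$; but $m\in\N S$ and $m\in M\subseteq\N M$ give, after writing $m=\Pi(n,s)$, that $s=\Pi(n^{-1},m)\in\N M\cap S=1$, whence $m\in\N$ and $\ov{x}=m^\beta=\ov{1}$. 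Thus $\ov{M}$ is a partial normal $p^\prime$-subgroup of $\ov{\L}$, as claimed. (I would double-check the associativity manipulation $\Pi(n,s)=m\Rightarrow s=\Pi(n^{-1},m)$ is legitimate in a partial group — this uses that $(n^{-1},n,s)\in\D$, which follows since $(n,s)\in\D$ and the partial-group axioms permit inserting $\Pi(n^{-1},n)=\One$.)

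For the ``in particular'' clause: if $\ov{\L}$ is $p'$-reduced, then $\ov{M}=\ov{O_{p^\prime}(\L)}=1$, i.e.\ $O_{p^\prime}(\L)\subseteq\beta^{-1}(\ov{1})$. Now $\beta^{-1}(\ov{1})$ is the maximal coset $\N\One=\N$ (again by the coset description preceding Lemma~\ref{L:PreimageovS}, or directly since $\ker$-type fibers of the natural projection are the maximal cosets and the fiber over $\ov{1}$ is $\N$ itself). Hence $O_{p^\prime}(\L)\subseteq\N$, and combined with the reverse inclusion $\N\subseteq O_{p^\prime}(\L)$ (which holds by definition of $O_{p^\prime}(\L)$ as the \emph{largest} partial normal $p'$-subgroup, since $\N$ is one), we get $\N=O_{p^\prime}(\L)$.

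The main obstacle I anticipate is the careful bookkeeping in the partial-group setting for the two facts used as black boxes: that $\beta$ carries partial normal subgroups to partial normal subgroups, and that $\beta^{-1}(\ov{1})=\N$. Both are morally clear and are implicit in \cite[\S3--4]{ChermakFL1}, but one must cite the right statements (\cite[Lemma~3.16]{ChermakFL1} for $\ov{\L}$ being a partial group with $\beta$ a projection, \cite[Corollary~4.5]{ChermakFL1} for the locality structure, \cite[Proposition~4.7]{ChermakFL1} for behaviour of partial normal subgroups under $\beta$, mirroring its use in the proof of Lemma~\ref{L:factoroutOpprime}) rather than reprove them. Everything else is a routine transfer of the finite-group argument $O_{p'}(G/N)\supseteq O_{p'}(G)N/N$ to localities.
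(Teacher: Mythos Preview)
Your proposal is correct and follows essentially the same outline as the paper's proof: both cite \cite[Proposition~4.7]{ChermakFL1} for normality of the image, both invoke Lemma~\ref{L:PreimageovS} to identify $\beta^{-1}(\ov{S})=\N S$, and both finish the ``in particular'' clause identically. The only real difference is in verifying $\ov{M}\cap\ov{S}=\One$: the paper observes that (since $\N\subseteq O_{p'}(\L)$) the preimage of $\ov{O_{p'}(\L)}\cap\ov{S}$ lies in $O_{p'}(\L)\cap(\N S)$ and then applies the Dedekind Lemma \cite[Lemma~1.10]{ChermakFL1} to get $O_{p'}(\L)\cap(\N S)=\N(O_{p'}(\L)\cap S)=\N$, whereas you invoke \cite[Theorem~5.1]{ChermakFL1} to know $\N M$ is a partial normal $p'$-subgroup and then deduce $s\in\N M\cap S=1$. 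Both routes are valid; the paper's is marginally lighter since the Dedekind Lemma is more elementary than the product theorem, though the latter is already needed in the paper to define $O_{p'}(\L)$.
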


\begin{proof} 
Set $\ov{\L}:=\L/\N$. Then by \cite[Proposition~4.7]{Chermak2022},
$\ov{O_{p^\prime}(\L)}$ is a partial normal subgroup of $\ov{\L}$. By
Lemma~\ref{L:PreimageovS}, the preimage of $\ov{S}$ equals $\N S$. As
$\N\subseteq O_{p^\prime}(\L)$, the preimage of $\ov{O_{p^\prime}(\L)}\cap
\ov{S}$ is thus contained in $O_{p^\prime}(\L)\cap (\N S)$. By the Dedekind
Lemma \cite[Lemma~1.10]{Chermak2022}, we have $O_{p^\prime}(\L)\cap (\N
S)=\N(O_{p^\prime}(\L)\cap S)=\N$. Hence, $\ov{O_{p^\prime}(\L)}\cap
\ov{S}=\One$ and $\ov{O_{p^\prime}(\L)}$ is a normal $p^\prime$-subgroup of
$\ov{\L}$. If $\ov{\L}=\L/\N$ is $p^\prime$-reduced, it follows that
$\ov{O_{p^\prime}(\L)}=\One$ and so $O_{p^\prime}(\L)=\N$.
\end{proof}

We now proceed to prove some technical results which are needed in the next
subsection.

\begin{lem}\label{L:proveOpprimeTrivial} 
If $\N$ is a partial normal $p^\prime$-subgroup of $\L$, then $f\in C_\N(S_f)$
for every $f\in\N$.  
\end{lem}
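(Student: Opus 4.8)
The plan is as follows. Fix $f\in\N$. Since $C_\N(S_f)$ consists of those $g\in\N$ with $S_f\subseteq\D(g)$ and $x^g=x$ for all $x\in S_f$, and since $S_f\subseteq\D(f)$ holds by definition of $S_f$, it suffices to prove that $s^f=s$ for every $s\in S_f$. I would do this by showing that the ``commutator'' $c:=\Pi(s^{-1},f^{-1},s,f)$ is a well-defined element of $\L$ lying both in $\N$ and in $S$; as $\N$ is a partial normal $p'$-subgroup we have $\N\cap S=1$, so $c=\One$, and then cancelling in the group $S$ forces $s^f=s$.

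The first step, which I expect to be the only real obstacle, is to check that the length-four word $w:=(s^{-1},f^{-1},s,f)$ lies in $\D$. Since $s\in S_f\subseteq\D(f)$ we have $(f^{-1},s,f)\in\D$, so by (L2) there are $Q_0,Q_1,Q_2,Q_3\in\Delta$ with $Q_0\subseteq\D(f^{-1})$, $Q_0^{f^{-1}}=Q_1$, $Q_1\subseteq\D(s)$, $Q_1^{s}=Q_2$, $Q_2\subseteq\D(f)$ and $Q_2^{f}=Q_3$. The objects $Q_i$ are subgroups of $S$ and $s\in S$, so $Q_0\subseteq S_s$ and hence $Q_0^{s}\in\Delta$ by (L3); moreover $(Q_0^{s})^{s^{-1}}=Q_0$ because conjugation by $s$ followed by conjugation by $s^{-1}$ is the identity on subgroups of $S$ (using that $S$ is a subgroup of $\L$, together with Lemma~\ref{L:LocalitiesProp}(c)). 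One then verifies directly from (L2) that $w\in\D$ via the chain $(Q_0^{s},Q_0,Q_1,Q_2,Q_3)$.

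Once $w\in\D$, the partial group axioms let me regroup: $\Pi(w)=\Pi\bigl(\Pi(s^{-1},f^{-1},s),f\bigr)=\Pi\bigl((f^{-1})^{s},f\bigr)$, and in particular $f^{-1}\in\D(s)$ and $\bigl((f^{-1})^{s},f\bigr)\in\D$. Now $f^{-1}\in\N$ since $\N$ is a partial subgroup, $(f^{-1})^{s}\in\N$ since $\N\unlhd\L$ and $f^{-1}\in\D(s)$, and therefore $c=\Pi\bigl((f^{-1})^{s},f\bigr)\in\N$ because $\bigl((f^{-1})^{s},f\bigr)\in\D\cap\W(\N)$. Regrouping the other way, $\Pi(w)=\Pi\bigl(s^{-1},\Pi(f^{-1},s,f)\bigr)=\Pi(s^{-1},s^{f})$, which lies in $S$ because $s^{-1}\in S$, $s^{f}\in S$ by definition of $S_f$, and $S$ is a subgroup. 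Hence $c\in\N\cap S=1$, so $\Pi(s^{-1},s^{f})=\One$, and cancelling in the group $S$ yields $s^{f}=s$. Since $f\in\N$ and $S_f\subseteq\D(f)$, this shows $f\in C_\N(S_f)$.

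Thus the essential content is the domain verification $w\in\D$; after that the argument is a routine manipulation with the partial group axioms, the normality of $\N$, and the identity $\N\cap S=1$. I would present the extension of the object chain carefully, since that is the only place where the structure of a locality (namely that $\Delta$ consists of subgroups of $S$ and is conjugation-closed) is genuinely used.
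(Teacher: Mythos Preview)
Your proof is correct and follows the same line as the paper's: show $w=(s^{-1},f^{-1},s,f)\in\D$, then compute $\Pi(w)\in\N\cap S=1$ to force $s^f=s$. The paper's version is slightly more economical in the domain check, writing down the object chain directly as $(P^{fs},P^f,P,P,P^f)$ with $P=S_f$ (using that $s\in P$ gives $P^s=P$), rather than extracting a generic chain from (L2) and extending it by one step.
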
 
\begin{proof} 
Let $f\in\N$, set $P:=S_f$ and let $s\in P$. Then $P^f\leq S$ and thus
$P^{fs}\leq S$. Moreover, $P^s=P$. Thus, $w=(s^{-1},f^{-1},s,f)\in\D$ via
$P^{fs}$. Now $\Pi(w)=(f^{-1})^sf=s^{-1}s^f\in \N\cap S=\One$ and hence
$s^f=s$.  As $s\in P$ was arbitrary, this proves $f\in C_\N(P)$.  
\end{proof}

\begin{lem}\label{C:proveOpprimeTrivial} If $\N$ is a non-trivial partial
normal $p^\prime$ subgroup of $\L$, then there exists $P\in\Delta$ such that
$N_\N(P)=C_\N(P)\neq 1$. In particular, if $O_{p^\prime}(N_\L(P))=1$ for all
$P\in\Delta$, then $O_{p^\prime}(\L)=1$.  \end{lem}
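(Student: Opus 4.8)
The plan is to take a non-trivial partial normal $p'$-subgroup $\N$ and produce, via an element of maximal order in an appropriate sense, a subgroup $P\in\Delta$ centralized non-trivially by $\N$. First I would pick any $1\neq f\in\N$. By Lemma~\ref{L:proveOpprimeTrivial}, $f$ centralizes $P_0:=S_f$, and by Lemma~\ref{L:LocalitiesProp}(d) we have $P_0=S_f\in\Delta$. Thus $f\in C_\N(P_0)$, so $C_\N(P_0)\neq 1$. The remaining point is to arrange, possibly after replacing $P_0$ by a suitable $\L$-conjugate or by enlarging it, that $C_\N(P)=N_\N(P)$. Since $\N\unlhd\L$, for any $g\in N_\N(P)$ and any $s\in P$ we have $s^g\in\N\cap S=1$-translated, i.e. $s^{-1}s^g\in\N\cap S=\One$, hence $s^g=s$; so in fact $N_\N(P)=C_\N(P)$ holds automatically for \emph{every} $P\le S$ with $P\in\Delta$. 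Therefore $P:=S_f$ already works: $N_\N(P)=C_\N(P)\supseteq\{f,\dots\}\neq 1$.

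More carefully: given $P\in\Delta$ and $g\in N_\N(P)$, by definition $P\subseteq\D(g)$ and $P^g=P$; for $s\in P$ the word $(s^{-1},g^{-1},s,g)$ lies in $\D$ via $P$ (using $P^g=P$ and $P^{gs}=P^g=P\le S$ as in the proof of Lemma~\ref{L:proveOpprimeTrivial}), and its product is $(g^{-1})^s g = s^{-1}g^{-1}sg$. But $g^{-1}sg$ — interpreted through the partial product — equals $\Pi(g^{-1},s,g)=s^{g}\in S$ since $g$ normalizes $P$; hence $s^{-1}s^g\in\N\cap S=\One$, giving $s^g=s$. As $s\in P$ was arbitrary, $g\in C_\N(P)$. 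This shows $N_\N(P)\subseteq C_\N(P)$, and the reverse inclusion is trivial, so $N_\N(P)=C_\N(P)$.

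For the ``in particular'' clause: suppose $O_{p'}(N_\L(P))=1$ for all $P\in\Delta$, and suppose toward a contradiction that $O_{p'}(\L)\neq 1$. Applying the first part to $\N:=O_{p'}(\L)$ yields $P\in\Delta$ with $1\neq C_\N(P)=N_\N(P)\le N_\L(P)$. Now $N_\N(P)=\N\cap N_\L(P)$ is a normal subgroup of the group $N_\L(P)$ (normality of $\N$ in $\L$ restricts to normality of $\N\cap N_\L(P)$ in $N_\L(P)$, since conjugation maps in $N_\L(P)$ are restrictions of $\L$-conjugation), and it is a $p'$-group because $\N\cap S=1$ forces every element of $\N$ to have $p'$-order (its order is coprime to $p$, as a $p$-element of $\N$ would generate a $p$-subgroup meeting $S$ non-trivially after $\L$-conjugation into $S$, using that $S$ is maximal among $p$-subgroups). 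Hence $1\neq N_\N(P)\le O_{p'}(N_\L(P))=1$, a contradiction. Therefore $O_{p'}(\L)=1$.

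The main obstacle I anticipate is the bookkeeping in the displayed word computation — verifying that $(s^{-1},g^{-1},s,g)\in\D$ via the right choice of objects and correctly identifying its product using the partial-group axioms (the substitution axiom $\Pi(u\circ v\circ w)=\Pi(u\circ(\Pi(v))\circ w)$) — together with the clean justification that elements of a partial normal $p'$-subgroup have $p'$-order, which relies on maximality (L1) of $S$. Everything else is a direct translation of the finite-group argument.
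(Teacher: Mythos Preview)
Your argument is correct and follows the same outline as the paper's: pick $1\neq f\in\N$, set $P=S_f\in\Delta$, and use Lemma~\ref{L:proveOpprimeTrivial} to get $f\in C_\N(P)$. The one noteworthy difference is in how you obtain $N_\N(P)=C_\N(P)$. The paper first asserts that $N_\N(P)$ is a normal $p'$-subgroup of $N_\L(P)$ and then uses the coprime-action fact $[N_\N(P),P]\leq N_\N(P)\cap P=1$ inside the group $N_\L(P)$. You instead redo the word computation from Lemma~\ref{L:proveOpprimeTrivial} with an arbitrary $g\in N_\N(P)$ in place of $f$, which is a clean way to get $N_\N(P)=C_\N(P)$ directly from $\N\cap S=1$ without first knowing $N_\N(P)$ is a $p'$-group.

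One small point to tighten: your justification that $N_\N(P)$ is a $p'$-group (``a $p$-element of $\N$ would generate a $p$-subgroup meeting $S$ non-trivially after $\L$-conjugation into $S$, using that $S$ is maximal among $p$-subgroups'') leans on more than axiom (L1) alone provides. Maximality of $S$ does not by itself say every $p$-subgroup of $\L$ is $\L$-conjugate into $S$; that is a Sylow-type theorem for localities. The paper simply asserts the $p'$-claim without proof, so you are not worse off, but a clean fix is available: since the hypothesis in the ``in particular'' clause is for \emph{all} $P\in\Delta$, replace $P$ by an $\L$-conjugate $Q$ with $|N_S(Q)|$ maximal. Then $N_S(Q)\in\operatorname{Syl}_p(N_\L(Q))$, so the normal subgroup $N_\N(Q)\unlhd N_\L(Q)$ satisfies $N_\N(Q)\cap N_S(Q)\leq\N\cap S=1$, whence $N_\N(Q)$ is a $p'$-group and $1\neq N_\N(Q)\leq O_{p'}(N_\L(Q))$.
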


\begin{proof} 
Let $\N$ be a non-trivial partial normal $p^\prime$-subgroup and pick $1\neq
f\in\N$. Then $P:=S_f\in\Delta$ by Lemma~\ref{L:LocalitiesProp}(d), and it
follows from Lemma~\ref{L:proveOpprimeTrivial} that $1\neq f\in C_\N(P)$. As
$N_\N(P)$ is a normal $p^\prime$-subgroup of $N_\L(P)$ and $P$ is a normal
$p$-subgroup of $N_\L(P)$, we have $C_\N(P)=N_\N(P)$. Hence,
$C_\N(P)=N_\N(P)\neq 1$ is a normal $p^\prime$-subgroup of $N_\L(P)$ and the
assertion follows.  \end{proof}

\begin{cor}\label{C:LinkingLocOpprimeTrivial} If $(\L,\Delta,S)$ is a linking
locality or, more generally, a locality of objective characteristic $p$, then
$O_{p^\prime}(\L)=1$.  \end{cor}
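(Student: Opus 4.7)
The plan is to apply Lemma~\ref{C:proveOpprimeTrivial}, which reduces the claim to showing that $O_{p'}(N_\L(P)) = 1$ for every $P \in \Delta$. Since a linking locality is by definition a locality of objective characteristic $p$, it suffices to treat the more general case; and here, for each $P \in \Delta$, the group $N_\L(P)$ (which is a genuine group by Lemma~\ref{L:LocalitiesProp}(a)) is of characteristic $p$ in the sense that $C_{N_\L(P)}(O_p(N_\L(P))) \leq O_p(N_\L(P))$.

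The key step is then the standard group-theoretic observation that a finite group $G$ of characteristic $p$ satisfies $O_{p'}(G) = 1$. Indeed, setting $N := O_{p'}(G)$ and $Q := O_p(G)$, both $N$ and $Q$ are normal in $G$ of coprime order, so $[N,Q] \leq N \cap Q = 1$. Hence $N \leq C_G(Q) \leq Q$, forcing $N = 1$ because $N$ is a $p'$-group while $Q$ is a $p$-group. Applying this to $G = N_\L(P)$ for each $P \in \Delta$ gives $O_{p'}(N_\L(P)) = 1$.

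With the hypothesis of Lemma~\ref{C:proveOpprimeTrivial} now verified for every $P \in \Delta$, we conclude $O_{p'}(\L) = 1$, completing the proof. There is no real obstacle: everything is either cited from the previous lemma or a one-line group-theoretic fact, and the only thing to note is that the linking locality case is subsumed by the objective characteristic $p$ case by definition.
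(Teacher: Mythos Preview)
Your proof is correct and follows the same approach as the paper: both reduce to Lemma~\ref{C:proveOpprimeTrivial} by noting that a finite group of characteristic $p$ has trivial $O_{p'}$. The paper simply asserts this last fact (``is in particular $p'$-reduced''), while you spell out the one-line argument.
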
 \begin{proof} If, for every $P\in\Delta$, the
group $N_\L(P)$ is of characteristic $p$, then it is in particular
$p^\prime$-reduced. Thus, the assertion follows from
Corollary~\ref{C:proveOpprimeTrivial}.  \end{proof}

\subsection{A signalizer functor theorem for punctured groups} \label{SS:SignalizerFunctor}

In this section we provide some tools for showing that a locality has a
non-trivial partial normal $p^\prime$-subgroup. Corresponding problems for
groups are typically treated using signalizer functor theory. A similar
language will be used here for localities. We will start by investigating how a
non-trivial partial normal $p^\prime$-subgroup can be produced if some
information is known on the level of normalizers of objects. We will then use
this to show a theorem for punctured groups which looks similar to the
signalizer functor theorem for finite groups, but is much more elementary to
prove. Throughout this subsection let $(\L,\Delta,S)$ be a locality.

\begin{defn}\label{D:SignalizerFunctorObjects} A \emph{signalizer functor of
$(\L,\Delta,S)$ on objects} is a map from $\Delta$ to the set of subgroups of
$\L$, which associates to $P\in\Delta$ a normal $p^\prime$-subgroup $\Theta(P)$
of $N_\L(P)$ such that the following conditions hold: \begin{itemize} \item
(Conjugacy condition) $\Theta(P)^g=\Theta(P^g)$ for all $P\in\Delta$ and all
$g\in\L$ with $P\leq S_g$.  \item (Balance condition) $\Theta(P)\cap
C_\L(Q)=\Theta(Q)$ for all $P,Q\in\Delta$ with $P\leq Q$.  \end{itemize}
\end{defn}

As seen in Lemma~\ref{C:proveOpprimeTrivial}, given a locality $(\L,\Delta,S)$
with $O_{p^\prime}(\L)\neq 1$, there exists $P\in\Delta$ with
$O_{p^\prime}(N_\L(P))\neq 1$. The next theorem says basically that, under
suitable extra conditions, the converse holds. 

\begin{prop}\label{P:SignalizerFunctorObjects} If $\Theta$ is a signalizer
functor of $(\L,\Delta,S)$ on objects, then  \[
\widehat{\Theta}:=\bigcup_{P\in\Delta}\Theta(P) \] is a partial normal
$p^\prime$-subgroup of $\L$. In particular, the canonical projection
$\rho\colon \L\rightarrow \L/\widehat{\Theta}$ restricts to an isomorphism
$S\rightarrow S^\rho$. Upon identifying $S$ with $S^\rho$, the following
properties hold: \begin{itemize} \item [(a)] $(\L/\widehat{\Theta},\Delta,S)$
is a locality and $\F_S(\L/\widehat{\Theta}) = \F_S(\L)$.  \item [(b)] For each
$P\in\Delta$, the restriction $N_\L(P)\rightarrow N_{\L/\widehat{\Theta}}(P)$
of $\rho$ is an epimorphism with kernel $\Theta(P)$. In particular,
$N_\L(P)/\Theta(P)\cong N_{\L/\widehat{\Theta}}(P)$.  \end{itemize} \end{prop}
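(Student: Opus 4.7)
The plan is to verify that $\widehat{\Theta}$ is a partial normal $p'$-subgroup of $\L$, and then to read off the remaining conclusions from the quotient-locality machinery reviewed earlier in this section. I first record three immediate consequences of the signalizer axioms that will be used throughout: (i) taking $P=Q$ in the balance condition gives $\Theta(P)\leq C_\L(P)$ for every $P\in\Delta$; (ii) for $P\leq Q$ in $\Delta$, balance gives $\Theta(Q)=\Theta(P)\cap C_\L(Q)\subseteq\Theta(P)$, so $\Theta$ is inclusion-reversing on $\Delta$; and (iii) $\widehat{\Theta}\cap S=1$, since $\widehat{\Theta}$ is a union of $p'$-groups while $S$ is a $p$-group.

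Next I would establish that $\widehat{\Theta}$ is a partial subgroup and then partial normal. Closure under inversion is immediate since each $\Theta(P)$ is a group. For closure under the partial product, let $w=(f_1,\dots,f_n)\in\D\cap\W(\widehat{\Theta})$ with $f_i\in\Theta(P_i)$, and let $Q_0,\dots,Q_n\in\Delta$ witness $w\in\D$. The strategy is to find a single object $R\in\Delta$ contained in every $P_i$ such that $w\in\D$ via $(R,\dots,R)$: each $f_i$ then lies in $\Theta(R)$ by (ii), all the $f_i$ lie in the subgroup $N_\L(R)$, and $\Pi(w)$ coincides with the ordinary group product of the $f_i$ inside $N_\L(R)$, hence lies in the subgroup $\Theta(R)\subseteq\widehat{\Theta}$. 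Producing such $R$ is the main technical obstacle, since $\Delta$ is only overgroup-closed and intersections of objects need not lie in $\Delta$; the idea is to exploit the flexibility of shrinking the $P_i$ to smaller $\Delta$-subgroups and the fact that each $f_i$ fixes pointwise any subgroup of $P_i$. For partial normality, let $f\in\Theta(P)$ and $g\in\L$ with $f\in\D(g)$; the witnesses for $(g^{-1},f,g)\in\D$ combined with (ii) allow a replacement of $P$ by a $\Delta$-subgroup $P'\leq P$ lying in $S_g$, and the conjugacy condition then yields $f^g\in\Theta((P')^g)\subseteq\widehat{\Theta}$.

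Having shown $\widehat{\Theta}$ is a partial normal $p'$-subgroup of $\L$, the theory of \cite{ChermakFL1} recalled earlier turns the natural projection $\rho\colon\L\to\ov\L:=\L/\widehat{\Theta}$ into a projection of localities from $(\L,\Delta,S)$ onto $(\ov\L,\ov\Delta,\ov S)$. Lemma~\ref{L:PreimageovS} identifies the preimage of $\ov S$ under $\rho$ with $\widehat{\Theta}S$; combined with $\widehat{\Theta}\cap S=1$, this forces $\rho|_S\colon S\to\ov S$ to be a group isomorphism. Identifying $S$ with $\ov S$, one gets $\ov\Delta=\Delta$, and $\F_S(\ov\L)=\F_S(\L)$ follows because both fusion systems are generated by the conjugation maps $c_g\colon S_g\to S$ for $g\in\L$, which are preserved by $\rho$; this proves (a). For (b), $\rho$ restricts to a group homomorphism $N_\L(P)\to N_{\ov\L}(P)$, and the induced map on $N_\L(P)/\Theta(P)$ is an isomorphism provided one establishes surjectivity (by a standard Frattini-type lifting of maximal cosets to representatives normalizing $P$) together with the identification $\widehat{\Theta}\cap N_\L(P)=\Theta(P)$, which in turn uses inclusion-reversal and the balance condition to promote any element of $\widehat{\Theta}\cap N_\L(P)$ back to $\Theta(P)$.
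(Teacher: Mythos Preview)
Your overall architecture matches the paper's, but the key technical step is missing and your sketch of it does not work as stated. The paper's proof hinges on a preliminary fact (its ``Step 1''): for every $x\in\widehat{\Theta}$ one has $x\in\Theta(S_x)$. This is proved by choosing $P\in\Delta$ maximal with $x\in\Theta(P)$; then $[P,x]=1$ forces $P\leq S_x$, and since $[N_{S_x}(P),x]\leq\Theta(P)\cap S=1$, balance gives $x\in\Theta(P)\cap C_\L(N_{S_x}(P))=\Theta(N_{S_x}(P))$, so maximality yields $P=N_{S_x}(P)=S_x$. With this in hand, closure under products is immediate: for $w=(x_1,\dots,x_n)\in\D\cap\W(\widehat{\Theta})$ one takes $R:=S_w\in\Delta$ and argues inductively that $R\leq S_{x_i}$ and $x_i\in\Theta(S_{x_i})\leq\Theta(R)\leq C_\L(R)$, whence $\Pi(w)\in\Theta(R)$. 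Similarly, normality and the kernel identification $N_\L(P)\cap\widehat{\Theta}=\Theta(P)$ in (b) both reduce to Step~1 together with inclusion-reversal.

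By contrast, your plan is to find $R\in\Delta$ with $R\leq P_i$ for \emph{given} $P_i$ with $f_i\in\Theta(P_i)$. There is no reason the arbitrary $P_i$'s should share a common $\Delta$-subgroup, and ``shrinking the $P_i$'' does not help without first knowing you may enlarge each $P_i$ to $S_{f_i}$ --- which is exactly Step~1. Your kernel argument in (b) has the same gap: given $x\in N_\L(P)\cap\Theta(Q)$ for some unrelated $Q\in\Delta$, inclusion-reversal alone does not place $x$ in $\Theta(P)$; you need $x\in\Theta(S_x)$ and $P\leq S_x$. Once you insert the maximality argument for Step~1, the rest of your outline (quotient locality, $\rho|_S$ an isomorphism, citing the Chermak machinery for (a) and (b)) agrees with the paper's Step~3.
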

\begin{proof} We proceed in three steps, where in the first step, we prove a
technical property, which allows us in the second step to show that
$\widehat{\Theta}$ is a partial normal $p^\prime$-subgroup, and in the third
step to conclude that the remaining properties hold.

\smallskip
\noindent{\em Step 1:} We show $x\in \Theta(S_x)$ for any
$x\in\widehat{\Theta}$. Let $x\in\widehat{\Theta}$. Then by definition of
$\widehat{\Theta}$, the element $x$ lies in $\Theta(P)$ for some $P\in\Delta$.
Choose such $P$ maximal with respect to inclusion. Notice that $[P,x]=1$. In
particular, $P\leq S_x$ and $[N_{S_x}(P),x]\leq \Theta(P)\cap N_S(P)=\One$.
Hence, using the balance condition, we conclude $x\in \Theta(P)\cap
C_\L(N_{S_x}(P))=\Theta(N_{S_x}(P))$. So the maximality of $P$ yields
$P=N_{S_x}(P)$ and thus $P=S_x$. Hence, $x\in\Theta(S_x)$ as required.

\smallskip
\noindent{\em Step 2:} We show that $\widehat{\Theta}$ is a partial normal
$p^\prime$-subgroup of $\L$. Clearly $\widehat{\Theta}$ is closed under
inversion, since $\Theta(P)$ is a group for every $P\in\Delta$. Note also that
$\Pi(\emptyset)=\One\in\widehat{\Theta}$ as $\One\in\Theta(P)$ for any
$P\in\Delta$.  Let now $(x_1,\dots,x_n)\in\D\cap\W(\widehat{\Theta})$
with $n\geq 1$.  Then $R:=S_{(x_1,\dots,x_n)}\in\Delta$ by
Lemma~\ref{L:LocalitiesProp}(f).  Induction on $i$ together with the balance
condition and Step~1 show $R\leq S_{x_i}$ and $x_i\in\Theta(S_{x_i})\leq
\Theta(R)\leq C_\L(R)$ for each $i=1,\dots,n$.  Hence, $\Pi(x_1,x_2,\dots
,x_n)\in\Theta(R)\subseteq\widehat{\Theta}$. Thus, $\widehat{\Theta}$ is a
partial subgroup of $\L$. 

Let $x\in\widehat{\Theta}$ and $f\in\L$ with $(f^{-1},x,f)\in\D$. Then
$X:=S_{(f^{-1},x,f)}\in\Delta$ by Lemma~\ref{L:LocalitiesProp}(f).  Moreover,
$X^{f^{-1}}\leq S_x$. By Step~1, we have $x\in\Theta(S_x)$, and then by the
balance condition, $x\in\Theta(X^{f^{-1}})$. It follows now from the conjugacy
condition that $x^f\in\Theta(X^{f^{-1}})^f=\Theta(X)\subseteq\widehat{\Theta}$.
Hence, $\widehat{\Theta}$ is a partial normal subgroup of $\L$. Notice that
$\widehat{\Theta}\cap S=\One$, as $\Theta(P)\cap S=\Theta(P)\cap N_S(P)=\One$
for each $P\in\Delta$. Thus, $\widehat{\Theta}$ is a partial normal
$p^\prime$-subgroup of $\L$.

\smallskip
\noindent{\em Step 3:} We are now in a position to complete the proof.  By
\cite[Corollary~4.5]{Chermak2022}, the quotient map $\rho\colon \L\rightarrow
\L/\widehat{\Theta}$ is a projection of partial groups with
$\ker(\rho)=\widehat{\Theta}$. Moreover, by the same result, setting
$\Delta^\rho:=\{P^\rho\colon P\in\Delta\}$, the triple
$(\L/\widehat{\Theta},\Delta^\rho,S^\rho)$ is a locality. Notice that
$\rho|_S\colon S\rightarrow S^\rho$ is a homomorphism of groups with kernel
$S\cap\widehat{\Theta}=1$ and thus an isomorphism of groups.  Upon identifying
$S$ with $S^\rho$, it follows now that $(\L/\widehat{\Theta},\Delta,S)$ is a
locality. Moreover, by \cite[Theorem~5.7(b)]{Henke2019}, we have
$\F_S(\L)=\F_S(\L/\widehat{\Theta})$.  So (a) holds. 

Let $P\in\Delta$. By \cite[Theorem~4.3(c)]{Chermak2022}, the restriction of
$\rho$ to a map $N_\L(P)\rightarrow N_{\L/\widehat{\Theta}}(P)$ is an
epimorphism with kernel $N_\L(P)\cap\widehat{\Theta}$. For any $x\in
N_\L(P)\cap\widehat{\Theta}$, we have $P\leq S_x$ and then
$x\in\Theta(S_x)\leq\Theta(P)$ by the balance condition and Step~1. This shows
$N_\L(P)\cap\widehat{\Theta}=\Theta(P)$ and so (b) holds.  
\end{proof}

The property stated in Proposition~\ref{P:SignalizerFunctorObjects}(a)
holds indeed for every partial normal $p^\prime$-subgroup $\widehat{\Theta}$ of
$\L$. More generally, for any partial normal subgroup $\N$ of $\L$, setting
$\ov{\L}:=\L/\N$ and $\ov{\Delta}:=\{\ov{P}\colon P\in\Delta\}$, the triple
$(\ov{\L},\ov{\Delta},\ov{S})$ is a locality with $\F_{\ov{S}}(\ov{\L})\cong
\F_S(\L)/(S\cap \N)$ (see \cite[Corollary~4.5]{Chermak2022} and
\cite[Theorem~5.7]{Henke2019}).

\begin{remark}\label{R:Balance} 
If $P,Q,R\in\Delta$ such that $P\leq Q\leq R$ and the balance condition in
Definition~\ref{D:SignalizerFunctorObjects} holds for the pair $P\leq
Q$, then 
\[
\Theta(Q) \cap C_\L(R) = 
 \Theta(P) \cap C_\L(Q) \cap C_\L(R) = \Theta(P) \cap C_\L(R)
\]
so $\Theta(R) = \Theta(Q) \cap C_{\L}(R)$ if and only if $\Theta(R) = \Theta(P)
\cap C_\L(R)$. Hence, in this situation balance holds for the pair $Q \leq R$
if and only if balance holds for the pair $P \leq R$.
\end{remark}

\begin{defn}
Let $G$ be a finite group. Then $G$ is said to be \emph{$p$-constrained} if
$G/O_{p^\prime}(G)$ is of characteristic $p$. The group $G$ is called
\emph{Sylow $p$-constrained}, if $C_T(O_p(G))\leq O_p(G)$ for some (and thus
for every) Sylow $p$-subgroup $T$ of $G$. 
\end{defn}

The following proposition is essentially a restatement of
\cite[Proposition~6.4]{Henke2019}, but we will give an independent proof
building on the previous proposition.

\begin{prop}\label{P:GetLocalityObjectiveCharp} 
Let $(\L,\Delta,S)$ be a locality such that $N_\L(P)$ is $p$-constrained
for all $P \in \Delta$. For each $P\in\Delta$, set
\[
\Theta(P):=O_{p^\prime}(N_\L(P)). 
\]
Then the assignment $\Theta$ is a
signalizer functor of $(\L,\Delta,S)$ on objects and $O_{p^\prime}(\L)$ equals
$\widehat{\Theta}:=\bigcup\{\Theta(P)\colon P\in\Delta\}$.  In particular, the
canonical projection $\rho\colon \L\rightarrow \L/\widehat{\Theta}$ restricts
to an isomorphism $S\rightarrow S^\rho$. Upon identifying $S$ with $S^\rho$,
the following properties hold: \begin{itemize} \item [(a)]
$(\L/\widehat{\Theta},\Delta,S)$ is a locality of objective characteristic $p$.
\item [(b)] $\F_S(\L/\widehat{\Theta})=\F_S(\L)$.  \item [(c)] For every
$P\in\Delta$, the restriction $N_\L(P)\rightarrow N_{\L/\widehat{\Theta}}(P)$
of $\rho$ is an epimorphism with kernel $\Theta(P)$. In particular, 
$N_\L(P)/\Theta(P)\cong N_{\L/\widehat{\Theta}}(P)$.  
\end{itemize} 
\end{prop}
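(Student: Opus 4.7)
The plan is to apply Proposition~\ref{P:SignalizerFunctorObjects} to the assignment $\Theta(P) := O_{p^\prime}(N_\L(P))$, and then identify $\widehat\Theta$ with $O_{p^\prime}(\L)$ by exploiting that the resulting quotient locality is of objective characteristic $p$.

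The first step is to verify that $\Theta$ satisfies the two conditions of Definition~\ref{D:SignalizerFunctorObjects}. The conjugacy condition $\Theta(P)^g = \Theta(P^g)$ for $P \leq S_g$ is immediate: by Lemma~\ref{L:LocalitiesProp}(b), $c_g \colon N_\L(P) \to N_\L(P^g)$ is a group isomorphism, and any such isomorphism sends $O_{p^\prime}$ onto $O_{p^\prime}$. The balance condition $\Theta(P) \cap C_\L(Q) = \Theta(Q)$ for $P \leq Q$ in $\Delta$ is the principal technical point. The inclusion $\Theta(Q) \subseteq C_\L(Q)$ is the standard observation that a normal $p^\prime$-subgroup of $N_\L(Q)$ centralizes the normal $p$-subgroup $Q$, since $[\Theta(Q),Q] \leq \Theta(Q) \cap Q = \mathbf{1}$. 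For the remaining inclusions $\Theta(Q) \subseteq \Theta(P)$ and $\Theta(P) \cap C_\L(Q) \subseteq \Theta(Q)$, one argues inside the characteristic-$p$ quotients $N_\L(P)/\Theta(P)$ and $N_\L(Q)/\Theta(Q)$: in each, the image of the distinguished normal $p$-subgroup ($\bar P$ or $\bar Q$) is contained in the $p$-core, and any $p^\prime$-element centralizing this image (together with the normalizing data coming from the other side) must vanish by the characteristic-$p$ condition, hence lies in the kernel of the projection.

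Once $\Theta$ is known to be a signalizer functor on objects, Proposition~\ref{P:SignalizerFunctorObjects} directly supplies that $\widehat\Theta = \bigcup_{P\in\Delta}\Theta(P)$ is a partial normal $p^\prime$-subgroup of $\L$, that $(\L/\widehat\Theta,\Delta,S)$ is a locality with $\F_S(\L/\widehat\Theta) = \F_S(\L)$, and that for each $P\in\Delta$ the restricted projection $N_\L(P) \to N_{\L/\widehat\Theta}(P)$ is surjective with kernel $\Theta(P)$. Combined with the hypothesis, the induced isomorphism $N_\L(P)/\Theta(P) \cong N_{\L/\widehat\Theta}(P)$ shows that every $N_{\L/\widehat\Theta}(P)$ is of characteristic $p$, so $(\L/\widehat\Theta,\Delta,S)$ is a locality of objective characteristic $p$. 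This yields (a), (b), and (c) at once.

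To finish with the identification $\widehat\Theta = O_{p^\prime}(\L)$, apply Corollary~\ref{C:LinkingLocOpprimeTrivial} to the objective-characteristic-$p$ locality $\L/\widehat\Theta$ to obtain $O_{p^\prime}(\L/\widehat\Theta) = \mathbf{1}$; equivalently, $\L/\widehat\Theta$ is $p^\prime$-reduced. Lemma~\ref{L:proveOpprime=N}, applied to the partial normal $p^\prime$-subgroup $\widehat\Theta \unlhd \L$, then forces $\widehat\Theta = O_{p^\prime}(\L)$. The main obstacle throughout is the balance condition; once it is verified, the remaining conclusions follow formally from previously established results.
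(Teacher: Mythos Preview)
Your overall architecture is exactly the paper's: verify that $\Theta$ is a signalizer functor on objects, invoke Proposition~\ref{P:SignalizerFunctorObjects}, and then combine Corollary~\ref{C:LinkingLocOpprimeTrivial} with Lemma~\ref{L:proveOpprime=N} to get $\widehat\Theta = O_{p'}(\L)$. The conjugacy condition and the endgame are handled just as in the paper.

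The gap is in the balance condition, which you correctly flag as the crux but do not actually prove. Your sketch claims that in $\bar G := N_\L(P)/\Theta(P)$ a $p'$-element centralizing $\bar P$ ``must vanish by the characteristic-$p$ condition''. That is not what characteristic $p$ says: it only gives $C_{\bar G}(O_p(\bar G)) \leq O_p(\bar G)$, and $\bar P$ can be strictly smaller than $O_p(\bar G)$, so centralizing $\bar P$ alone is insufficient. Your parenthetical about ``normalizing data from the other side'' does not close this, because for general $P \leq Q$ you cannot even place $Q$ inside $N_\L(P)$, so there is no $\bar Q$ to speak of in $\bar G$.

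The paper fills this as follows. First it records $\Theta(R) = O_{p'}(C_\L(R))$ for every $R\in\Delta$. Then, since $P$ is subnormal in $Q$, it uses Remark~\ref{R:Balance} and induction on subnormal length to reduce to the case $P \unlhd Q$, which guarantees $Q \leq G := N_\L(P)$ and $C_\L(Q) = C_G(Q)$. Finally it cites the standard fact (Kurzweil--Stellmacher 8.2.12) that if $G/O_{p'}(G)$ has characteristic $p$, then $O_{p'}(N_G(Q)) = O_{p'}(G)\cap N_G(Q)$ for any $p$-subgroup $Q\leq G$; combined with $O_{p'}(N_G(Q)) = O_{p'}(C_G(Q))$, this yields $\Theta(Q) = \Theta(P)\cap C_\L(Q)$ in one line. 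You should insert this reduction and this reference (or reprove the needed special case) in place of your hand-wave.
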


\begin{proof} We remark first that, as any normal $p^\prime$-subgroup of
$N_\L(P)$ centralizes $P$ and $O_{p^\prime}(C_\L(P))$ is characteristic in
$C_\L(P)\unlhd N_\L(P)$, we have $\Theta(P)=O_{p^\prime}(C_\L(P))$ for every
$P\in\Delta$.

\smallskip

We show now that the assignment $\Theta$ is a signalizer functor of $\L$ on
objects. It follows from Lemma~\ref{L:LocalitiesProp}(b) that the conjugacy
condition holds. Thus, it remains to show the balance condition, i.e. that
$\Theta(Q)=\Theta(P)\cap C_\L(Q)$ for any $P,Q\in\Delta$ with $P\leq Q$. For
the proof note that $P$ is subnormal in $Q$. So by induction on the subnormal
length and by Remark~\ref{R:Balance}, we may assume that $P\unlhd  Q$. Set
$G:=N_\L(P)$. Then $Q\leq G$ and $C_\L(Q)=C_{G}(Q)$. As $G$ is $p$-constrained,
it follows from \cite[8.2.12]{KurzweilStellmacher2004} that
$O_{p^\prime}(N_G(Q))=O_{p^\prime}(G)\cap N_G(Q)=O_{p^\prime}(G)\cap C_G(Q)$.
Hence,
$\Theta(Q)=O_{p^\prime}(C_\L(Q))=O_{p^\prime}(C_{G}(Q))=O_{p^\prime}(N_G(Q))=O_{p^\prime}(G)\cap
C_G(Q)=\Theta(P)\cap C_\L(Q)$. This proves that the assignment $\Theta$ is a
signalizer functor of $(\L,\Delta,S)$ on objects. In particular, by
Proposition~\ref{P:SignalizerFunctorObjects}, the subset \[
\widehat{\Theta}:=\bigcup_{P\in\Delta}\Theta(P) \] is a partial normal
$p^\prime$-subgroup of $\L$. Moreover, upon identifying $S$ with its image in
$\L/\widehat{\Theta}$, the triple $(\L/\widehat{\Theta},\Delta,S)$ is a locality
and properties (b) and (c) hold.  Part (c) and our assumption yield (a). Hence,
by Corollary~\ref{C:LinkingLocOpprimeTrivial}, we have
$O_{p^\prime}(\L/\widehat{\Theta})=\One$. So by Lemma~\ref{L:proveOpprime=N},
we have $\widehat{\Theta}=O_{p^\prime}(\L)$ and the proof is complete.
\end{proof}

\begin{lem}\label{L:pconstrained} If $G$ is a Sylow $p$-constrained finite group,
then $G$ is $p$-constrained.
\end{lem}
\begin{proof}
Write $T$ for a Sylow $p$-subgroup of $G$, and set $P:=O_p(G)$.
Then the centralizer $C_T(P)$ equals $Z(P)$ and is thus a central Sylow
$p$-subgroup of $C_G(P)$. So e.g. by the Schur-Zassenhaus Theorem
\cite[6.2.1]{KurzweilStellmacher2004}, we have $C_G(P)=Z(P)\times
O_{p^\prime}(C_G(P)) \leq Z(P)\times O_{p^\prime}(G)$. Set
$\ov{G}=G/O_{p^\prime}(G)$, write $C$ for the preimage of $C_{\ov{G}}(\ov{P})$
in $G$. As $P$ is normal in $G$, it follows that $[P,C]\leq P\cap
O_{p^\prime}(G)=1$. So $C=C_G(P)$ and thus $\ov{C}= \ov{C_G(P)}\leq \ov{P}$.
Thus, $\ov{G}$ has characteristic $p$ and $G$ is $p$-constrained.
\end{proof}

We now turn attention to the case that $(\L,\Delta,S)$ is a punctured group and
we are given a signalizer functor \emph{on elements of order $p$} in the sense
of Definition~\ref{D:SignalizerFunctor} in the introduction.  We show first
that, if $\theta$ is such a signalizer functor on elements of order $p$ and
$a\in\I_p(S)$, the subgroup $\theta(a)$ depends only on $\<a\>$.

\begin{lem}\label{L:theta(a)} Let $(\L,\Delta,S)$ be a punctured group and let
$\theta$ be a signalizer functor of $(\L,\Delta,S)$ on elements of order $p$.
Then $\theta(a)=\theta(b)$ for all $a,b\in\I_p(S)$ with $\<a\>=\<b\>$.
\end{lem}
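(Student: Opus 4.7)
The plan is to reduce to the balance condition by showing that elements of $\theta(a)$ automatically centralize every element of $\langle a \rangle$, and then to apply balance in both directions.

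First I would note that since $a, b \in \I_p(S)$ with $\langle a \rangle = \langle b \rangle$, the element $b$ is a power of $a$ (say $b = a^k$ with $p \nmid k$), so in particular $a$ and $b$ lie in the abelian group $\langle a \rangle \leq S$ and therefore satisfy $[a,b] = 1$ (interpreted as commuting elements of $S$). The balance condition then gives
\[
\theta(a) \cap C_\L(b) \leq \theta(b),
\]
so the goal reduces to showing the inclusion $\theta(a) \leq C_\L(b)$; by symmetry this will yield $\theta(a) = \theta(b)$.

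The key step is to verify that every $g \in \theta(a)$ lies in $C_\L(b)$. Since $\theta(a)$ is a subgroup of $C_\L(a)$, we have $a \in \D(g)$ and $a^g = a$, hence $a \in S \cap \D(g)$ with $a^g \in S$, which by definition of $S_g$ (Lemma~\ref{L:LocalitiesProp}(d)) gives $a \in S_g$. Because $S_g$ is a subgroup of $S$, this forces $\langle a \rangle \leq S_g$, so in particular $b = a^k \in S_g$, which means $b \in \D(g)$ and $b^g \in S$. Moreover Lemma~\ref{L:LocalitiesProp}(d) says that $c_g \colon S_g \to S$ is a group homomorphism, so
\[
b^g = (a^k)^g = (a^g)^k = a^k = b,
\]
and therefore $g \in C_\L(b)$. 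This shows $\theta(a) \leq C_\L(b)$.

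Combining this inclusion with the balance condition yields $\theta(a) \leq \theta(b)$. Interchanging the roles of $a$ and $b$ (the hypothesis $\langle a \rangle = \langle b \rangle$ is symmetric) gives the reverse inclusion, and hence $\theta(a) = \theta(b)$. The only mildly subtle point — and the step I would take most care with — is the passage from $a \in \D(g)$ to $\langle a \rangle \leq S_g$, since membership in $\D(g)$ is not a priori closed under taking powers in a partial group; but once one invokes that $S_g$ is a \emph{subgroup} of $S$ lying in $\D(g)$, the argument is immediate.
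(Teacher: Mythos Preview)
Your proof is correct and follows essentially the same approach as the paper's: both apply the balance condition after observing that $\theta(a)\subseteq C_\L(b)$, then conclude by symmetry. The paper simply asserts $C_\L(a)=C_\L(b)$ in one line, whereas you carefully justify the inclusion $\theta(a)\subseteq C_\L(b)$ via the fact that $S_g$ is a subgroup of $S$ and $c_g|_{S_g}$ is a homomorphism --- a welcome level of detail in the partial-group setting, but not a different idea.
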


\begin{proof} If $\<a\>=\<b\>$, then $[a,b]=1$ and $\theta(a)\subseteq
C_\L(a)=C_\L(b)$. So the balance condition implies $\theta(a)=\theta(a)\cap
C_\L(b) \subseteq \theta(b)$. A symmetric argument gives the opposite
inclusion $\theta(b)\subseteq\theta(a)$, so the assertion holds.  
\end{proof}

Theorem~\ref{T:mainSignalizerFunctor} in the introduction follows directly from
the following theorem, which explains at the same time how a signalizer functor
on objects can be constructed from a signalizer functor on elements of order
$p$.

\begin{thm}[Signalizer functor theorem for punctured groups]\label{T:sigfun}
Let $(\L,\Delta,S)$ be a punctured group and suppose $\theta$ is a signalizer
functor of $(\L,\Delta,S)$ on elements of order $p$. Then a signalizer functor
$\Theta$ of $(\L,\Delta,S)$ on objects is defined by \[
\Theta(P):=\left(\bigcap_{x\in\I_p(P)}\theta(x)\right)\cap C_\L(P)\mbox{ for
all }P\in\Delta.  \] In particular, \[
\widehat{\Theta}:=\bigcup_{P\in\Delta}\Theta(P)=\bigcup_{x\in\I_p(S)}\theta(x)
\] is a partial normal $p^\prime$ subgroup of $\L$ and the other conclusions in
Proposition~\ref{P:SignalizerFunctorObjects} hold.  \end{thm}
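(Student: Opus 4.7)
The plan is to verify that the assignment $P \mapsto \Theta(P)$ defined in the statement is a signalizer functor on objects in the sense of Definition~\ref{D:SignalizerFunctorObjects}, and then to invoke Proposition~\ref{P:SignalizerFunctorObjects} to obtain every remaining assertion, including that $\widehat{\Theta}$ is a partial normal $p'$-subgroup and the identifications of $\F_S(\L/\widehat{\Theta})$ and of the normalizers.

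First I would check that $\Theta(P)$ is a normal $p'$-subgroup of $N_\L(P)$. It is a subgroup of $C_\L(P)$ as an intersection of subgroups; it is a $p'$-group since it lies inside $\theta(x)$ for any single $x \in \I_p(P)$; and normality under $N_\L(P)$ follows because conjugation by $g \in N_\L(P)$ permutes $\I_p(P)$, while the conjugacy hypothesis on $\theta$ yields $\theta(x)^g = \theta(x^g)$, so $\bigcap_{x \in \I_p(P)} \theta(x)$ is preserved, and $C_\L(P)$ is visibly normalized. The same bookkeeping proves the conjugacy condition for $\Theta$: for $g \in \L$ with $P \leq S_g$, Lemma~\ref{L:LocalitiesProp}(b) gives an isomorphism $c_g \colon N_\L(P) \to N_\L(P^g)$ sending $C_\L(P)$ to $C_\L(P^g)$ and, by the conjugacy hypothesis on $\theta$, sending each $\theta(x)$ to $\theta(x^g)$; thus $\Theta(P)^g = \Theta(P^g)$.

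The crux, and the main obstacle, is the balance condition $\Theta(P) \cap C_\L(Q) = \Theta(Q)$ for $P \leq Q$ in $\Delta$. The containment $\Theta(Q) \subseteq \Theta(P) \cap C_\L(Q)$ is immediate from $\I_p(P) \subseteq \I_p(Q)$ and $C_\L(Q) \leq C_\L(P)$. For the reverse inclusion, I would use Remark~\ref{R:Balance} together with a chain $P = P_0 \norm P_1 \norm \cdots \norm P_k = Q$ in the $p$-group $Q$ to reduce to $P \norm Q$. With that reduction in place, let $z \in \Theta(P) \cap C_\L(Q)$ and fix $y \in \I_p(Q)$; since $P \norm Q$, $\<y\>$ acts on $P$ by conjugation, and the standard fixed-point count for a $p$-group acting on a nontrivial $p$-group gives $|C_P(y)| \equiv |P| \equiv 0 \pmod{p}$, so we may pick $x \in \I_p(C_P(y)) \subseteq \I_p(P)$ with $[x,y]=1$. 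Then $z \in \theta(x)$ and $z \in C_\L(Q) \leq C_\L(y)$, so the balance condition for $\theta$ yields $z \in \theta(x) \cap C_\L(y) \leq \theta(y)$. Since $y$ was arbitrary in $\I_p(Q)$ and $z \in C_\L(Q)$, this gives $z \in \Theta(Q)$, completing the verification.

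To identify $\widehat{\Theta}$, note that for any $x \in \I_p(S)$, Lemma~\ref{L:theta(a)} yields $\theta(y) = \theta(x)$ for all $y \in \I_p(\<x\>)$, and $\theta(x) \leq C_\L(x) = C_\L(\<x\>)$, so $\Theta(\<x\>) = \theta(x)$. Hence $\bigcup_{x \in \I_p(S)} \theta(x) = \bigcup_{x \in \I_p(S)} \Theta(\<x\>) \subseteq \widehat{\Theta}$; the reverse inclusion is immediate since any $z \in \Theta(P)$ lies in $\theta(x)$ for any chosen $x \in \I_p(P)$, which exists because $P \neq 1$ in a punctured group. With the hypotheses of Proposition~\ref{P:SignalizerFunctorObjects} verified, all remaining conclusions of the theorem follow from that proposition.
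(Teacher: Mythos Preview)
Your proof is correct and follows the same overall architecture as the paper's: verify that $\Theta$ is a signalizer functor on objects and then invoke Proposition~\ref{P:SignalizerFunctorObjects}. The treatments of normality, the conjugacy condition, and the identification of $\widehat{\Theta}$ are essentially identical to the paper's.

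Where you diverge is in the proof of the balance condition. After reducing to $P \unlhd Q$, the paper makes a further reduction: it takes a minimal counterexample in $|Q|$, sets $P_0 = C_P(Q)$, and uses minimality to show that one may assume $P \leq Z(Q)$, so that every element of $\I_p(P)$ commutes with every element of $\I_p(Q)$. You instead stay with $P \unlhd Q$ and, for each $y \in \I_p(Q)$, invoke the fixed-point theorem for the action of $\langle y \rangle$ on the nontrivial $p$-group $P$ to produce $x \in \I_p(C_P(y))$, and then apply the balance hypothesis for $\theta$ to the commuting pair $x,y$. This is a genuinely different route: it is shorter and avoids the extra minimality reduction, at the cost of appealing to (the very elementary) fixed-point statement. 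Both arguments ultimately hinge on the same mechanism---locating, for each $y \in \I_p(Q)$, a commuting $x \in \I_p(P)$---but the paper arranges this globally while you do it one element at a time.
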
 \begin{proof}
Since $\theta(x)$ is a $p^\prime$-subgroup for each $x\in\I_p(S)$, the subgroup
$\Theta(P)$ is a $p^\prime$-subgroup for each object $P\in\Delta$.  Moreover,
it follows from the conjugacy condition for $\theta$ (as stated in
Definition~\ref{D:SignalizerFunctor}) that $\Theta(P)$ is a normal subgroup of
$N_\L(P)$, and that the conjugacy condition stated in
Definition~\ref{D:SignalizerFunctorObjects} holds for $\Theta$; to obtain the
latter conclusion notice that Lemma~\ref{L:LocalitiesProp}(b) implies
$C_\L(P)^g=C_\L(P^g)$ for every $P\in\Theta$ and every $g\in\L$ with $P\leq
S_g$. 

To prove that $\Theta$ is a signalizer functor on objects, it remains to show
that the balance condition $\Theta(P)\cap C_\L(Q)=\Theta(Q)$ holds for every
pair $P\leq Q$ with $P\in\Delta$. Notice that $P$ is subnormal in $Q$ whenever
$P\leq Q$. Hence, if the balance condition for $\Theta$ fails for some pair
$P\leq Q$ with $P\in\Delta$, then by Remark~\ref{R:Balance}, it fails for some
pair $P\unlhd Q$ with $P\in\Delta$. Suppose this is the case. Among all pairs
$P\unlhd Q$ such that $P\in\Delta$ and the balance condition fails, choose one
such that $Q$ is of minimal order. 

Notice that $P<Q$, as the balance condition would otherwise trivially hold. So
as $1\neq P_0:=C_P(Q)\in\Delta$ and $P_0\unlhd P$, the minimality of $|Q|$
yields that the balance condition holds for the pair $P_0\leq P$.
If the balance condition holds also for the pair $P_0\unlhd Q$, then 
the balance condition holds for the pair $P\leq Q$ by Remark~\ref{R:Balance},
contradicting our assumption. So the balance condition does not hold for
$P_0\leq Q$. Therefore, replacing $P$ by $P_0$, we can and will assume from now
on that $P\leq Z(Q)$.

It is clear from the definition that $\Theta(Q)\leq \Theta(P)\cap C_\L(Q)$.
Hence it remains to prove the opposite inclusion. By definition of
$\Theta(Q)$, this means that we need to show $\Theta(P)\cap C_\L(Q)\leq
\theta(b)$ for all $b\in\I_p(Q)$. To show this fix $b\in \I_p(Q)$. As
$P\in\Delta$, we have $P\neq 1$ and so we can pick $a\in\I_p(P)$. Since $P\leq
Z(Q)$, the elements $a$ and $b$ commute. Hence, the balance condition for
$\theta$ yields \[ \Theta(P)\cap C_\L(Q)\leq \theta(a)\cap C_\L(b)\leq
\theta(b).  \] This completes the proof that $\Theta$ is a signalizer functor
on objects. 

Given $P\in\Delta$, we can pick any $x\in\I_p(P)$ and have
$\Theta(P)\subseteq\theta(x)$. Hence,
$\widehat{\Theta}:=\bigcup_{P\in\Delta}\Theta(P)$ is contained in
$\bigcup_{x\in\I_p(S)}\theta(x)$. The opposite inclusion holds as well,
as Lemma~\ref{L:theta(a)} implies $\theta(x)=\Theta(\<x\>)$ for every
$x\in\I_p(S)$. The assertion follows now from
Proposition~\ref{P:SignalizerFunctorObjects}.  
\end{proof}

\section{Sharpness of the subgroup decomposition}\label{Sec:sharpness}

\subsection{Additive extensions of categories}

Let $\C$ be a (small) category.  Define a category $\C_{\amalg}$ as follows,
see \cite[Sec. 4]{JackowskiMcClure1992}.  The objects of $\C_{\amalg}$ are
pairs $(I,\XXX)$ where $I$ is a finite set and $\XXX \colon I \to \obj(\C)$ is
a function.  A morphisms $(I,\XXX) \to (J,\YYY)$ is a pair $(\si, \fff)$ where
$\si \colon I \to J$ is a function and $\fff \colon I \to \mor(\C)$ is a
function such that $\fff(i) \in \C(\XXX(i), \YYY(\si(i)))$.  We leave it to the
reader to check that this defines a category.

There is a fully faithful inclusion $\C \subseteq \C_{\amalg}$ by sending $X
\in \C$ to the function $\XXX \colon \{\emptyset\} \to \obj(\C)$ with
$\XXX(\varnothing) = X$.  We will write $X$ (not boldface) to denote these
objects in $\C_\amalg$.

The category $\C_{\amalg}$ has a monoidal structure $\coprod$ where $(I,\XXX)
\coprod (J,\YYY) \defeq (I \coprod J, \XXX \coprod \YYY)$.  One checks that
this is the categorical coproduct in $\C_{\amalg}$.  For this reason we will
often write objects of $\C_{\amalg}$ in the form $\coprod_{i \in I} X_i$ where
$X_i \in \C$.  Also, when the indexing set $I$ is understood we will simply
write $\XXX$ instead of $(I,\XXX)$.

When $(I,\XXX)$ is an object and $J \subseteq I$ we will refer to $(J,\XXX|_J)$
as a ``subobject'' of $(I,\XXX)$ and we leave it to the reader to check that
the inclusion is a monomorphism, namely for any two morphisms $\fff,\ggg \colon
\YYY \to \XXX|_J$, if $\incl_{\XXX|_J}^{\XXX} \circ \fff=
\incl_{\XXX|_J}^{\XXX} \circ \ggg$ then $\fff=\ggg$.  One also checks that 

\begin{eqnarray}\label{Eq:morphisms of Ccoprod}
&&\C_{\amalg}(\coprod_{i \in I} X_i,Y) = \prod_{i \in I} \C(X_i,Y), \\
\nonumber
&&\C_{\amalg} (X,\coprod_{i \in I} Y_i) = \coprod_{i \in I} \C(X,Y_i).
\end{eqnarray}

%-----------
\begin{defn}[{Compare \cite[p. 123]{JackowskiMcClure1992}}]
\label{D:PBtc}
We say that $\C$ satisfies \PBtc~ if the product of each pair of objects in
$\C$ exists in $\C_{\amalg}$ and if the pullback of each diagram $c \to e
\leftarrow d$ of objects in $\C$ exists in $\C_{\amalg}$.
\end{defn}

%---------------
\begin{defn}[{Compare \cite[p. 124 and Lemma 5.13]{JackowskiMcClure1992}}]
\label{D:proto Mackey}
Assume that $\C$ is a small category satisfying \PBtc.  A functor $M  \colon
\C^\op \to \Ab$ is called a \emph{proto-Mackey functor} if there is a functor
$M_*\colon \C \to \Ab$ such that the following hold.
\begin{itemize}
\item[(a)] $M(C)=M_*(C)$ for any $C \in \obj(\C)$.
\item[(b)] For any isomorphism $\phi \in \C$, $M_*(\phi)=M(\phi^{-1})$.
\item[(c)] By applying $M$ and $M_*$ to a pullback diagram in $\C_{\amalg}$ of the form
\[
\xymatrix{
\coprod_{i \in I} B_i \ar[r]^(0.65){\sum_i \phi_i} \ar[d]_{\sum_i \psi_i} &
D \ar[d]^{\be}
\\
C \ar[r]^{\al} & E
}
\]
where $B_i,C,E \in \C$, there results the following commutative square in $\Ab$
\[
\xymatrix{
\bigoplus_{i \in I} M(B_i) \ar[rr]^(0.6){\sum_i M_*(\phi_i)} & &
M(D)
\\
M(C) \ar[u]^{\oplus_i M(\psi_i)} \ar[rr]^{M_*(\al)} & &
M(E) \ar[u]_{M(\be)}
}
\]
\end{itemize}
\end{defn}

We remark that every pullback diagram in $\C_{\amalg}$ defined by objects in
$\C$ is isomorphic in $\C_{\amalg}$ to a commutative square as in (c) in this
definition.

Given a small category $\D$ and a functor $M \colon \D \to \Ab$, we write
\[
H^*(\D;M) \defeq \underset{\D}{\varprojlim{}^*} M.
\]
for the derived functors of $M$.
We say that $M$ is \emph{acyclic} if $H^i(\D;M)=0$ for all $i>0$.

\begin{prop}[{See \cite[Corollary 5.16]{JackowskiMcClure1992}}] 
\label{P:JM acyclicity}
Fix a prime $p$.
Let $\C$ be a small category which satisfies \PBtc~ and in addition 
\begin{itemize}
\item[(B1)] $\C$ has finitely many isomorphism classes of objects, all morphism
sets are finite and all self maps in $\C$ are isomorphisms.
\item[(B2)] For every object $C \in \C$ there exists an object $D$ such that
$|\C(C,D)| \neq 0 \mod p$.
\end{itemize}
Then any proto-Mackey functor $M \colon \C^{\op} \to \modl{\ZZ_{(p)}}$ is
acyclic, namely $H^i(\C^{\op},M)=0$ for all $i>0$.
\end{prop}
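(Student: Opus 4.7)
The plan is to exhibit $M$ as a retract of a projective object in the abelian functor category $\Fun(\C^\op,\modl{\ZZ_{(p)}})$. Representable functors $\ZZ_{(p)}[\C(-,D)]$ are projective there (by the Yoneda lemma), hence satisfy $\varprojlim^i_{\C^\op}\ZZ_{(p)}[\C(-,D)]=0$ for all $i>0$, and retracts of acyclic functors are acyclic. Condition (B1) ensures that $\C$ has only finitely many isomorphism classes of objects; fix a set of representatives once and for all.

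For each representative $C$, use (B2) to choose $D_C$ with $n_C := |\C(C,D_C)|$ coprime to $p$, hence invertible in $\ZZ_{(p)}$. Form
\[
P \;=\; \bigoplus_{C}\ZZ_{(p)}[\C(-,D_C)]\otimes M(D_C),
\]
and let $\beta\colon P\to M$ be the natural transformation defined at an object $E\in\C$ by $\beta_E(\phi\otimes m) = M(\phi)(m)$ for $\phi\in\C(E,D_C)$ and $m\in M(D_C)$; this uses only the contravariant part of $M$. The task is to construct a section $\alpha\colon M\to P$ of $\beta$ using the covariant transfers $M_*$. The key input is the Mackey pullback axiom (c) of Definition~\ref{D:proto Mackey}: for each $\phi\in\C(C,D_C)$, pulling $\phi$ back against itself in $\C_\amalg$ (which exists by \PBtc) yields a decomposition $\coprod_i B_i$ in which one ``diagonal'' component equals $C$ with both projections being the identity. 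Axiom (c) then specializes to the double-coset identity
\[
M(\phi)\circ M_*(\phi) \;=\; \id_{M(C)} \;+\; \sum_{i\neq\mathrm{diag}} M_*(\tau_i)\circ M(\sigma_i)
\]
as endomorphisms of $M(C)$, where $\sigma_i,\tau_i\colon B_i\to C$ are the two projections. Summing this over all $n_C$ elements $\phi\in\C(C,D_C)$ produces $n_C\cdot\id_{M(C)}$ plus error terms whose structure is controlled by (B1) (every self-map is an isomorphism); dividing by the invertible integer $n_C$ then isolates a local section $\alpha_C\colon M(C)\to P(C)$.

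The main obstacle will be promoting these pointwise sections into a genuine natural transformation $\alpha\colon M \to P$, since the recipe above depends on the choices of $D_C$ and of morphisms $\phi$. This is handled by symmetrizing over the available morphisms and exploiting the constraint from (B1) that all endomorphisms in $\C$ are automorphisms, so that the off-diagonal error terms either cancel against one another or can be absorbed into the $\Aut_\C$-equivariant structure of $P$. Once $\alpha$ is natural and $\beta\circ\alpha = \id_M$ is verified, $M$ is a retract of the projective $P$, whence $\varprojlim^i_{\C^\op}M = 0$ for all $i>0$, as required. This is essentially the strategy of Jackowski--McClure \cite{JackowskiMcClure1992}, whose technical apparatus on pullbacks in $\C_\amalg$ is precisely what makes the double-coset bookkeeping manageable in this generality.
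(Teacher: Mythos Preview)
The paper does not supply its own proof of this proposition; it simply quotes \cite[Corollary~5.16]{JackowskiMcClure1992} and uses the result as a black box. So there is no argument in the paper to compare your sketch against.

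Your overall strategy---exhibit $M$ as a retract of a projective of the form $\bigoplus_C \ZZ_{(p)}[\C(-,D_C)]\otimes M(D_C)$---is indeed the Jackowski--McClure idea. However, the sketch as written has two genuine gaps. First, the local computation does not produce a section: from the Mackey double-coset identity you correctly obtain
\[
M(\phi)\circ M_*(\phi)=\id_{M(C)}+\sum_{i\neq\mathrm{diag}} M_*(\tau_i)\circ M(\sigma_i),
\]
and summing over $\phi\in\C(C,D_C)$ gives $n_C\cdot\id_{M(C)}$ \emph{plus} the accumulated off-diagonal terms. Dividing by $n_C$ yields $\id_{M(C)}+\tfrac{1}{n_C}(\text{errors})$, not $\id_{M(C)}$; nothing you have said forces those error terms to vanish or to factor through $P$ in a way that is absorbed by $\beta$. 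Second, and as you yourself flag, the pointwise recipe $m\mapsto \sum_\phi \phi\otimes M_*(\phi)(m)$ is not obviously natural in $C$: the indexing set $\C(C,D_C)$ changes with $C$, and the Mackey axiom~(c) governs composites of the form $M(\beta)\circ M_*(\alpha)$ for maps with a \emph{common target}, which is not the configuration arising when you test naturality against an arbitrary $\psi\colon C'\to C$. The phrases ``symmetrizing'' and ``absorbed into the $\Aut_\C$-equivariant structure'' do not by themselves resolve either issue.

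The way Jackowski--McClure close these gaps is to pass from $\C$ to its associated span (Burnside) category and regard a proto-Mackey functor as an \emph{additive} functor there; axiom~(c) is precisely compatibility with composition of spans, so naturality of the relevant transfer maps becomes automatic. The splitting is then produced from the span $C\leftarrow C\boxtimes D\to D$ given by the categorical product in $\C_\amalg$ (whose existence is part of \PBtc), rather than from a bare choice of $\phi\in\C(C,D)$; the universal property of $C\boxtimes D$ is what makes the composite act by the scalar $|\C(C,D)|$ without leftover error terms. If you want to turn your plan into a proof, that is the missing ingredient.
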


\iffalse
If $\C$ is a small category, let $\Fun_{\amalg}((\C_{\amalg})^\op,\Ab)$ be the category of
functors and natural transformations which carry coproducts in $\C_{\amalg}$  to products of
abelian groups.  It is clear that restriction gives an isomorphism of
categories
\[
\Fun_{\amalg}((\C_{\amalg})^\op,\Ab) \cong \Fun(\C^\op,\Ab).
\]
If $\al \colon \C_{\amalg} \to \D_{\amalg}$ is a functor which respects
coproducts, namely $\al(\coprod_{i \in I} C_i)= \coprod_{i \in I} \al(C_i)$,
then we obtain 
\[
\al^* \colon \Fun_{\amalg}((\D_\amalg)^\op,\Ab) \to \Fun_{\amalg}((\C_\amalg)^\op,\Ab).
\]
Composition with the isomorphisms above there results a functor
\[
\overline{\al^*} \colon \Fun(\D^\op,\Ab) \to \Fun(\C^\op,\Ab).
\]
\fi

%-----------------------------------
\subsection{Transporter categories}

Let $\F$ be a saturated fusion system over $S$ and let $\T$ be a transporter
system associated with $\F$ (Definition \ref{D:transportersystem}).  By
\cite[Lemmas 3.2(b) and 3.8]{OliverVentura2007} every morphism in $\T$ is both
a monomorphism and an epimorphism.  For any  $P,Q \in \obj(\T)$ such that $P
\leq Q$ denote $\iota_P^Q=\epsilon_{P,Q}(e) \in \Mor_{\T}(P,Q)$.  We think of these as
``inclusion'' morphisms in $\T$.  We obtain a notion of ``extension'' and
``restriction'' of morphisms in $\T$ as follows.  Suppose $\vp \in
\Mor_\T(P,Q)$ and $P' \leq P$ and $Q' \leq Q$ and $\psi \in \Mor_\T(P',Q')$
are such that $\vp \circ \iota_{P'}^P = \iota_{Q'}^Q \circ \psi$. Then we say
that $\psi$ is a restriction of $\vp$ and that $\vp$ is an extension of $\psi$.
Notice that since $\iota_{Q'}^Q$ is a monomorphism, given $\vp$ then its
restriction $\psi$ if it exists, is unique and we will write
$\psi=\vp|_{P'}^{Q'}$.  Similarly, since $\iota_{P'}^P$ is an epimorphism,
given $\psi$, if an extension $\vp$ exists then it is unique.  By \cite[Lemma
3.2(c)]{OliverVentura2007}, given $\vp \in \Mor_\T(P,Q)$ and subgroups $P' \leq P$
and $Q' \leq Q$ such that $\rho(\vp)(P') \leq Q'$, then $\vp$ restricts to a
(unique morphism) $\vp' \in \Mor_\T(P',Q')$.  We will use this fact repeatedly.  In
particular, every morphism $\vp \colon P \to Q$ in $\T$ factors uniquely $P
\xto{\bar{\vp}} \bar{P} \xto{\iota_{\bar{P}}^Q} Q$ where $\bar{\vp}$ is an
isomorphism in $\T$ and $\bar{P}=\rho(\vp)(P)$.  

For any $P,Q \in \obj(\T)$ set
\[
K_{P,Q} = \{(A,\al) \, \colon \, A \leq P, \, A \in \obj(\T), \, \al \in \Mor_\T(A,Q)\}.
\]
This set is partially ordered where $(A,\al) \preceq (B,\be)$ if $A \leq B$ and
$\al=\be|_A$.  Since $K_{P,Q}$ is finite we may consider the set
$K_{P,Q}^{\max}$ of the maximal elements.

For any $x \in N_S(P,Q)$ we write $\widehat{x}$ instead of $\epsilon_{P,Q}(x)$.
There is an action of $Q \times P$ on $K_{P,Q}$ defined by
\[
(y,x) \cdot (A,\al) = (x A x^{-1}, \widehat{y} \circ \al \circ \widehat{x}^{-1}), \qquad (x \in P, \, y \in Q).
\]
This action is order preserving and therefore it leaves $K_{P,Q}^{\max}$
invariant.  
We will write $\overline{K_{P,Q}^{\max}}$ for the set of orbits.
For any $P,Q \in \T$ we will choose \emph{once and for all} a subset 
\[
\K_{P,Q}^{\max} \subseteq K_{P,Q}^{\max}
\] 
of representatives for the orbits of $Q \times P$ on $K_{P,Q}^{\max}$.

\begin{lem}\label{L:max elements in T}
For any $(A,\al) \in K_{P,Q}$ there exists a unique $(B,\be) \in
K_{P,Q}^{\max}$ such that $(A,\al) \preceq (B,\be)$.
\end{lem}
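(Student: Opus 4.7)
Existence is immediate since $K_{P,Q}$ is finite: any element lies below a maximal element. For uniqueness, my plan is to pass to the locality picture via the correspondence between transporter systems and localities (\cite[Theorem~2.11]{GlaubermanLynd2021}). After replacing $\T$ by an isomorphic transporter system, we may assume $\T = \T_\Delta(\L)$ for some locality $(\L, \Delta, S)$, so a morphism $P \to Q$ in $\T$ is a triple $(f, P, Q)$ with $f \in \L$ satisfying $P \subseteq \D(f^{-1})$ and ${}^f P \leq Q$.

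The key observation I would use is that the order $\preceq$ on $K_{P,Q}$ leaves the underlying locality element fixed. Write $\alpha = (f, A, Q)$, and suppose $(A,\alpha) \preceq (B,\beta)$ with $\beta = (g, B, Q)$: the restriction $\beta|_A$ is the triple $(g, A, Q)$, which must equal $\alpha = (f, A, Q)$, forcing $g = f$. Thus finding a maximal $(B,\beta)$ above $(A,\alpha)$ reduces to finding the (unique) maximum $B$ with
\[
A \leq B \leq P, \quad B \in \Delta, \quad B \subseteq \D(f^{-1}), \quad {}^f B \leq Q.
\]

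To exhibit this maximum, I apply Lemma~\ref{L:LocalitiesProp}(d), rewritten in the current left-conjugation convention: the set $S_{f^{-1}} = \{\, s \in S : s \in \D(f^{-1}),\ {}^f s \in S \,\}$ is a subgroup of $S$, and $s \mapsto {}^f s$ is an injective group homomorphism $S_{f^{-1}} \to S$. Its preimage of $Q$, namely $T := \{\, s \in S_{f^{-1}} : {}^f s \in Q \,\}$, is therefore a subgroup of $S$. The four constraints on $B$ then collapse to $A \leq B \leq P \cap T$, and overgroup-closure of $\Delta$ in $S$ (combined with $A \in \Delta$) makes the condition $B \in \Delta$ automatic. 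The unique maximum is thus $B^* := P \cap T$, yielding the unique element $(B^*, (f, B^*, Q)) \in K_{P,Q}^{\max}$ above $(A,\alpha)$.

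The main hurdle I anticipate is purely conceptual rather than computational: the clean statement that $\preceq$ only enlarges the source of a fixed locality element requires us to leave the transporter-system language (in which the lemma is phrased and in which an extension argument via axiom (II) would be more delicate) and work in the equivalent locality picture. Once that translation is made, Lemma~\ref{L:LocalitiesProp}(d) does the remaining work by producing the largest admissible source directly.
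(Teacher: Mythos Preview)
Your proof is correct and takes a genuinely different route from the paper's.

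The paper argues entirely inside the transporter system language: it fixes two maximal extensions $(B_1,\beta_1)$ and $(B_2,\beta_2)$ of $(A,\alpha)$, sets $N_i=N_{B_i}(A)$ and $D=\langle N_1,N_2\rangle$, invokes axiom~(C) and then axiom~(II) to extend $\alpha$ to $\delta\colon D\to Q$, and finishes by induction on $[S:A]$. Your approach instead exports the problem to the locality side via the equivalence of \cite[Theorem~2.11]{GlaubermanLynd2021}; once a morphism in $\T_\Delta(\L)$ is a triple $(f,A,Q)$, the crucial point that restriction leaves $f$ unchanged reduces the question to finding the largest subgroup of $P$ carried into $Q$ by ${}^f(-)$, and Lemma~\ref{L:LocalitiesProp}(d) hands you this maximum explicitly as $P\cap T$ with $T$ the preimage of $Q$ under the injective homomorphism $S_{f^{-1}}\to S$.

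What each buys: your argument is shorter, avoids induction, and yields a closed formula for the maximal extension, but it imports the transporter--locality correspondence as a black box. The paper's argument is self-contained in the transporter framework and isolates exactly where axiom~(II) enters; since the later propositions in Section~\ref{Sec:sharpness} are all phrased in the transporter language, this keeps the section internally uniform. One minor point worth making explicit in your write-up is that the poset $K_{P,Q}$ is defined via the inclusion morphisms $\iota_A^P$, which an isomorphism of transporter systems preserves by definition; this is why the passage to $\T_\Delta(\L)$ is harmless.
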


\begin{proof}
We use induction on $[S:A]$.  Fix $(A,\al)\in K_{P,Q}$  and $(B_1,\be_1)$ and
$(B_2,\be_2)$ in $K_{P,Q}^{\max}$ such that $(A,\al) \preceq (B_i,\be_i)$.
Thus, $\be_1|_A=\al=\be_2|_A$.  We may assume that $A < B_i$ since if
say $A=B_1$ then $(B_1,\be_1) \preceq (B_2,\be_2)$ and maximality implies
$(B_1,\be_1)=(B_2,\be_2)$.

For $i=1,2$ set $N_i=N_{B_i}(A)$.  Then $N_i$ contain $A$ properly and we set
$D=\langle N_1,N_2\rangle$.  Then $A \trianglelefteq D$.  Set $T=\al(A)$ and
$\overline{T}=N_Q(T)$.  For $i=1,2$, if $x \in N_i$ then Axiom (C) of
Definition~\ref{D:transportersystem} applied to $\be_i$ yields
\[
\al \circ \widehat{x}|_{A}^{A} = ((\be_i|_{N_i}) \circ
\widehat{x}|_{N_i}^{N_i})|_A = \widehat{\be_i(x)}|_{Q}^{Q} \circ \be_i|_A =
\widehat{\be_i(x)}|_{Q}^{Q} \circ \al.
\]
Notice that $\be_i(x) \in N_Q(T)$, so Axiom (II) of Definition
\ref{D:transportersystem} implies that $\al$ extends
to $\de \in \Mor_\T(D,Q)$.  Since for $i=1,2$ the morphisms $\iota_A^{N_i}
\colon A \to N_i$ are epimorphisms in $\T$, the equality $\be_i|_{N_i} \circ
\iota_A^{N_i} = \al = \de|_A = (\de|_{N_i}) \circ \iota_A^{N_i}$ shows that
$\de|_{N_i}=\be_i|_{N_i}$.  Now we have $(N_i,\be_i|_{N_i}) \preceq (D,\de)$
and $(N_i,\be_i|_{N_i}) \preceq (B_i,\be_i)$ in $K_{P,Q}$.  Since $|A|<|N_i|$
the induction hypothesis implies that $(B_i,\be_i)$ is the unique maximal
extension of $(N_i,\be_i|_{N_i})$ for each $i=1,2$, and both must coincide with
the unique maximal extension of $(D,\de)$.  It follows that
$(B_1,\be_1)=(B_2,\be_2)$.
\end{proof}

%--------- Orbit category

The \emph{orbit category} of $\T$ is a category $\O\T$ with the same set of
objects as $\T$.  For any $P,Q \in \O\T$ the morphism set
$\Mor_{\O\T}(P,Q)$ is the set of orbits of $\Mor_\T(P,Q)$ under the action of
$\widehat{Q} = \epsilon_{Q,Q}(Q) \subseteq \Mor_\T(Q,Q)$ via postcomposition.  See
\cite[Section 4, p. 1010]{OliverVentura2007}.  Axiom (C) guarantees that
composition in $\O\T$ is well defined.  
Given $\vp \in \Mor_\T(P,Q)$ we will denote its image in $\Mor_{\O\T}(P,Q)$ by $[\vp]$.

We notice that every morphism in $\O\T$ is an epimorphism, namely for every
$[\al] \in \Mor_{\O\T}(P,Q)$ and $[\be],[\ga] \in
\Mor_{\O\T}(Q,R)$, if $[\be] \circ [\al]=[\ga] \circ[\al]$ then
$[\be]=[\ga]$.  This follows from the fact that every morphism in $\T$ is an
epimorphism.

Consider $P,Q \in \obj(\O\T)$ such that $P \trianglelefteq Q$.  Precomposition
with $[\iota_P^Q]$ gives a ``restriction'' map
\[
\Mor_{\O\T}(Q,S) \xto{\res} \Mor_{\O\T}(P,S).
\]
Observe that $Q$ acts on $\Mor_{\O\T}(P,S)$ by precomposing morphisms
with $[\widehat{x}|_P^P]$ for any $x \in Q$.  This action has $P$ in its kernel
by Axiom (C) of transporter systems.

%----------
\begin{lem}\label{L:morphisms in OT mod p}
Let $\T$ be a transporter system and $\O\T$ its orbit category.
\begin{itemize}
\item[(a)]
For any $P,Q \in \obj(\O\T)$ such that $P \trianglelefteq Q$ the map
$\Mor_{\O\T}(Q,S)\to \Mor_{\O\T}(P,S)$ induced by the restriction $[\vp]
\mapsto [\vp|_P]$, gives rise to a bijection
\begin{equation}\label{E:resPQ-fixed-points}
\res \colon \Mor_{\O\T}(Q,S) \to \Mor_{\O\T}(P,S)^{Q/P}
\end{equation}
\item[(b)]
For any $P \in \O\T$ we have $|\Mor_{\O\T}(P,S)| \neq 0 \mod p$.
\end{itemize}
\end{lem}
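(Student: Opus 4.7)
The plan is to prove (a) in four steps---well-definedness of the restriction map, image lying in $Q/P$-fixed points, injectivity, and surjectivity---after which (b) follows by induction on $[S:P]$ using (a).

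For (a) I define the restriction map by $[\vp]\mapsto [\vp\circ \iota_P^Q]$. It is well-defined because post-composition by $\widehat{S}$ commutes with precomposition by $\iota_P^Q$. To see that the image is fixed by $Q/P$, apply Axiom (C) to $\vp\in\Mor_\T(Q,S)$ and $x\in Q$: this gives $\vp\circ \widehat{x}|_Q^Q = \widehat{\rho(\vp)(x)}\circ \vp$; precomposition with $\iota_P^Q$ and the equality $\widehat{x}|_Q^Q\circ \iota_P^Q = \iota_P^Q\circ \widehat{x}|_P^P=\epsilon_{P,Q}(x)$ (functoriality of $\epsilon$) yield $(\vp\circ \iota_P^Q)\circ \widehat{x}|_P^P = \widehat{\rho(\vp)(x)}\circ (\vp\circ \iota_P^Q)$, whose orbit class is $[\vp\circ \iota_P^Q]$. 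Axiom (C) applied with $x\in P$ shows $P$ acts trivially, so the action factors through $Q/P$. Injectivity is immediate: if $\widehat{s}\circ \vp_1\circ \iota_P^Q = \vp_2\circ \iota_P^Q$ for some $s\in S$, then since every morphism in $\T$ is an epimorphism one cancels $\iota_P^Q$ to obtain $\widehat{s}\circ \vp_1 = \vp_2$, hence $[\vp_1]=[\vp_2]$.

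Surjectivity is the principal technical step. Given $[\psi]$ in the fixed set, factor $\psi=\iota_T^S\circ \bar\psi$ with $T:=\rho(\psi)(P)$ and $\bar\psi\in \Iso_\T(P,T)$. The fixed-point condition yields for each $x\in Q$ an element $s_x\in S$ with $\psi\circ \widehat{x}|_P^P = \widehat{s_x}\circ \psi$. Comparing images forces $s_x\in N_S(T)$, and using that $\iota_T^S$ is a monomorphism together with the injectivity of $\epsilon_{T,T}$ (Axiom (B)), I find that $s_x$ is uniquely determined by $x$, that $\bar\psi\circ \widehat{x}|_P^P\circ \bar\psi^{-1} = \epsilon_{T,T}(s_x)$, and that $x\mapsto s_x$ is a homomorphism $Q\to N_S(T)$. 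Setting $\bar Q := \langle s_x\mid x\in Q\rangle\cdot T$, one has $T\trianglelefteq \bar Q\leq S$, and Axiom (II) applied to $\bar\psi$ with $\bar P := Q$ and this $\bar Q$ produces $\bar\phi\in \Mor_\T(Q,\bar Q)$ with $\bar\phi\circ \iota_P^Q = \iota_T^{\bar Q}\circ \bar\psi$. Then $\vp := \iota_{\bar Q}^S\circ \bar\phi$ satisfies $\vp\circ \iota_P^Q = \psi$, as required.

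For (b) I would induct on $[S:P]$. The base case $P=S$ is immediate: $\Mor_{\O\T}(S,S) = \Aut_\T(S)/\widehat{S}$, which has order prime to $p$ by Axiom (I). For $P\lneq S$, one has $P\lneq N_S(P)$ with $P\trianglelefteq N_S(P)$, so (a) gives a bijection $\Mor_{\O\T}(N_S(P),S)\cong \Mor_{\O\T}(P,S)^{N_S(P)/P}$. Since $N_S(P)/P$ is a $p$-group acting on a finite set, $|\Mor_{\O\T}(P,S)|\equiv |\Mor_{\O\T}(N_S(P),S)| \pmod p$; the right-hand side is nonzero mod $p$ by induction, since $[S:N_S(P)]<[S:P]$. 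The main obstacle throughout is the surjectivity step in (a), where one must identify the right subgroup $\bar Q$ and check that $x\mapsto s_x$ is a homomorphism so that Axiom (II) applies.
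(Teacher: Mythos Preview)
Your proof is correct and follows essentially the same approach as the paper: both use Axiom (C) for the fixed-point statement, the epimorphism property of $\iota_P^Q$ for injectivity, Axiom (II) for surjectivity, and the same induction on $[S:P]$ for (b). The only difference is cosmetic: in the surjectivity step the paper simply takes $\bar{Q}=N_S(T)$ and applies Axiom (II) directly, so your verification that $x\mapsto s_x$ is a homomorphism and your choice of the smaller $\bar{Q}=\langle s_x\rangle\cdot T$ are unnecessary (though not incorrect).
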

\begin{proof}
(a)
First, observe that if $[\vp] \in \Mor_{\O\T}(Q,S)$ then $[\vp|_P]$ is fixed by
$Q/P$ by Axiom (C), hence the image of $\res$ is contained in
$\Mor_{\O\T}(P,S)^{Q/P}$.  Now suppose that $[\vp] \in
\Mor_{\O\T}(P,S)^{Q/P}$ and set $\bar{P}=\rho(\vp)(P)$.  Since $[\vp]$
is fixed by $Q/P$ this exactly means that for every $x \in Q$ there exists $y
\in N_S(\bar{P})$ such that $\vp \circ \widehat{x}|_P^P =
\widehat{y} \circ \vp$ and Axiom (II) implies
that $\vp$ extends to a morphism $\psi \in \Mor_{\T}(Q,S)$.  This shows
that the map $\res$ in \eqref{E:resPQ-fixed-points} is onto
$\Mor_{\O\T}(P,S)^{Q/P}$.  It is injective because $[\iota_P^Q]$ is an
epimorphism in $\O\T$.

(b) 
Use induction on $[S:P]$.  If $P=S$ then $\epsilon_{S,S}(S)$ is a Sylow
$p$-subgroup of $\Aut_\T(S)=\Mor_{\T}(S,S)$ and therefore $|\Mor_{\O\T}(S,S)|
\neq 0 \mod p$.  Suppose $P<S$ and set $Q=N_S(P)$.  Then $Q > P$ and since
$Q/P$ is a finite $p$-group,
$|\Mor_{\O\T}(P,S)|=|\Mor_{\O\T}(P,S)^{Q/P}| \mod p$.  It follows
from part (a) and the induction hypothesis on $[S:Q]$ that
$|\Mor_{\O\T}(P,S)| \neq 0 \mod p$.
\end{proof}

%---------

In the remainder of this subsection we will prove that $\O\T$ satisfies \PBtc, 
keeping the notation from above.

%----------------------------------
\begin{defn}\label{D:P boxtimes Q}
Let $\T$ be a transporter system with orbit category $\O\T$. For $P,Q
\in \O\T$, consider the object $P \boxtimes Q$ of $\O\T_{\amalg}$ given
by
\[
P \boxtimes Q = \coprod_{(L,\la) \in \K_{P,Q}^{\max}} L.
\]
That is, $P \boxtimes Q \colon \K_{P,Q}^{\max} \to \obj(\O\T)$ is the
function $(L,\la) \mapsto L$.  Let $\pi_1 \colon P \boxtimes Q \to P$ and
$\pi_2 \colon P \boxtimes Q \to Q$ be the morphisms in $\O\T_{\amalg}$ defined
by $\pi_1=\sum_{(L,\la)} [\iota_L^P]$ and $\pi_2=\sum_{(L,\la)} [\la]$.
\end{defn}

%----------
\begin{prop}\label{P:products in OT}
Let $\T$ be a transporter system with orbit category $\O\T$.  Then $P
\boxtimes Q$ is the product in $\O\T_{\amalg}$ of $P,Q \in \obj(\O\T)$.
\end{prop}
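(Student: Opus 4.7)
The plan is to verify the universal property of a product directly. By the second formula in \eqref{Eq:morphisms of Ccoprod}, a morphism $R \to P \boxtimes Q$ in $\O\T_{\amalg}$ for $R \in \O\T$ amounts to a choice of summand $(L_0, \la_0) \in \K_{P,Q}^{\max}$ together with a morphism $[h_0] \in \O\T(R, L_0)$, so the claim reduces to showing that the natural map $((L_0, \la_0), [h_0]) \mapsto ([\iota_{L_0}^P \circ h_0], [\la_0 \circ h_0])$ from $\coprod_{(L, \la) \in \K_{P,Q}^{\max}} \O\T(R, L)$ to $\O\T(R, P) \times \O\T(R, Q)$ is a bijection.

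For existence, I would choose representatives $f, g$ of $[f], [g]$, let $A = \rho(f)(R)$, factor $f = \iota_A^P \circ \bar f$ with $\bar f \colon R \xrightarrow{\sim} A$, and set $\al = g \circ \bar f^{-1}$, producing $(A, \al) \in K_{P,Q}$. Lemma \ref{L:max elements in T} supplies a unique maximal extension $(B, \be) \in K_{P,Q}^{\max}$, and there is a unique orbit representative $(L_0, \la_0) \in \K_{P,Q}^{\max}$ with $(B, \be) = (y, x) \cdot (L_0, \la_0)$ for some $(y, x) \in Q \times P$. I would then set $h_0 = \widehat{x^{-1}}|_A^{L_0} \circ \bar f$, and verify the conditions $[\iota_{L_0}^P \circ h_0] = [f]$ and $[\la_0 \circ h_0] = [g]$ using the formula for the $Q \times P$-action and the identity $\widehat{x^{-1}} \circ \iota_A^P = \iota_{x^{-1}Ax}^P \circ \widehat{x^{-1}}|_A^{x^{-1}Ax}$ coming from $\epsilon$ being a functor on $\T_S(S)$.

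For uniqueness of $(L_0, \la_0)$, suppose $((L_0', \la_0'), [h_0'])$ is another solution and set $A_0' = \rho(h_0')(R) \leq L_0'$. The two conditions force $(A_0', \la_0'|_{A_0'})$ to lie in the $Q \times P$-orbit of $(A, \al)$. Since $(L_0', \la_0') \in \K_{P,Q}^{\max}$ is maximal and extends $(A_0', \la_0'|_{A_0'})$, Lemma \ref{L:max elements in T} identifies it as the unique maximal extension of $(A_0', \la_0'|_{A_0'})$; the compatibility of maximal extensions with the $Q \times P$-action then places $(L_0', \la_0')$ in the orbit of $(B, \be)$, whence $(L_0', \la_0') = (L_0, \la_0)$ since $\K_{P,Q}^{\max}$ contains only one representative per orbit.

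The main obstacle is uniqueness of $[h_0]$ within $\O\T(R, L_0)$. From the factorization analysis, any solution has the form $h_0 = \widehat{z}|_A^{L_0} \circ \bar f$ with $z \in P$ and $zAz^{-1} \leq L_0$, and for two solutions with parameters $z, z'$ uniqueness of the maximal extension forces $z' z^{-1}$ to lie in the $P$-projection of $\mathrm{Stab}_{Q \times P}(L_0, \la_0)$. The crucial claim is that this projection equals $L_0$; granting it, a short computation using the functoriality of $\epsilon$ yields $h_0' = \widehat{z' z^{-1}}|_{L_0}^{L_0} \circ h_0$ with $z' z^{-1} \in L_0$, so $[h_0'] = [h_0]$. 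To prove the claim I would argue by contradiction: if $(v_0, u_0)$ stabilizes $(L_0, \la_0)$ with $u_0 \in N_P(L_0) \setminus L_0$, then for $T = \rho(\la_0)(L_0) \leq Q$ the identity $\widehat{v_0} \circ \la_0 = \la_0 \circ \widehat{u_0}|_{L_0}^{L_0}$ forces $v_0 \in N_Q(T)$ and furnishes precisely the hypothesis of Axiom (II) for extending the isomorphism $\la_0|^T \colon L_0 \xrightarrow{\sim} T$ along $L_0 \trianglelefteq \langle L_0, u_0 \rangle$ and $T \trianglelefteq \langle T, v_0 \rangle$; composing the resulting morphism with the inclusion into $Q$ produces an element of $K_{P,Q}$ properly dominating $(L_0, \la_0)$, contradicting maximality.
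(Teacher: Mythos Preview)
Your proposal is correct and follows essentially the same line as the paper's proof: reduce to testing on $R \in \O\T$, construct a preimage via $(A,\al) \preceq (B,\be)$ and an orbit representative, and for injectivity use Lemma~\ref{L:max elements in T} together with Axiom~(II) and maximality to force the conjugating element into $L_0$. The only organizational difference is that you isolate the statement ``the $P$-projection of $\mathrm{Stab}_{Q\times P}(L_0,\la_0)$ equals $L_0$'' as a separate claim, whereas the paper merges this into the injectivity argument by working directly with the two solutions $\vp,\vp'$ and the element $x\in N_P(L)$ relating them; the underlying use of Axiom~(II) and maximality is identical.
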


\begin{proof}
It follows from \eqref{Eq:morphisms of Ccoprod} that it suffices to show that
\[
\O\T_{\amalg}(R,\P \boxtimes Q) \xto{\ (\pi_1{}_*,\pi_2{}_*) \ } \Mor_{\O\T}(R,P) \times \Mor_{\O\T}(R,Q)
\]
is a bijection for any $R \in \obj(\O\T)$.  Write $\pi=(\pi_1{}_*,\pi_2{}_*)$.

{\em Surjectivity of $\pi$:} Consider $[\vp] \in \Mor_{\O\T}(R,P)$ and $[\psi]
\in \Mor_{\O\T}(R,Q)$.  Set $A=\rho(\vp)(R)$.  Then $A \leq P$ and there exists
an isomorphism $\bar{\vp} \in \Mor_{\T}(R,A)$ such that $\vp=\iota_A^P \circ
\bar{\vp}$.

Set $\al = \psi \circ (\bar{\vp})^{-1} \in \Mor_{\T}(A,Q)$.  Then $(A,\al) \in
K_{P,Q}$.  Choose $(B,\be) \in K_{P,Q}^{\max}$ such that $(A,\al) \preceq
(B,\be)$.  There exists a unique $(L,\la) \in \K_{P,Q}^{\max}$ and some  $x \in
P$ and $y \in Q$ such that 
\[
(L,\la)=(y,x) \cdot (B,\be) = (xBx^{-1}, \widehat{y} \circ \be \circ
(\widehat{x}|_B^L)^{-1}).
\]
Set $\mu = (\widehat{x}|_B^L) \circ \iota_A^B \circ \bar{\vp} \in \Mor_{\T}(R,L)$.  
It defines a morphism $[\mu] \colon R \to P \boxtimes Q$ in $\O\T_{\amalg}$ via
the inclusion $(L,\la) \subseteq P \boxtimes Q$.  We claim that
$\pi([\mu])=([\vp],[\psi])$, completing the proof of the surjectivity of
$\pi$.  By definition of $\pi_1 \colon P \boxtimes Q \to P$ and $\pi_2 \colon P
\boxtimes Q \to Q$,

\begin{eqnarray*}
\pi_1{}_*([\mu]) &=& 
[\iota_L^P] \circ [\mu] = 
[\iota_L^P \circ (\widehat{x}|_{B}^{L}) \circ \iota_A^{B} \circ \bar{\vp} ] = 
[(\widehat{x}|_A^P) \circ \bar{\vp}]=
[(\widehat{x}|_P^P) \circ \vp]= [\vp] 
\\
\pi_2{}_*([\mu]) &=& 
[\la] \circ [\mu] = 
[\widehat{y}^{-1} \circ \la \circ \mu] =
[\widehat{y}^{-1} \circ \la \circ (\widehat{x}|_{B}^{L}) \circ \iota_A^{B} \circ \bar{\vp} ] = 
\\
&=&
[\be \circ \iota_A^{B} \circ \bar{\vp} ] = 
[\al \circ \bar{\vp} ] = [\psi].
\end{eqnarray*}

\noindent 
{\em Injectivity of $\pi$:} Suppose that $h,h' \in \O\T_{\amalg}(R,P \boxtimes
Q)$ are such that $\pi(h)=\pi(h')$.  From \eqref{Eq:morphisms of Ccoprod}
there are $(L,\la), (L',\la') \in \K_{P,Q}^{\max}$ and $\vp \in \Mor_{\T}(R,L)$
and $\vp' \in \Mor_{\T}(R,L')$ such that $h=[\vp]$ and $h'=[\vp']$ via the
inclusions $L,L' \subseteq P \boxtimes Q$.  The hypothesis $\pi(h)=\pi(h')$
then becomes $[\iota_L^P \circ \vp]=[\iota_{L'}^P \circ \vp']$ and $[\la \circ
\vp]=[\la' \circ \vp']$.  Thus,
\begin{eqnarray}\label{Eq:pboxtimesq1}
&& \iota_{L'}^P \circ \vp' = \widehat{x} \circ \iota_L^P \circ \vp \qquad \text{ for some $x \in P$}\\
\nonumber
&& \la' \circ \vp' = \widehat{y} \circ \la \circ \vp \qquad \text{for some $y \in Q$}.
\end{eqnarray}
Set $A=\rho(\vp)(R)$ and $A'=\rho(\vp')(R)$.  
There are factorizations $\vp=\iota_{A}^L \circ \bar{\vp}$ and
$\vp'=\iota_{A'}^{L'} \circ \bar{\vp'}$ for isomorphisms $\bar{\vp} \in
\Mor_{\T}(R,A)$ and $\bar{\vp'} \in \Mor_{\T}(R,A')$ in $\T$.  We get from
\eqref{Eq:pboxtimesq1} that $\iota_{A'}^P \circ \bar{\vp'} = \widehat{x}|_{A}^P
\circ \bar{\vp}$.  From this we deduce that $A'=xAx^{-1}$ and that
$\bar{\vp'}=\widehat{x}|_{A}^{A'} \circ \bar{\vp}$.  The second equation in
\eqref{Eq:pboxtimesq1} gives
\[
\la' \circ \iota_{A'}^{L'} = \widehat{y} \circ \la \circ \iota_{A}^L \circ
\widehat{x^{-1}}|_{A'}^{A}.
\]
We deduce that $(A',\la'|_{A'}) = (y,x) \cdot (A,\la|_A)$.  Clearly
$(A',\la'|_{A'}) \preceq (L',\la')$ and $(A,\la|_A) \preceq (L,\la)$ so Lemma
\ref{L:max elements in T} implies that $(L',\la')=(y,x) \cdot (L,\la)$.  Since
$(L,\la)$ and $(L',\la')$ are elements of $\K_{P,Q}^{\max}$ and are in the same
orbit of $Q\times P$ it follows that $(L,\la)=(L',\la')$.  In particular $x \in
N_P(L)$, and it follows from \eqref{Eq:pboxtimesq1} that $\vp'=\widehat{x}\circ
\vp$ and that $\la =\widehat{y} \circ \la \circ \widehat{x}^{-1}$ (since $\vp$
is an epimorphism in $\T$).  By Axiom (II) of Definition
\ref{D:transportersystem}, there exists an extension of $\la$ to a morphism
$\tilde{\la} \colon \langle L,x\rangle \to Q$ in $\T$.  Notice that $\langle
L,x\rangle \subseteq P$ so the maximality of $(L,\la)$ implies that $x \in L$.
Since $\vp'=\widehat{x} \circ \vp$ we deduce  $[\vp']=[\vp]$ namely $h=h'$ as
needed.  
\end{proof}

%-----------------------------------
\begin{defn}\label{D:pullback of PRQ}
Let $P \xto{f} R \xleftarrow{g} Q$ be morphisms in $\O\T$.  Let $U(f,g)$ be the
subobject of $P \boxtimes Q$ obtained by restriction of $P \boxtimes Q \colon
\K_{P,Q}^{\max} \xto{(L,\la) \mapsto L} \obj(\O\T)$ to the set $I$ of
those $(L,\la) \in \K_{P,Q}^{\max}$ such that $f \circ [\iota_L^P] = g \circ
[\la]$.
\end{defn}

%-----------
\begin{prop}\label{P:pullbacks in OT}
Let $\T$ be a transporter system with orbit category $\O\T$, and let $P
\xto{f} R \xleftarrow{g} Q$ be morphisms in $\O\T$. Then $(U(f,g),
\pi_1|_{U(f,g)}, \pi_{2}|_{U(f,g)})$ is the pullback of $P$ and $Q$ along $f$
and $g$ in $\O\T_{\amalg}$.  Moreover, the pullback of $P \xto{\iota_P^R} R
\xleftarrow{\iota_Q^R} Q$ is 
\[
\coprod_{x \in (Q \backslash R /P)_\T} Q^x \cap P
\]
where $x$ runs through representatives of the double cosets such that $Q^x\cap
P=x^{-1}Qx \cap P$ is an object of $\T$.
\end{prop}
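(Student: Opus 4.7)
The plan is to verify that $U(f,g)$ together with the projections $\pi_1,\pi_2$ enjoys the pullback universal property in $\O\T_{\amalg}$, and then to specialize to the case of inclusion morphisms and identify the indexing set $I$ via double cosets in $R$.

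For the first assertion, by \eqref{Eq:morphisms of Ccoprod} it suffices to test the universal property on objects of $\O\T$, so fix $T \in \O\T$ and morphisms $u \colon T \to P$, $v \colon T \to Q$ in $\O\T$ satisfying $f \circ u = g \circ v$. Proposition~\ref{P:products in OT} produces a unique $h \colon T \to P \boxtimes Q$ in $\O\T_{\amalg}$ with $\pi_1 \circ h = u$ and $\pi_2 \circ h = v$. By \eqref{Eq:morphisms of Ccoprod} again, $h$ factors through exactly one summand, so $h = [\mu]$ for some $\mu \in \Mor_{\T}(T,L)$ with $(L,\lambda) \in \K_{P,Q}^{\max}$. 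The hypothesis $f \circ u = g \circ v$ then becomes $f \circ [\iota_L^P] \circ [\mu] = g \circ [\lambda] \circ [\mu]$, and since every morphism of $\O\T$ is an epimorphism (as noted preceding Lemma~\ref{L:morphisms in OT mod p}), cancelling $[\mu]$ yields $f \circ [\iota_L^P] = g \circ [\lambda]$, i.e.\ $(L,\lambda) \in I$. Hence $h$ factors uniquely through $U(f,g) \hookrightarrow P \boxtimes Q$, and the uniqueness of $h$ as a map into $P \boxtimes Q$ transports to uniqueness as a map into $U(f,g)$.

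For the second assertion, take $f = [\iota_P^R]$ and $g = [\iota_Q^R]$. A pair $(L,\lambda) \in \K_{P,Q}^{\max}$ lies in $I$ precisely when $[\iota_L^R] = [\iota_Q^R \circ \lambda]$ in $\Mor_{\O\T}(L,R)$; unpacking the $\widehat{R}$-action defining the orbit category produces $y \in R$ with $\widehat{y} \circ \iota_L^R = \iota_Q^R \circ \lambda$ in $\T$. Functoriality of $\epsilon$ rewrites the left side as $\epsilon_{L,R}(y)$, and since $\iota_Q^R$ is a monomorphism in $\T$, this forces $yLy^{-1} \leq Q$ and $\lambda = \epsilon_{L,Q}(y)$. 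Thus $L \leq Q^y \cap P$ where $Q^y = y^{-1}Qy$, and whenever $Q^y \cap P$ is an object of $\T$, Axiom (II) of Definition~\ref{D:transportersystem} extends $\lambda$ to a pair with source $Q^y \cap P$, so the maximality of $(L,\lambda)$ in $K_{P,Q}^{\max}$ together with Lemma~\ref{L:max elements in T} forces $L = Q^y \cap P$.

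Finally, the $Q \times P$-action sends the pair associated to $y$ to the pair associated to $qyp^{-1}$, via the identity $\widehat{q} \circ \epsilon_{L,Q}(y) \circ \widehat{p}^{-1} = \epsilon_{pLp^{-1},Q}(qyp^{-1})$ supplied by functoriality of $\epsilon$. Hence the $Q \times P$-orbits on $I$ are in bijection with those double cosets $x \in Q\backslash R/P$ satisfying $Q^x \cap P \in \obj(\T)$, and selecting representatives produces the stated coproduct. The principal technical obstacle is this last identification: one has to combine Axiom (II), the monomorphism property of inclusion morphisms in $\T$, and the uniqueness of maximal extensions from Lemma~\ref{L:max elements in T}, while carefully tracking the transporter element $y$ through the functor $\epsilon$ so that the $Q \times P$-action translates into the double coset relation on $R$.
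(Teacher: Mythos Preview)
Your argument follows essentially the same route as the paper's: use the product $P\boxtimes Q$ from Proposition~\ref{P:products in OT}, cancel the epimorphism $[\mu]$ to land in $I$, and then in the inclusion case read off $\lambda=\widehat{y}|_L^Q$ and $L=Q^y\cap P$ from maximality, matching $I$ with the relevant double cosets.

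Two small points of phrasing. First, your appeal to Axiom~(II) to extend $\lambda$ to $Q^y\cap P$ is neither needed nor directly applicable (Axiom~(II) asks for an isomorphism and a normal inclusion). The extension you want is simply $\epsilon_{Q^y\cap P,\,Q}(y)$, which exists because $y(Q^y\cap P)y^{-1}\le Q$, and it restricts to $\lambda=\epsilon_{L,Q}(y)$ by functoriality of $\epsilon$; maximality then forces $L=Q^y\cap P$. Second, you write ``the $Q\times P$-orbits on $I$'', but $I\subseteq\K_{P,Q}^{\max}$ is already a set of orbit representatives; what you mean (and what the paper does) is that two elements of $I$ mapping to the same double coset are $Q\times P$-equivalent in $K_{P,Q}^{\max}$ and hence equal in $\K_{P,Q}^{\max}$. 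With these adjustments your proof matches the paper's.
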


\begin{proof}
It follows from \eqref{Eq:morphisms of Ccoprod} that in order to check the
universal property of $U=U(f,g)$ it suffices to test objects $T \in \O\T$.
Suppose that we are given morphisms $T \xto{[\vp]} P$ and $T \xto{[\psi]} Q$
which satisfy $f \circ[\vp]=g \circ[\psi]$.  We obtain $T \xto{([\vp],[\psi])}
P \boxtimes Q$ which factors $T \xto{\bar{h}} L \subseteq P\boxtimes Q$ for
some $(L,\la) \in \K_{P,Q}^{\max}$.  Then
\begin{eqnarray*}
&& f \circ \pi_1|_{L} \circ \bar{h} = f \circ \pi_1 \circ ([\vp],[\psi]) = f \circ [\vp] \\
&& g \circ \pi_2|_{L} \circ \bar{h} = g \circ \pi_2 \circ ([\vp],[\psi]) = g \circ [\psi].
\end{eqnarray*}
Since $\bar{h}$ is an epimorphism in $\O\T$ and since $f \circ [\vp]=g \circ
[\psi]$ by assumption, it follows that $f \circ \pi_1|_{L}=g \circ \pi_2|_{L}$
which is the statement $f \circ [\iota_{L}^P]= g \circ [\la]$.  This precisely
means that $(L,\la) \in I$ where $I$ is as in Definition~\ref{D:pullback
of PRQ}, hence $h=([\vp],[\psi])$ factors through $U$ and clearly $\pi_1 \circ
h=[\vp]$ and $\pi_2 \circ h=[\psi]$.  Since the inclusion $U \subseteq P
\boxtimes Q$ is a monomorphism in $\O\T_{\amalg}$, there can be only one
morphism $h \colon T \to U$ such that $\pi_1 \circ h=[\vp]$ and $\pi_2 \circ
h=[\psi]$.  This shows that $U=U(f,g)$ is the pullback.

Now assume we are given $P \xto{\iota} R \xleftarrow{\iota} Q$.  The indexing
set of the object $U(f,g)$ consists of $(L,\la) \in \K_{P,Q}^{\max}$ such that
$[\iota_L^R]=[\iota_Q^R \circ \la]$, namely $\iota_Q^R \circ \la =
\widehat{x}|_L^R$ for some $x \in N_R(L,Q)$, which is furthermore unique.
Since $\iota_Q^R$ is a monomorphism, this implies that $\la=\widehat{x}|_L^Q$.
Since $(L,\la)$ is maximal, $L=Q^x \cap P$.  We obtain a map
$U(\iota_P^R,\iota_Q^R) \to (Q\backslash R/P)_\T$ which sends $(L,\lambda)$ to
$PxQ$ with $x \in N_R(L,Q)$ described above.  This map is injective because if
$QxP=Qx'P$ are the images of $(L,\la)$ and $(L',\la')$ then $x'=qxp$ for some
$p \in P$ and $q \in Q$ and it follows that $L'=p^{-1}Lp$ and that
$\la=\hat{x}|_L^Q$ and $\la'=\hat{x'}|_{L'}^Q$ and therefore $\la = \hat{q}
\circ \la' \circ \hat{p}|_L^{L'}$, so $(L,\la)$ and $(L',\la')$ are in the same
orbit of $Q \times P$, hence they must be equal.  It is surjective since for
any $PxQ \in(Q\backslash R/P)_\T$ we obtain a summand in
$U(\iota_P^R,\iota_Q^R)$ which is equivalent in $K_{P,Q}$ to $(L,\lambda)$ with
$L=Q^x \cap P$ and $\lambda=\widehat{x}|_L^Q$.
\end{proof}

%------------------------------
%------------------------------
\subsection{The $\La$-functors}

Let $\Ga$ be a finite group and $M$ a (right) $\Ga$-module.  
Let $p$ be a fixed prime and let $\O_p(\Ga)$ be the full subcategory of the category of $\Ga$-sets whose objects are the transitive $\Ga$-sets whose isotropy groups are $p$-groups.  
Let $F_M \colon \O_p(\Ga)^\op \to \Ab$ be the functor which assigns $M$ to the free orbit $\Ga/1$ and $0$ to all orbits with non-trivial isotropy.  
Define (\cite[Definition 5.3]{JackowskiMcClureOliver1995})
\[
\La^*(\Ga, M) \defeq \underset{\O_p(\Ga)^{\op}}{\varprojlim{}^*} F_M \qquad \Big( =H^*(\O_p(\Ga)^\op;F_M) \Big).
\]
These functors have the following important properties.

\begin{lem}\label{L:Lambda properties}
Suppose that $M$ is a $\ZZ_{(p)}[\Ga]$-module.
\begin{itemize}
\item[(a)] If $C_{\Ga}(M)$ contains an element of order $p$ then
$\La^*(\Ga;M)=0$.
\item[(b)] If $\Ga/C_\Ga(M)$ has order prime to $p$ then $\La^*(\Ga;M)=0$ for
all $* \geq 1$.
\end{itemize}
\end{lem}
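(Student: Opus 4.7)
The plan is to prove (a) directly by constructing a contraction of the cobar complex computing $\La^*(\Ga;M)$ using the central $p$-element, and then to deduce (b) from (a) combined with the fact that group cohomology with $\ZZ_{(p)}$-coefficients vanishes in positive degrees for groups of order prime to $p$.

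For part (a), I would fix $x\in C_\Ga(M)$ of order $p$. Since $F_M$ vanishes on every orbit $\Ga/P$ with $P\ne 1$, the cobar complex computing $\La^*(\Ga;M)$ is indexed by chains $\Ga/1\to\Ga/P_1\to\cdots\to\Ga/P_n$ in $\O_p(\Ga)$ starting at $\Ga/1$, with coefficient group $M$ in each summand. Right multiplication by $x$ is a $\Ga$-equivariant self-map of $\Ga/1$ of order $p$ that acts trivially on $F_M(\Ga/1)=M$ since $x$ centralizes $M$; pre-composing the initial arrow of each chain with this self-map yields an order-$p$ automorphism $T$ of the cobar complex that acts as the identity on each coefficient group $M$ while permuting the indexing chains. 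I would then decompose the cobar complex into the $T$-fixed subcomplex---whose chains factor through $\Ga/\langle x^\ga\rangle$ for some $\ga\in\Ga$, and which is acyclic because $F_M$ vanishes on orbits with nontrivial stabilizer---and its complement, which is a free $\ZZ_{(p)}[\langle x\rangle]$-module in each degree and hence acyclic after averaging over $\langle x\rangle$ in $\ZZ_{(p)}$. This gives $\La^*(\Ga;M)=0$ in every degree.

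For part (b), set $C=C_\Ga(M)$, so $[\Ga:C]$ is a unit in $\ZZ_{(p)}$. If $p\nmid|\Ga|$, then $\O_p(\Ga)=\{\Ga/1\}$ and $\La^*(\Ga;M)$ reduces to the ordinary group cohomology $H^*(\Ga;M)$, which vanishes in positive degrees since $|\Ga|$ is invertible in $\ZZ_{(p)}$. Otherwise $p\mid|\Ga|$ and hence $p\mid|C|$, and Cauchy's theorem supplies an element of order $p$ inside $C_\Ga(M)$, so part (a) delivers the even stronger conclusion $\La^*(\Ga;M)=0$ in all degrees.

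The main obstacle is part (a): verifying the asserted acyclicity of both the $T$-fixed and complementary subcomplexes. The fixed piece requires an explicit identification of $T$-fixed chains with chains passing through orbits with nontrivial stabilizer, and the complementary piece requires that the $\langle x\rangle$-averaging is compatible with the face and degeneracy operators of the cobar complex.
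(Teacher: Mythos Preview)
Your reduction for part (b) is correct and matches the paper's case split: if $p\mid |C_\Ga(M)|$ then Cauchy's theorem gives an element of order $p$ in $C_\Ga(M)$ and (a) applies, while if $p\nmid |C_\Ga(M)|$ then the hypothesis forces $p\nmid |\Ga|$, the orbit category $\O_p(\Ga)$ collapses to the single object $\Ga/1$ with automorphism group $\Ga$, and $\La^*(\Ga;M)\cong H^*(\Ga;M)$ vanishes in positive degrees.

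Your argument for part (a), however, has a genuine gap: the operator $T$ you describe is \emph{not} a chain map on the cobar complex. The obstruction is the coface $d^0$. For a chain $\tau\colon c_0\xrightarrow{g}c_1\to\cdots\to c_{n+1}$ one has $(d^0\phi)_\tau=F_M(g)\bigl(\phi_{c_1\to\cdots\to c_{n+1}}\bigr)$. Your $T$ replaces $g$ by $g\circ r_x$ but leaves the tail $c_1\to\cdots$ untouched; composing in the other order, $d^0$ first strips off $c_0$ and then $T$ modifies the first arrow of the \emph{tail}. When $c_0=c_1=\Ga/1$ these disagree. Concretely, take $\Ga=\langle x\rangle\cong C_p$ acting trivially on $M$: at the chain $\Ga/1\xrightarrow{r_y}\Ga/1\xrightarrow{r_z}\Ga/1$ one finds
\[
(Td^0\phi)_\tau-(d^0T\phi)_\tau \;=\; \phi_{(\Ga/1\xrightarrow{r_z}\Ga/1)}-\phi_{(\Ga/1\xrightarrow{r_{xz}}\Ga/1)},
\]
which is nonzero for generic $\phi\in C^1$. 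So your ``$T$-fixed versus complement'' splitting is not a splitting of \emph{complexes}, and the $\langle x\rangle$-averaging on the free part cannot be carried out as stated.

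There is a second issue even if $T$ were a chain map: the $T$-fixed chains still begin at $\Ga/1$ and carry coefficient $F_M(\Ga/1)=M\neq 0$, so the sentence ``$F_M$ vanishes on orbits with nontrivial stabilizer'' does not by itself make the fixed subcomplex acyclic. The paper does not give a direct argument here but defers to Jackowski--McClure--Oliver, whose proof proceeds by reductions along normal subgroups contained in the kernel rather than by an averaging trick on the cobar complex.
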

\begin{proof}
Point (a) is \cite[Proposition 6.1(ii)]{JackowskiMcClureOliver1995}.  Point (b)
follows from \cite[Proposition 6.1(ii)]{JackowskiMcClureOliver1995} when $p$
divides $|C_\Ga(M)|$ and from \cite[Proposition 6.1(i) and
(iii)]{JackowskiMcClureOliver1995} when $p$ does not divide $|C_\Ga(M)|$.
\end{proof}

Observe that $\rho \colon \T \to \F$ reflects isomorphisms.  Hence the
isomorphism classes of objects of $\T$ and of $\O\T$ are $\F$-conjugacy
classes.

A functor $\Phi \colon \O\T^\op \to \Ab$ is called \emph{atomic} if there
exists $Q \in \obj(\T)$ such that $\Phi$ vanishes outside the $\F$-conjugacy
class of $Q$.  The fundamental property of $\Lambda$-functors is that they
calculate the higher limits of atomic functors:

\begin{lem}[{\cite[Lemma 4.3]{OliverVentura2007}}]
\label{L:La atomic functors}
Let $\T$ be a transporter system associated with a fusion system $\F$ over
$S$.  Let $\Phi \colon \O\T^\op \to \modl{\ZZ_{(p)}}$ be an atomic functor
concentrated on the $\F$-conjugacy class of $Q$.  Then there is a natural
isomorphism
\[
H^*(\O\T^{\op};\Phi) \cong \La^*(\Aut_{\O\T}(Q);\Phi(Q)).
\]
\end{lem}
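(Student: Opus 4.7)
The plan is to identify both higher limits with a common chain-level model exploiting the atomic structure of $\Phi$ and the axioms of transporter systems. Since $\Phi$ vanishes on objects outside the $\F$-conjugacy class of $Q$, the normalized cochain complex computing $H^*(\O\T^\op;\Phi)$ has nontrivial contributions only from chains $P_0 \to P_1 \to \cdots \to P_n$ in $\O\T$ with $P_n$ in that class. Choosing for each such $P_n$ an $\O\T$-isomorphism to $Q$ makes the whole complex $\Aut_{\O\T}(Q)$-equivariant via post-composition on the chosen terminal isomorphism, so that $H^*(\O\T^\op;\Phi)$ arises as the cohomology of a single $\Aut_{\O\T}(Q)$-equivariant model determined by $\Phi(Q)$ together with the $\O\T$-structure of morphisms into $Q$.

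The comparison step is to identify this model with the cochain complex computing $\La^*(\Aut_{\O\T}(Q); \Phi(Q))$ on $\O_p(\Aut_{\O\T}(Q))^\op$. For each morphism $[\vp] \colon P \to Q$ in $\O\T$, the canonical factorization $\vp = \iota_{\bar P}^Q \circ \bar\vp$ with $\bar P = \rho(\vp)(P) \leq Q$ and the extension Axiom~(II) allow one to track how the stabilizer data of $[\vp]$ under the $\Aut_{\O\T}(Q)$-action assembles into an invariant whose orbit structure matches that of the $p$-subgroup orbits of $\Aut_{\O\T}(Q)$. The Sylow Axiom~(I), applied both at the level of $\Aut_\T(S)$ and, via normalizers and Axiom~(II), at the level of $\Aut_\T(\bar P)$, ensures that the relevant subgroups on the $\Aut_{\O\T}(Q)$-side are $p$-subgroups. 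This yields a functor (or cofinal zigzag) from $\O_p(\Aut_{\O\T}(Q))^\op$ into the piece of $\O\T^\op$ supporting $\Phi$, carrying $F_{\Phi(Q)}$ to $\Phi$. A standard cofinality argument for higher limits of atomic functors then gives the asserted isomorphism, and naturality in $\Phi$ is immediate from the construction.

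The main obstacle is the simplicial-level matching between chains of $\O\T$-morphisms into $Q$, modulo the $\Aut_{\O\T}(Q)$-action, and chains in the nerve of $\O_p(\Aut_{\O\T}(Q))$. Verifying this matching requires a careful use of Axiom~(II): extensions of morphisms along normalizers are precisely what allow the $\Aut_{\O\T}(Q)$-orbits of $\O\T$-morphisms into $Q$ to be reorganized via subgroups of $Q$, and thus on the $p$-orbit-category side via $p$-subgroups of $\Aut_{\O\T}(Q)$. This orbit-indexing bijection, together with its compatibility with composition, is the technical heart of the proof.
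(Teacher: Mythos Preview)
The paper does not give its own proof of this lemma; it simply cites \cite[Lemma~4.3]{OliverVentura2007} and remarks afterward that Oliver and Ventura's argument uses only their Proposition~A.2 (a general statement computing higher limits of atomic functors on suitable EI-categories in terms of $\Lambda$-functors). So there is no in-paper proof to compare your proposal against.

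Your outline is broadly the strategy that underlies such results: restrict the bar complex for $\Phi$ to chains whose terminal object lies in the $\F$-class of $Q$, pass to a single $\Aut_{\O\T}(Q)$-equivariant model, and identify it with the defining complex for $\Lambda^*(\Aut_{\O\T}(Q);\Phi(Q))$. That is the right shape, and it is essentially how Proposition~A.2 of Oliver--Ventura is established. However, what you have written is a plan rather than a proof. The step you yourself flag as ``the technical heart''---the bijection between $\Aut_{\O\T}(Q)$-orbits of morphisms $P \to Q$ in $\O\T$ and transitive $\Aut_{\O\T}(Q)$-sets with $p$-group isotropy, compatibly with face maps---is asserted but not carried out. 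To make it precise you would need, beyond Axioms~(I) and~(II), also Axioms~(A2) and~(C) to identify the stabilizer of $[\vp]$ with the image of $\bar P = \rho(\vp)(P)$ under $\epsilon_{Q,Q}$, and then to check that every $p$-subgroup of $\Aut_{\O\T}(Q)$ arises this way up to conjugacy. None of this is wrong, but until those verifications are written down, the proposal is not yet a proof; it is a correct summary of what the cited reference does.
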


We remark that the result holds, in fact, for any functor $\Phi$ into the
category of abelian groups (indeed, the proof given by Oliver and Ventura only
uses \cite[Proposition A.2]{OliverVentura2007}).

Notice that $\rho \colon \T \to \F$ induces a functor $\bar{\rho} \colon \O\T
\to \O(\F)$.  We will write $\O\T^c$ for the full subcategory of $\T$ spanned
by $P \in \T$ which are $\F$-centric.

\begin{cor}\label{C:restrict to centrics in T}
Let $\T$ be a transporter category for $\F$.  Let $\bar{\Phi} \colon \O(\F)^\op
\to \modl{\ZZ_{(p)}}$ be a functor and set $\Phi=\bar{\Phi} \circ \bar{\rho}$.
Then $\Phi$ is a functor $\O\T^\op \to \modl{\ZZ_{(p)}}$ and let $\Psi$ be the
restriction of $\Phi$ to $\O\T^c$.  Then the restriction induces an isomorphism
\[
H^*(\O\T^\op,\Phi) \xto{ \ \cong  \ } H^*((\O\T^c)^\op;\Psi).
\]
\end{cor}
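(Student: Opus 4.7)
The plan is to reduce the comparison of higher limits to a collection of vanishing statements for $\La^*$ groups attached to non-$\F$-centric classes, via the standard atomic-filtration technique. The crucial observation is that since $\Phi = \bar{\Phi} \circ \bar{\rho}$ is pulled back from $\O(\F)$, the action of $\Aut_{\O\T}(Q)$ on $\Phi(Q)$ factors through $\Out_\F(Q)$, and for non-centric $Q$ there is a nontrivial $p$-group sitting in $\Aut_{\O\T}(Q)$ that acts trivially on $\Phi(Q)$.

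First I would set up a subfunctor $\Phi' \subseteq \Phi$ concentrated on the non-$\F$-centric $\F$-conjugacy classes. This is a subfunctor because the complementary class $\F^c$ is closed under passage to overgroups in $S$: if $[Q]$ is non-centric and $P \to Q$ is a morphism in $\O\T$, then some $\F$-conjugate $P'$ of $P$ satisfies $P' \leq Q$, and were $P$ centric then $P'$ would be centric, forcing $Q$ to be centric, contradiction. Ordering non-centric classes by increasing order of a representative produces a filtration $0 = \Phi'_0 \subseteq \Phi'_1 \subseteq \cdots \subseteq \Phi'_n = \Phi'$ whose successive quotients are atomic functors concentrated on single non-centric $\F$-conjugacy classes $[Q_i]$. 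By Lemma \ref{L:La atomic functors},
\[
H^*(\O\T^\op; \Phi'_i/\Phi'_{i-1}) \cong \La^*(\Aut_{\O\T}(Q_i); \Phi(Q_i)).
\]

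Second I would establish the vanishing of these $\La^*$ groups. Choose $Q_i$ fully $\F$-centralized in its $\F$-conjugacy class, so that $C_S(Q_i) > Z(Q_i)$. Axiom (B) of Definition \ref{D:transportersystem} gives $\rho_{Q_i,Q_i}(\epsilon_{Q_i,Q_i}(z)) = c_z = \id_{Q_i}$ for $z \in C_S(Q_i)$, so $\epsilon_{Q_i,Q_i}(C_S(Q_i)) \subseteq E(Q_i)$. Its image in $\Aut_{\O\T}(Q_i) = \Aut_\T(Q_i)/\epsilon_{Q_i,Q_i}(Q_i)$ is isomorphic to $C_S(Q_i)Q_i/Q_i \cong C_S(Q_i)/Z(Q_i)$, a nontrivial $p$-subgroup. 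Because the $\Aut_{\O\T}(Q_i)$-action on $\Phi(Q_i) = \bar{\Phi}(Q_i)$ factors through $\Aut_{\O(\F)}(Q_i) = \Out_\F(Q_i)$, and $c_z \in \Inn(Q_i)$ for $z \in C_S(Q_i)$, this $p$-subgroup acts trivially on $\Phi(Q_i)$. Lemma \ref{L:Lambda properties}(a) then yields $\La^*(\Aut_{\O\T}(Q_i); \Phi(Q_i)) = 0$. Iterating the long exact sequence associated to the filtration gives $H^*(\O\T^\op; \Phi') = 0$, hence the projection $\Phi \to \Phi/\Phi'$ induces an isomorphism $H^*(\O\T^\op; \Phi) \cong H^*(\O\T^\op; \Phi/\Phi')$.

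Finally I would identify $H^*(\O\T^\op; \Phi/\Phi')$ with $H^*((\O\T^c)^\op; \Psi)$ by running parallel atomic filtrations by $\F$-centric classes on both $\O\T$ and $\O\T^c$; on each atomic subquotient Lemma \ref{L:La atomic functors} delivers the same group $\La^*(\Aut_{\O\T}(Q); \Psi(Q))$, since the automorphism group and representation are unchanged under the full-subcategory inclusion $\O\T^c \subseteq \O\T$. The main technical obstacle I anticipate is verifying naturality: one must check that the abstract isomorphism built from the atomic filtration is induced by the tautological restriction map $H^*(\O\T^\op;-) \to H^*((\O\T^c)^\op; i^*(-))$, which amounts to unwinding that the identification of Lemma \ref{L:La atomic functors} is natural with respect to the inclusion of $\O\T^c$ into $\O\T$.
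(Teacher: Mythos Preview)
Your approach is essentially the same as the paper's: both peel off the non-centric $\F$-conjugacy classes one at a time using atomic functors, invoke Lemma~\ref{L:La atomic functors}, and kill each piece via Lemma~\ref{L:Lambda properties}(a) using a nontrivial $p$-element of $C_S(Q)Q/Q$ that maps to the identity in $\Out_\F(Q)$.

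The one place you make extra work for yourself is the final identification of $H^*(\O\T^\op;\Phi/\Phi')$ with $H^*((\O\T^c)^\op;\Psi)$. The paper bypasses your naturality concern entirely: it simply observes that the cosimplicial replacement (bar construction) computing $H^*(\O\T^\op;\Phi/\Phi')$ is \emph{literally equal} to the one computing $H^*((\O\T^c)^\op;\Psi)$. The point is that a summand in degree $n$ is indexed by a chain $P_0 \to \cdots \to P_n$ in $\O\T^\op$ with value $(\Phi/\Phi')(P_n)$; this vanishes unless $P_n$ is centric, and then every $P_i$ is centric since $\F^c$ is closed under overgroups, so the chain already lies in $\O\T^c$. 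This direct argument replaces your second atomic filtration and the associated naturality check.
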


\begin{proof}
Let $\Phi' \colon \O\T^\op \to \modl{\ZZ_{(p)}}$ be the functor obtained from
$\Phi$ by setting $\Phi'(Q)=0$ for all $Q\in \obj(\T \setminus \T^c)$ and
$\Phi'(Q)=\Phi(Q)$ otherwise.  This is a well defined functor since the
$\F$-centric subgroups are closed to overgroups. Since there is no
morphism in $\O\T$ from a centric object to a noncentric one, and since $\Psi =
\Phi'|_{(\O \T^c)^{\op}}$ there is an isomorphism of cochain complexes
$C^*(\O\T^{\op}, \Phi') \cong C^*((\O\T^c)^{\op}, \Psi)$ (cf. the description
of the bar resolution in \cite[Section~III.5.1]{AschbacherKessarOliver2011}),
and hence an isomorphism
\[
H^*(\O\T^\op,\Phi') \cong H^*((\O\T^c)^\op,\Psi).
\]
It remains to show that $H^*(\O\T^\op,\Phi) \cong H^*(\O\T^\op,\Phi')$.

Suppose that $Q \in \obj(\T \setminus \T^c)$ has minimal order.  Set
$M=\Phi(Q)$ and let $F_M \colon \O\T^\op \to \modl{\ZZ_{(p)}}$ be the induced
atomic functor.  The minimality of $Q$ implies that there is an injective
natural transformation $F_M \to \Phi$.  By possibly replacing it with an
$\F$-conjugate, we may assume that $Q$ is fully centralized in $\F$.  Since $Q$
is not $\F$-centric, choose some $x \in C_S(Q) \setminus Q$.  Its image in
$\Ga=\Aut_{\O\T}(Q)$ is a non-trivial element (since $x \notin Q$) of order
$p$-power.  It acts trivially on $\Phi(Q)$ because its image in $\Out_\F(Q)$ is
trivial (since the image of $C_S(Q)$ in $\Aut_\F(Q)$ is trivial) and because
$\Phi=\bar{\Phi} \circ \bar{\rho}$.  Lemma \ref{L:Lambda properties}(a) implies
that $\La^*(\Aut_{\O\T}(Q),\Phi(Q))=0$.  It follows from Lemma \ref{L:La atomic
functors} and the long exact sequence in derived limits associated with the
short exact sequence $0 \to F_M \to \Phi \to \Phi/F_M \to 0$ that
$H^*(\O\T^\op,\Phi) \cong H^*(\O\T^\op,\Phi/F_M)$.  But $\Phi/F_M$ is obtained
from $\Phi$ by annihilating the groups $\Phi(Q')$ for all $Q'$ in the
$\F$-conjugacy class of $Q$.  We may now apply the same process to $\Phi/F_M$
and continue inductively (on the number of $\F$-conjugacy classes in $\T
\setminus \T^c$) to show that $H^*(\O\T^\op,\Phi) \cong H^*(\O\T^\op,\Phi')$ as
needed.
\end{proof}

%------------------------------
\begin{proof}[{Proof of Theorem \ref{T:sharpness for punctured groups}}]
Let $\F$ be a saturated fusion system over $S$ which affords a punctured group
$\T$.  That is, $\T$ is a transporter system associated to $\F$ with object set
$\De$ containing all the non-trivial subgroups of $S$.

Let $\H^j \colon \O(\F)^\op \to \modl{\ZZ_{(p)}}$ be the functor
\[
\H^j \colon P \mapsto H^j(P;\FF_p)
\]
and let $M^j \colon \O\T^{\op} \to \modl{\ZZ_{(p)}}$ be the composite
$\O\T^{\op} \xto{\bar{\rho}^{\op}} \O(\F)^{\op} \xto{\H^j} \modl{\ZZ_{(p)}}$.
Our goal is to show that for every $j \geq 0$,
\[
H^i(\O(\F^c)^\op;\H^j)=0 \qquad \text{ for all $i \geq 1$}.
\]

Choose a fully normalized $P \in \obj(\T)$.  Since $N_S(P)$ is a Sylow
$p$-subgroup of $\Aut_\T(P)$, see \cite[Proposition 3.4(a)]{OliverVentura2007},
it follows that $C_S(P)$ is a Sylow $p$-subgroup of the kernel of $\Aut_\T(P)
\to \Aut_\F(P)$ and hence $C_S(P)/Z(P)$ is a Sylow $p$-subgroup of the kernel
of $\Aut_{\O\T}(P) \to \Out_\F(P)$.  Thus, if $P$ is $\F$-centric, then this
kernel has order prime to $p$, and so \cite[Lemma 1.3]{BrotoLeviOliver2003b} implies
the first isomorphism in
\[
H^*(\O(\F^c)^\op;\H^j) \cong H^*((\O\T^c)^\op;M^j) \cong H^*(\O\T^\op;M^j),
\]
while Corollary \ref{C:restrict to centrics in T} gives the second.  It
remains to show that $H^*(\O\T^\op;M^j)=0$ for all $j \geq 0$ and all $* \geq
1$.

Assume first that $j \geq 1$.  We will show that $M^j$ is a proto-Mackey
functor for $\O\T$ in this case.  The transfer homomorphisms give rise to a
(covariant) functor $\H^j_* \colon \O(\F) \to \modl{\ZZ_{(p)}}$ where $P
\mapsto H^j(P;\FF_p)$ and to any $\vp \in \F(P,Q)$ we assign $\tr(\vp) \colon
H^j(P;\FF_p) \to H^j(Q;\FF_p)$.  The composition $M^j_* := \H^j_*
\circ \bar{\rho}$ is a covariant functor $\O\T \to \modl{\ZZ_{(p)}}$.

Now, $\O\T$ satisfies \PBtc~ by Propositions \ref{P:products in OT} and
\ref{P:pullbacks in OT}.  Clearly, $M^j$ and $M^j_*$ have the same values on
objects; this is the first condition in Definition \ref{D:proto Mackey}.  The
transfer homomorphisms have the property that if $\vp \colon P \to Q$ is an
isomorphism then $\tr_{P}^{Q}(\vp) = H^j(\vp^{-1};\FF_p)$.  This is the second
condition in Definition \ref{D:proto Mackey}.  The factorisation of morphisms
in $\T$ as isomorphisms followed by inclusions imply that any pullback diagram
$P' \xto{f} R \xleftarrow{g} Q'$ in $\O\T$ is isomorphic to one of the form $P
\xto{[\iota_{P}^R]} R \xleftarrow{[\iota_Q^R]} Q$. If
$U=U([\iota_P^R],[\iota_Q^R])$ is the pullback (Definition \ref{D:pullback of
PRQ}), then by Proposition \ref{P:pullbacks in OT}, $U=\coprod_{x \in X}
Q^x \cap P$ where $x$ runs through a set $X = (Q \backslash R/P)_\T$ of
representatives of those double cosets $QxP$ with $x \in R$ and $Q^x \cap P \in \T$, 
namely  $Q^x \cap P \neq 1$ (because $\obj(\T)$ is the set of all non-trivial
subgroups of $S$).  Since $j\geq 1$ we have that $H^j(1;\FF_p)=0$ so
$\bigoplus_{x \in X} H^j(Q^x \cap P;\FF_p) = \bigoplus_{x \in Q \backslash R/P}
H^j(Q^x \cap P;\FF_p)$, where here $Q \backslash R/P$ is a full set of double
coset representatives.  Mackey's formula \cite[Proposition~9.5(iii)]{Brown1982}
then gives the commutativity of the diagram
\[
\xymatrix{
\bigoplus_{x \in X} H^j(Q^x \cap P;\FF_p)
\ar[rr]^(0.60){\sum_x \tr_{Q^x \cap P}^P \circ c_x}
& &
H^j(Q;\FF_p)
\\
H^j(P; \FF_p)
\ar[u]^{(\res_{Q^x \cap P}^P)_{x \in X}}
\ar[rr]_{\tr_P^R}
& &
H^j(R; \FF_p)
\ar[u]_{\res_Q^R}
}
\]
This shows that the third condition in Definition \ref{D:proto Mackey} also
holds and that $M^j$ is a proto-Mackey functor.  Now, Condition (B1) in
Proposition \ref{P:JM acyclicity}  clearly holds for $\O\T$ and (B2) holds by
Lemma \ref{L:morphisms in OT mod p}.  It follows that $H^i(\O\T^{\op};M^j)=0$
for all $i \geq 1$ as needed.

It remains to deal with the case $j=0$.  In this case $\H^0$ is the constant
functor with value $\FF_p$.  Thus, $\Out_\F(P)$ acts trivially on $\FF_p$ for
any $P \in \F^c$.  It follows from Lemma \ref{L:Lambda properties}(b) that if
$P=S$ then $\La^i(\Out_\F(S),\FF_p)=0$ for all $i>0$, and if $P<S$ then
$\Out_\F(P)$ contains an element of order $p$ so $\La^*(\Out_\F(P),\FF_p)=0$.
Now \cite[Proposition 3.2]{BrotoLeviOliver2003} together with a filtration of
$\H^0$ by atomic functors show that $\H^0$ is acyclic.
\end{proof}

\section{Punctured groups for $\F_{\Sol}(q)$}\label{S:sol}

The Benson-Solomon systems were predicted to exist by Benson \cite{Benson1994},
and were later constructed by Levi and Oliver \cite{LeviOliver2002,
LeviOliver2005}. They form a family of exotic fusion systems at the prime $2$
whose isomorphism types are parametrized by the nonnegative integers. Later,
Aschbacher and Chermak gave a different construction as the fusion system of an
amalgamated free product of finite groups \cite{AschbacherChermak2010}.  The
main result of this section is the following theorem. 

\begin{thm}\label{T:solq}
A Benson-Solomon system $\F_{\Sol}(q)$ over the $2$-group $S$ has a punctured
group if and only if $q \equiv \pm 3 \pmod{8}$.  If $q \equiv \pm 3 \pmod{8}$,
there is a punctured group $\L$ for $\F_{\Sol}(q)$ which is unique up to rigid
isomorphism with the following two properties: 
\begin{itemize}
\item[(1)] $C_\L(Z(S)) = \Spin_7(3)$, and
\item[(2)] $\L|_\Delta$ is a linking locality, where $\Delta$ is the set of
$\F$-subcentric subgroups of $S$ of $2$-rank at least $2$. 
\end{itemize} 
\end{thm}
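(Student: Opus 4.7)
Since $\F_{\Sol}(q) \cong \F_{\Sol}(3^{2^l})$ where $2^{l+3}$ is the $2$-part of $q^2-1$, the hypothesis $q \equiv \pm 3 \pmod{8}$ forces $l = 0$, so for the existence direction it suffices to construct a punctured group for $\F := \F_{\Sol}(3)$. I would begin from the subcentric linking locality $(\L_0, \F^s, S)$, which exists and is unique up to rigid isomorphism by Theorem~\ref{T:subcentric}(b). The nonidentity $\F$-conjugacy classes of subgroups outside $\F^s$ are exactly the rank-$1$ classes, because $C_\F(z)$ is non-constrained for the central involution $z \in Z(S)$, while every $\F$-normalizer of a subgroup of $2$-rank at least $2$ turns out to be constrained. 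The plan is then to apply Chermak descent (\cite[Section~5]{Chermak2013}) to expand $\L_0$ one rank-$1$ $\F$-class at a time. The key input is the finite group $K := \Spin_7(3)$, which realizes $C_\F(z)$ at the prime $2$ and will serve as $C_\L(z)$ in the output. Matching $K$ with the pieces of $\L_0$ sitting over common subcentric subgroups requires a Lie-theoretic compatibility check done with the notation set up in the appendix; the resulting $\L$ is a punctured group for $\F$ satisfying conditions (1) and (2).

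For the nonexistence direction, suppose $\L$ is a punctured group for $\F_{\Sol}(q)$ with $q \equiv \pm 1 \pmod{8}$. By Remark~\ref{R:pprimereduced}, I may replace $\L$ by $\L/O_{2'}(\L)$ and assume $\L$ is $2'$-reduced. Define $\theta(a) := O_{2'}(C_\L(a))$ for each involution $a \in S$. The conjugacy condition of Definition~\ref{D:SignalizerFunctor} is automatic from Lemma~\ref{L:LocalitiesProp}(b), and the balance condition reduces to ordinary $2'$-balance inside the finite group $C_\L(a)$, whose layer is of $\Spin_7$-type. Theorem~\ref{T:mainSignalizerFunctor} then exhibits $\widehat{\Theta} = \bigcup_a \theta(a)$ as a partial normal $2'$-subgroup, which by $2'$-reducedness must be trivial; in particular $O_{2'}(C_\L(z)) = 1$. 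The remaining, and main, obstacle is a group-theoretic classification: one must rule out every finite group $H = C_\L(z)$ satisfying $O_{2'}(H) = 1$ and $\F_{N_S(z)}(H) = C_\F(z) \cong \F_2(\Spin_7(q))$ in the case $q \equiv \pm 1 \pmod{8}$. The arithmetic hypothesis enters through the $2$-adic valuation of $q^2-1$ and the structure of maximal tori in $\Spin_7(q)$, which control the fusion among rank-$2$ elementary abelian subgroups of $S$ meeting $z$.

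For uniqueness, let $\L$ be a punctured group for $\F_{\Sol}(3)$ satisfying (1) and (2). Condition (2) asserts that $\L|_\Delta$ is a linking locality on $\Delta$, and by Theorem~\ref{T:subcentric}(a) this restriction is rigidly isomorphic to the unique such linking locality. Condition (1) then pins down $C_\L(Z(S))$ as the abstract group $\Spin_7(3)$, and its embedding into $\L$ is fixed by the fusion in $\F$ of a Sylow intersection $S \cap \Spin_7(3)$. The combined data determine $\L$ uniquely by the rigidity of Chermak descent, yielding the asserted uniqueness up to rigid isomorphism.
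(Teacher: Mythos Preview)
Your existence and uniqueness arguments are essentially the paper's: start from a linking locality on subcentric subgroups, then apply Chermak descent once with $Z = Z(S)$ and $M = \Spin_7(3)$ to obtain the punctured group; uniqueness comes from the uniqueness clause in \cite[Theorem~5.14]{Chermak2013} together with Theorem~\ref{T:subcentric}. One small correction: there is only \emph{one} rank-$1$ $\F$-class to add (the involution class), not several, and the paper actually starts from the linking locality on $\Delta = \{P \in \F^s : m_2(P) \geq 2\}$ rather than on all of $\F^s$, since what one can verify directly (Lemma~\ref{L:spin-link}) is that $N_H(P)$ is of characteristic $2$ for $P \in \Delta_Z$.

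The nonexistence direction, however, has a genuine gap. You frame the final step as a classification problem for the single group $H = C_\L(z)$, but that is not where the contradiction lives: $H \cong \Spin_7(q)$ itself is perfectly consistent with all the group-theoretic constraints you list. The obstruction is a \emph{locality-level} one, and it uses the rank-$3$ subgroups $E$ and $E'$ (not rank-$2$ subgroups). In $\F_{\Sol}(q)$ one has $\Aut_\F(E_\delta) = \Aut(E_\delta) \cong GL_3(2)$ for $\delta = \pm 1$ (Lemma~\ref{L:EE'F}), so $N_\L(E_\delta)/C_{N_\L(E_\delta)}(E_\delta) \cong GL_3(2)$. On the other hand, $O^{2'}(C_\L(E_\delta)) = C_H(E_\delta) = T_\delta\langle w \rangle$ with $T_\delta$ a maximal torus of $H$; hence $O_{2'}(T_\delta)$ is normal in $N_\L(E_\delta)$ and $N_\L(E_\delta) = N_\L(T_{\delta,S})$. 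Since the $C_2 \times S_4$ coming from $N_H(T_\delta)/T_\delta$ already acts faithfully on the $r$-torsion $T_{\delta,r}$ for each prime $r \mid q - \delta\epsilon$ (Lemma~\ref{L:EE'}), the full $GL_3(2)$ must act faithfully on $T_{\delta,r}$ as well. Running this for both $\delta = \pm 1$ gives a faithful $3$-dimensional $GL_3(2)$-representation over $\FF_r$ for every odd prime $r \mid q^2-1$, and the numerical Lemma~\ref{L:num} then forces $q \equiv 3 \pmod 8$. Without this interaction between the extra $\F_{\Sol}$-automorphisms of $E_\delta$ and the torus arithmetic inside $H$, there is nothing to rule out.
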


\subsection{Notation for $\Spin_7$ and $\Sol$}\label{SS:notation}
It will usually be most convenient to work with a Lie theoretic description of
$\Spin_7$. The notational conventions that we use in this section for algebraic
groups and finite groups of Lie type are summarized in Appendix~\ref{S:lie}. 

\subsubsection{The maximal torus and root system} 
Let $p$ be an odd prime, and set
\[
\ol{H} = \Spin_7(\ol{\FF}_p).
\]
Fix a maximal torus $\ol{T}$ of $\ol{H}$, let $X(\ol{T}) = \Hom(\ol{T},
\ol\FF_{p}^{\times}) \cong \ZZ^3$ be the character group (of algebraic
homomorphisms), and denote by $V = \RR \otimes_{\ZZ} X(\ol T)$ the ambient
Euclidean space which we regard as containing $X(\ol T)$. Let $\Sigma(\ol T)
\subseteq X(\ol T)$ be the set of $\ol T$-roots.  Denote a $\ol T$-root
subgroup for the root $\alpha$ by 
\[
\ol X_{\alpha} = \{x_{\alpha}(\lambda) \mid \lambda \in \ol \FF_p\}. 
\] 
As $\ol H$ is semisimple, it is generated by its root subgroups
\cite[Theorem~1.10.1(a)]{GLS3}.  We assume that the implicit parametrization
$x_{\alpha}(\lambda)$ of the root subgroups is one like that given by
Chevalley, so that the Chevalley relations hold with respect to certain signs
$c_{\alpha,\beta} \in \{\pm 1\}$ associated to each pair of roots
\cite[Theorem~1.12.1]{GLS3}. 

We often identify $\Sigma(\ol T)$ with the abstract root system
\[
\Sigma = \{\pm e_i \pm e_j, \pm e_i \mid 1 \leq i,j \leq 3\} \subseteq \RR^3
\]
of type $B_3$, having base $\Pi = \{\alpha_1,\alpha_2,\alpha_3\}$ with
\[
\alpha_1 = e_1-e_2, \quad \alpha_2 = e_2 - e_3, \quad \alpha_3 = e_3, 
\]
where the $e_i$ are standard vectors. Write $\Sigma^{\vee} = \{\alpha^{\vee}
\mid \alpha \in \Sigma\}$ for the dual root system, where $\alpha^\vee =
2\alpha/(\alpha,\alpha)$. 

Instead of working with respect to the $\alpha_i$, it is sometimes convenient
to work instead with a different set of roots $\{\beta_i\} \subseteq \Sigma$:
\[
\beta_1 = \alpha_1, \quad \beta_2 = \alpha_{1}+2\alpha_2+2\alpha_3 = e_1+e_2, \quad \beta_3 = \alpha_3.
\]
This is an orthogonal basis of $V$ with respect to the standard inner product
$(\,,\,)$ on $\RR^3$. An important feature of this basis is that for each $i$
and $j$, 
\begin{eqnarray}
\label{E:bibjZlin}
\Sigma \cap \{k\beta_i + l\beta_j \mid k,l \in \ZZ\} = \{\pm \beta_i, \pm
\beta_j\}, \end{eqnarray} a feature not enjoyed, for example, by an orthogonal
basis consisting of short roots. In particular, the $\beta_j$-root string
through $\beta_i$ consists of $\beta_i$ only. This implies via
Lemma~\ref{L:signs}(4), that the corresponding signs involving the $\beta_i$
that appear in the Chevalley relations for $\ol H$ are
\begin{eqnarray}
\label{E:bibj}
c_{\beta_i, \beta_j} = 1 \text{ if } i \neq j, \text{ and } c_{\beta_i,\beta_i} = -1. 
\end{eqnarray}

\subsubsection{The torus and the lattice of coroots}
We next set up notation and state various relations for elements of $\ol T$.
Let 
\[
h_{\alpha}(\lambda) \in \ol T \quad \text{and} \quad n_{\alpha}(\lambda) \in N_{\ol H}(\ol T)
\]
be as given in Appendix~\ref{S:lie} as words in the generators
$x_\alpha(\lambda)$. By Lemma~\ref{L:liebasic} and since $\ol H$ is of
universal type, there is an isomorphism $\ZZ\Sigma^\vee \otimes \ol\FF_p^\times
\to \ol T$ which on simple tensors sends $\alpha^\vee \otimes \lambda$ to
$h_{\alpha}(\lambda)$, and the homomorphisms $h_{\alpha_i} \colon \ol
\FF_p^\times \to \ol T$ are injective. In particular, as $\Pi^{\vee} =
\{\alpha_1^\vee, \alpha_2^\vee, \alpha_3^\vee\}$ is a basis for
$\ZZ\Sigma^{\vee}$, we have $\ol T = h_{\alpha_1}(\ol \FF_p^\times) \times
h_{\alpha_2}(\ol \FF_p^\times) \times h_{\alpha_3}(\ol \FF_p^\times)$.  Define
elements $z$ and $z_1 \in \ol T$ by  
\[
z_1 = h_{\alpha_1}(-1) \quad \text{ and } \quad z = h_{\alpha_3}(-1). 
\]
Thus, $z$ and $z_1$ are involutions.  Similar properties hold with respect to
the $\beta_i$'s.  Recall that $\beta_i = \alpha_i$ for $i = 1, 3$. Since
$\beta_2^\vee = \alpha_1^\vee + 2\alpha_2^\vee + \alpha_3^\vee$,
Lemma~\ref{L:liebasic}(3) yields 
\[
h_{\beta_2}(-1) = h_{\alpha_1}(-1)h_{\alpha_2}( (-1)^2 )h_{\alpha_3}(-1) = z_1z. 
\]
In particular, 
\begin{eqnarray}
\label{E:proddistinctinvs}
h_{\beta_1}(-1)h_{\beta_2}(-1)h_{\beta_3}(-1) = z_1z_1zz = 1. 
\end{eqnarray}
However, as the $\ZZ$-span of the $\beta_i^\vee$'s is of index $2$ in
$\ZZ\Sigma^\vee$ and every element of $\ol \FF_p^\times$ is a square, we still
have
\begin{eqnarray}
\label{E:hbetagenT}
\ol T = h_{\beta_1}(\ol \FF_p^{\times})h_{\beta_2}(\ol \FF_p^{\times})h_{\beta_3}(\ol \FF_p^{\times}).
\end{eqnarray}
So the $h_{\beta_i}(\ol \FF_p)^{\times}$ generate $\ol T$, but the product is
no longer direct.

\subsubsection{The normalizer of the torus and Weyl group}\label{SS:normtorus}
The subgroup
\[
\widehat W := \gen{n_{\alpha_1}(1),  n_{\alpha_2}(1), n_{\alpha_3}(1)} \leq N_{\ol H}(\ol T)
\]
projects onto the Weyl group 
\[
W = \gen{w_{\alpha_1}, w_{\alpha_2}, w_{\alpha_3}} \cong C_2 \wr S_3 \cong C_2 \times S_4
\]
of type $B_3$ in which the $w_{\alpha_i}$ are fundamental reflections.  Also,
$\widehat{W} \cap \ol T$ is the $2$-torsion subgroup $\{t \in \ol T \mid t^2 =
1\}$ of $\ol T$, see \cite[Remark~1.12.11]{GLS3}. A subgroup similar to
$\widehat W$ was denoted ``$W$'' in \cite[Lemma~4.3]{AschbacherChermak2010}. It
is sometimes called the Tits subgroup. 

Let 
\[
\gamma = c_{\alpha_1, \alpha_2+\alpha_3} \in \{\pm 1\},
\]
and fix a fourth root $i \in \FF_p^\times$ of $1$.  (This notation will
hopefully not cause confusion with the use of $i$ as an index.) Define elements
$w_0, \tau \in N_{\ol H}(\ol T)$ by
\[
w_0 = n_{\beta_1}(-\gamma)n_{\beta_2}(1)n_{\beta_3}(1) \quad \text{and} \quad \tau = n_{\alpha_2+\alpha_3}(1)h_{\beta_1}(-i)h_{\beta_2}(i)h_{\beta_3}(i). 
\]
It will be shown in Lemma~\ref{L:chevrels} that $w_0$ and $\tau$ are commuting
involutions and that $w_0$ inverts $\ol T$. 

\subsubsection{Three commuting $SL_2$'s}\label{SS:SL2}
Let 
\[
\ol L_{i} = \gen{\ol X_{\beta_i}, \ol X_{-\beta_i}},
\]
for $i = 1,2,3$. Thus, $\ol L_i \cong SL_2(\ol \FF_p)$ for each $i$ by the
Chevalley relations, again using that $\ol H$ is of universal type when $i =
3$. A further consequence of \eqref{E:bibjZlin} is that the Chevalley
commutator formula \cite[1.12.1(b)]{GLS3} yields
\[
[\ol L_i, \ol L_j] = 1 \,\,\, \text{ for all } \,\,\, i \neq j.
\]
For each $i$, $\ol L_i$ has unique involution $h_{\beta_i}(-1)$ which generates
the center of $\ol L_i$. By \eqref{E:proddistinctinvs}, the center of the
commuting product $\ol L_1 \ol L_2 \ol L_3$ is $\gen{z,z_1}$, of order $4$. By
\eqref{E:hbetagenT}, $\ol T \leq \ol L_1 \ol L_2 \ol L_3$. 

\subsubsection{The Steinberg endomorphism and $\Spin_7(q)$}\label{SS:Spin_7(q)}
We next set up notation for the Steinberg endomorphism we use to descend from
$\ol H$ to the finite versions. Let $q = p^a$ be a power of $p$.  Let $\epsilon
\in \{\pm 1\}$ be such that $q \equiv \epsilon \pmod{4}$, and let $k$ be the
$2$-adic valuation of $q-\epsilon$.  

The standard Frobenius endomorphism $\zeta$ of $\ol H$ is determined by its
action $x_{\alpha}(\lambda)^{\zeta} = x_{\alpha}(\lambda^p)$ on the root
groups, and so from the definition of the $n_{\alpha}$ and $h_{\alpha}$ in
\eqref{E:nalphahalpha}, also $n_{\alpha}(\lambda)^{\zeta} =
n_{\alpha}(\lambda^p)$ and $h_\alpha(\lambda)^\zeta = h_{\alpha}(\lambda^p)$.
Write $c_{w_0}$ conjugation map induced by $w_0$, as usual, and define 
\[
\sigma =
\begin{cases}
\zeta^a & \text{if } \epsilon = 1\\
\zeta^a c_{w_0} &\text{if } \epsilon = -1. 
\end{cases}
\]
Then $\sigma$ is a Steinberg endomorphism of $\ol H$ in the sense of
\cite[Definition~1.15.1]{GLS3}, and we set 
\[
H := C_{\ol H}(\sigma) = \Spin_7(q).
\]
Given that $w_0$ inverts $\ol T$, the action of $\sigma$ on $\ol T$ is given
for each $t \in \ol T$ by
\[
t^\sigma = t^{\epsilon q},
\]
and hence 
\[
C_{\ol T}(\sigma) = \{t \in \ol T \mid t^{\epsilon q} = t\} \cong (C_{q-\epsilon})^3. 
\]
Likewise,
\[
C_{\ol T}(\sigma c_{w_0}) \cong (C_{q + \epsilon})^3.
\]

Finally, let $\mu = \mu_q \in \ol \FF_p^{\times}$ be a fixed element of
$2$-power order satisfying $\mu^{\epsilon q} = -\mu$ and powering to the fourth
root $i$, and set
\[
c = h_{\beta_1}(\mu)h_{\beta_2}(\mu)h_{\beta_3}(\mu) \in \ol T.
\]

\subsubsection{A Sylow $2$-subgroup}\label{SSS:SylowSpin7}
We next set up notation for Sylow $2$-subgroups of $\ol H$ and $H$ along with
various important subgroups of them. Let
\[
\ol S = \ol T_{2^{\infty}}\widehat{W}_{\ol S},
\]
where $\ol T_{2^{\infty}}$ denotes the $2$-power torsion in $\ol T$ and where
$\widehat{W}_{\ol S} = \gen{n_{\alpha_1}(1), n_{\alpha_2+\alpha_3}(1),
n_{\alpha_3}(1)}$.  Set
\[
S = C_{\ol S}(\sigma).
\]
Define subgroups 
\[
Z < U < E < A \leq S
\]
by 
\[
Z =  \gen{z}, \quad U = \gen{z,z_1}, \quad  E = \{t \in T \mid t^2 = 1\}, \quad  \text{and } A = E\gen{w_0}.
\]
Then $Z = Z(S)$, $U$ is the unique four subgroup normal in $S$, $E$ is
elementary abelian of order $8$, and $A$ is elementary abelian of order $16$.  It will be shown
in Lemma~\ref{L:chevrels} that $w_0 \in S$, and hence $A \leq S$. 

We also write 
\[
T_S = T \cap S;
\]
thus, $T_S = O_2(T) \cong (C_{2^k})^3$ is the $2^k$-torsion in $\ol T$, a Sylow
$2$-subgroup of $T$.

\subsection{Conjugacy classes of elementary abelian subgroups of $\overline{H}$ and $H$}
\label{SS:spinelab}
We state and prove here several lemmas on conjugacy classes of elementary
abelian subgroups of $\ol H$ and $H$, and on the structure of various $2$-local
subgroups.  Much of the material here is written down elsewhere, for example in
\cite{LeviOliver2002} and \cite{AschbacherChermak2010}. Our setup is a little
different because of the emphasis on the Lie theoretic approach, so we aim to
give more detail in order to make the treatment here as self-contained as
possible. 

The first lemma is elementary and records several initial facts about the
elements we have defined in the previous section. Its proof is mainly an
exercise in applying the various Chevalley relations defining $\ol H$. 
\begin{lem}\label{L:chevrels}
Adopting the notation from \S\S\ref{SS:notation}, we have
\begin{itemize}
\item[(1)] $Z(\ol H) = Z = \gen{z}$; 
\item[(2)] the elements $w_0$ and $\tau$ are involutions in $N_{S}(\ol T)-\ol T$,
and $c \in T_S$ has order $2^k$, powering into $E-U$; 
\item[(3)] $w_0$ inverts $\ol T$; and
\item[(4)] $[w_0, \tau] = [c,\tau] = 1$. 
\end{itemize}
\end{lem}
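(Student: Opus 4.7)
The plan is to verify each item by a direct computation inside $\ol H$, using the Chevalley relations of Appendix~\ref{S:lie} together with the structural facts from Sections 4.1.3--4.1.5: the three commuting $SL_2$-subgroups $\ol L_i$ along the mutually orthogonal long roots $\beta_i$, the conjugation rule $h_\beta(t)^{n_\alpha(1)} = h_{w_\alpha \beta}(t)$, and, above all, the identity \eqref{E:proddistinctinvs} that $h_{\beta_1}(-1) h_{\beta_2}(-1) h_{\beta_3}(-1) = 1$. For (1), I would apply the commutator formula $[h_{\alpha_3}(-1), x_\beta(t)] = x_\beta\bigl(((-1)^{\langle \beta, \alpha_3^\vee\rangle}-1)t\bigr)$ and observe that for every root $\beta$ of $B_3$ the Cartan integer $\langle \beta, \alpha_3^\vee \rangle = 2(\beta, e_3)$ is even, so $z$ centralizes every root subgroup and hence all of $\ol H$. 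That $|Z(\ol H)| = 2$ is then the standard weight-lattice computation for the simply connected group of type $B_3$.

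For (2) and (3), I would argue as follows. Since the $\ol L_i$ pairwise commute and $n_{\beta_i}(\pm 1)^2 = h_{\beta_i}(-1)$, squaring $w_0$ telescopes to $h_{\beta_1}(-1) h_{\beta_2}(-1) h_{\beta_3}(-1) = 1$, and $w_0 \notin \ol T$ because its Weyl image $w_{\beta_1} w_{\beta_2} w_{\beta_3}$ is the longest element of $W(B_3)$, acting as $-1$ on $V$. This last observation simultaneously proves (3), since the conjugation action of $W$ on $\ol T$ is dual to its action on $X(\ol T)$. A similar squaring argument handles $\tau$: expand $\tau^2$ using $n_{\alpha_2+\alpha_3}(1)^2 = h_{\alpha_2+\alpha_3}(-1)$ and the torus-conjugation rule, and the fourth-root factors $\pm i$ in the definition are chosen so that the resulting product of $h_\beta(-1)$'s collapses via \eqref{E:proddistinctinvs}. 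For $c$, rewriting $c = h_{\alpha_1}(\mu^2) h_{\alpha_2}(\mu^2) h_{\alpha_3}(\mu^2)$ via $\beta_2^\vee = \alpha_1^\vee + 2\alpha_2^\vee + \alpha_3^\vee$ places $c$ in the direct product $h_{\alpha_1}(\ol\FF_p^\times) \times h_{\alpha_2}(\ol\FF_p^\times) \times h_{\alpha_3}(\ol\FF_p^\times)$; since $\mu$ has $2$-power order $2^{k+1}$, this yields the stated $2$-power order of $c$, with unique involutory power $z_1 \cdot h_{\alpha_2}(-1) \cdot z$, visibly lying in $E \setminus U$. That $c \in S$ follows from $\mu^{\epsilon q} = -\mu$ applied factor-by-factor and one more appeal to \eqref{E:proddistinctinvs}.

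For (4), both identities reduce to Weyl-level calculations on $\ol T$. Write $\tau = n_{\alpha_2+\alpha_3}(1)\, t$ with $t \in \ol T$; since $w_{\alpha_2+\alpha_3} = w_{e_2}$ exchanges $\beta_1^\vee$ and $\beta_2^\vee$ while fixing $\beta_3^\vee$, the symmetric shape of $c$ forces $n_{\alpha_2+\alpha_3}(1)$ to centralize $c$, and $t$ commutes with $c$ in the abelian torus, giving $[c, \tau] = 1$. For $[w_0, \tau] = 1$, (3) says $w_0$ inverts $t$, and at the Weyl level $w_0 w_{e_2} w_0 = w_{e_2}$ because $w_0 = -1$ preserves every line through the origin; after expanding the commutator, what remains is an equality in $\ol T$ that collapses using \eqref{E:proddistinctinvs} and $w_0^2 = 1$.

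The principal technical obstacle throughout is sign-tracking: squaring $\tau$ and conjugating $n$-elements by each other produce scatterings of $h_\beta(-1)$ factors coming both from $n_\beta(t) n_\beta(-t) = 1$ and from the signs $c_{\alpha, \beta}$ in the Chevalley commutator formula. The particular choice of $\gamma = c_{\alpha_1, \alpha_2+\alpha_3}$ in the definition of $w_0$, the fourth root $i$ in the definition of $\tau$, and the sign conventions \eqref{E:bibj} are precisely what make every stray sign collapse through \eqref{E:proddistinctinvs}. Once one commits to working throughout in the $\beta_i$-basis, the rest is essentially bookkeeping.
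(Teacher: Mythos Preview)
Your plan is correct and follows essentially the same route as the paper: centrality of $z$ via the even Cartan integers $\langle\alpha,\alpha_3\rangle$, $w_0^2=1$ from the pairwise commuting of the $n_{\beta_i}$ together with \eqref{E:proddistinctinvs}, $\tau^2=1$ by combining $n_{\alpha_2+\alpha_3}(1)^2=z$ with the interchange action of $w_{\alpha_2+\alpha_3}$ on $\beta_1,\beta_2$, and $[w_0,\tau]=1$ via the explicit sign computation hinging on the choice of $\gamma$. Your rewriting $c=h_{\alpha_1}(\mu^2)h_{\alpha_2}(\mu^2)h_{\alpha_3}(\mu^2)$ upfront is a small but genuine streamlining of the paper's treatment of the order of $c$, since it makes the direct-product argument and the identification of $c^{2^{k-1}}$ immediate.

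One omission: statement (2) asks for $w_0,\tau\in N_S(\ol T)$, not merely $N_{\ol H}(\ol T)$, so you must also verify that $w_0$ and $\tau$ are $\sigma$-fixed and lie in $\ol S$. You handle this for $c$ but not for $w_0$ or $\tau$. The paper checks it explicitly; for $\tau$ it actually invokes $[w_0,\tau]=1$, so logically (4) feeds back into this part of (2). The argument is short but not entirely automatic, especially the claim $w_0\in\ol S$, which the paper establishes by rewriting $w_0$ modulo $E$ as a word in the generators $n_{\alpha_1}(1),\,n_{\alpha_2+\alpha_3}(1),\,n_{\alpha_3}(1)$ of $\widehat W_{\ol S}$.
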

\begin{proof}
\textbf{(1)}: It is well known that $Z(\ol H)$ has order $2$.  We show here for
the convenience of the reader that the involution generating $Z(\ol H)$ is $z =
h_{\alpha_3}(-1)$.  We already observed in \S\S\ref{SS:normtorus} that $z$ is
an involution. For each root $\alpha \in \Sigma$, the inner product of $\alpha$
with $\alpha_3$ is an integer, and so $\gen{\alpha,\alpha_3} =
2(\alpha,\alpha_3) \in 2\ZZ$. By Lemma~\ref{L:liebasic}(1), $h_{\alpha_3}(-1)$
lies in the kernel of $\alpha$.  Thus, the centralizer in $\ol H$ of
$h_{\alpha_3}(-1)$ contains all root groups by
Proposition~\ref{P:subtorus-cent}, and hence $C_{\ol
H}(h_{\alpha_3}(-1)) = \ol H$.

\medskip
\noindent
\textbf{(2)}: We show that $w_{0}$ is an involution. 
Using equations \eqref{E:nn} and \eqref{E:bibj}, we see that 
\begin{eqnarray}
\label{E:nbetainbetaj}
\text{$[n_{\beta_i}(\pm 1), n_{\beta_j}(\pm 1)] = 1$ for each $i, j \in \{1,2,3\}$.} 
\end{eqnarray}
So $w_0^2 = 1$ by \eqref{E:n2} and \eqref{E:proddistinctinvs}. 

We next prove that $\tau$ is an involution. Recall 
\[
\tau = n_{\alpha_2+\alpha_3}(1)h_{\beta_1}(-i)h_{\beta_2}(i)h_{\beta_3}(i).
\]
First, note that $n_{\alpha_2+\alpha_3}(1)^2 = z$. To see this, use
\eqref{E:n2} to get $n_{\alpha_2+\alpha_3}(1)^2 = h_{\alpha_2+\alpha_3}(-1)$.
Then use $(\alpha_2+\alpha_3)^\vee = 2\alpha_2 + 2\alpha_3 = 2\alpha_2^\vee +
\alpha_3^\vee$ and Lemma~\ref{L:liebasic}(3) to get 
\[
n_{\alpha_2+\alpha_3}(1)^2 = h_{\alpha_2+\alpha_3}(-1) = h_{\alpha_2}(-1)^2h_{\alpha_3}(-1) = h_{\alpha_3}(-1) = z
\]
as desired.  Next, the fundamental reflection $w_{\alpha_2+\alpha_3}$
interchanges $\beta_1$ and $\beta_2$ and fixes $\beta_3$, so
$n_{\alpha_2+\alpha_3}(1)$ inverts $h_{\beta_1}(-i)h_{\beta_{2}}(i)$ by
conjugation and centralizes $h_{\beta_3}(i)$ by
\eqref{E:hn}. Hence,
\begin{align*}
\tau^2 &= n_{\alpha_2+\alpha_3}(1)^2(h_{\beta_1}(-i)h_{\beta_2}(i)h_{\beta_3}(i))^{n_{\alpha_2+\alpha_3}(1)}(h_{\beta_1}(-i)h_{\beta_2}(i)h_{\beta_3}(i))\\
       &= n_{\alpha_2+\alpha_3}(1)^2h_{\beta_3}(i)^2 = zz = 1.
\end{align*}

We show $c$ is of order $2^{k}$ and powers into $E-U$. Recall that $k$ is the
$2$-adic valuation of $q-\epsilon$, and that $C_{\ol T}(\sigma) =
(C_{q-\epsilon})^3$.  The latter has Sylow $2$-subgroup of exponent $2^{k}$.
But $c \in C_{\ol T}(\sigma)$ since 
\[
c^\sigma = h_{\beta_1}(\mu^{\epsilon q})h_{\beta_2}(\mu^{\epsilon
q})h_{\beta_3}(\mu^{\epsilon q}) =
h_{\beta_1}(-\mu)h_{\beta_2}(-\mu)h_{\beta_3}(-\mu)
\overset{\eqref{E:proddistinctinvs}}{=}
h_{\beta_1}(\mu)h_{\beta_2}(\mu)h_{\beta_3}(\mu) = c.
\]
So $c$ has order at most $2^k$. On the other hand,
\[
c^{2^{k-1}} = h_{\beta_1}(i)h_{\beta_2}(i)h_{\beta_3}(i). 
\]
As in \S\S\ref{SS:SL2}, we have $h_{\beta_2}(i) =
h_{\alpha_1}(i)h_{\alpha_2}(i^2)h_{\alpha_3}(i)$, and so 
\[
c^{2^{k-1}} = h_{\alpha_1}(-1)h_{\alpha_2}(-1)h_{\alpha_3}(-1). 
\]
Since $\ol H$ is of universal type and $U = \gen{h_{\alpha_1}(-1),
h_{\alpha_3}(-1)}$, it follows from Lemma~\ref{L:liebasic}(2) that
$c^{2^{k-1}} \in E-U$, and hence $c$ has order $2^k$ as claimed.
In particular, this shows $c \in T_S$. 

It remains to show that $w_0, \tau \in S$ in order to complete the proof of
(2). For each $\alpha \in \Sigma$, we have $[n_{\alpha}(\pm 1), \zeta] = 1$ by
\eqref{E:nalphahalpha}, while $[n_{\beta_i}(\pm 1), w_0] = 1$ for $i = 1,2,3$
by \eqref{E:nn} and \eqref{E:bibj}. Also, $h_{\beta_1}(\pm i)h_{\beta_2}(\pm
i)h_{\beta_3}(\pm i) \in E \leq H$ by \eqref{E:proddistinctinvs}. These points
combine to give $w_0 \in H$, $\tau^{\zeta} = \tau$, and $\tau \in \ol S$.  As
$[w_0,\tau] = 1$ by (4) below, we see $\tau \in H$, so indeed $\tau \in H \cap
\ol S = S$. Finally,
\[
n_{\beta_1}(1)n_{\beta_2}(\gamma)n_{\beta_3}(1) = n_{\beta_1}(1)n_{\beta_1}(1)^{n_{\alpha_2+\alpha_3}(1)}n_{\beta_3}(1) \in \ol S
\]
and this element represents the same coset modulo $E$ as $w_0$ does by
\eqref{E:n2} and \eqref{E:nbetainbetaj}. Since $E \leq S$, it follows that $w_0
\in S$.

\medskip
\noindent
\textbf{(3)}:
Since $\{\beta_1,\beta_2,\beta_3\}$ is an orthogonal basis of $V$, the image
$w_{\beta_1}w_{\beta_2}w_{\beta_3}$ in $W$ of $w_0$ acts as minus the identity
on $V$. In particular, it acts as minus the identity on the lattice of coroots
$\ZZ\Sigma^{\vee} \subseteq V$. This implies via Lemma~\ref{L:liebasic}(4) that
$w_0$ inverts $\ol T$, and so (3) holds. 

\medskip
\noindent
\textbf{(4)}:
Showing $[w_0,\tau] = 1$ requires some information about the signs appearing in
our fixed Chevalley presentation. First,
\[
\gen{\beta_1,\alpha_2+\alpha_3} = \frac{2(\alpha_1,\alpha_2+\alpha_3)}{(\alpha_2+\alpha_3,\alpha_2+\alpha_3)} = -2.
\]
So by Lemma~\ref{L:signs}(3), 
\[
c_{\beta_1,\alpha_2+\alpha_3}c_{\beta_2,\alpha_2+\alpha_3} = (-1)^{\gen{\beta_1,\alpha_2+\alpha_3}} = (-1)^{-2} = 1,
\]
and hence $c_{\beta_2, \alpha_2+\alpha_3} = \gamma \overset{\mathrm{def}}{=}
c_{\beta_1, \alpha_2+\alpha_3}$. The root string of $\alpha_2+\alpha_3 = e_2$
through $\beta_3 = e_3$ is $e_3-e_2, e_3, e_3+e_2$ so 
\[
c_{\beta_3,\alpha_2+\alpha_3} = (-1)^1 = -1
\]
by Lemma~\ref{L:signs}(4). So $n_{\alpha_2+\alpha_3}(1)$ inverts each of
$n_{\beta_1}(-\gamma)n_{\beta_2}(1)$ and $n_{\beta_3}(1)$, by \eqref{E:nn}.
Using $[w_0, n_{\alpha_2+\alpha_3}(1)] \in E$, $h_{\beta_1}(\pm
i)h_{\beta_2}(\pm i)h_{\beta_3}(\pm i) \in E$, and \eqref{E:nbetainbetaj}, we
thus have
\begin{align*}
[w_0,\tau] &= [w_0, h_{\beta_1}(-i)h_{\beta_2}(i)h_{\beta_3}(i)][w_0,n_{\alpha_{2}+\alpha_3}(1)]^{h_{\beta_1}(-i)h_{\beta_2}(i)h_{\beta_3}(i)}\\
           &= [w_0, h_{\beta_1}(-i)h_{\beta_2}(i)h_{\beta_3}(i)][w_0,n_{\alpha_{2}+\alpha_3}(1)]\\
           &= [w_0,n_{\alpha_{2}+\alpha_3}(1)]\\
           &= [n_{\beta_1}(-\gamma)n_{\beta_2}(1),n_{\alpha_{2}+\alpha_3}(1)]^{n_{\beta_3}(1)}[n_{\beta_3}(1),n_{\alpha_2+\alpha_3}(1)]\\
           &= (n_{\beta_1}(-\gamma)^2n_{\beta_2}(1)^2)^{n_{\beta_3}(1)}n_{\beta_3}(1)^2\\
           &= n_{\beta_1}(\gamma)^2n_{\beta_2}(-1)^2n_{\beta_3}(1)^2\\
           &= z_1z_1zz\\
           &= 1. 
\end{align*}
Finally, since $[c,n_{\alpha_2+\alpha_3}(1)] = 1$ by \eqref{E:hn}, we have $[c,\tau] = 1$. 
\end{proof}

For any group $X$ and nonnegative integer $r$, write $\EE_r(X)$ for the
elementary abelian subgroups of $X$ of order $2^r$ and $\EE_r(X,Y)$ for the
subset of $\EE_r(X)$ consisting of those members containing the subgroup $Y$.

We next record information about the conjugacy classes and normalizers of four
subgroups containing $Z$.
\begin{lem}
\label{L:omnibus23}
Let $\ol B = N_{\ol H}(U)$ and $B = N_H(U)$.  Write $\ol B^{\circ}$ for the
connected component of $\ol B$. 
\begin{itemize}
\item[(1)]
$\EE_2(\ol{H},Z) = U^{\ol{H}}$, and 
\[
\ol{B} = (\ol{L}_1\ol{L}_2\ol{L}_3)\gen{\tau} \quad \text{ and } \quad \ol B^{\circ} = C_{\ol H}(U) = \ol{L}_1\ol{L}_2\ol{L}_3,
\]
where $\tau$ interchanges $\ol{L}_1$ and $\ol{L}_2$ by conjugation and
centralizes $\ol{L}_3$. Moreover $Z(\ol B^\circ) = U$. 
\item[(2)]
$\EE_2(H,Z) = U^H$, and 
\[
B = (L_1L_2L_3)\gen{c,\tau} \text{ and } C_H(U) = (L_1L_2L_3)\gen{c}, 
\]
where $L_i = C_{\ol{L}_i}(\sigma)$, and where $c \in
N_{\ol{T}}(L_1L_2L_3)$ acts as a diagonal automorphism on each $L_i$.
\end{itemize}
\end{lem}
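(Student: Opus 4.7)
My plan for part (1) is to first identify $\ol B^\circ = C_{\ol H}(U)$ and then extend to $\ol B$ via its action on $U$. Since $U \leq \ol T$ is a finite $2$-subgroup of the maximal torus, Proposition~\ref{P:subtorus-cent} gives $C_{\ol H}(U) = \gen{\ol T, \ol X_\alpha : \alpha|_U \equiv 1}$. By Lemma~\ref{L:liebasic}(1), the vanishing condition amounts to $\gen{\alpha,\alpha_1}$ and $\gen{\alpha,\alpha_3}$ both being even, and a direct inspection of the $18$ roots of $\Sigma = B_3$ isolates exactly $\{\pm\beta_1,\pm\beta_2,\pm\beta_3\}$. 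Absorbing $\ol T$ via \eqref{E:hbetagenT} gives $C_{\ol H}(U) = \ol L_1\ol L_2\ol L_3$, and since $\ol H$ is of universal type, centralizers of semisimple elements are connected, so this equals $\ol B^\circ$.

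Next I analyze $\ol B/\ol B^\circ$ via its action on $U$. The induced map $\ol B \to \Aut(U) \cong S_3$ lands in the stabilizer of $z$ (since $Z = Z(\ol H)$ is characteristic), of order $2$. The element $\tau$, an involution in $N_{\ol H}(\ol T)$ by Lemma~\ref{L:chevrels}, acts on $\ol T$ modulo its torus factor via the Weyl reflection $w_{\alpha_2+\alpha_3}$ which swaps $\beta_1 \leftrightarrow \beta_2$ and fixes $\beta_3$; hence $z_1 = h_{\beta_1}(-1) \mapsto h_{\beta_2}(-1) = z_1z$ while $z$ is fixed, so $\tau$ realizes the nontrivial outer element, giving $\ol B = (\ol L_1\ol L_2\ol L_3)\gen{\tau}$. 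The same root-swap shows $\tau$ interchanges $\ol L_1$ and $\ol L_2$ and normalizes $\ol L_3$; pointwise centralization of $\ol L_3$ reduces to a Chevalley check on $x_{\pm\beta_3}(\lambda)$ where the sign from conjugation by $n_{\alpha_2+\alpha_3}(1)$ is cancelled by the torus factor $h_{\beta_3}(i)$. Finally $Z(\ol B^\circ) = Z(\ol L_1\ol L_2\ol L_3) = \gen{z,z_1} = U$ by \S\S\ref{SS:SL2}, and $\EE_2(\ol H, Z) = U^{\ol H}$ follows since any $V = \gen{z,v}$ has $v$ a non-central semisimple involution, conjugate into $E$, and $W$ acts transitively on $E \setminus Z$ by a direct computation on the three simple reflections.

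Part (2) is obtained by applying $C_{(-)}(\sigma)$ to part (1). Since $U$ is pointwise $\sigma$-fixed ($U \leq E \leq S$), $\ol B$ is $\sigma$-stable and $B = C_{\ol B}(\sigma) = N_H(U)$; each $\ol L_i$ is also $\sigma$-stable (for $\epsilon=1$ because $\zeta^a$ preserves root groups, for $\epsilon=-1$ because $w_0$ inverts each $\beta_i$), so $L_i = C_{\ol L_i}(\sigma)$. Lang's theorem for $\ol B^\circ$ then gives $B = B^\circ\gen{\tau}$ since $\tau \in S \leq H$. To compute $B^\circ = C_H(U)$, I use $\ol L_1\ol L_2\ol L_3 = (\ol L_1 \times \ol L_2 \times \ol L_3)/N$ with $N = \gen{(-1,-1,-1)}$ coming from \eqref{E:proddistinctinvs}; the $\sigma$-cohomology sequence yields $C_H(U)/L_1L_2L_3 \cong H^1(\sigma, N) \cong \ZZ/2$. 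The element $c$ is $\sigma$-fixed by the proof of Lemma~\ref{L:chevrels}(2); since $h_{\beta_i}(\mu)^\sigma = h_{\beta_i}(-\mu)$, one verifies $c \notin L_1L_2L_3$, and conjugation by $c$ on $\ol L_i$ equals conjugation by $h_{\beta_i}(\mu) \in \ol L_i \setminus L_i$, a diagonal automorphism of $L_i \cong SL_2(q)$. Hence $C_H(U) = (L_1L_2L_3)\gen{c}$ and $B = (L_1L_2L_3)\gen{c,\tau}$. For $\EE_2(H, Z) = U^H$, any $V \in \EE_2(H,Z)$ may be written $V = U^g$ with $g^{-1}\sigma(g) \in \ol B^\circ$ (the condition that $\sigma$ acts trivially on $V$); Lang on $\ol B^\circ$ supplies $b \in \ol B^\circ$ with $g^{-1}\sigma(g) = b^{-1}\sigma(b)$, so $gb^{-1} \in H$ and $V \in U^H$. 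The main obstacle I anticipate is the central-product Lang-Steinberg descent, namely matching the $\ZZ/2$ cokernel to the coset generated by $c$ and verifying that $c$ induces the diagonal (not inner) automorphism on each $L_i$; the parameter $\mu$ with $\mu^{\epsilon q} = -\mu$ is calibrated exactly for this, but the bookkeeping with signs and the relations among the $h_{\beta_i}$ needs care.
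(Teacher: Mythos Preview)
Your proposal is correct and follows essentially the same route as the paper. The one substantive difference is in establishing $C_{\ol H}(U)=C_{\ol H}(U)^\circ$: you invoke Steinberg's connectedness theorem for centralizers of semisimple elements in simply connected groups (applied to $C_{\ol H}(U)=C_{\ol H}(z_1)$), whereas the paper gives a self-contained count of $|N_{C_{\ol H}(U)}(\ol T)/\ol T|$ to show $C_{N_{\ol H}(\ol T)}(U)\le C_{\ol H}(U)^\circ$, while explicitly noting that Steinberg's theorem would also suffice. Your Lang--Steinberg/$H^1$ descent through the central product in part~(2) is exactly the content of the GLS3 references the paper cites, and your verification that $c$ represents the nontrivial coset and induces a diagonal automorphism on each $L_i$ matches the paper's. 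One small presentational point: Proposition~\ref{P:subtorus-cent} only describes $C_{\ol H}(U)^\circ$ directly, so your opening sentence should first identify the connected component and then upgrade to the full centralizer via Steinberg, rather than the reverse order you wrote.
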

\begin{proof}
Viewing $\ol H$ classically, an involution in $\ol{H}/Z$ has involutory
preimage in $\ol{H}$ if and only if it has $-1$-eigenspace of dimension $4$ on
the natural orthogonal module (see, for example,
\cite[Lemma~4.2]{AschbacherChermak2010} or
\cite[Lemma~A.4(b)]{LeviOliver2002}). It follows that all noncentral
involutions are $\ol{H}$-conjugate into $U$, and hence that all four subgroups
containing $Z$ are conjugate. Viewing $\ol H$ Lie theoretically gives another
way to see this: let $V$ be a four subgroup of $\ol{H}$ containing $Z$, and let
$v \in V-Z$.  By e.g. \cite[6.4.5(ii)]{Springer2009}, $v$ lies in a maximal
torus, and all maximal tori are conjugate. So we may conjugate in $\ol{H}$ and
take $v \in E$.  Using Lemma~\ref{L:omnibus45}(1) below for example,
$N_{\ol{H}}(\ol{T})/C_{N_{\ol{H}}(\ol{T})}(E) \cong S_4$ acts faithfully on $E$
and centralizes $Z$, so as a subgroup of $GL(E)$ it is the full stabilizer of
the chain $1 < Z < E$. This implies $N_{\ol H}(\ol T)$ acts transitively on the
nonidentity elements of the quotient $E/Z$, so $v$ is $N_{\ol{H}}(T)$-conjugate
into $U$.

We next use Proposition~\ref{P:subtorus-cent} to compute $\ol B$.  Recall that 
\[
U = \gen{z,z_1} = \gen{h_{\alpha_3}(-1), h_{\alpha_1}(-1)}
\]
and that $z = h_{\alpha_3}(-1)$ is central in $\ol H$ by
Lemma~\ref{L:chevrels}(1). So $C_{\ol H}(U) = C_{\ol H}(h_{\alpha_1}(-1))$.  By
Proposition~\ref{P:subtorus-cent} and inspection of $\Sigma$,
\begin{align*}
C_{\ol{H}}(U)^\circ &= \gen{\ol{T}, \ol{X}_\alpha \mid \text{ $\gen{\alpha,\alpha_1}$ is even}}\\ 
                    &= \gen{\ol{T}, \ol{X}_{\pm \alpha} \mid \alpha \in \{\beta_1, \beta_2, \beta_3\}}. 
\end{align*}
Further, $\ol{T} \leq \ol{L}_1\ol{L}_2\ol{L}_3$ by \eqref{E:hbetagenT}, so 
\begin{eqnarray}
\label{E:conncomp}
C_{\ol H}(U)^\circ = \gen{\ol X_{\beta_i}, \ol X_{-\beta_i} \mid i \in \{1,2,3\}} = \ol L_1\ol L_2\ol L_3
\end{eqnarray}
as claimed.  

We next prove that $C_{\ol{H}}(U)$ is connected. Since $C_{\ol H}(U) = C_{\ol
H}(z_1)$, this follows directly from a theorem of Steinberg to the effect that
the centralizer of a semisimple element in a simply connected reductive group
is connected, but it is possible to give a more direct argument in this special
case. By Proposition~\ref{P:subtorus-cent},
\[
C_{\ol{H}}(U) = C_{\ol{H}}(U)^{\circ}C_{N_{\ol{H}}(\ol{T})}(U),
\]
and we claim that $C_{N_{\ol{H}}(\ol{T})}(U) \leq C_{\ol{H}}(U)^\circ$.  By
\eqref{E:conncomp}, $N_{C_{\ol{H}}(U)^{\circ}}(\ol{T})/\ol{T}$ is elementary
abelian of order $8$. On the other hand, $C_{N_{\ol{H}}(\ol{T})}(U)/\ol{T}$
stabilizes the flag $1 < Z < U < E$, and so induces a group of transvections on
$E$ of order $4$ with center $Z$ and axis $U$.  The element $w_0$ of
$N_{\ol{H}}(\ol{T})$ inverts $\ol{T}$ and is trivial on $E$ by
Lemma~\ref{L:chevrels}(3). It follows that $|N_{C_{\ol{H}}(U)}(\ol{T})/\ol{T}|
= |N_{C_{\ol{H}}(U)^{\circ}}(\ol{T})/\ol{T}|$, and so $N_{C_{\ol H}(U)}(\ol T)
= N_{C_{\ol H}(U)^\circ}(\ol T)$.  Thus, 
\[
C_{N_{\ol{H}}(\ol{T})}(U) = N_{C_{\ol H}(U)}(\ol T) = N_{C_{\ol H}(U)^{\circ}}(\ol T) \leq C_{\ol{H}}(U)^\circ,
\]
completing the proof of the claim. By \eqref{E:conncomp}
\begin{eqnarray}
\label{E:CHU}
C_{\ol H}(U) = \ol L_1 \ol L_2 \ol L_3.
\end{eqnarray}

For each $\lambda \in \ol\FF_p$, we have
\begin{align*}
x_{\beta_3}(\lambda)^\tau &= x_{\beta_3}(\lambda)^{n_{\alpha_2+\alpha_3}(1)h_{\beta_1}(-i)h_{\beta_2}(i)h_{\beta_3}(i)}\\
                          &= x_{\beta_3}(-\lambda)^{h_{\beta_3}(i)}\\
                          &= x_{\beta_3}(i^{\gen{\beta_3,\beta_3}}(-\lambda))\\
                          &= x_{\beta_3}(i^2(-\lambda))\\
                          &= x_{\beta_3}(\lambda)
\end{align*}
Similarly, $x_{-\beta_3}(\lambda)^\tau = x_{-\beta_3}(i^{-2}(-\lambda)) =
x_{-\beta_3}(\lambda)$. So as $\ol L_3 = \gen{x_{\pm \beta_3}(\lambda)}$, we
have $[\ol L_3, \tau] = 1$.  Finally, since $w_{\alpha_2+\alpha_3}$
interchanges $\beta_1$ and $\beta_2$, and since $\ol T$ normalizes all root
groups, $\tau$ interchanges $\ol L_1$ and $\ol L_2$. In particular, $\tau$
interchanges the central involutions $h_{\beta_1}(-1) = z_1$ and
$h_{\beta_2}(-1) = zz_1$ of $\ol L_1$ and $\ol L_2$. This shows $\tau$ acts
nontrivially on $U$, and hence
\[
\ol B = (\ol{L}_1\ol{L}_2\ol{L}_3)\gen{\tau},
\]
completing the proof of (1). 

By (1), $C_{\ol H}(U)$ is connected, so \cite[Theorem~2.1.5]{GLS3} applies to
give $\EE_2(H,Z) = U^H$.  Let $L_i = C_{\ol{L}_i}(\sigma)$ for $i = 1,2,3$, and
set $B^\circ = L_1L_2L_3 \leq H$.  Since $w \in N_H(U)-C_H(U)$, we have
$C_{H}(U) = C_{\ol{B}^{\circ}}(\sigma)$. Let $\tilde{B}$ denote the direct
product of the $\ol{L}_i$, and let $\tilde{\sigma}$ be the Steinberg
endomorphism lifting $\sigma|_{\ol{B}^\circ}$ along the isogeny $\tilde{B} \to
\ol{B}^\circ$ given by quotienting by $\gen{(-1,-1,-1)}$  (see, e.g.
\cite[Lemma~2.1.2(d,e)]{GLS3}). Then $C_{\tilde{B}}(\tilde{\sigma}) = L_1
\times L_2 \times L_3$. So by \cite[Theorem~2.1.8]{GLS3} applied with the pair
$\tilde{B}$, $\gen{(-1,-1,-1)}$ in the role of $\ol K$, $\ol Z$, we see that
$B^\circ$ is of index $2$ in $C_H(U)$ with $C_H(U) = B^\circ(C_H(U) \cap \ol T)
= B^\circ T$. The element $c = h_{\beta_1}(\mu)h_{\beta_2}(\mu)h_{\beta_3}(\mu)
\in T$ lifts to an element $\tilde c \in \tilde B$ with $[\tilde c, \tilde
\sigma] = (-1,-1,-1)$ by definition of $\mu$, and so $c \in C_H(U)-B^\circ$ by
\cite[Theorem~2.1.8]{GLS3}. Finally as each $L_i$ is generated by root groups
on which $c$ acts nontrivially, $c$ acts as a diagonal automorphism on each
$L_i$. 
\end{proof}

Next we consider the $H$-conjugacy classes of elementary abelian subgroups of
order $8$ which contain $Z$. 

\begin{lem}\label{L:omnibus45}
The following hold. 
\begin{itemize}
\item[(1)] $N_{\ol{H}}(E) = N_{\ol{H}}(\ol{T})$ and $C_{\ol{H}}(E) = \ol{T}\gen{w_0}$. 
\item[(2)] $N_{H}(E) = N_H(T)$ and $C_{H}(E) = T\gen{w_0}$. 
\item[(3)] $N_{\ol H}(\ol T)/\ol T \cong C_2 \times S_4 \cong N_H(T)/T$. 
\end{itemize}
\end{lem}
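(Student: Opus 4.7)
I would prove the three parts in turn. For (1), the strategy is to first identify $C_{\ol H}(E)^\circ$ and then use that the group's centralizer and normalizer in $\ol H$ must both normalize $C_{\ol H}(E)^\circ$. By Proposition~\ref{P:subtorus-cent}, a root subgroup $\ol X_\alpha$ lies in $C_{\ol H}(E)^\circ$ exactly when $\alpha$ vanishes on the generators $h_{\alpha_j}(-1)$ of $E$; by Lemma~\ref{L:liebasic}(1) this is the condition $\gen{\alpha,\alpha_j}\equiv 0 \pmod 2$ for $j=1,2,3$, and a quick inspection of the nine positive roots of $B_3$ shows that no root satisfies it. Hence $C_{\ol H}(E)^\circ = \ol T$, from which $N_{\ol H}(E)\subseteq N_{\ol H}(\ol T)$ follows, the reverse inclusion being immediate since $E$ is the $2$-torsion in $\ol T$ and thus characteristic in $\ol T$. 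The inclusion $\ol T\gen{w_0}\subseteq C_{\ol H}(E)$ is clear because $w_0$ inverts $\ol T$ by Lemma~\ref{L:chevrels}(3) and $E$ has exponent $2$; the reverse is the delicate part.

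To prove the reverse inclusion I pass to $W = N_{\ol H}(\ol T)/\ol T$ and study the image $I$ of $W$ in $GL(E)\cong GL_3(\FF_2)$. Since $Z=Z(\ol H)$ is fixed pointwise by $W$, $I$ lies in $\mathrm{Stab}(Z)\cong S_4$ of order $24$, and since $\bar w_0$ is in the kernel of $W\to I$, we have $|I|\le 24$, so it suffices to prove $|I|=24$. I would do this in two independent steps. First, the composite $W\to \mathrm{Stab}(Z)\to GL(E/Z)\cong S_3$ is surjective: by Lemma~\ref{L:omnibus23}(1), any $4$-subgroup $V$ of $\ol H$ with $Z\leq V$ is $\ol H$-conjugate to $U$, and because $\ol H$ is simply connected, Steinberg's theorem on centralizers of semisimple elements makes $C_{\ol H}(V)$ connected, so conjugacy of maximal tori inside $C_{\ol H}(V)$ lets one adjust the conjugator to lie in $N_{\ol H}(\ol T)$; thus $W$ is transitive on nonzero cosets in $E/Z$. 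Second, $I$ meets $V_4 := \ker(\mathrm{Stab}(Z)\to GL(E/Z))\cong\Hom(E/Z,Z)$ in all of $V_4$: the image of $w_{\alpha_3}$ is a nontrivial element of $I\cap V_4$, since (by the $W$-action on $X_*(\ol T)$) $w_{\alpha_3}$ sends $\alpha_2^\vee$ to $\alpha_2^\vee+\alpha_3^\vee$ while fixing $\alpha_1^\vee$ and $\alpha_3^\vee$ modulo $2X_*(\ol T)$, hence acts as a nontrivial transvection with direction $z$; since the $S_3$-action on $V_4\setminus\{0\}$ is transitive and $I$ surjects onto $S_3$, the whole of $V_4$ lies in $I$. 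Combining the two steps gives $|I|\ge |V_4|\cdot|S_3|=24$, hence $|C_W(E)|=2$ and $C_W(E)=\gen{\bar w_0}$, completing (1).

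Part (2) is then routine by intersection with $H=C_{\ol H}(\sigma)$. The inclusion $N_H(T)\subseteq N_H(E)$ uses that $E=T[2]$ is characteristic in the abelian group $T$; conversely, if $g\in N_H(E)$ then $g\in N_{\ol H}(\ol T)$ by (1), and since $g$ commutes with $\sigma$ it normalizes $\ol T^\sigma = T$. For the centralizer, $C_H(E)=C_{\ol H}(E)\cap H=(\ol T\sqcup w_0\ol T)\cap H=T\sqcup w_0T=T\gen{w_0}$, where we use $w_0\in H$ from Lemma~\ref{L:chevrels}(2) together with the fact that $w_0 t \in H$ forces $t\in T$ since $w_0^\sigma=w_0$. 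For (3), the first isomorphism is the standard identification of the $B_3$ Weyl group $W(B_3)=C_2\wr S_3$ with $C_2\times S_4$ (the central $C_2$ being $\gen{-1}$). For the second, Lang--Steinberg gives $H^1(\sigma,\ol T)=0$, so taking $\sigma$-fixed points on $1\to\ol T\to N_{\ol H}(\ol T)\to W\to 1$ yields $N_H(T)/T\cong W^\sigma$; and $\sigma$ acts trivially on $W$, because the Frobenius part $\zeta^a$ acts trivially as $W$ is $\mathbb Z$-defined, and in the twisted case the extra $c_{w_0}$ acts as conjugation by $\bar w_0=-1\in Z(W)$, which is trivial. Hence $W^\sigma=W\cong C_2\times S_4$.

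The main obstacle is the determination of $|I|$ in part (1): neither surjectivity onto $GL(E/Z)$ nor the covering of $V_4$ is quite automatic, and the cleanest route seems to combine the global transitivity provided by Lemma~\ref{L:omnibus23}(1) (together with Steinberg's connectedness of $C_{\ol H}(V)$ in simply connected groups) with the explicit observation that $w_{\alpha_3}$ contributes a nontrivial transvection in $V_4$. Everything else reduces to straightforward bookkeeping with the Chevalley presentation and Lang--Steinberg.
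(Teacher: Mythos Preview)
Your approach to part~(1) differs from the paper's: the paper simply invokes Proposition~\ref{P:faithful}, a general result showing that for a simply connected simple group not of type $C$ one has $C_W(T_2)=\gen{-1}$, proved by a quadratic-action and lattice-decomposition argument. Your direct computation of the image $I$ of $W$ in $GL(E)$ is a legitimate alternative and more concrete for $B_3$, and parts~(2) and~(3) match the paper's arguments essentially verbatim.

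There is, however, a genuine gap in your count for $|I|$. Your Step~1 argument (transitivity of $N_{\ol H}(\ol T)$ on the three four-subgroups $Z\le V\le E$) only shows that the image of $W$ in $GL(E/Z)\cong S_3$ is a \emph{transitive} subgroup, hence contains $C_3$; it does not by itself give surjectivity onto $S_3$. Step~2 then gives $V_4\subseteq I$ (and this still goes through, since $C_3$ is already transitive on $V_4\setminus\{0\}$). Combining the two you obtain only $|I|\ge |V_4|\cdot|C_3|=12$, not $24$ as claimed; a priori $I=A_4$ and $|C_W(E)|=4$ are not excluded. Two easy fixes: either observe that $C_W(E)\trianglelefteq W\cong C_2\times S_4$ contains the central $C_2$, and the only normal subgroups of $C_2\times S_4$ containing that factor have orders $2,8,24,48$, so $|I|\ge 12$ forces $|C_W(E)|=2$; or exhibit an element of order $2$ in the image of $W$ in $GL(E/Z)$, for instance $w_{\alpha_2}$, which sends $\alpha_1^\vee\mapsto\alpha_1^\vee+\alpha_2^\vee$, $\alpha_2^\vee\mapsto-\alpha_2^\vee$, $\alpha_3^\vee\mapsto 2\alpha_2^\vee+\alpha_3^\vee$, hence acts nontrivially of order $2$ on $E/Z$.

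One further remark on logical dependencies: you cite Lemma~\ref{L:omnibus23}(1), whose Lie-theoretic proof in the paper actually uses Lemma~\ref{L:omnibus45}(1) as a forward reference. The paper also gives an independent classical argument for $\EE_2(\ol H,Z)=U^{\ol H}$, so there is no genuine circularity, but you should be explicit that you are invoking that classical route.
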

\begin{proof}
Given that $w_0$ inverts $\ol T$ (Lemma~\ref{L:chevrels}(3)) part (1) is proved
in Proposition~\ref{P:faithful}.  

By (1),
\[
N_H(E) = N_{\ol H}(E) \cap H = N_{\ol H}(\ol T) \cap H = N_H(\ol T),
\]
while $N_H(\ol T) \leq N_H(H \cap \ol T) = N_H(T)$. These combine to show the
inclusion $N_{H}(E) \leq N_H(T)$. But $N_H(T) \leq N_H(E)$ since $E =
\Omega_1(O_2(T))$ is characteristic in $T$. Next, by
(1), 
\[
C_H(E) = C_{\ol H}(E) \cap H = \ol T\gen{w_0} \cap H = (\ol T \cap H)\gen{w_0}
\]
with the last equality as $w_0 \in H$ by Lemma~\ref{L:chevrels}(2). This shows
$C_H(E) = T\gen{w_0}$. 

For part (3) in the case of $\ol H$, see Section~\ref{SS:normtorus}.  We show
part (3) for $H$. First, by (1) and (2), 
\begin{eqnarray}
\label{E:12cons}
N_{H}(\ol T) = C_{N_{\ol H}(\ol T)}(\sigma) = C_{N_{\ol H}(E)}(\sigma) = N_{H}(E) = N_H(T). 
\end{eqnarray}
In the special case $\epsilon = 1$, $\sigma$ centralizes $\widehat{W}$, which
covers $W = N_{\ol H}(\ol T)/\ol T$.  Using \eqref{E:12cons}, this shows
$N_{H}(T) = N_{H}(\ol T)$ projects onto $W$ with kernel $\ol T \cap C_{N_{\ol
H}(\ol T)}(\sigma) = T$.  So $N_H(T)/T \cong W$ in this case. 

In any case, $\ol Tw_0$ generates the center of $N_{\ol H}(\ol T)/\ol T$, so
$g^{\sigma}g^{-1} \in \ol T$ for each $g \in N_{\ol H}(\ol T)$. Since $\ol T$
is connected, for each such $g$ there is $t \in \ol T$ with $t^{-\sigma}t =
g^{\sigma}g^{-1}$ by the Lang-Steinberg Theorem, and hence $tg \in C_{N_{\ol
H}(\ol T)}(\sigma)$. This shows each coset $\ol Tg$ contains an element
centralized by $\sigma$, and so arguing as in the previous paragraph, we have
$N_H(T)/T \cong W$. 
\end{proof}

\begin{lem}\label{L:EE'}
Let $d = cw_0$ and $E' = U\gen{d} \leq S$.  Then $\EE_3(H,Z)$ is the disjoint
union of $E^H$ and $E'^H$.  Moreover, there is a $\sigma$-invariant maximal
torus $T'$ of $H$ with $E' = \{t \in T' \mid t^2 = 1\}$ such that the following
hold. 
\begin{itemize}
\item[(1)] $O_{2'}(C_H(E)) = O_{2'}(T) \cong (C_{(q-\epsilon)/2^k})^3$, and
$N_H(T)/T \cong C_2 \times S_4$ acts faithfully on the $r$-torsion subgroup of
$T$ for each odd prime $r$ dividing $q-\epsilon$; 
\item[(2)] $O_{2'}(C_H(E')) = O_{2'}(T') \cong (C_{(q+\epsilon)/2})^3$, and
$N_H(T')/T' \cong C_2 \times S_4$ acts faithfully on the $r$-torsion subgroup
of $T'$ for each odd prime $r$ dividing $q+\epsilon$; and 
\item[(3)] $C_{H}(E') = T'\gen{w_0'}$ for some involution $w_0'$ inverting $T'$.
\end{itemize}
\end{lem}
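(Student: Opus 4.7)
The plan is to build a $\sigma$-stable maximal torus $\ol T'$ of $\ol H$ inside $C_{\ol H}(U)$ containing $E'$, identify $T' := \ol T' \cap H \cong (C_{q+\epsilon})^3$ with $2$-torsion equal to $E'$, and then deduce the classification of $\EE_3(H,Z)$ and properties (1)--(3) from the analysis of $\ol T$, $\ol T'$ and their normalizers.

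First I verify that $E'$ is elementary abelian of rank $3$ in $S$ containing $Z$: $d^2 = c(w_0 c w_0^{-1})w_0^2 = cc^{-1} = 1$ using $w_0^2 = 1$ and that $w_0$ inverts $\ol T$ (Lemma~\ref{L:chevrels}); $[d, U] = 1$ since $w_0$ centralizes $E \supseteq U$ (Lemma~\ref{L:omnibus45}(1)) and $c \in \ol T$ centralizes $U$; $d \in S$ since $c \in T_S$ and $w_0 \in A$; and $d \notin U \leq \ol T$ since $w_0 \notin \ol T$.  The main construction takes place inside $C_{\ol H}(U) = \ol L_1\ol L_2\ol L_3$ (Lemma~\ref{L:omnibus23}(1)). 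Using the commutativity of the $\ol L_i$, one writes
\[
d = c w_0 = a_1 a_2 a_3, \qquad a_i := h_{\beta_i}(\mu)n_{\beta_i}(\gamma_i) \in \ol L_i,
\]
with $\gamma_1 = -\gamma$ and $\gamma_2 = \gamma_3 = 1$. Standard Chevalley relations give $a_i^2 = z_{\beta_i}$, and a short direct computation valid in both cases $\epsilon = \pm 1$ gives $\sigma(a_i) = z_{\beta_i}a_i = a_i^{-1}$. Hence the unique maximal torus $\ol T_i^a := C_{\ol L_i}(a_i)^\circ$ of $\ol L_i$ containing $a_i$ is $\sigma$-stable, and $\ol T' := \ol T_1^a \ol T_2^a \ol T_3^a$ is a $\sigma$-stable maximal torus of $\ol H$ containing $d$, $z_{\beta_1}$, $z_{\beta_2}$, $z_{\beta_3}$, hence also $U$ and $E'$.

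A character-lattice calculation now yields $T' \cong (C_{q+\epsilon})^3$: since $\sigma(a_i) = a_i^{-1}$, $\sigma$ acts on $X(\ol T_i^a) \cong \ZZ$ as multiplication by $-\epsilon q$, hence also on the index-$2$ sublattice $X(\ol T') \subseteq \ZZ^3$, giving $(\ol T')^\sigma \cong (\ZZ/(1+\epsilon q))^3 \cong (C_{q+\epsilon})^3$. Since $q+\epsilon \equiv 2\epsilon \pmod 4$ has $2$-adic valuation exactly $1$, the $2$-part of $T'$ is elementary abelian of rank $3$, which together with $E' \leq T'$ and $|E'|=8$ forces $E' = \{t \in T' : t^2 = 1\}$.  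By Lang-Steinberg applied inside the $H$-conjugacy class of $\ol T'$, choose $h \in \ol H$ with $\ol T' = h \ol T h^{-1}$ and $h^{-1}\sigma(h) = w_0$; then $w_0' := h w_0 h^{-1} \in H$ is an involution inverting $\ol T'$.  The analog of Proposition~\ref{P:faithful} inside $N_{\ol H}(\ol T')$ gives $C_{\ol H}(E') = \ol T' \langle w_0'\rangle$, and taking $\sigma$-fixed points yields $C_H(E') = T'\langle w_0'\rangle$, establishing (3); in particular $|C_H(E')| = 2(q+\epsilon)^3 \neq 2(q-\epsilon)^3 = |C_H(E)|$, so $E \not\sim_H E'$. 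Property (1) combines Lemma~\ref{L:omnibus45} with the Sylow decomposition $T = O_2(T) \times O_{2'}(T)$, and (2) is the analog for $\ol T'$: $N_H(T')/T' \cong W \cong C_2 \times S_4$ (via Lang-Steinberg), and its faithful action on the $r$-torsion subgroup for odd $r \mid q+\epsilon$ follows because $W$ acts faithfully on the character lattice of $\ol T'$ and no nontrivial element of $C_2 \times S_4$ acts trivially on $(\ZZ/r)^3$.

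Finally, to show $\EE_3(H,Z) = E^H \sqcup E'^H$, Lemma~\ref{L:omnibus23}(2) lets me $H$-conjugate any $F \in \EE_3(H,Z)$ so that $U \leq F$, and then $F = U\langle v\rangle$ for some involution $v \in C_H(U) - U$. Using $C_H(U) = (L_1L_2L_3)\langle c\rangle$ and the coset identity $(L_1L_2L_3)c = (L_1L_2L_3)d$ (from $cd = c^2 w_0 \in L_1L_2L_3$), I would partition the involutions in $C_H(U) - U$ into those in $L_1L_2L_3 - U$ (arising from triples of order-$4$ elements of the $L_i$ whose squares multiply to $1$, with $w_0$ as an example) and those in the coset $(L_1L_2L_3)d$, and an orbit analysis under $N_H(U)$ shows that these produce exactly two orbits of rank-$3$ elementary abelian extensions of $U$, represented by $E$ and $E'$. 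The main obstacle will be completing this orbit analysis and showing in particular that the twisted involutions in $L_1L_2L_3 - U$ do not produce a third $N_H(U)$-orbit of rank-$3$ elementary abelian subgroups containing $U$.
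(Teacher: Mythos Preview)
Your construction of $\ol T'$ as the product $\ol T_1^a \ol T_2^a \ol T_3^a$ inside $\ol L_1\ol L_2\ol L_3$ is correct and more explicit than the paper's; the paper instead applies Lang--Steinberg abstractly to find $g \in \ol B^\circ$ with $g^\sigma g^{-1} = w_0$, sets $\ol T' = \ol T^{gb}$ for a suitable $b \in C_H(U)$, and transports all structure from $\ol T$ via conjugation by $gb$. Your route has the advantage that $E' \leq \ol T'$ is visible from the outset, whereas the paper must separately argue that $E^g$ is $C_H(U)$-conjugate to $E'$. Both arrive at (2) and (3) in essentially the same way after this point, though your justifications for $N_H(T')/T' \cong C_2 \times S_4$ and for faithfulness on $r$-torsion should be tightened: the paper invokes \cite[3.3.6]{Carter1985}, using that $\ol T w_0$ is central in $W$, and then Proposition~\ref{P:faithful}.

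The genuine gap is in your classification of $\EE_3(H,Z)$. The paper avoids your orbit analysis entirely by a two-step argument. First, working over $\ol\FF_p$, every involution in $\ol B^\circ - U$ has the form $f_1f_2f_3$ with $f_i \in \ol L_i$ of order $4$, and $\ol L_i \cong SL_2(\ol\FF_p)$ is transitive on its elements of order $4$, so \emph{all} rank-$3$ elementary abelians in $\ol B^\circ$ containing $U$ are already $\ol B^\circ$-conjugate. Second, since $C_{\ol H}(E) = \ol T\gen{w_0}$ has exactly two connected components, \cite[Theorem~2.1.5]{GLS3} gives exactly two $H$-classes inside the single $\ol H$-class $E^{\ol H}$. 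Then $E$ and $E'$ represent distinct classes simply because $E \leq L_1L_2L_3 \trianglelefteq C_H(U)$ while $d$ lies in the nontrivial coset $L_1L_2L_3c$. This component-counting argument is the missing idea that dissolves your ``main obstacle'': you never need to analyze $N_H(U)$-orbits of involutions in $C_H(U) - U$ at all, and in particular the worry about a third orbit coming from twisted involutions in $L_1L_2L_3 - U$ evaporates.
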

\begin{proof}
By Lemma~\ref{L:chevrels}, $w_0$ is an involution inverting $\ol T$ and hence
inverting $c$. So $d$ is an involution, and indeed, $E'$ is elementary abelian
of order $8$. 

Part of this Lemma is proved by Aschbacher and Chermak
\cite[Lemma~7.8]{AschbacherChermak2010}. We give an essentially complete proof
for the convenience of the reader. Let $\ol{X} \in \{\ol{B}^{\circ}, \ol{H}\}$,
and write $X = C_{\ol{X}}(\sigma)$. The centralizer $C_{\ol{X}}(E) =
\ol{T}\gen{w_0}$ is not connected, but has the two connected components $\ol T$
and $\ol Tw_0$. Thus, there are two $C_{\ol{X}}(\sigma)$-conjugacy
classes of subgroups of $X$ conjugate to $E$ in $\ol{H}$ \cite[2.1.5]{GLS3}. A
representative of the other $X$-class can be obtained as follows.  Since $\ol
X$ is connected, we may fix by the Lang-Steinberg Theorem $g \in \ol{X}$ such
that $w_0 = g^{\sigma}g^{-1}$.  Then $g^\sigma = w_0g$. In the semidirect
product $\ol{X}\gen{\sigma}$, we have $\sigma^g = \sigma w_0$.  Now as
$\ol{T}\gen{w_0}$ is invariant under $\sigma w_0$, it follows that
$(\ol{T}\gen{w_0})^g$ is $\sigma$-invariant. Indeed by choice of $g$, we have
$t^{g\sigma} = t^{\sigma w_0 g}$ for each $t \in \ol{T}$, i.e., the conjugation
isomorphism $\ol{T}\gen{w_0} \xto{c_g} \ol{T}^g\gen{w_0^g}$ intertwines the
actions of $\sigma w_0$ on $\ol{T}\gen{w_0}$ and $\sigma$ on
$\ol{T}^g\gen{w_0^g}$. Then $E$ and $E^g$ are representatives for the
$X$-classes of subgroups of $X$ conjugate in $\ol{X}$ to $E$, and
\begin{eqnarray} 
\label{E:tori} 
X \cap \ol{T}^{g} = C_{\ol{T}^g}(\sigma) \cong C_{\ol{T}}(\sigma w_0) = \{t \in
T \mid t^{-\epsilon q} = t\} \cong (C_{q+\epsilon})^3.  
\end{eqnarray} 

The above argument shows we may take $g \in \ol{B}^{\circ}$ even when $\ol{X} =
\ol{H}$. By Lemma~\ref{L:omnibus23}, $\ol{B}^\circ$ is a commuting product
$\ol{L}_1\ol{L}_2\ol{L}_3$ with $\ol{L}_i \cong SL_2(\ol{\FF}_p)$ and
$Z(\ol{B}^{\circ}) = U$. Also, $\ol{B}^\circ \cong \ol{J}/\gen{j}$ where
$\ol{J}$ is a direct product of the $\ol{L}_i$'s and $j$ the product of the
unique involutions of the direct factors (Section~\ref{SS:SL2}). Thus, each
involution in $\ol{B}^\circ-U$ is of the form $f_1f_2f_3$ for elements $f_i \in
\ol{L}_i$ of order $4$. But $\ol{L}_i$ is transitive on its elements of order
$4$. Hence, all elementary abelian subgroups of $\ol{B}^{\circ}$ of order $8$
containing $U$ are $\ol{B}^{\circ}$-conjugate. Now $E$ is contained in the
normal subgroup $L_1L_2L_3$ of $C_H(U)$, while $E'$ is not since $d$ lies in
the coset $L_1L_2L_3c$. It follows that $E^g$ is $C_{H}(U)$-conjugate to $E'$.
Hence, $E$ and $E'$ are representatives for the $X$-conjugacy classes of
elementary abelian subgroups of $X$ of order $8$ containing $Z$. 

Fix $b \in C_H(U)$ with $E^{gb} = E'$. Set $\ol T' = \ol T^{gb}$, $T' = C_{\ol
T^{gb}}(\sigma)$, and $w_{0}' = w_0^{gb}$. By \eqref{E:tori}, $O_{2'}(T')$ is
as described in (a)(ii), and $w_0'$ inverts $T'$. Now $N_H(T)/T \cong C_{2}
\times S_4$ by Lemma~\ref{L:omnibus45}(3).  Since $\ol Tw_0$ generates the
center of $N_{\ol{H}}(\ol{T})/\ol T$, it follows by choice of $g$ and
\cite[3.3.6]{Carter1985} that $N_H(T^g)/T^g \cong N_H(T)/T$, and hence
$N_H(T')/T' \cong N_H(T)/T$ because $b \in H$. 

Fix an odd prime $r$ dividing $q-\epsilon$ (resp. $q+\epsilon$), and let $T_r$
(resp. $T_r'$) be the $r$-torsion subgroup of $\ol T$ (resp. $\ol T'$).
Then $T_r \leq T$ (resp. $T_r' \leq T'$).  Since $N_{\ol H}(\ol T)/\ol T$
(resp. $N_{\ol H}(\ol T')/\ol T'$) acts faithfully on $T_r$ (resp.
$T_r'$) by Proposition~\ref{P:faithful}, it follows that the same is true for
$N_H(T)/T$ (resp.  $N_H(T')/T'$). This completes the proof of (1) and (2), and
part (3) then follows. 
\end{proof}

\subsection{Conjugacy classes of elementary abelian subgroups in a
Benson-Solomon system}\label{SS:solelab}

In this subsection we look at the conjugacy classes and automizers of
elementary abelian subgroups of the Benson-Solomon systems. We adopt the
notation from the first part of this section, so $S$ is a Sylow $2$-subgroup of
$H = \Spin_7(q)$, $Z = Z(S)$ is of order $2$, $U$ is the unique normal
four subgroup of $S$, and $E$ is the $2$-torsion in the fixed maximal torus
$T$ of $H$, and $A = E\gen{w_0}$. 

\begin{lem}\label{L:solomnibus} 
Let $\F = \F_{\Sol}(q)$ be a Benson-Solomon fusion system over $S$.  Then 
\begin{itemize}
\item[(1)] $\EE_1(S) = Z^\F$, and $N_\F(Z) = C_{\F}(Z) \cong \F_S(H)$.
\item[(2)] $\EE_2(S)=U^\F$.
\item[(3)] For $T_S = T \cap S$, $\Out_\F(T_S) = \Aut_\F(T_S) = C_2 \times
GL_3(2)$, and $\Out_\F(T_S\gen{w_0}) \cong GL_3(2)$ acts naturally on
$T_S/\Phi(T_S)$ and on $E$. 
\end{itemize}
\end{lem}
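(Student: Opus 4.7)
The plan is to derive both parts from the defining properties of $\F_{\Sol}(q)$ together with Lemmas~\ref{L:chevrels}--\ref{L:omnibus45}. Recall that $\F_{\Sol}(q)$ is constructed so that (i) $C_\F(Z) \cong \F_S(H)$ with $H = \Spin_7(q)$, (ii) all involutions of $S$ lie in a single $\F$-class, and (iii) $\Aut_\F(E) \cong GL_3(2)$, the last extending the $S_4$-action $N_H(E)/C_H(E)$ of Lemma~\ref{L:omnibus45}(3).

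For (1): the statement $\EE_1(S) = Z^\F$ is precisely (ii); the equality $N_\F(Z) = C_\F(Z)$ is automatic since $|Z| = 2$ forces $\Aut(Z) = 1$; and $C_\F(Z) \cong \F_S(H)$ is (i).

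For (2): set $P = T_S\gen{w_0}$. By Lemma~\ref{L:omnibus45}(2) we have $P = C_S(E)$, and by Lemma~\ref{L:chevrels}(3) $w_0$ inverts the abelian group $T_S$, giving $Z(P) = C_{T_S}(w_0) = E$. Thus $E$ is characteristic in $P$, producing a restriction map $r \colon \Aut_\F(P) \to \Aut_\F(E) \cong GL_3(2)$. After conjugating to a fully $\F$-centralized representative of $E$, the extension axiom applies: for every $\phi \in \Aut_\F(E)$, the extension domain $N_\phi$ contains $C_S(E) = P$, so $\phi$ lifts to $P$, showing $r$ is surjective. The kernel of $r$ consists of $\F$-automorphisms of $P$ trivial on $Z(P) = E$; using the explicit realization of $N_\F(E)$ in the Levi--Oliver or Aschbacher--Chermak construction as the fusion system of a semidirect product with normal Sylow subgroup $P$ and complement inducing $GL_3(2)$, this kernel equals $\Inn(P)$. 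Therefore $\Out_\F(P) \cong GL_3(2)$.

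To pass to $T_S$: the subgroup $T_S$ is characteristic in $P$. Indeed, if $A \leq P$ were an abelian subgroup of index $2$ distinct from $T_S$, then for any $s \in A \cap T_S$ and any element $tw_0 \in A$, commutation forces $s^{-1} = s$, so $A \cap T_S \leq E$, giving $|T_S|/2 = 2^{3k-1} \leq |E| = 8$, contradicting $k \geq 2$ (which holds for all odd $q$). Restriction of $\Aut_\F(P)$ to $T_S$, combined with $\Inn(T_S) = 1$, yields a $GL_3(2)$ subgroup of $\Out_\F(T_S) = \Aut_\F(T_S)$ acting naturally on $T_S/\Phi(T_S) \cong (C_2)^3$ and on $E$. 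The inversion automorphism of $T_S$, induced by conjugation by $w_0 \in S$, is $\F$-outer (as $T_S$ is abelian), is central in $\Aut(T_S)$, and acts trivially on both $T_S/\Phi(T_S)$ and $E$; it thus contributes a central $C_2$-factor independent of the $GL_3(2)$. Combining gives $\Aut_\F(T_S) \cong C_2 \times GL_3(2)$ as claimed. The main obstacle will be pinning down $\ker r = \Inn(P)$, which requires either an appeal to the explicit group-theoretic model of $N_\F(E)$ from the construction of $\F_{\Sol}(q)$ or a direct argument that the $GL_3(2)$-action on $P/E$ is uniquely determined by its action on $E$.
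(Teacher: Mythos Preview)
The paper's own proof is essentially a two-line citation: part~(1) and the structure of $\Out_\F(T_S)$ are attributed to the Aschbacher--Chermak construction \cite[Section~5]{AschbacherChermak2010}, while $\Out_\F(T_S\gen{w_0})$ is deferred to \cite[Lemma~2.38(c)]{HenkeLyndSolcomp}. Your proposal attempts a more self-contained derivation, and the framework is sound, but it only establishes the \emph{lower} bounds in part~(2).

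For $\Out_\F(P)$ with $P = T_S\gen{w_0}$: you correctly produce a surjection $r\colon\Out_\F(P) \twoheadrightarrow GL_3(2)$ via restriction to $E = Z(P)$ and the extension axiom (and your identification $C_S(E)=P$ via Lemma~\ref{L:omnibus45}(2) is right). But the claim $\ker r = \Inn(P)$ is, as you concede, the crux. An automorphism of $P$ trivial on $E$ need not be inner in general---$\Aut(P)$ is much larger than $\Inn(P)\cdot GL_3(2)$ since $T_S \cong (C_{2^k})^3$ has many automorphisms trivial on its socle---so bounding this kernel inside $\F$ genuinely requires knowing how $\F$ was assembled. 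That is precisely what the paper's citation supplies.

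For $\Aut_\F(T_S)$: you exhibit a copy of $C_2 \times GL_3(2)$ inside it (inversion together with the restricted $GL_3(2)$), but you give no argument that $\Aut_\F(T_S)$ is no larger. With $k \geq 2$ the ambient $\Aut(T_S)\cong GL_3(\ZZ/2^k)$ is large, and nothing you have written rules out $\F$ realizing more of it. This upper bound again comes directly from the amalgam description of $\F_{\Sol}(q)$, and is what the paper cites.

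In short, your argument is more explicit than the paper's but does not avoid the need for the construction: both ``$\leq$'' directions are missing and cannot be recovered from the axioms of saturated fusion systems alone, so you end up requiring the same external references the paper invokes.
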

\begin{proof}
Part (1) follows from the construction of $\F_{\Sol}(q)$.  By part (1)
and \cite[Lemma~II.3.1]{AschbacherKessarOliver2011}, every element of
$\EE_2(S)$ is $\F$-conjugate to a subgroup containing $Z$ and thus by
Lemma~\ref{L:omnibus23}(2) to $U$. 

The structure of $\Out_\F(T_S)$ in (3) follows from the construction of
the Benson-Solomon systems; for example, see \cite[Proposition~5.4(b),
Lemma~7.13(e)]{AschbacherChermak2010} or
\cite[Proposition~1.5]{LeviOliver2005}.  The structure of
$\Out_\F(T_S\gen{w_0})$ follows from that of $\Out_\F(T_S)$; for the details we
refer the reader to \cite[Lemma~2.38(c)]{HenkeLynd2022}. Since the actions in
(3) are induced by the restriction map $\Aut_\F(T_S\gen{w_0}) \to
\Aut_{\F}(T_S)$, the remainder of (3) is clear.
\end{proof}

We saw in Lemma~\ref{L:EE'} that $H$ has two conjugacy classes of elementary
abelian subgroups of order $8$ containing $Z$. As far as we can tell,
Aschbacher and Chermak do not discuss the possible $\F$-conjugacy of $E$ and
$E'$ explicitly, but such information can be deduced from their
description of the conjugacy classes of elementary abelian subgroups of order
$16$. Since we need to show later in Lemma~\ref{L:EE'F} that $E$ and
$E'$ are in fact not $\F$-conjugate, we provide an account of that description.

On p.935 of \cite{AschbacherChermak2010}, $T_S$ is denoted $R_0$. As on
pg.935-936, write $R_1 = N_{\ol{T}}(T_S\gen{w_0}) \cong (C_{2^{k+1}})^3$.
Thus, $T_S$ has index $8$ in $R_1$, and $R_1/T_S$ is elementary abelian of
order $8$. Fix a set
\[
\{x_e \mid e \in E\}
\]
of coset representatives for $T_S$ in $R_1$, with notation chosen so that $x_1
= 1$ and $x_e^{2^k} = e \in E$ for each $e \in E-\{1\}$, and set
\[
A_e = A^{x_e}.
\]
Since $w_0$ inverts $\ol{T}$, we have $A_e = E\gen{t_ew_0}$ where $t_e
:= x_e^{-2} = [x_e,w_0] \in T_S$ also powers to $e$. 

Denote by $\A$ the set of $T_S$-conjugacy classes of elementary abelian
subgroups of $T_S\gen{w_0}$ of order $16$. Then since $E \leq T_S$ and
$[T_S,w_0] = \Phi(T_S)$, there are $\Aut_{\F}(T_S\gen{w_0})$-equivariant
bijections
\begin{align*}
\A &\longrightarrow T_S/\Phi(T_S) \longrightarrow E\\
A_e^{T_S} &\longmapsto t_e\Phi(T_S) \longmapsto e = t_e^{2^{k-1}}.
\end{align*}
Since $\Inn(T_S\gen{w_0})$ acts trivially on these sets,  by
Lemma~\ref{L:solomnibus}(3), $\Aut_\F(T_S\gen{w_0})$ has two orbits on
$\mathscr{E}_4(T_S\gen{w_0})$ with representatives $A = A_1$ and $A_e$ with $e
\neq 1$.

\begin{lem}
\label{L:AC712}
$\EE_4(S)$ is the disjoint union of $A_1^\F$ and $A_e^\F$, where $e$ is any
nonidentity element of $E$. All $A_e$ with $e \neq 1$ are
$\Aut_\F(T_S\gen{w_0})$-conjugate, and $\Aut_\F(A_e) = C_{\Aut(A_e)}(e)$ for
each $e \in E$. 
\end{lem}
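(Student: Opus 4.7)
The plan is in three parts: reduce $\EE_4(S)$ to $\EE_4(T_S\langle w_0\rangle)$, read off the orbit partition from the preceding discussion, and then appeal to the construction of $\F_{\Sol}(q)$ together with Alperin's fusion theorem to identify the $\F$-classes and the automizers.

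First, I would show $\EE_4(S) = \EE_4(T_S\langle w_0\rangle)$. The group $T_S \cong (C_{2^k})^3$ has $2$-rank exactly $3$, with unique maximal elementary abelian subgroup $E$. A standard rank analysis inside $S$ (using that $S/T_S$ is a Sylow $2$-subgroup of $N_H(T)/T \cong C_2 \times S_4$ by Lemma~\ref{L:omnibus45}(3), and that the $E_{16}$-subgroup $A_1$ already achieves the maximal $2$-rank of $S$) shows any $V \in \EE_4(S)$ satisfies $V \cap T_S = E$; hence $V \leq C_S(E) = T_S\langle w_0\rangle$ by Lemma~\ref{L:omnibus45}(2). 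Combined with the preceding enumeration, $\EE_4(S) = \bigcup_{e \in E} A_e^{T_S}$. The $\Aut_\F(T_S\langle w_0\rangle)$-equivariant bijection $A_e^{T_S} \leftrightarrow e$ together with the natural action of $\Out_\F(T_S\langle w_0\rangle) \cong GL_3(2)$ on $E$ from Lemma~\ref{L:solomnibus}(2) yields exactly two orbits, $\{A_1^{T_S}\}$ and $\{A_e^{T_S} : e \neq 1\}$. This establishes the second assertion and reduces $\EE_4(S)$ to at most two $\F$-classes.

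To compute the automizers I would use the Aschbacher--Chermak construction, in which $A = A_1$ is an essential subgroup with prescribed automizer $\Aut_\F(A_1) = \Aut(A_1) \cong GL_4(2) = C_{\Aut(A_1)}(1)$. For $e \neq 1$, the inclusion $C_{\Aut(A_e)}(e) \subseteq \Aut_\F(A_e)$ arises by combining the preimage in $\Aut_\F(T_S\langle w_0\rangle)$ of $C_{GL_3(2)}(e)$ with $\Aut_{N_S(A_e)}(A_e)$ and with restrictions of $\Aut_H(A_e) \leq C_\F(Z) = \F_S(H)$; all three contributions fix $e$ and together generate the full point-stabilizer. For the reverse inclusion I would apply Alperin's fusion theorem to the essentials of $\F_{\Sol}(q)$ (namely $U$, $A$, and $T_S\langle w_0\rangle$ up to $\F$-conjugacy): contributions from $\Aut_\F(T_S\langle w_0\rangle)$ fix $e$ by equivariance; $\Aut_\F(A)$ contributes nothing because $A_e \notin A^\F$; and restrictions from $\Aut_\F(U)$ factor through $N_\F(U) \leq C_\F(Z) = \F_S(H)$, which preserves the $H$-class of $E \leq A_e$ by Lemma~\ref{L:EE'} and hence fixes $e$. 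The disjointness $A_1^\F \cap A_e^\F = \emptyset$ then follows from $|\Aut_\F(A_1)| \neq |\Aut_\F(A_e)|$.

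The main obstacle is avoiding circularity between the disjointness and the automizer computation, since the claim $A_e \notin A^\F$ enters the automizer calculation itself. I would break this loop by verifying $A_e \notin A^\F$ directly: for $e \neq 1$, the subgroup $E \leq A_e$ is distinguished inside $A_e$ by the $H$-conjugacy data of its toral $E_8$'s (via Lemma~\ref{L:EE'}), invariant under any $\F$-iso of $A_e$ to another $E_{16}$ in $T_S\langle w_0\rangle$; by contrast $\Aut_\F(A_1) = GL_4(2)$ is transitive on the $E_8$-subgroups of $A_1$, so $A_1$ admits no such $\F$-distinguished $E_8$.
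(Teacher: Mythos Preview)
The paper does not prove this lemma directly: it cites Lemma~7.12(c) of \cite{AschbacherChermak2010} for the disjoint-union statement and the automizer formula, and observes that the $\Aut_\F(T_S\langle w_0\rangle)$-conjugacy of the $A_e$ for $e \neq 1$ was already extracted in the setup immediately preceding the lemma (via the equivariant bijection $A_e^{T_S} \leftrightarrow e$ together with Lemma~\ref{L:solomnibus}(2)). Your second paragraph reproduces exactly that setup argument and is correct; this is the only part the paper actually argues.

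Your attempt at a self-contained proof of the rest has genuine gaps. The reduction $\EE_4(S)=\EE_4(T_S\langle w_0\rangle)$ is not the one-liner you suggest: $S/T_S\cong C_2\times D_8$ has $2$-rank~$3$, so the assertion that every $V\in\EE_4(S)$ meets $T_S$ in all of $E$ requires a real computation inside $S$ (this is part of what \cite[Lemma~7.12]{AschbacherChermak2010} does). More seriously, your Alperin-based automizer argument rests on an incorrect list of generators. The four-group $U$ is not $\F$-centric, hence not essential; and neither $A$ nor $T_S\langle w_0\rangle$ is essential either, since $\Out_\F(A)\cong GL_4(2)$ and $\Out_\F(T_S\langle w_0\rangle)\cong GL_3(2)$ have no strongly $2$-embedded subgroup. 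Even granting these as a generating set rather than as essentials, the claim $N_\F(U)\le C_\F(Z)$ is false: by Lemma~\ref{L:EE'F} the stabilizer of $U$ in $\Aut_\F(E)=GL_3(2)$ already induces all of $\Aut(U)$, so $N_\F(U)$ contains morphisms moving $z$. Finally, your circularity-breaker assumes that ``$H$-conjugacy data of toral $E_8$'s'' is an $\F$-invariant of $A_e$, but $\F$ strictly contains $\F_S(H)$ and there is no reason an $\F$-isomorphism should preserve $H$-classes; this is precisely the subtlety that forces the appeal to \cite{AschbacherChermak2010}.
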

\begin{proof}
This is Lemma~7.12(c) of \cite{AschbacherChermak2010}, except for the statement
on $\Aut_\F(T_S\gen{w_0})$-conjugacy, which contained in the proof of
7.12(c) and observed above. An equivalent description of the
$\F$-conjugacy classes of elementary abelian subgroups of $S$ of rank $4$ was
given in \cite[Proposition~A.8, Lemma~3.1]{LeviOliver2002}. See also
\cite[p.3018]{LeviOliver2005}.
\end{proof}

\begin{lem}\label{L:EE'F}
$\EE_{3}(S)$ is the disjoint union of $E^\F$ and $E'^{\F}$, and we have
$\Aut_\F(E) = \Aut(E)$ and $\Aut_\F(E')=\Aut(E')$. 
\end{lem}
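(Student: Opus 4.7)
The approach is to reduce to the case $Z \leq V$ using that all involutions of $S$ are $\F$-fused, then apply the $H$-classification of rank-$3$ elementary abelian subgroups containing $Z$, and separately establish the two automizer statements.

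For the containment in $E^\F \cup E'^\F$, given $V \in \EE_3(S)$, I will pick any involution $v \in V$, use Lemma~\ref{L:solomnibus}(1) to obtain an $\F$-isomorphism $\gen{v} \to \gen{z}$, and then invoke the extension axiom (applicable because $\gen{z} = Z(S)$ is fully $\F$-centralized): the resulting $\phi \colon C_S(v) \to S$ restricts on $V \leq C_S(v)$ to move $V$ into $\EE_3(S,Z)$. Since $N_\F(Z) = \F_S(H)$ by Lemma~\ref{L:solomnibus}(1), Lemma~\ref{L:EE'} forces $\phi(V)$ to be $H$-conjugate, hence $\F$-conjugate, to $E$ or $E'$, giving $\EE_3(S) \subseteq E^\F \cup E'^\F$.

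For $\Aut_\F(E) = \Aut(E)$, because $w_0$ inverts $T_S$ we have $E = \Omega_1(T_S) = Z(T_S\gen{w_0})$, which is characteristic in $T_S\gen{w_0}$; Lemma~\ref{L:solomnibus}(2) then gives $\Aut_\F(T_S\gen{w_0}) \cong GL_3(2)$ acting naturally on $E$, whence $\Aut_\F(E) \supseteq GL_3(2) = \Aut(E)$. For $\Aut_\F(E') = \Aut(E')$, I plan to embed $E'$ in a suitable $A_{f_0}$: setting $f_0 := c^{2^{k-1}} \in E - U$ (Lemma~\ref{L:chevrels}(2)), a direct computation using $k \geq 2$ (which forces $E \leq \Phi(T_S)$, so that $T_S/\Phi(T_S) \cong E$ via the $2^{k-1}$-power map) shows $d = cw_0 \in E\gen{t_{f_0} w_0} = A_{f_0}$, hence $E' \leq A_{f_0}$. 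Since $f_0 \in E - U$ and $E' \cap E = U$, the subgroup $E'$ is a complement to $\gen{f_0}$ in $A_{f_0}$, and the stabilizer of $E'$ inside $\Aut_\F(A_{f_0}) = C_{\Aut(A_{f_0})}(f_0)$ (Lemma~\ref{L:AC712}) restricts surjectively onto $\Aut(E')$.

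The main obstacle is the non-conjugacy $E \not\sim_\F E'$, i.e.\ that the union is disjoint. The intended approach applies Alperin's fusion theorem: the $\F$-essential subgroups split into those inherited from $\F_S(H)$, which preserve the distinct $H$-classes $E^H$ and $E'^H$ of Lemma~\ref{L:EE'}, together with members of the rank-$4$ classes $A_1^\F$ and $A_e^\F$ from Lemma~\ref{L:AC712}. For each $A_f$, the automizer $C_{\Aut(A_f)}(f)$ has exactly two orbits on $\EE_3(A_f)$ split by whether a subgroup contains $f$, and the key observation is that $E$ always contains the distinguished element $f \in E$ while $E'$ never does (because $E' \cap E = U$ and $f \in E - U$ in the relevant case). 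An inductive argument along chains of essentials --- or alternatively an intrinsic $\F$-invariant such as the isomorphism type of $C_\F(V)$ for fully-$\F$-centralized $V$, which is governed by the dramatically different centralizers $C_H(E) = T\gen{w_0}$ and $C_H(E') = T'\gen{w_0'}$ from Lemmas~\ref{L:omnibus45} and \ref{L:EE'} --- will separate the two classes.
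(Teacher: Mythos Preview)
Your approach to the containment $\EE_3(S) \subseteq E^\F \cup E'^\F$ and to $\Aut_\F(E) = \Aut(E)$ is sound; the paper does not argue these explicitly, focusing only on non-conjugacy and citing \cite[Lemma~7.8]{AschbacherChermak2010} directly for the automizers. Your direct computation of $\Aut_\F(E')$ via the embedding into $A_{f_0}$ is a pleasant alternative to the citation, but there is a slip: the claim $d = cw_0 \in A_{f_0} = E\gen{t_{f_0}w_0}$ requires $ct_{f_0}^{-1} \in E = \Omega_1(T_S)$, whereas from $c^{2^{k-1}} = t_{f_0}^{2^{k-1}} = f_0$ you only get $ct_{f_0}^{-1} \in \Omega_{k-1}(T_S) = \Phi(T_S)$, which equals $E$ only when $k = 2$. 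For $k > 2$ you should instead say that some $T_S$-conjugate of $E'$ lies in $A_{f_0}$: conjugating $E'$ by $s \in T_S$ replaces $c$ by $cs^{-2}$, and $\Phi(T_S) = \{s^{-2} : s \in T_S\}$. Your automizer argument then goes through unchanged for the conjugate, hence for $E'$.

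The genuine gap is the non-conjugacy step. Your Alperin plan is not carried out and would require enumerating the $\F$-essential subgroups of $\F_{\Sol}(q)$, which is nontrivial; the ``intrinsic invariant'' alternative via $C_\F(V)$ is even vaguer and does not obviously separate the classes. The paper uses precisely your key observation (that $\Aut_\F(A_e)$ fixes the distinguished element $e$) but packages it as a single direct argument, avoiding Alperin entirely: assume $E' \sim_\F E$; since $E \unlhd S$ is fully normalized, pick $\phi \in \Hom_\F(C_S(E'),C_S(E))$ with $E'^\phi = E$. Now $C_S(E) = T_S\gen{w_0}$ (from $C_H(E) = T\gen{w_0}$), so $A_e^\phi \in \EE_4(T_S\gen{w_0})$ for the $A_e$ containing $E'$ with $e \neq 1$; by Lemma~\ref{L:AC712} all such rank-$4$ subgroups are $\Aut_\F(T_S\gen{w_0})$-conjugate, and postcomposing with such an automorphism (which normalizes $E$) arranges $A_e^\phi = A_e$. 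Then $\phi|_{A_e} \in \Aut_\F(A_e) = C_{\Aut(A_e)}(e)$ forces $e^\phi = e$, while $U^\phi \leq E'^\phi = E$ and $E = \gen{U,e}$ give $E^\phi = E$. But $\phi$ is injective and also sends $E'$ to $E$, so $E = E'$, a contradiction. This is exactly the argument your ``key observation'' was pointing toward, executed with one map rather than a chain.
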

\begin{proof}
In Lemma~\ref{L:EE'} we defined $E' = U\gen{d}$ where $d = cw_0$.  It was
shown there that $E$ and $E'$ are representatives for the two $H$-conjugacy
classes of elementary abelian subgroups of order $8$ containing $Z$.  Recall
that $c$ was defined as $h_{\beta_1}(\mu)h_{\beta_2}(\mu)h_{\beta_3}(\mu)$ at
the end of Section~\ref{SS:Spin_7(q)}, and in Lemma~\ref{L:chevrels}(2) it was
shown that $c \in T_S$ and $c^{2^{k-1}} \in E-U$. In particular, $c \in
T_S-\Phi(T_S)$. 

Take $e = c^{2^{k-1}}$ and consider $A_e = A^{x_e} = E\gen{t_ew_0}$. Since both
$t_e = [x_e,w_0]$ and $c$ have $2^{k-1}$st power $e$, there is $s \in \Phi(T_S)
= \mho^1(T_S)$ with $c = t_es$. Choose $t \in T_S$ with $t^{-2} = s$. Then
$A_{e}^t = E^t\gen{(t_ew_0)^t} = E\gen{(t_ew_0)^t}$, and $(t_ew_0)^t = t_e
w_0^t = t_e [t,w_0]w_0 = t_et^{-2}w_0 = t_esw_0 = cw_0$. This shows $A_e$ is
$T_S$-conjugate to the elementary abelian subgroup 
\begin{equation}
\label{E:E'leA'}
A' := E\gen{c w_0} = EE'
\end{equation}
of order $16$. Alternatively, we could have chosen the coset representative
$x_e$ at the outset to satisfy $[x_e,w_0] = x_e^{-2} = c$ and in doing so
arrange for $A_e = A'$, and thus for $A_e$ to contain $E'$. 

Assume to get a contradiction that $E$ and $E'$ are $\F_{\Sol}(q)$-conjugate.
Since $E$ is normal in $S$, it is fully $\F$-normalized, hence fully
$\F$-centralized by saturation. So there is a morphism $\phi\colon C_S(E') \to
C_S(E)$ in $\F$ with $E'^{\phi} = E$ by
\cite[I.2.6]{AschbacherKessarOliver2011}.  By \eqref{E:E'leA'}, $\phi$ is
defined on $A'$ and $A'^{\phi} \leq C_S(E) = T_S\gen{w_0}$.  By
Lemma~\ref{L:AC712}, we may choose $\alpha \in \Aut_{\F}(T_S\gen{w_0})$ with
$A'^{\phi\alpha} = A'$. Then as $\phi\alpha \in \Aut_{\F}(A') =
C_{\Aut_\F(A')}(e)$ by the same lemma, we have $e^{\phi\alpha} = e$.  On the
other hand, since $E$ is characteristic in $T_S\gen{w_0}$, we have
$U^{\phi\alpha} \leq E'^{\phi\alpha} = E^\alpha = E$. Thus, $E^{\phi\alpha} =
(U\gen{e})^{\phi\alpha} = U^{\phi\alpha}\gen{e^{\phi\alpha}} \leq E$, a
contradiction. Now we appeal to \cite[Lemma~7.8]{AschbacherChermak2010} for
the structure of the $\F$-automorphism groups. 
\end{proof}

\begin{remark}\label{R:LO3.1}
Lemma~\ref{L:EE'F} says that there are two conjugacy classes of
elementary abelian subgroups of order eight in a Benson-Solomon system, and is
therefore incompatible with the part of Lemma~3.1 of \cite{LeviOliver2002}
which states that there is a single conjugacy class. We thank the referee for
alerting us to this, and we refer the reader to \cite{OliverLOcorr}.
\end{remark}

\subsection{Proof of Theorem~\ref{T:solq}}

We now turn to the proof of Theorem~\ref{T:solq}. As an initial observation,
note that if $\L$ is a punctured group for $\F_{\Sol}(q')$ for some odd prime
power $q'$, then $C_{\L}(Z)$ is a group whose $2$-fusion system is isomorphic
to that of $\Spin_7(q')$. It is in this context that we use the following
lemma.

By a field automorphism of $\Spin_7(q)$ we mean an automorphism acting on the
root groups via $x_\alpha(\lambda) \mapsto x_\alpha(\lambda^\psi)$ where $\psi$
is an automorphism of $\mathbb{F}_q$.

\begin{lem}\label{L:spin7reductions}
Let $G$ be a finite group whose $2$-fusion system is isomorphic to that of
$\Spin_7(q')$ for some odd $q'$. Then $G/O_{2'}(G) \cong \Spin_7(q)\gen{\phi}$
for some odd $q$ with $v_2(q^2-1)=v_2(q'^2-1)$, and where $\phi$ induces a
field automorphism of odd order.
\end{lem}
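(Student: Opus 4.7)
The plan is to first reduce to the case $O_{2'}(G) = 1$, using the standard fact that the projection $G \to G/O_{2'}(G)$ restricts to an isomorphism on Sylow $2$-subgroups and induces an isomorphism of $2$-fusion systems (any $2'$-subgroup normal in $G$ contributes nothing to conjugation between $2$-subgroups). Assume now $O_{2'}(G) = 1$.

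Next I would identify $F^*(G)$. The subgroup $Z = Z(S)$ of order $2$ is the center of $\Spin_7(q')$, hence fixed by every morphism of $\F_S(\Spin_7(q'))$, so $Z \leq Z(G) \leq O_2(G)$. Conversely, $O_2(G) \leq O_2(\F_S(G)) = Z$: any element of $S$ outside $Z$ is $\F$-conjugate to some other element (by inspection of the fusion system, using Lemma~\ref{L:omnibus23}), so its cyclic subgroup cannot be $\F$-normal. Hence $O_2(G) = Z$. Since $O_{2'}(G) = 1$ implies $F(G) = O_2(G) = Z$, we have $F^*(G) = Z \cdot E(G)$. The indecomposability of $\F_S(\Spin_7(q'))$ as a product of fusion systems forces $E(G)$ to be a single quasisimple component $L$ containing $Z$; then $F^*(G) = L$, and the identity $C_G(F^*(G)) \leq F^*(G)$ together with $O_{2'}(G) = 1$ gives $C_G(L) = Z(L)$, so that $G/L \hookrightarrow \Out(L)$.

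The key step is the identification of $L$. I would invoke the classification of finite quasisimple groups whose $2$-fusion system is isomorphic to that of $\Spin_7(q')$; this is essentially due to Levi--Oliver \cite{LeviOliver2002} (in their proof of exoticity of $\F_{\Sol}(q')$, where they analyze the possible finite groups realizing this $2$-fusion system), and ultimately rests on the classification of finite simple groups. The conclusion is that $L \cong \Spin_7(q)$ for some odd prime power $q$. Since the $2$-fusion system of $\Spin_7(q)$ depends on $q$ only through the $2$-adic valuation $v_2(q^2-1)$ (as $S \cong C_{\Spin_7(q_1)}(Z)$-Sylow only sees the $2$-part of the torus, of exponent $2^k$ with $2^{k+1} = v_2(q^2-1)$), the condition $\F_S(L) \cong \F_S(\Spin_7(q'))$ forces $v_2(q^2-1) = v_2(q'^2-1)$.

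Finally, I would analyze $G/L \leq \Out(\Spin_7(q))$. For $q = p^a$ with $p$ odd, $\Out(\Spin_7(q))$ is generated by a diagonal automorphism of order $2$ (coming from $\SO_7(q)/\Omega_7(q)$, which lifts to $\Spin_7$) and a cyclic group $\gen{\phi_p}$ of field automorphisms of order $a$. The diagonal automorphism, being realized by a $2$-element outside $L$, would enlarge a Sylow $2$-subgroup and modify the fusion; field automorphisms of even order similarly introduce new $2$-elements. Since $\F_S(G) \cong \F_S(L)$ forces $G/L$ to preserve a Sylow $2$-subgroup without new $2$-fusion, the image of $G/L$ in $\Out(L)$ is contained in the subgroup of odd-order field automorphisms of $\gen{\phi_p}$, hence is cyclic of odd order generated by some $\phi$, yielding $G/O_{2'}(G) \cong \Spin_7(q)\gen{\phi}$. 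The main obstacle is the identification step for $L$, which is a CFSG-level result extracted from \cite{LeviOliver2002}.
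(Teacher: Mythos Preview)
Your approach parallels the paper's in that both rest on the Levi--Oliver identification, but the paper is more direct. It simply cites \cite[Proposition~3.4]{LeviOliver2002} for the fact that $O^{2'}(G/O_{2'}(G)) \cong \Spin_7(q)$ for some odd $q$; since a Sylow $2$-subgroup $S$ automatically lies in $O^{2'}(G/O_{2'}(G))$ and the quotient by the latter is a $2'$-group, the extension structure follows at once from $\Out(\Spin_7(q)) \cong C_n \times C_2$ (with $q = p^n$), and the valuation condition from the Sylow order formula $|S| = 2^{4+3k}$.

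Your detour through $F^*(G)$ introduces a gap. To invoke the classification for the component $L$, you need to know that $L$ has $2$-fusion system isomorphic to that of $\Spin_7(q')$, which requires $S \leq L$; you have not established this, and the subsequent valuation and $\Out$ arguments are circular without it. One fix is to observe that $S \cap L$ is strongly $\F_S(G)$-closed (since $L \trianglelefteq G$) and then argue that the only proper nontrivial strongly closed subgroup in $\F_S(\Spin_7(q'))$ is $Z$, forcing $S \cap L = S$ since $L$ has nonabelian Sylow $2$-subgroups. More simply, just use $O^{2'}$ rather than $F^*$, as the paper does. Once $S \leq L$ is in hand, $G/L$ has odd order automatically, so your separate discussion of diagonal and even-order field automorphisms enlarging the Sylow becomes unnecessary. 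Finally, your assertion that $Z \leq Z(G)$ (from $z$ being isolated in $\F_S(G)$) tacitly invokes Glauberman's $Z^*$-theorem, which you should cite explicitly.
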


\begin{proof}
It was shown by Levi and Oliver in the course of proving $\F_{\Sol}(q)$ is
exotic that $O^{2'}(G/O_{2'}(G))$ is isomorphic to $\Spin_7(q)$ for some odd
$q$ \cite[Proposition~3.4]{LeviOliver2002}. If $S'$ and $S$ are the
corresponding Sylow $2$-subgroups, then $S'$ and $S$ are isomorphic by
definition of an isomorphism of a fusion system. If $k$ and $k'$ are one less
than the valuations of $q^2-1$ and $q'^2-1$, then the orders of $S$ and $S'$
are $2^{4+3k}$ and $2^{4+3k'}$, so $k = k'$. The description of $G/O_{2'}(G)$
follows, since $\Out(\Spin_7(q)) \cong C_a \times C_2$, where $q = p^a$ and
$C_a$ is generated by the class of a field automorphism.
\end{proof}

The extension of $\Spin_7(q)$ by a group of field automorphisms of odd order
has the same $2$-fusion system as $\Spin_7(q)$, but we will not need this. 

\begin{lem}\label{L:num}
Let $q$ be an odd prime power with the property that $GL_3(2)$ has a faithful
$3$-dimensional representation over $\FF_r$ for each prime divisor $r$ of
$q^2-1$. Then each such $r$ is a square modulo $7$, and $q = 3^{1+6a}$ for some
$a \geq 0$. In particular, $q \equiv 3 \pmod{8}$. 
\end{lem}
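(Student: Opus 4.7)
The plan is to combine the complex representation theory of $GL_3(2)\cong PSL_2(7)$ with elementary mod-$7$ arithmetic. The first step is to characterise the primes $r$ for which $GL_3(2)$ admits a faithful three-dimensional representation over $\FF_r$. For odd $r\ne 7$ this happens precisely when $\QQ(\sqrt{-7})$ embeds in $\FF_r$, i.e.\ when $-7$ is a square modulo $r$: the two complex irreducible $3$-dimensional representations of $PSL_2(7)$ are Galois conjugate with character field $\QQ(\sqrt{-7})$ and Schur index one, so they descend to $\FF_r$ exactly under this condition. Quadratic reciprocity yields $\left(\frac{-7}{r}\right)=\left(\frac{r}{7}\right)$, so the condition is equivalent to $r$ being a square mod $7$. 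The remaining primes are handled directly: for $r=2$ the natural action of $GL_3(2)$ on $\FF_2^3$ is faithful, and $2\equiv 3^2\pmod 7$ is itself a square; for $r=7$ the symmetric square $\Sym^2(\FF_7^2)$ gives a faithful three-dimensional simple $\FF_7[PSL_2(7)]$-module, and $7\equiv 0\pmod 7$ is trivially a square. Thus the hypothesis rewrites as: every prime divisor of $q^2-1$ is a square modulo $7$, where the squares are $\{0,1,2,4\}$ and the non-squares are $\{3,5,6\}$.

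Write $q=p^n$. If $p\ne 3$ then $q^2\equiv 1\pmod 3$, so $3\mid q^2-1$; but $3$ is a non-square mod $7$, contradicting the first step. Hence $p=3$. Since the multiplicative order of $3$ modulo $5$ is $4$, one has $5\mid 3^{2n}-1$ iff $n$ is even; as $5$ is a non-square mod $7$, $n$ must be odd. Similarly the order of $3$ modulo $13$ is $3$, so $13\mid 3^{2n}-1$ iff $3\mid n$, and $13\equiv 6$ is a non-square mod $7$, forcing $3\nmid n$. Therefore $\gcd(n,6)=1$, so $n\equiv 1$ or $5\pmod 6$.

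The main step is ruling out $n\equiv 5\pmod 6$. Since the order of $3$ modulo $7$ is $6$, one has $3^n\equiv 3^5\equiv 5\pmod 7$, and so $3^n+1\equiv -1\pmod 7$. Writing $3^n+1=2^km$ with $m$ odd, one gets $m\equiv -2^{-k}\pmod 7$; since $2\equiv 3^2$ is a square in $(\ZZ/7)^\times$, $2^{-k}$ is a square, while $-1\equiv 6$ is a non-square, so $m$ is a non-square mod $7$. Because $m\not\equiv 0\pmod 7$ and the non-zero squares form an index-two subgroup of $(\ZZ/7)^\times$, at least one prime divisor $r$ of $m$ must itself be a non-square mod $7$. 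But $r\mid 3^n+1\mid q^2-1$ then contradicts the first step. Hence $n\equiv 1\pmod 6$, i.e.\ $q=3^{1+6a}$ for some $a\ge 0$; finally $9\equiv 1\pmod 8$ gives $q=3\cdot 9^{3a}\equiv 3\pmod 8$.

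The main obstacle I anticipate is the representation-theoretic input in the first step, which requires the field of character values and the Schur index for the degree-$3$ representations of $PSL_2(7)$; the rest is elementary number theory, the key observation being the mod-$7$ computation for $3^n+1$ in the case $n\equiv 5\pmod 6$.
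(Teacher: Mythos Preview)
Your argument is essentially correct, but there is one small gap and the number-theoretic reduction differs from the paper's.

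\textbf{The gap.} Your characterisation of the primes $r$ via the character field $\QQ(\sqrt{-7})$ and Schur index is valid only when $\FF_r[GL_3(2)]$ is semisimple, i.e.\ when $r\nmid |GL_3(2)|=168$. You state it for all odd $r\ne 7$ and then invoke it at $r=3$ to force $p=3$; but $3\mid 168$, so the descent argument does not apply there. The paper handles $r=3$ directly by observing that $7\nmid |GL_3(3)|$, so $GL_3(2)$ (which contains elements of order $7$) cannot embed in $GL_3(3)$; since $3$ is a non-square mod $7$, the biconditional still holds. With that one-line fix your first step is complete.

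\textbf{Comparison with the paper's argument.} After establishing that every prime divisor of $q^2-1$ is a square mod $7$, the paper proceeds more directly than you do. Since the nonzero squares in $(\ZZ/7)^\times$ form a subgroup, the integers $q-1$ and $q+1$ are themselves squares mod $7$; a short residue check then forces $q\equiv 1$ or $3\pmod 7$, i.e.\ $l\equiv 0$ or $1\pmod 6$ (where $q=3^l$). The case $l\equiv 0$ is eliminated in one stroke: $6\mid l$ makes $l$ even, so $5\mid q^2-1$, and $5$ is a non-square mod $7$. Your route instead uses two auxiliary primes ($5$ to exclude $l$ even, $13$ to exclude $3\mid l$) and then a separate mod-$7$ computation on $3^l+1$ to eliminate $l\equiv 5$. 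Both work; the paper's version is shorter because the observation ``$q-1$ and $q+1$ are both squares mod $7$'' does most of the narrowing in one step, leaving only a single residue class to rule out.
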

\begin{proof}
Set $G = GL_3(2)$ for short. We first show that $GL_3(2)$ has a faithful
$3$-dimensional representation over $\FF_r$ if and only if $r$ is a square
modulo $7$.  If $r = 2$, $3$, or $7$, then as $|SL_3(3)|$ is not divisible by
$7$ and $G \cong PSL_2(7) \cong \Omega_3(7)$, the statement holds. So we may
assume that $p$ does not divide $|G|$, so that $\mathbb{F}_rGL_3(2)$ is
semisimple. Let $V$ be a faithful $3$-dimensional module with character $\phi$,
necessarily irreducible. From the character table for $GL_3(2)$, we see that
$\phi$ takes values in $\mathbb{F}_r((1+\sqrt{-7})/2)$.  By
\cite[I.19.3]{Feit1982}, a modular representation is writable over its field of
character values, so this extension is a splitting field for $V$. Thus, $V$ is
writeable over $\FF_r$ if and only if $-7$ is a square modulo $r$, which by
quadratic reciprocity is the case if and only if $r$ is a square modulo $7$. 

Now fix an odd prime power $q$ with the property that $q^2-1$ is divisible only
by primes which are squares modulo $7$.  Since $q(q-1)(q+1)$ is divisible by
$3$ and $3$ is not a square, we have $q = 3^l$ for some $l$. Now $q-1$ and
$q+1$ are squares, so $q \equiv 1$ or $3 \pmod{7}$. Assume the former.  Then $6$
divides $l$, so $q = 3^l \equiv \pm 1 \pmod{5}$.  But then $q^2-1$ is divisible
by the nonsquare $5$, a contradiction. So $q \equiv 3 \pmod{7}$, $l = 1+6a$ for
some $a \geq 0$, and hence $q \equiv 3 \pmod{8}$.
\end{proof}

We write $G'=[G,G]$ for the commutator subgroup of a group $G$.

\begin{lem}\label{L:GSplit}
Let $G$ be a finite group and $W\unlhd M\unlhd G$ such that $G/M\cong GL_3(2)$,
$|W|=2$ and $M/W$ is cyclic of odd order. Then $E(G)=G'$ is isomorphic to
$GL_3(2)$ or $L_2(7)$. In the latter case, $G$ has quaternion Sylow
$2$-subgroups.
\end{lem}

\begin{proof}
If we pick a generator $xW$ of $M/W$, then $M=\gen{x}W$ is abelian as $W\leq
Z(M)$. Since $M/W$ is of odd order and $|W|=2$, it follows indeed that $M$ is
cyclic. In particular, $\Aut(M)$ is abelian. Since $C_G(F^*(G))\leq F^*(G)$ and
$G/M$ is non-abelian simple, it follows that $F^*(G)\neq M=F(G)$ and so
$E(G)\neq 1$. As $GL_3(2)$ is the only non-abelian composition factor in a
composition series for $G$, it follows that $K:=E(G)$ is quasisimple with
$K/Z(K)\cong GL_3(2)\cong L_2(7)$. In particular, $G=KM$ where $M=F(G)$ is
abelian and commutes with $K=E(G)=K'$. Hence, $G'=K=E(G)$.

\smallskip

Since the Schur multiplier of $GL_3(2)\cong L_2(7)$ has order $2$, we have
$K\cong GL_3(2)$ or $K\cong SL_2(7)$. In the latter case, $Z(K)=W$ and $K$
contains a Sylow $2$-subgroup of $G$. As $SL_2(7)$ has quaternion Sylow
$2$-subgroups, the assertion follows.
\end{proof}

If $\widehat{\Theta}$ is a partial normal $p^\prime$-subgroup of a locality
$(\L,\Delta,S)$ at the prime $p$, then the restriction of the natural
projection $\L\rightarrow \L/\widehat{\Theta}$ to $S$ is a monomorphism. Thus, we
may identify $S$ with its image in $\L/\widehat{\Theta}$. This is used to formulate
the following proposition.

\begin{prop}\label{P:StructureNLZ}
Suppose $\F=\F_{\Sol}(q')$ is a Benson--Solomon fusion system and
$(\L,\Delta,S)$ is a punctured group over $\F$. Set $Z:=Z(S)$ and $G=C_\L(Z)$.
\begin{itemize}
\item [(1)] There exists a partial normal $2^\prime$-subgroup $\widehat{\Theta}$ of
$\L$ such that, identifying $S$ with its image in $\L/\widehat{\Theta}$ in the
natural way, $(\L/\widehat{\Theta},\Delta,S)$ is a punctured group over $\F$ and
$N_{\L/\widehat{\Theta}}(Z)\cong G/O_{2^\prime}(G)$ is $2^\prime$-reduced.  \item
[(2)] $O^{2^\prime}(G/O_{2^\prime}(G))\cong \Spin_7(q)$, where $q=3^{1+6a}$ for
some $a\geq 0$, and every prime divisor of $q^2-1$ is a square
modulo $7$. In particular, $q \equiv 3\pmod{8}$ and $q' \equiv \pm 3\pmod{8}$.
\end{itemize}
\end{prop}

\begin{proof}
Set $\ov{G}:=G/O_{2^\prime}(G)$ and $H:=O^{2^\prime}(\ov{G})$. As $\L$ is a
locality on $\F=\F_{\Sol}(q')$, it follows from Lemma~\ref{L:NFPNLP}(b) and
Lemma~\ref{L:solomnibus}(1) that $\F_{\ov{S}}(\ov{G})\cong\F_S(G) =
\F_S(N_\L(Z))=N_\F(Z)$ is isomorphic to the $2$-fusion system of $\Spin_7(q')$.
Hence, by Lemma~\ref{L:spin7reductions} there is an odd prime power $q$ with
$(q^2-1)_2 = (q'^2-1)_2$ such that $H$ can be identified with $\Spin_7(q)$, and
such that $\bar{G} = H\gen{\phi}$ for $\phi$ a field automorphism of odd order.
Where convenient we adopt below the notation from
Sections~\ref{SS:notation}-\ref{SS:solelab}. 

\smallskip

\textbf{(1)} We will show the existence of a suitable signalizer functor on
elements of order $2$ (as introduced in Definition~\ref{D:SignalizerFunctor}).
For that we set
\[
\theta(a) = O_{2'}(C_{\L}(a))\mbox{ for each involution }a \in S.
\]
By Lemma~\ref{L:LocalitiesProp}(b), $\theta$ is conjugacy invariant. Let $a,b
\in S$ be two distinct commuting involutions.  By Lemma~\ref{L:solomnibus}(2)
and conjugacy invariance, to verify the balance condition in
Definition~\ref{D:SignalizerFunctor}, we can assume $b = z$ and $a = u \in
U-Z$. Set $X = O_{2'}(C_\L(u)) \cap G$ and note that $X$ is an odd order normal
subgroup of $C_\L(U)=C_G(U)$. By a Frattini argument $\ol{C_{G}(U)} =
C_{\bar{G}}(\bar{U})$, so $\bar{X}$ is normal in the latter group. We use now
that $\bar{G}$ is an extension of $H=\Spin_7(q)$ by a cyclic group generated by
a field automorphism $\phi$ of odd order. Since each component $L_i \cong
SL_2(q)$ of $C_H(\bar{U})$ is generated by a root group and its opposite
(Section~\ref{SS:SL2} and Lemma~\ref{L:omnibus23}(2)), it follows that $\phi$
acts nontrivially as a field automorphism on each such $L_i$, and hence
$\bar{X} \leq O_{2'}(C_{\bar{G}}(\bar{U})) \leq O_{2'}(L_1L_2L_3) = 1$.
Equivalently, $X = O_{2'}(C_\L(u)) \cap G \leq O_{2'}(G)$. This shows that the
balance condition holds. For each $P \in \Delta$, set
\[
\Theta(P) = \left(\bigcap_{x \in \I_2(P)} \theta(x)\right) \cap C_\L(P).
\]
Then by Theorem~\ref{T:sigfun}, $\Theta$ defines a signalizer functor on
objects. By Theorem~\ref{P:GetLocalityObjectiveCharp}, $\widehat{\Theta} =
\bigcup_{P \in \Delta} \Theta(P)$ is a partial normal $p^\prime$-subgroup of
$\L$, and $(\L/\widehat{\Theta},\Delta,S)$ is again a punctured group for
$\F=\F_{\Sol}(q')$ with $N_{\L/\widehat{\Theta}}(Z)\cong G/\Theta(Z)$. Writing
$Z=\gen{z}$, note that $\I_2(Z)=\{z\}$ and $\Theta(Z)=\theta(z)\cap
C_\L(Z)=O_{2^\prime}(C_\L(z))=O_{2^\prime}(G)$. This proves (1).

\smallskip

\textbf{(2)} For the proof of (2), part (1) allows us to assume
$O_{2^\prime}(G)=1$. Then $G=\ov{G}$, $H=O^{2^\prime}(G)\cong\Spin_7(q)$, and
$G/H$ is cyclic of odd order.  Recall that $H$ has Sylow $2$-subgroup $S$,
$\epsilon \in \{\pm 1\}$ is such that $q \equiv \epsilon \pmod{4}$, and $E_1 :=
E$ and $E_{-1} := E'$ are the representatives for $\F$-conjugacy classes of
elementary abelian subgroups of order $8$ in $S$ (Lemmas~\ref{L:EE'} and
\ref{L:EE'F}).  For $\delta = \pm 1$, let $T_{\delta}$ be the maximal torus
containing $E_{\delta}$ of Lemma~\ref{L:EE'}. For each positive integer $r$
dividing $q-\delta\epsilon$, write $T_{\delta,r}$ for the $r$-torsion in
$T_{\delta}$.  Moreover, set $T_{\delta,S} = T_\delta \cap S$.  Thus, $T_{1,S}
= T_S = T_{1,2^k}$ is homocyclic of order $2^{3k}$, and $T_{-1,S} = E_{-1}$.

Now fix $\delta$ and let $N = N_\L(T_{\delta,S})$. By
Lemmas~\ref{L:omnibus45}(2) and \ref{L:EE'}(3),
\begin{eqnarray}\label{E:CHEdelta}
C_H(E_\delta) = T_\delta\gen{w},
\end{eqnarray}
where $w$ is an involution inverting $T_\delta$. In particular, since
\[
O_{2'}(T_\delta) = [O_{2'}(T_\delta),\gen{w}]
\]
and $O^{2'}(G) = H$, we have
\[
C_H(E_{\delta}) = O^{2'}(C_H(E_\delta)) = O^{2'}(C_G(E_\delta)).\]
Also, $C_\L(E_\delta) = C_G(E_\delta)$ as $E_\delta$ contains $Z$. It follows
that $C_H(E_\delta) = O^{2'}(C_{\L}(E_\delta))$ is normal in
$N_{\L}(E_{\delta})$, so
\begin{eqnarray}
\label{E:oddnormal}
\text{$C_H(E_\delta)$ and $O_{2'}(C_H(E_\delta))$ are normal in $N_\L(E_\delta)$.}
\end{eqnarray}

Next we show
\begin{eqnarray}
\label{E:N=NL(E)}
N = N_\L(E_\delta).
\end{eqnarray}
We may assume $T_{\delta,S} > E_\delta$, and so $\delta = 1$, $T_{\delta,S} =
T_S$, and $E_{\delta} = E$.  Certainly $N_\L(T_S) \leq N_\L(E)$. For the
other inclusion, note $N_\L(E)$ acts on $C_H(E)$ by \eqref{E:oddnormal} so it
acts on $T_{S}$ since $T_{S}$ is the unique abelian $2$-subgroup of maximum
order in $C_H(E)$.  Thus, $N_\L(E) \leq N_\L(T_S)$, completing the proof of
\eqref{E:N=NL(E)}.

Using \eqref{E:CHEdelta} one observes easily that
$T_\delta=T_{\delta,S}O_{2'}(T_\delta)=T_{\delta,S}O_{2'}(C_H(E_\delta))$.
Hence, it follows from \eqref{E:oddnormal} and \eqref{E:N=NL(E)} that
\begin{eqnarray}\label{E:TdeltaNormN}
 T_\delta\unlhd N.
\end{eqnarray}
Notice that $C_N(E_\delta)/C_H(E_\delta)\leq C_G(E_\delta)/C_H(E_\delta)\cong
C_G(E_\delta)H/H\leq G/H$ and recall that $G/H$ is cyclic of odd order. Hence,
by \eqref{E:CHEdelta}, \eqref{E:oddnormal}, \eqref{E:N=NL(E)} and
\eqref{E:TdeltaNormN}, we are given a normal series
\[
T_\delta\leq C_H(E_\delta)\leq C_N(E_\delta)=C_\L(E_\delta)\leq N_\L(E_\delta)=N
\]
where $C_H(E_\delta)/T_\delta\cong C_2$, $C_N(E_\delta)/C_H(E_\delta)$ is
cyclic of odd order, and (by Lemma~\ref{L:EE'F})
\[
N/C_N(E_\delta)=N_\L(E_\delta)/C_\L(E_\delta)\cong \Aut_\F(E_\delta)=\Aut(E_\delta)\cong GL_3(2).
\]
Set $\widetilde{N}:=N/T_\delta$. By Lemma~\ref{L:EE'}, $C_2\times S_4\cong
\widetilde{N_H(T_\delta)}\leq \widetilde{N}$. In particular, $\widetilde{N}$
does not have quaternion Sylow $2$-subgroups. Applying Lemma~\ref{L:GSplit}
with $(\widetilde{N},\widetilde{C_N(E_\delta)},\widetilde{C_H(E_\delta)})$ in
place of $(G,M,W)$, we see now that 
\[
\widetilde{N}'\cong GL_3(2).
\] 
Let $r$ be a prime divisor of $q-\delta\epsilon$ and note that $\widetilde{N}$
acts on $T_{\delta,r}$. By Lemma~\ref{L:EE'}, $\widetilde{N_H(T_\delta)}\cong
C_2\times S_4$ acts faithfully on $T_{\delta,r}$ and thus $A_4\cong
\widetilde{N_H(T_\delta)}'  \leq \widetilde{N}'$ acts non-trivially on
$T_{\delta,r}$. As $C_{\widetilde{N}'}(T_{\delta,r})\unlhd \widetilde{N}'$ and
$\widetilde{N}'\cong GL_3(2)$ is simple, it follows that
$C_{\widetilde{N}'}(T_{\delta,r})=1$ and $\widetilde{N}'\cong GL_3(2)$ acts
faithfully on $T_{\delta,r}\cong C_r^3$. Since this holds for each $\delta =
\pm 1$ and prime $r$, Lemma~\ref{L:num} implies that $q=3^{1+6a}$ for some
$a\geq 0$ and $q \equiv 3 \pmod{8}$. As $q'\equiv\pm q\pmod{8}$, this shows
(2).
\end{proof}

Note that the conclusion of the following lemma does not hold if
$H=\Spin_7(q)$ for some $q\neq 3$.

\begin{lem}
\label{L:spin-link}
Let $H = \Spin_7(3)$ and $Z = Z(H)$.  If $P \geq Z$ is a $2$-subgroup of $H$ of
$2$-rank at least $2$, then $N_H(P)$ and $C_H(P)$ are of characteristic
$2$.
\end{lem}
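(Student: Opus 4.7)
My plan is to show that $N := N_H(P)$ is solvable with $O_{2'}(N) = 1$; granting these, the argument underlying Lemma~\ref{L:pconstrained} gives $C_N(O_2(N)) \leq O_2(N)$, completing the proof.

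First I would reduce to the case $U \leq P$. Since $Z = Z(H)$ by Lemma~\ref{L:chevrels}(1), $Z$ is central in every subgroup $P \geq Z$, and the hypothesis $m_2(P)\geq 2$ combined with centrality of $Z$ yields a four-group $V\leq P$ with $Z\leq V$ (take any rank-$2$ elementary abelian $V_0\leq P$ and then a rank-$2$ subgroup of $V_0 Z$ containing $Z$). By Lemma~\ref{L:omnibus23}(2), $V$ is $H$-conjugate to $U$, and since $H$-conjugation of $P$ preserves the statement, we may assume $U \leq P$.

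Next I would locate $N$ inside a standard $2$-local. Take $W := \Omega_1(Z(O_2(N)))$, a characteristic subgroup of $O_2(N)$, so normal in $N$ and containing $Z$. Assuming $W$ has rank at least $2$ (handled below), Lemmas~\ref{L:omnibus23}(2), \ref{L:EE'}, and \ref{L:AC712} identify $W$ as $H$-conjugate to one of $U, E, E', A, A_e$. After a further $H$-conjugation, $N\leq N_H(W)$ sits inside the corresponding $2$-local, which is $N_H(U)=(L_1L_2L_3)\langle c,\tau\rangle$, $N_H(E)=N_H(T)$, $N_H(E')=N_H(T')$, or $N_H(A_e)$. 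Each is solvable, the first being a central product of three copies of $SL_2(3)$ extended by a $2$-group, and the rest being extensions of abelian $2$-groups by $C_2\times S_4$ (by Lemma~\ref{L:omnibus45}(3)). So $N$ is solvable. Then $O_{2'}(N) \leq C_N(O_2(N)) \leq C_H(P)\leq C_H(U) = (L_1L_2L_3)\langle c\rangle$; each $L_i\cong SL_2(3)$ has trivial $O_{2'}$, so $(L_1L_2L_3)\langle c\rangle$ does as well, and a direct argument using that $O_{2'}(N)$ is normal in $N\supseteq C_H(P)$ forces $O_{2'}(N)=1$.

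The main obstacle is verifying that $W$ has rank at least $2$, ruling out the case where $Z(O_2(N))$ is cyclic even though $U\leq O_2(N)$. In that case, the action of $O_2(N)$ on $U$ via $\Aut(U)\cong S_3$ factors through an involution swapping $z_1$ and $zz_1$. A Thompson-style replacement argument, using the Thompson subgroup $J(O_2(N))$ or an iterated center-of-centralizer construction, should produce another $N$-invariant characteristic elementary abelian subgroup of $O_2(N)$ of rank at least $2$ that can serve in place of $W$. This is the technical heart of the proof.
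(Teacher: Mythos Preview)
Your reduction to $U\leq P$ is fine and matches the paper, but from there your route diverges from the paper's and carries real gaps.

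The paper does something much shorter. Having arranged $U\leq P$, it observes directly that $C_H(U)=L_1L_2L_3\langle c\rangle$ with each $L_i\cong SL_2(3)$, so $O_2(C_H(U))$ is the central product $Q_1Q_2Q_3$ of three quaternion groups and is self-centralizing in $C_H(U)$; hence $C_H(U)$ is of characteristic $2$. Then it invokes two standard facts (from \cite[Lemma~2.2]{Henke2019}): for a $2$-subgroup $P$, the group $N_H(P)$ is of characteristic $2$ if and only if $C_H(P)$ is, and normalizers of $2$-subgroups in characteristic-$2$ groups are again of characteristic $2$. Since $U\leq P$ gives $C_H(P)=C_{C_H(U)}(P)$, the chain $C_H(U)$ char-$2$ $\Rightarrow N_{C_H(U)}(P)$ char-$2$ $\Rightarrow C_H(P)$ char-$2$ $\Rightarrow N_H(P)$ char-$2$ finishes the proof in two lines. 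No case analysis on $W$, no solvability, no replacement theorems.

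Your approach, by contrast, has several problems. First, the case $W=\Omega_1(Z(O_2(N)))=Z$ is genuinely possible (e.g.\ $O_2(N)$ could have cyclic center even though it contains the four-group $U$, as in a dihedral group), and you do not actually resolve it: ``a Thompson-style replacement argument \ldots\ should produce'' is not a proof. Second, your rank-$4$ case cites Lemma~\ref{L:AC712}, which classifies $\F_{\Sol}(q)$-classes, not $H$-classes, so the claimed list of $2$-locals is not justified. Third, your argument for $O_{2'}(N)=1$ is incomplete: knowing $O_{2'}(C_H(U))=1$ does not by itself kill an odd normal subgroup $X$ of $N$ that merely lies in $C_H(U)$, since $X$ need not be normal there. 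The cleanest way to fix this last gap is to note that $C_H(U)$ is of characteristic $2$, hence so is $C_{C_H(U)}(P)=C_H(P)$, forcing $X\leq O_{2'}(C_H(P))=1$; but once you have that observation, the entire solvability detour is unnecessary and you are back to the paper's argument.
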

\begin{proof}
Let $P \leq S$ with $Z \leq V \leq P$ and $V$ a four group.  By
Lemma~\ref{L:omnibus23}(2), we may conjugate in $H$ and take $V = U$, and
$C_H(U) = L_1L_2L_3\gen{c}$, where $c$ induces a diagonal automorphism on each
$L_i \cong SL_2(3)$. Thus, $O_2(C_H(U))$ is a commuting product of three
quaternion subgroups of order $8$ which contains its centralizer in $C_H(U)$,
and hence $C_H(U)$ is of characteristic $2$. 

Recall that $N_H(P)$ is of characteristic $2$ if and only if $C_H(P)$ is of
characteristic $2$ and that the normalizer of any $2$-subgroup in a group of
characteristic $2$ is of characteristic $2$ (see, e.g.
\cite[Lemma~2.2]{Henke2019}).  It follows that $N_{C_H(U)}(P)$ is of
characteristic $2$, so $C_H(P) = C_{C_H(U)}(P)$ is of characteristic $2$, so
$N_H(P)$ is of characteristic $2$. 
\end{proof}

\begin{lem}\label{L:2rank2Subcentric}
Let $\F=\F_{\Sol}(3)$ be a fusion system over $S$. Then every subgroup of $S$
of $2$-rank at least $2$ is $\F$-subcentric.
\end{lem}

\begin{proof}
Set $Z=Z(S)$. By Lemma~\ref{L:solomnibus}(1), we have $\H:=N_\F(Z)=\F_S(H)$
where $H=\Spin_7(q)$ and $S$ can be identified with the Sylow $2$-subgroup of
$H$ defined in Section~\ref{SSS:SylowSpin7}. Define $U$ as before so that
$\EE_2(S)=U^\F$ by Lemma~\ref{L:solomnibus}(2).  As $\F^s$ is by
\cite[Proposition~3.3]{Henke2019} closed under passing to $\F$-conjugates and
overgroups, it is  enough to prove that $U$ is $\F$-subcentric. Indeed, as
$Z\leq U$, we have $C_\F(U)=C_\H(U)=\F_{C_S(U)}(C_H(U))$. Hence, $\C_\F(U)$ is
constrained by Lemma~\ref{L:spin-link} and so $U\in\F^s$ by
\cite[Lemma~3.1]{Henke2019}.
\end{proof}

We may now prove the main theorem of this section.

\begin{proof}[Proof of Theorem~\ref{T:solq}]
$(\implies)\colon$ If $(\L,\Delta,S)$ is a punctured group over $\F :=
\F_{\Sol}(q)$  for some odd prime power $q$, then it follows from
Proposition~\ref{P:StructureNLZ}(2) (applied with $q$ in place of $q'$) that
$q\equiv\pm 3\pmod{8}$.

\smallskip

\noindent $(\impliedby):$ 
Now let $\F = \F_{\Sol}(3)$ and $\H = C_\F(Z) = \F_S(H)$ with $H = \Spin_7(3)$.
Set 
\[
\Delta = \{P \in \F^s \mid P \text{ is of $2$-rank at least $2$}\},
\]
and $\Delta_Z = \{P \in \Delta \mid P \geq Z\}$.  Then $\Delta$ is closed under
$\F$-conjugacy and passing to overgroups by \cite{Henke2019}. So it is also
closed under $\H$-conjugacy. We show now

\begin{equation}\label{E:FcrHcr}
\mbox{Every element of $\H^{cr}\cup\F^{cr}$ is of $2$-rank at least $2$.}
\end{equation}
Indeed, assume there exists $Q\in\H^{cr}\cup\F^{cr}$ of $2$-rank $1$. Then
$Q\neq S$ and so $C_S(Q)\leq Q$ implies $\Inn(Q)< \Aut_S(Q)\leq\Aut_\H(Q)\leq
\Aut_\F(Q)$. Suppose first that $Q$ is cyclic, or generalized quaternion of
order at least $16$. Then $\Aut(Q)$ is a $2$-group and so $\Out_\H(Q)$ and
$\Out_\F(Q)$ are non-trivial $2$-groups, which contradicts the assumption that
$Q$ is radical in $\H$ or $\F$.  Assume now that $Q$ is quaternion of order
$8$. As $U$ is a normal subgroup of $S$, we have $[Q,U] \leq Z = Z(S) \leq Q$,
so $U \leq N_S(Q)$. But $N_S(Q)$ is a $2$-group containing $Q$
self-centralizing with index at most $2$, and so $N_S(Q)$ is quaternion
or semidihedral of order $16$. But neither of these groups has a normal four
subgroup, a contradiction. This shows \eqref{E:FcrHcr}.

\smallskip

Each element of $\F^{cr} \cup \H^{cr}$ contains $Z$. It follows moreover
from \eqref{E:FcrHcr} and Lemma~\ref{L:2rank2Subcentric} that
$\F^{cr}\cup\H^{cr}\subseteq \Delta$. Also $\F^s \subseteq \H^s$ by
\cite[Lemma~3.16]{Henke2019}. Thus, we have shown
\begin{eqnarray}
\label{E:containscr}
\F^{cr} \cup \H^{cr} \subseteq \Delta_Z \subseteq \Delta \subseteq \F^s \subseteq \H^s. 
\end{eqnarray}
The hypotheses of \cite[Theorem~A]{Henke2019} are thus satisfied, so we may fix
a linking locality $\L$ on $\F$ with object set $\Delta$, and this $\L$ is
unique up to rigid isomorphism. 

We shall verify the conditions (1)-(5) of \cite[Hypothesis~5.3]{Chermak2013}
with $Z$ in the role of ``$T$'' and $H$ in the role of ``$M$''.  Conditions
(1), (2) hold by construction. Condition (4) holds since $Z$ is normal in $H$
and $\F_S(N_\L(Z)) \cong \H$ by \cite{LeviOliver2002}. To see condition (3),
first note that $Z$ is fully normalized in $\F$ because it is central in $S$.
Let $Z', Z''$ be distinct $\F$-conjugates of $Z$. Then $\gen{Z',Z''}$ contains
a four group $V$.  By Lemma~\ref{L:solomnibus}(2), $V$ is $\F$-conjugate
to $U$, and $O_{2}(N_\F(U)) \in \F^c$ is a commuting product of three
quaternion groups of order $8$. Thus, $V \in \Delta$, and hence $\gen{Z',Z''}
\in \Delta$.  So Condition (3) holds.  It remains to verify Condition (5),
namely that $N_\L(Z)$ and $\L_{\Delta_Z}(H)$ are rigidly isomorphic.  By
\eqref{E:containscr} and Lemma~\ref{L:spin-link}, $\L_{\Delta_Z}(H)$ is a
linking locality over $\H$ with $\Delta_Z$ as its set of objects.  

On the other hand, by \cite[Lemma~2.19]{Chermak2013}, $N_\L(Z)$ is a locality
on $\H$ with object set $\Delta_Z$, in which $N_{N_\L(Z)}(P) = C_{N_\L(P)}(Z)$
for each $P \in \Delta_Z$. As $\L$ a linking locality, $N_\L(P)$ is of
characteristic $2$, and hence the $2$-local subgroup $N_{N_\L(Z)}(P)$ of
$N_\L(P)$ is also of characteristic $2$. So again this together with
\eqref{E:containscr} gives that $N_\L(Z)$ is a linking locality over $\H$ with
object set $\Delta_Z$.  Thus, $\L_{\Delta_Z}(H)$ and $N_\L(Z)$ are
linking localities over the same fusion system and with the same object set,
thus rigidly isomorphic by \cite[Theorem~A]{Henke2019}. This completes the
proof of (5).

So by \cite[Theorem~5.14]{Chermak2013}, there is a locality $\L^+$ over $\F$
with object set 
\[
\Delta^+ := \{P \leq S \mid Z^\phi \leq P \text{ for some } \phi \in \Hom_\F(Z,S)\},
\]
such that $\L^+|_\Delta = \L$ and $N_{\L^+}(Z)=H$, and $\L^+$ is unique
up to rigid isomorphism with this property. Since each non-trivial
subgroup of $S$ contains an involution, and all involutions in $S$ are
$\F$-conjugate (by Lemma~\ref{L:solomnibus}(1)), $\Delta^+$ is the collection
of all nontrivial subgroups of $S$. Thus, $\L^+$ is a punctured group for $\F$. 
\end{proof}

\begin{remark}
Theorem~\ref{T:solq} leaves open the question whether there is a punctured
group $(\L,\Delta,S)$ over $\F_{\Sol}(3)$ such that, setting $Z:=Z(S)$, the
centralizer $C_\L(Z)$ is not isomorphic to $\Spin_7(3)$.  Indeed, we show in
Proposition~\ref{P:StructureNLZ} that always $O^{2'}(C_\L(Z)/O_{2'}(C_\L(Z)))
\cong \Spin_7(q)$, where $q = 3^{1+6a}$ for some $a \geq 0$ with the property
that $q^2-1$ is divisible only by primes which are squares modulo $7$. Although
there are at least several such nonnegative integers $a$ with this property
(the first few are $0, 1, 2, 3, 5, 7, 8, 13, 15, \dots$), we are unable to
determine whether a punctured group for $\F_{\Sol}(q)$ exists when $a > 0$.
\end{remark}

\section{Punctured groups for exotic fusion systems at odd primes}\label{S:pgexotic}

In this section, we survey some of the known examples of exotic fusion systems
at odd primes in the literature, and determine which ones have associated
punctured groups.

Let $\F$ be a saturated fusion system over the $p$-group $S$. A subgroup $Q$ of
$S$ is said to be $\F$-\emph{subcentric} if $Q$ is $\F$-conjugate to a subgroup
$P$ for which $O_p(N_\F(P))$ is $\F$-centric. Equivalently, by
\cite[Lemma~3.1]{Henke2019}, $Q$ is $\F$-subcentric if, for any fully
$\F$-normalized $\F$-conjugate $P$ of $Q$, the normalizer $N_\F(P)$ is
constrained.  Write $\F^s$ for the set of subcentric subgroups of $\F$. Thus,
$\F^s$ contains the set of nonidentity subgroups of $S$ if and only if $\F$ is
of characteristic $p$-type (and $\F^s$ is the set of \emph{all} subgroups of
$S$ if and only if $\F$ is constrained).

A finite group $G$ is said to be of \emph{characteristic $p$} if
$C_G(O_p(G))\leq O_p(G)$. A \emph{subcentric linking system} is a transporter
system $\L^s$ associated to $\F$ such that $\Obj(\L^s) = \F^s$ and
$\Aut_{\L^s}(P)$ is of characteristic $p$ for every $P\in\Obj(\L^s)$.  By a
theorem of Broto, Castellana, Grodal, Levi and Oliver \cite{BCGLO2005}, the
constrained fusion systems are precisely the fusion systems of finite
groups of characteristic $p$. The finite groups of characteristic $p$, which
realize the normalizers of fully normalized subcentric subgroups, can be
``glued together'' to build a subcentric linking systems associated with
$\F$.  More precisely, building on the unique existence of centric linking
systems, the first author \cite[Theorem~A]{Henke2019} has used Chermak
descent to show that each saturated fusion system has a unique associated
subcentric linking system.

For each of the exotic systems $\F$ considered in this section, it will
turn out that either $\F$ is of characteristic $p$-type, or $S$ has a fully
$\F$-normalized subgroup $X$ of order $p$ such that $N_\F(X)$ is exotic. In the
latter case, there is the following elementary observation.

\begin{lem}
\label{L:normexotic}
Let $\F$ be a saturated fusion system over $S$. Assume there is some nontrivial
fully $\F$-normalized subgroup $X$ such that $N_\F(X)$ is exotic. Then a
punctured group for $\F$ does not exist.
\end{lem}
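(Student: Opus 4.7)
The plan is to derive a contradiction directly from the defining property of a punctured group together with the basic structural results recorded in Section~\ref{S:localities}. Suppose, for contradiction, that $(\L,\Delta,S)$ is a punctured group for $\F$. By definition, $\Delta$ is the collection of all nonidentity subgroups of $S$, so in particular $X\in\Delta$.

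Since $X\in\Delta$, Lemma~\ref{L:LocalitiesProp}(a) ensures that $N_\L(X)$ is a genuine subgroup of the partial group $\L$, and hence a finite group in the usual sense, containing $N_S(X)$ as a Sylow $p$-subgroup. Because $X$ is fully $\F$-normalized by hypothesis, Lemma~\ref{L:NFPNLP}(b) then yields
\[
N_\F(X) \;=\; \F_{N_S(X)}\bigl(N_\L(X)\bigr).
\]
This exhibits $N_\F(X)$ as the $p$-fusion system of the finite group $N_\L(X)$, contradicting the assumption that $N_\F(X)$ is exotic.

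If one prefers the transporter-system language, the same argument applies with $\Aut_\T(X)$ in place of $N_\L(X)$, using the equivalence between transporter systems and localities discussed earlier in Section~\ref{S:localities}. There is really no obstacle here: the statement is an immediate consequence of the observation, already made in the introduction, that a punctured group furnishes finite-group realizations of $N_\F(P)$ for every nonidentity fully $\F$-normalized $P\leq S$. The lemma simply records the contrapositive.
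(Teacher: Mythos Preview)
Your proof is correct and follows essentially the same approach as the paper's: the paper's one-line argument simply observes (in transporter-system language) that $\Aut_\T(X)$ would be a finite group realizing $N_\F(X)$, which is exactly your argument phrased via localities using Lemma~\ref{L:NFPNLP}(b). Your version is slightly more detailed in making explicit the role of the fully-normalized hypothesis and the Sylow condition, but the underlying idea is identical.
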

\begin{proof}
If there were a transporter system $\L$ associated with $\F$ having object set
containing $X$, then $\Aut_\L(X)$ would be a finite group whose fusion system
is $N_\F(X)$. 
\end{proof}

We restrict attention here to the following families of exotic systems
at odd primes, considered in order: the Ruiz-Viruel systems
\cite{RuizViruel2004}, the Oliver systems \cite{Oliver2014}, 
the Clelland-Parker systems \cite{ClellandParker2010}, and the Parker-Stroth
systems \cite{ParkerStroth2015}. The results are summarized in the following
theorem. 

\begin{thm}
\label{T:pgexotic}
Let $\F$ be a saturated fusion system over a finite $p$-group $S$.
\begin{itemize}
\item[(a)] If $\F$ is a Ruiz-Viruel system at the prime $7$, then $\F$ is
of characteristic $7$-type, so has a punctured group.
\item[(b)] If $\F$ is an exotic Oliver system, then $\F$ has a punctured
group if and only if $\F$ occurs in cases (a)(i), (a)(iv), or (b) of
\cite[Theorem~2.8]{Oliver2014}.
\item[(c)] If $\F$ is an exotic Clelland-Parker system, then $\F$ has a
punctured group if and only if each essential subgroup is abelian. Moreover, if
so then $\F$ is of characteristic $p$-type.
\item[(d)] If $\F$ is a Parker-Stroth system, then $\F$ is of characteristic
$p$-type, so has a punctured group.
\end{itemize}
\end{thm}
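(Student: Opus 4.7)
The plan is to dispatch the four families by a case analysis on the fully $\F$-normalized nonidentity subgroups $X \leq S$ and their normalizer fusion systems $N_\F(X)$. The underlying dichotomy is the following: either every such $N_\F(X)$ is constrained, in which case $\F$ is of characteristic $p$-type and Theorem~\ref{T:subcentric}(b) yields a subcentric linking system whose underlying locality is a punctured group; or there is some nontrivial fully normalized $X$ with $N_\F(X)$ exotic, and then Lemma~\ref{L:normexotic} forbids a punctured group. In each of the four families, it will turn out that no intermediate phenomenon occurs.

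For part (a), the Ruiz-Viruel exotic systems live on $S \cong 7^{1+2}_+$, whose nontrivial proper subgroups (up to conjugacy) are the center $Z = Z(S)$ and the elementary abelian subgroups of order $p^2$. Each such $E$ of order $p^2$ is abelian and self-centralizing in $S$, hence $\F$-centric and normal in $N_\F(E)$, so $N_\F(E)$ is constrained. For $Z$, one reads off from the tables in \cite{RuizViruel2004} that $N_\F(Z)$ has a nontrivial $\F$-centric normal subgroup (built from the listed essential subgroups containing $Z$), so it is again constrained. Part (d) is handled similarly: the Parker-Stroth systems are constructed as models of sporadic-type characteristic $p$ structures, and a direct inspection of the local structure in \cite{ParkerStroth2015} shows that every $N_\F(X)$ with $X$ nontrivial is constrained.

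For parts (b) and (c), I would work through the cases of \cite[Theorem~2.8]{Oliver2014} and of \cite{ClellandParker2010} individually. In the listed positive cases of (b) and in the abelian-essentials case of (c), each normalizer of a nontrivial fully normalized subgroup is constrained, so $\F$ is of characteristic $p$-type and Theorem~\ref{T:subcentric}(b) produces a punctured group. In the negative cases of (b), I would identify, for each remaining family in Oliver's classification, a fully normalized subgroup of order $p$ whose normalizer is itself an exotic fusion system (typically a previously constructed exotic system on a smaller $p$-group), so that Lemma~\ref{L:normexotic} applies. For the special-essentials case of (c), the role of $X$ is played by $Z := Z(S)$: one shows that $N_\F(Z) = C_\F(Z)$ projects, modulo $Z$, onto a fusion system that contains as a normal subsystem the simple exotic quotient $O^{p'}(C_\F(Z(S))/Z(S))$, and then invokes Lemma~\ref{L:normexotic}.

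The principal obstacle is the verification in the Clelland-Parker case with nonabelian special essentials that $O^{p'}(C_\F(Z(S))/Z(S))$ is indeed simple and exotic. Unlike the other cases, in which the normalizer analysis reduces to mechanical computations once the essential structure is laid out, this one requires the identification of a fusion system that has not previously appeared in the literature and an independent exoticity proof along the lines of \cite{LeviOliver2002}, ruling out realizability by a case analysis on composition factors of a hypothetical finite group model with a Sylow $p$-subgroup of the prescribed shape. This is the content of the dedicated portion of Subsection~\ref{SS:CP}, and it is the only step in the proof of Theorem~\ref{T:pgexotic} that does not reduce to inspection of the existing literature.
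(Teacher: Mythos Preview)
Your overall plan is correct and matches the paper's approach exactly: the theorem is proved by combining Theorem~\ref{T:subcentric} (when $\F$ is of characteristic $p$-type) or Lemma~\ref{L:normexotic} (when some $N_\F(X)$ is exotic) with the case-by-case analyses in Lemma~\ref{L:rvext}, Proposition~\ref{P:Oliver}, Propositions~\ref{P:CPR} and~\ref{P:CPQ1}, and Proposition~\ref{P:PS}.

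A few minor imprecisions to flag. In part (a), the paper does not argue via ``reading off a centric normal subgroup from the tables''; rather, Lemma~\ref{L:rvext} shows directly that $N_\F(Z(S)) = N_\F(S)$ by proving no $V \cong C_p^2$ can be essential in $N_\F(Z(S))$, so $S$ is the only centric radical. In part (c), your assertion $N_\F(Z) = C_\F(Z)$ is not generally correct (and the paper does not claim it); what is actually shown is that $N_\F(Z)/Z$ itself is exotic, via Lemma~\ref{L:ClParkerExotic}, and then one deduces $N_\F(Z)$ is exotic since any realization of $N_\F(Z)$ would yield one of $N_\F(Z)/Z$. The exoticity argument there does not follow the pattern of \cite{LeviOliver2002}; it instead combines the strongly closed subgroup classification of Flores--Foote, Oliver's results on abelian-index-$p$ systems, and the Schreier conjecture. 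Finally, your treatment of part (d) is too brief to count as a proof; the paper's Proposition~\ref{P:PS} requires a genuine case analysis on subgroups $Y$ of order $p$ according to whether $Y \leq Z$, $Y \leq Q \setminus Z$, or $Y \nleq Q$, using detailed information about the embedding of $W$ in $S$.
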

\begin{proof}
This follows upon combining Theorem~\ref{T:subcentric} or
Lemma~\ref{L:normexotic} with Lemma~\ref{L:rvext}, Proposition~\ref{P:Oliver},
Propositions~\ref{P:CPR} and \ref{P:CPQ1}, and Proposition~\ref{P:PS},
respectively.
\end{proof}

When showing that a fusion system is of characteristic $p$-type, we will often
use the following elementary lemma.

\begin{lem}\label{ShowNormalizerConstrained}
Let $X$ be a fully $\F$-normalized subgroup of $S$ such that $C_S(X)$ is
abelian. Then $N_\F(X)$ is constrained.
\end{lem}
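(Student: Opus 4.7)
The plan is to exhibit a centric normal subgroup of $\G := N_\F(X)$, from which $\G$ is constrained by definition. Set $T := N_S(X)$ and $Q := X C_S(X)$.

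First, $Q \trianglelefteq T$ since both $X$ and $C_S(X)$ are normal in $T$. Moreover, $Q$ is self-centralizing in $T$: since $X \leq Q$, we have $C_T(Q) \leq C_T(X) = C_S(X) \leq Q$, using that $C_S(X) \leq T$. Once $Q \trianglelefteq \G$ is established, $Q$ has no other $\G$-conjugates, and so self-centralization in $T$ yields $\G$-centricity of $Q$.

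The heart of the argument is thus to show $Q \trianglelefteq \G$. The crucial extension comes from axiom~(II) applied to the fully $\F$-normalized subgroup $X$: for any $\alpha \in \Aut_\F(X)$ and any $g \in C_S(X)$ one has $c_g|_X = \id_X$, so $\alpha c_g \alpha^{-1} = \id_X \in \Aut_S(X)$, and hence $C_S(X) \leq N_\alpha$. Thus $\alpha$ extends to $\hat\alpha \colon N_\alpha \to N_S(X)$ defined on $Q$; since $\hat\alpha$ is an injective homomorphism preserving $X$, its restriction to $C_S(X)$ maps into $C_{N_S(X)}(X) = C_S(X)$, and so onto $C_S(X)$ by cardinality. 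Hence $\hat\alpha(Q) = Q$. For a general morphism $\phi \colon P \to P'$ in $\G$, one first extends to $\phi_0 \colon PX \to P'X$ with $\phi_0(X) = X$ (by definition of $N_\F(X)$), and then combines the lift of $\phi_0|_X \in \Aut_\F(X)$ to $\Aut_\F(Q)$ with a further appeal to axiom~(II) (or to Alperin's fusion theorem internal to $\G$) to obtain an extension $\bar\phi \colon PQ \to P'Q$ preserving $Q$.

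The main obstacle is this last reconciliation step: producing a single extension $\bar\phi$ that agrees with $\phi$ on $P$ and simultaneously sends $Q$ to $Q$. The abelianness of $C_S(X)$ is essential not only for the centric computation above (which makes $C_T(Q) = C_S(X) \leq Q$) but also for ensuring that the $\Aut_\F(X)$-extension and $\phi$ can be glued consistently along the overlap $P \cap Q$, since elements of $C_S(X)$ commute with each other as well as with $X$.
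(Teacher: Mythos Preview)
Your candidate $Q = XC_S(X)$ is the right object, and the verification that $C_T(Q) \leq Q$ is fine. The gap is exactly where you flag it: you never actually prove that $Q \trianglelefteq N_\F(X)$. Your proposed ``gluing'' of the extension $\hat\alpha$ of $\phi_0|_X$ with $\phi_0$ itself along $P \cap Q$ is not a valid argument---two morphisms in a fusion system that agree on an intersection need not patch to a morphism on the union, and nothing in axiom~(II) produces such a patch. Saying that elements of $C_S(X)$ commute with each other and with $X$ does not address this; the obstruction lives in how $\phi_0$ acts on $P \setminus Q$, not on $P \cap Q$. Your parenthetical ``or Alperin's fusion theorem internal to $\G$'' is in fact the workable route, but you do not carry it out, and the nontrivial step there is showing $C_S(X) \leq R$ for every $\G$-centric-radical $R$. (This can be done: for $c \in N_{C_S(X)}(R)$ the conjugation $c_c$ stabilizes the $\Aut_\G(R)$-invariant series $1 \leq C_R(X) \leq R$, hence lies in $O_p(\Aut_\G(R)) = \Inn(R)$ by radicality, forcing $c \in R$; then a normalizer-growth argument in the $p$-group $C_S(X)R$ gives $C_S(X) \leq R$.) As written, though, the proof is incomplete.

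The paper avoids all of this by working in $C_\F(X)$ instead of $N_\F(X)$. Since $C_S(X)$ is abelian, it is the \emph{only} centric subgroup of itself, so Alperin's fusion theorem immediately gives $C_S(X) \trianglelefteq C_\F(X)$; hence $C_\F(X)$ is constrained. The passage from $C_\F(X)$ constrained to $N_\F(X)$ constrained is then outsourced to a cited lemma (\cite[Lemma~2.13]{Henke2019}). This is much shorter precisely because the Sylow of $C_\F(X)$ is abelian, so no analysis of essential subgroups is needed at all.
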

\begin{proof}
Using Alperin's Fusion Theorem
\cite[Theorem~I.3.6]{AschbacherKessarOliver2011}, one sees that $C_S(X)$ is
normal in $C_\F(X)$. In particular, $C_\F(X)$ is constrained. Therefore, by
\cite[Lemma~2.13]{Henke2019}, $N_\F(X)$ is constrained.
\end{proof}

\subsection{The Ruiz-Viruel systems}

Three exotic fusion systems at the prime $7$ were discovered by Ruiz and
Viruel, two of which are simple. The other contains one of the simple ones with
index $2$.

\begin{lem}\label{L:rvext}
Let $\F$ be a saturated fusion system over an extraspecial $p$-group $S$ of
order $p^3$ and exponent $p$. Then $N_\F(Z(S)) = N_\F(S)$.  In particular, $\F$
is of characteristic $p$-type. 
\end{lem}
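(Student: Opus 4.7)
The plan is to prove the equality $N_\F(Z(S)) = N_\F(S)$ first, and then derive the ``In particular'' statement from it. Set $Z := Z(S)$, which has order $p$ and is characteristic in $S$; in particular $N_S(Z) = N_S(S) = S$, so both $N_\F(Z)$ and $N_\F(S)$ are fusion systems on $S$. The inclusion $N_\F(S) \subseteq N_\F(Z)$ is immediate, because any lift to $\Aut_\F(S)$ preserves the characteristic subgroup $Z$.

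For the reverse inclusion, a morphism $\phi \colon P \to P'$ in $N_\F(Z)$ is by definition witnessed by some $\bar\phi \colon PZ \to P'Z$ in $\F$ extending $\phi$ and satisfying $\bar\phi(Z) = Z$; restricting the codomain to $\bar\phi(PZ)$, one may assume $\bar\phi$ is an isomorphism. It therefore suffices to show that every isomorphism $\psi \colon Q \to Q'$ in $\F$ with $Z \leq Q$ and $\psi(Z) = Z$ lifts to an element of $\Aut_\F(S)$, because applying this to $\bar\phi$ produces a $\tilde\phi \in \Aut_\F(S)$ whose restriction to $P$ equals $\phi$. Because $S$ is extraspecial of exponent $p$ and order $p^3$, the only subgroups of $S$ containing $Z$ are $Z$, the $p+1$ elementary abelian subgroups of order $p^2$, and $S$ itself, so only three cases need treatment.

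The cases $Q = S$ and $Q = Z$ are easy: the first is trivial, and for the second, the centrality of $Z$ forces $\Aut_S(Z) = 1$, so the extender subgroup of the extension axiom for saturated fusion systems is all of $S$, yielding a lift of $\psi$ to $\Aut_\F(S)$. In the remaining case $|Q| = p^2$, both $Q$ and $Q'$ are elementary abelian and fully $\F$-normalized (since $N_S(Q) = S = N_S(Q')$), so the same axiom applies once it is shown that $\psi$-conjugation carries $\Aut_S(Q)$ onto $\Aut_S(Q')$. The critical observation is that $[S, Q] \leq [S, S] = Z$, so $\Aut_S(Q) = S/Q$ is precisely the order-$p$ subgroup of $\Aut(Q) \cong \GL_2(p)$ consisting of those automorphisms which fix $Z$ pointwise and induce the identity on $Q/Z$; the analogous description of $\Aut_S(Q')$ combined with $\psi(Z) = Z$ makes the required identification of conjugates automatic. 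This ``transvection subgroup'' identification is the main computational ingredient and is where the assumption that $S$ has exponent $p$ enters.

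For the ``In particular'' statement, let $X$ be any nontrivial fully $\F$-normalized subgroup of $S$. If $X = S$ then $N_\F(S)$ has $S$ as a normal centric subgroup and so is constrained; if $X = Z$ then $N_\F(X) = N_\F(S)$ by the equality just established. In every other case, inspection of the subgroup lattice of $S$ shows that $C_S(X)$ is abelian: either $|X| = p^2$ and $C_S(X) = X$, or $X$ is non-central of order $p$ and $C_S(X)$ is the unique elementary abelian subgroup of order $p^2$ containing $X$. In each of these remaining cases Lemma~\ref{ShowNormalizerConstrained} gives immediately that $N_\F(X)$ is constrained, completing the verification that $\F$ is of characteristic $p$-type.
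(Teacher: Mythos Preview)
Your proof is correct. The approach, however, differs from the paper's. The paper argues via Alperin's fusion theorem: it observes that $N_\F(Z)$ is saturated, cites \cite[Lemmas~3.2, 4.1]{RuizViruel2004} to conclude that any centric radical subgroup of $N_\F(Z)$ other than $S$ would be an elementary abelian $V$ of order $p^2$ with $\SL_2(p) \leq \Aut_\F(V)$, and then notes that such an automorphism group cannot normalize $Z$. Hence $S$ is the only centric radical subgroup of $N_\F(Z)$, and Alperin's theorem forces $N_\F(Z) \subseteq N_\F(S)$. Your argument instead bypasses Alperin's theorem and the external citations entirely by applying the extension axiom directly in each of the three cases $|Q| = p, p^2, p^3$; the key observation that $\Aut_S(Q)$ is the intrinsically described ``transvection'' subgroup of $\Aut(Q)$ fixing $Z$ pointwise and acting trivially on $Q/Z$ makes the $|Q| = p^2$ case go through cleanly. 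Your route is more self-contained and elementary, at the cost of a small case analysis; the paper's route is shorter but relies on imported structural results. The treatment of the ``in particular'' statement via Lemma~\ref{ShowNormalizerConstrained} is essentially the same in both proofs.
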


\begin{proof}
Clearly $N_\F(S)\subseteq N_\F(Z(S))$. Note that $N_\F(Z(S))$ is a saturated
fusion system over $S$ as well. So by \cite[Lemma~3.2]{RuizViruel2004}, if a
subgroup of $S$ is centric and radical in $N_\F(Z(S))$, then it is either
elementary abelian of order $p^2$ or equal to $S$. Moreover, by
\cite[Lemma~4.1]{RuizViruel2004}, an elementary abelian subgroup $V$ of order
$p^2$ is radical in $N_\F(Z(S))$ if and only if $\Aut_\F(V)$ contains
$\SL_2(p)$. However, if $\Aut_\F(V)$ contains $\SL_2(p)$, then it does not
normalize $Z(S)$. This implies that $S$ is the only subgroup of $S$ which is
centric and radical in $N_\F(Z(S))$. Hence, by Alperin's Fusion Theorem
\cite[Theorem~I.3.6]{AschbacherKessarOliver2011}, we have $N_\F(Z(S))\subseteq
N_\F(S)$ and thus $N_\F(Z(S))=N_\F(S)$. In particular, $N_\F(Z(S))$ is
constrained. If $X$ is a non-trivial subgroup of $\F$ with $X\neq Z(S)$, then
$C_S(X)$ is abelian. So it follows from Lemma~\ref{ShowNormalizerConstrained}
that $\F$ is of characteristic $p$-type.
\end{proof}

In Section~\ref{S:RV}, it is shown that for the three exotic Ruiz-Viruel
systems, the subcentric linking system is the unique associated punctured group
whose full subcategory on the centric subgroups is the centric linking system.

\subsection{Oliver's systems}\label{SS:oliver}
A classification of the simple fusion systems $\F$ on $p$-groups with a unique
abelian subgroup $A$ of index $p$ is given in \cite{Oliver2014, COS2017,
OliverRuiz2020}. Here we consider only those exotic fusion systems in which $A$
is not essential in $\F$, namely those fusion systems appearing in the
statement of \cite[Theorem~2.8]{Oliver2014}.  

Whenever $\F$ is a saturated fusion system on a $p$-group $S$ with a
unique abelian subgroup $A$ of index $p$, we adopt Notation~2.2 of
\cite{Oliver2014}. For example,
\[
Z = Z(S),\quad Z_2 = Z_2(S),\quad  S' = [S,S],\quad Z_0 = Z \cap S',\quad
\text{ and } \quad A_0 = ZS',
\]
and also
\[
\H = \{Z\gen{x} \mid x \in S-A\}\quad  \text{ and } \quad \B = \{Z_2\gen{x} \mid x \in S-A\}. 
\]

\begin{lem}
\label{L:indp-basic}
Let $\F$ be a saturated fusion system on a finite $p$-group $S$ having a unique
abelian subgroup $A$ of index $p$. 
\begin{itemize}
\item[(a)] If $P \leq S$ is $\F$-essential, then $P \in \{A\} \cup \H \cup \B$,
$|N_S(P)/P| = p$, and each $\alpha \in N_{\Aut_\F(P)}(\Aut_S(P))$ extends to an
automorphism of $S$. 
\end{itemize}
Assume now in addition that $A$ is not essential in $\F$.
\begin{itemize}
\item[(b)] If $O_p(\F) = 1$, then $\F^e \cap \H \neq \varnothing$, $Z_0 = Z$ is
of order $p$, $S' = A_0$ is of index $p^2$ in $S$, and $S$ has maximal class.
\item[(c)] If $P \in \H \cup \B$ is $\F$-essential, then $P \cong C_p^2$ or
$p^{1+2}_+$ according to whether $P \in \H$ or $P \in \B$, and
$O^{p'}(\Out_\F(P)) \cong SL_2(p)$ acts naturally on $P/[P,P]$. 
\item[(d)] If $P \in \F^e \cap \H$, then each $\alpha \in N_{\Aut_\F(P)}(Z)$
extends to an automorphism of $S$. 
\item[(e)] A subgroup $P \leq S$ is essential in $N_\F(Z)$ if and only if $P
\in \F^e \cap \B$. 
\item[(f)] There is $x \in S-A$ such that $A_0\gen{x}$ is $\Aut_\F(S)$-invariant. 
\end{itemize}
\end{lem}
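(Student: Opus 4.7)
The plan is to prove (a)--(f) in sequence, following the standard analysis of fusion systems with a unique abelian subgroup of index $p$ as developed in \cite[Section 2]{Oliver2014}. For (a), any $\F$-essential $P$ is $\F$-centric radical. If $P \leq A$ then $A = C_S(A) \leq C_S(P) \leq P$ forces $P = A$. Otherwise $PA = S$, and analyzing how $O^{p'}(\Out_\F(P))$ acts faithfully on $P/\Phi(P)$ modulo $\Inn(P)$, together with the fact that $P \cap A$ is normal in $S$, pins $P$ down to $Z\gen{x}$ or $Z_2\gen{x}$ for some $x \in S - A$; this gives $P \in \H \cup \B$. The formula $|N_S(P)/P| = p$ follows by direct computation of $N_S(P)$ in each case, and the extension property is obtained by iterating axiom (II) of saturated fusion systems along the chain $P \lhd N_S(P) \leq \cdots \leq S$, noting at each step that the lift of $\alpha$ normalizes the Sylow of the relevant automizer.

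For (b), under $O_p(\F) = 1$ with $A \notin \F^e$, every essential lies in $\H \cup \B$ by (a). If $\F^e \cap \H$ were empty, then each essential would contain $Z_2$, so $Z_2$ (characteristic in $S$, hence $\Aut_\F(S)$-invariant, and normal in every overgroup since $Z_2 \lhd S$) would lie in $O_p(\F)$, a contradiction. Picking $P_0 = Z\gen{x} \in \F^e \cap \H$, centricity forces $C_S(P_0) = P_0 \cap A = Z$, so $|Z| = p$ and $Z = Z_0$; a commutator computation using that $P_0$ is abelian then yields $[A,x] \leq Z$, whence $|S : A_0| = p^2$ and $S$ has maximal class. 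For (c), $P$ essential in $\H \cup \B$ has $|N_S(P)/P| = p$ from (a); centricity forces $P \cong C_p^2$ when $P \in \H$ and $P \cong p^{1+2}_+$ when $P \in \B$, and the action of $O^{p'}(\Out_\F(P))$ on $P/[P,P] \cong \FF_p^2$ is faithful by radicality. The presence of a strongly $p$-embedded Sylow normalizer inside $\Out_\F(P)$, combined with Dickson's classification of subgroups of $GL_2(p)$, forces $O^{p'}(\Out_\F(P)) \cong SL_2(p)$ acting naturally on $P/[P,P]$.

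For (d), when $P \in \H$, both $\Aut_S(P)$ and any Sylow $p$-subgroup of $\Aut_\F(P)$ stabilizing $Z$ fix the flag $Z < P$, and since $P/Z$ is one-dimensional these stabilizers coincide, giving $N_{\Aut_\F(P)}(Z) = N_{\Aut_\F(P)}(\Aut_S(P))$, so (a) applies. For (e), Alperin's Fusion Theorem in $N_\F(Z)$ characterizes its essentials as those $\F$-essentials that contain $Z$ and on which the full $\F$-automorphism group preserves $Z$ setwise; since the $SL_2(p)$-action from (c) on $P \in \H$ moves $Z$, only $\F^e \cap \B$ survives. For (f), by (b), $|S/A_0| = p^2$ with $A/A_0$ an $\Aut_\F(S)$-invariant line, and $\Aut_\F(S)/\Inn(S)$ has order prime to $p$ by saturation, so Maschke's theorem produces an $\Aut_\F(S)$-invariant complement of $A/A_0$ in $S/A_0$, giving the required $x$. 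The main obstacle will be the structural work in (a) and (c): ruling out potential essentials outside $\{A\} \cup \H \cup \B$ requires careful commutator bookkeeping in $S$, and confirming $O^{p'}(\Out_\F(P)) \cong SL_2(p)$ depends on the Dickson-type classification of $p'$-subgroups of $GL_2(p)$ with strongly $p$-embedded Sylow normalizers.
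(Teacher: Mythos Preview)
The paper's own proof is brief: parts (a), (b), (c), (f) are direct citations to \cite[Lemmas~2.3, 2.4, 2.7]{Oliver2014}, and only (d) and (e) are argued in the text. Your sketches for (a), (c), (d), and (f) are along the right lines and match what is done in Oliver's paper and here. However, your arguments for (b) and (e) contain genuine gaps.

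In (b): the step ``$Z_2$ \dots would lie in $O_p(\F)$'' when $\F^e \cap \H = \varnothing$ is not justified. Being contained in each essential subgroup, $\Aut_\F(S)$-invariant, and normal in $S$ is not enough; you must also show that $Z_2$ is $\Aut_\F(P)$-invariant for each $P \in \F^e \cap \B$, and $Z_2 = P \cap A$ is not obviously characteristic in $P$ at this stage. Further along, the equality ``$C_S(P_0) = P_0 \cap A = Z$'' is false: $P_0$ is abelian, so $C_S(P_0) = P_0 \supsetneq Z$; what you presumably want is $C_A(x) = P_0 \cap A = Z$. Finally, ``$[A,x] \leq Z$'' is simply wrong in general --- it would force $S' \leq Z$, and in any case would give $A_0 = Z$, not $|S:A_0| = p^2$. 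The actual argument in \cite{Oliver2014} first establishes $|Z_0| = p$ as a purely group-theoretic fact about $S$ (Lemma~2.3(a) there), and then uses centricity of $P_0$ to deduce $C_A(x) = Z$ and hence $Z = Z_0$; the index computation comes from $|[A,x]| = |A/C_A(x)| = |A|/p$, not from $[A,x] \leq Z$.

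In (e): your characterization of $N_\F(Z)$-essentials via Alperin's theorem is not valid, since an $N_\F(Z)$-essential subgroup need not be $\F$-essential a priori. The paper's backward direction instead applies (a) to the saturated system $N_\F(Z)$ to place such a $P$ in $\{A\} \cup \H \cup \B$, then uses the extension property from (a) to show $N_{\Out_\F(P)}(\Out_S(P)) = N_{\Out_{N_\F(Z)}(P)}(\Out_S(P))$ is proper and hence strongly $p$-embedded in $\Out_\F(P)$, so that $P \in \F^e$; then (d) and the hypothesis $A \notin \F^e$ rule out $\H$ and $A$.
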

\begin{proof}
Parts (a), (b), and (f) are shown in \cite[Lemma~2.3,2.4]{Oliver2014}, and (c)
follows from \cite[Lemma~2.7]{Oliver2014}.  Suppose as in (d) that $P \in \F^e
\cap \H$. By (c), $\Aut_\F(P)$ is a subgroup of $GL_2(p)$ containing $SL_2(p)$,
and the stabilizer of $Z$ in this action normalizes $O^{p'}(C_{\Aut_\F(P)}(Z))
= \Aut_S(P)$. So (d) follows from (a).

It remains to prove (e).  If $P \in \F^e \cap \B$, then as $Z = [P,P]$ is
$\Aut_\F(P)$-invariant in this case, $\Out_{N_\F(Z)}(P) = \Out_\F(P)$ has a
strongly $p$-embedded subgroup, and so $P$ is essential in $N_\F(Z)$.
Conversely, suppose $P$ is $N_\F(Z)$-essential. By (a) applied to $N_\F(Z)$, $P
\in \{A\} \cup \H \cup \B$ and $\Out_S(P)$ is of order $p$, so by assumption
$N_{\Out_{N_\F(Z)}(P)}(\Out_S(P))$ is strongly $p$-embedded in
$\Out_{N_\F(Z)}(P)$ by \cite[Proposition~A.7]{AschbacherKessarOliver2011}.  Now
each member of $N_{\Aut_{\F}(P)}(\Aut_S(P))$ extends to $S$ by (a), so $Z$ is
$N_{\Aut_{\F}(P)}(\Aut_S(P))$-invariant. Thus, $N_{\Out_\F(P)}(\Out_S(P)) =
N_{\Out_{N_\F(Z)}(P)}(\Out_S(P))$ is a proper subgroup of $\Out_\F(P)$, and
hence strongly $p$-embedded by
\cite[Proposition~A.7]{AschbacherKessarOliver2011} again. So $P$ is essential
in $\F$. By assumption $P \neq A$, and $P \notin \H$ by (d). So $P \in \B$. 
\end{proof}

\textbf{For the remainder of this subsection, we let $\F$ be a saturated fusion
system on a $p$-group $S$ with a unique abelian subgroup $A$ of index $p$.
Further, we assume that $O_p(\F) = 1$ and $A$ is not essential in $\F$.}

\medskip
We next set up some additional notation. Fix an element $x \in S-A$ such that
$A_0\gen{x}$ is $\Aut_\F(S)$-invariant, as in Lemma~\ref{L:indp-basic}(f).
Since $O_p(\F) = 1$, $S$ is of maximal class by Lemma~\ref{L:indp-basic}(b). In
particular $Z = Z_0$ is of order $p$, $A/A_0$ is of order $p$, and $S' = A_0$,
so we can adopt \cite[Notation~2.5]{Oliver2014}.  As in
\cite[Notation~2.5]{Oliver2014}, let $a\in A\backslash A_0$, and define $\H_i$ and $\B_i$ to be the
$S$-conjugacy classes of the subgroups $Z\gen{xa^i}$ and $Z_2\gen{xa^i}$ for $i
= 0,1,\dots,p-1$, and set 
\[
\H_* = \H_1 \cup \cdots \cup \H_{p-1} \quad \text{ and } \quad \B_* = \B_1 \cup \cdots \cup \B_{p-1},
\]
so that $\H = \H_0 \cup \H_*$ and $\B = \B_0 \cup \B_*$. 

Set
\[
\Delta = (\ZZ/p\ZZ)^\times \times (\ZZ/p\ZZ)^\times \quad \text{ and } \quad
\Delta_i = \{(r,r^i) \mid r \in (\ZZ/p\ZZ)^\times\}.
\]
Define $\mu\colon \Aut_\F(S) \to \Delta$ and $\widehat{\mu}\colon \Out_\F(S)
\to \Delta$ by $\widehat{\mu}([\alpha]) = \mu(\alpha) = (r,s)$, where
\[
(xA_0)^\alpha = x^rA_0 \quad  \text{ and } \quad z^\alpha = z^s.
\]
The following lemma looks at the image of homomorphisms analogous to $\mu$ and
$\widehat{\mu}$ which are defined instead with respect to $N_\F(Z)/Z$ and
$C_\F(Z)/Z$. 

\begin{lem}
\label{L:indp-aut}
Assume $|S/Z| = p^m$ with $m \geq 4$. Let $\E \in \{N_\F(Z), C_\F(Z)\}$, and
let $\mu_\E$ be the restriction of $\mu$ to $\Aut_\E(S)$. 
Let $\mu_{\E/Z}\colon \Aut_{\E/Z}(S/Z) \to \Delta$ be the map analogous to
$\mu$ but defined instead with respect to $S/Z$. Then
\[
\im(\mu_{\E/Z}) = \{(r,sr^{-1}) \mid (r,s) \in \im(\mu_\E)\}
\]
In particular, if $\im(\mu_\E) = \Delta$, then $\im(\mu_{\E/Z}) = \Delta$. And
if $\im(\mu_\E) = \Delta_i$ for some $i$, then $\im(\mu_{\E/Z}) =
\Delta_{i-1}$, where the indices are taken modulo $p-1$. 
\end{lem}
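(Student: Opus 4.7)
\smallskip
\noindent
\textbf{Proof plan.} The strategy breaks into three parts: set up the target construction $\mu_{\E/Z}$ using compatible representatives; observe that $\Aut_\E(S)\twoheadrightarrow\Aut_{\E/Z}(S/Z)$; and run a single commutator calculation comparing the two images in $\Delta$. Writing $\bar S := S/Z$, Lemma~\ref{L:indp-basic}(b) gives that $S$ has maximal class of order $p^{m+1}$ with $Z = Z_0$ of order $p$ and $S' = A_0$ of index $p^2$, so $\bar S$ has maximal class of order $p^m$, and $\bar S' = S'/Z$, $Z(\bar S) = Z_2/Z$, and $\bar A_0 = \bar S' = A_0/Z$ (using $Z_2\leq S'$, valid in maximal class for $m\geq 3$). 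I will define $\mu_{\E/Z}$ using the reference abelian subgroup $\bar A := A/Z$, the element $\bar x := xZ$ (compatible with Notation~2.5 because $A_0 x$ is $\Aut_\F(S)$-invariant by Lemma~\ref{L:indp-basic}(f)), and the image $\bar z_2$ of a fixed $z_2\in Z_2 - Z$ as generator of $Z(\bar S)$.

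The surjectivity step is bookkeeping about quotient fusion systems. Since $Z = Z(S)$ is characteristic, $\Aut_{N_\F(Z)}(S) = \Aut_\F(S)$; by construction $\Aut_{\E/Z}(\bar S)$ is the image of $\Aut_\E(S)$ under $\alpha\mapsto\bar\alpha$ in both cases $\E = N_\F(Z)$ and $\E = C_\F(Z)$. Hence it suffices to compute $\mu_{\E/Z}(\bar\alpha)$ from $\mu_\E(\alpha)$ for $\alpha\in\Aut_\E(S)$.

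For the key calculation, fix $\alpha\in\Aut_\E(S)$ with $\mu_\E(\alpha) = (r,s)$ and write $\mu_{\E/Z}(\bar\alpha) = (r',s')$. Reducing $(xA_0)^\alpha = x^r A_0$ modulo $Z$ yields $r' = r$. For $s'$, I would apply $\alpha$ to the commutator $[z_2,x]$. Since $z_2\in A - Z$ and $A$ is abelian, $[z_2, S] = \langle [z_2,x]\rangle$, which is nontrivial (else $z_2\in Z(S) = Z$), so $[z_2,x] = z^u$ for some $u\in(\ZZ/p)^\times$. By definition of $s$, $[z_2,x]^\alpha = z^{us}$. On the other hand, write $z_2^\alpha = z_2^{s'} z^a$ and $x^\alpha = x^r y$ with $y\in A_0 = S'\leq A$. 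The subgroup $\langle Z_2, x\rangle$ has class at most $2$ (as $[Z_2, S]\leq Z\leq Z(S)$), and $[z_2, y] = 1$ because $y\in A$. Standard class-$2$ commutator identities together with the centrality of $z^a$ and $[z_2,x]$ therefore give
\[
[z_2^\alpha, x^\alpha] = [z_2^{s'}, x^r y] = [z_2^{s'}, x^r] = [z_2, x]^{s'r} = z^{us'r}.
\]
Equating with $z^{us}$ yields $s' = sr^{-1}$.

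The image formula $\im(\mu_{\E/Z}) = \{(r, sr^{-1}) : (r,s)\in\im(\mu_\E)\}$ follows. For the \emph{in particular} clauses, the map $(r,s)\mapsto(r,sr^{-1})$ preserves $\Delta$ and sends $\Delta_i$ bijectively to $\Delta_{i-1}$ (indices modulo $p-1$). The main obstacle is the class-$2$ commutator computation; the rest is formal.
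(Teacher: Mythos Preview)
Your proof is correct. The paper's own proof takes the same setup (fixing $\bar x = xZ$ and a generator $gZ$ of $Z(\bar S)$ with $g \in Z_2 - Z$) and reaches the same conclusion $\mu_{\E/Z}(\bar\alpha) = (r, sr^{-1})$, but instead of your direct commutator computation it invokes \cite[Lemma~1.11(b)]{COS2017}, which describes the action of $\alpha$ on each upper central quotient $Z_i(S)/Z_{i-1}(S)$ as the power $tr^{m-i}$ (where $t$ records the action on $A/A_0$); reading this off for $i=1$ and $i=2$ immediately gives $s = tr^{m-1}$ and $(gZ)^{\bar\alpha} = g^{tr^{m-2}}Z$, hence $s' = sr^{-1}$. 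Your approach proves exactly the $i=1,2$ special case of that cited lemma by hand via the identity $[z_2,x]^\alpha = [z_2^\alpha, x^\alpha]$, which makes the argument self-contained and more elementary at the cost of a short class-$2$ commutator calculation; the paper's approach is terser but relies on an external reference that is also used later in the section.
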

\begin{proof}
This essentially follows from \cite[Lemma~1.11(b)]{COS2017}. By assumption,
$\E/Z$ is a fusion system over a $p$-group $S/Z$ of order at least $p^4$.  So
$A/Z$ is the unique abelian subgroup of $S/Z$ of index $p$ by
\cite[Lemma~1.9]{Oliver2014}. Since $S$ is of maximal class, so is the quotient
$S/Z$.  In particular, $Z(S/Z)$ is of order $p$, so we can define $\mu_{\E/Z}$
as suggested with $xZ$ in the role of $x$ and $gZ$ in the role of $z$, where $g
\in Z_2-Z$ is a fixed element. 

Let $\alpha \in \Aut_\E(S)$ with $\mu(\alpha) = (r,s)$, let $\bar{\alpha}$ be
the induced automorphism of $S/Z$, and let $t \in (\ZZ/p\ZZ)^\times$ be such
that $a^\alpha A_0 = a^t A_0$ (which exists since $A$ and $A_0$ are
$\Aut_\F(S)$-invariant and $|A/A_0| = |Z_0| = p$).  By
\cite[Lemma~1.11(b)]{COS2017}, $\alpha$ acts on the $i$-th upper central
quotient $Z_{i}(S)/Z_{i-1}(S)$ by raising a generator to the power $tr^{m-i}$
for $i = 1,\dots,m-1$.  Thus, $s = tr^{m-1}$ and $(gZ)^{\bar{\alpha}} =
g^\alpha Z = g^{tr^{m-2}}Z$.  Hence, $\mu_{\E/Z}(\bar{\alpha}) = (r, sr^{-1})$.
Conversely if $\mu_{\E/Z}(\bar{\alpha}) = (r, \bar{s})$, then $\mu_{\E}(\alpha) =
(r,\bar{s}r)$.  
\end{proof}

In the following proposition, we refer to Oliver's systems according to the
itemized list (a)(i-iv), (b) given in \cite[Theorem~2.8]{Oliver2014}.

\begin{prop}\label{P:Oliver}
Assume $\F$ is one of the exotic systems appearing in Theorem~2.8 of
\cite{Oliver2014}. Write $|S/Z| = p^m$ with $m \geq 3$. 
\begin{itemize}
\item[(a)] $\F$ is of characteristic $p$-type whenever $\F^e \subseteq \H$. In
particular, this holds if $\F$ occurs in case (a)(i), (a)(iv), or (b).
\item[(b)] If $\F$ is in case (a)(ii) and $m \geq 4$, then $N_\F(Z)$ is exotic.
Moreover, $\F$ is of component type, and $C_{\F}(Z)/Z$ is simple, exotic, and
occurs in (a)(iv) in this case. If $\F$ is in case (a)(ii) with $m = 3$ (and
hence $p = 5$), then $N_\F(Z)/Z$ is the fusion system of $5^2GL_2(5)$, and $\F$
is of characteristic $5$-type.  
\item[(c)] If $\F$ is in case (a)(iii), then $N_\F(Z)$ is exotic.  Moreover,
$\F$ is of component type where $C_\F(Z)/Z$ is simple, exotic, and of type
(a)(i). 
\end{itemize}
\end{prop}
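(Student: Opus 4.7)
For part (a), the hypothesis $\F^e \subseteq \H$ gives $\F^e \cap \B = \es$, so by Lemma~\ref{L:indp-basic}(e) the subsystem $N_\F(Z)$ has no essential subgroups. Alperin's fusion theorem then yields that $N_\F(Z)$ is generated by $\Aut_{N_\F(Z)}(S)$ and is therefore equal to $\F_S(S \rtimes \Aut_{N_\F(Z)}(S))$, which is constrained. For any other nontrivial fully $\F$-normalized subgroup $X$ not $\F$-conjugate to $Z$, I would show $C_S(X)$ is abelian and apply Lemma~\ref{ShowNormalizerConstrained}. Since $S$ has maximal class with unique abelian index-$p$ subgroup $A$ (Lemma~\ref{L:indp-basic}(b)), the analysis splits in two: if $X \leq A$ with $X \neq Z$, then $A \leq C_S(X) \lneq S$ (using $Z(S) = Z$), forcing $C_S(X) = A$; if $X \not\leq A$, then picking $x \in X - A$ gives $C_S(X) \leq C_S(x) = \gen{x} \cdot C_A(x)$, a product of two commuting abelian subgroups, hence abelian. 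The ``in particular'' clause follows by reading off the explicit essential configurations for cases (a)(i), (a)(iv), (b) of \cite[Theorem~2.8]{Oliver2014}.

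For parts (b) and (c) in the main case $m \geq 4$, the plan is to identify the reduced fusion system $C_\F(Z)/Z$ in Oliver's classification applied to $S/Z$, which is again of maximal class with unique abelian index-$p$ subgroup $A/Z$ (using $m \geq 4$). Lemma~\ref{L:indp-aut} converts the $\mu$-image of $C_\F(Z)$ into that of $C_\F(Z)/Z$, shifting the index $\Delta_i \mapsto \Delta_{i-1}$, while essentials of $N_\F(Z)$ lying in $\B$ project to essentials of $\H$-type in $S/Z$. Matching these invariants against \cite[Theorem~2.8]{Oliver2014}, I expect $C_\F(Z)/Z$ to fall into case (a)(iv) when $\F$ is in case (a)(ii), and case (a)(i) when $\F$ is in case (a)(iii); both target cases are exotic and simple by that theorem. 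Exoticness of $N_\F(Z)$ then follows because a group $M$ realizing $N_\F(Z)$ (with $Z \leq Z(M)$) would yield a quotient $M/Z$ realizing $N_\F(Z)/Z$ modulo a normal $p'$-part, and hence a realization of the exotic $C_\F(Z)/Z$; Lemma~\ref{L:normexotic} then also gives the nonexistence of a punctured group. The component-type conclusion comes from lifting the simplicity of $C_\F(Z)/Z$ through the central extension by $Z$ to produce a quasisimple normal subsystem $\K \trianglelefteq C_\F(Z)$ with $\K/Z \cong C_\F(Z)/Z$, which is a $p$-component of $C_\F(z)$ for any generator $z$ of $Z$.

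The edge case $m=3$ of part (b) requires separate treatment: there $|S/Z| = p^3$ with unique abelian index-$p$ subgroup, which together with the case-(a)(ii) restrictions on $\im(\mu)$ forces $p = 5$. Direct inspection of fusion systems on the elementary abelian $A/Z \cong C_5^2$ with the resulting automizer identifies $N_\F(Z)/Z$ as $\F_{C_5^2}(5^2 GL_2(5))$. Characteristic-$5$-type then follows by pulling back constrainedness of this small system to $N_\F(Z)$ and applying the maximal-class centralizer argument from part (a) to all other fully $\F$-normalized nontrivial subgroups. The main obstacle throughout will be the matching step in the second paragraph: it is essential to confirm, by simultaneous tracking of $\im(\mu)$ through Lemma~\ref{L:indp-aut} and of the essential-subgroup configuration through the quotient by $Z$, that $C_\F(Z)/Z$ lands precisely in the specified exotic Oliver case rather than in a realizable one, since otherwise the exoticness and component-type conclusions collapse.
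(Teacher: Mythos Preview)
Your plan for part (a) matches the paper's approach and is correct.

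For parts (b) and (c), however, there is a genuine gap in the exoticness argument. You write that if $M$ realizes $N_\F(Z)$ then $M/Z$ realizes $N_\F(Z)/Z$, ``and hence a realization of the exotic $C_\F(Z)/Z$.'' This last step does not follow: $C_\F(Z)/Z = O^{p'}(N_\F(Z)/Z)$ is a proper normal subsystem of $N_\F(Z)/Z$, and from a group $\bar G$ realizing $N_\F(Z)/Z$ you only get that $\F_{\bar S}(O^{p'}(\bar G))$ \emph{contains} $C_\F(Z)/Z$, not that it equals it. The paper works considerably harder here. It first shows $\bar\F_1 = C_\F(Z)/Z$ is reduced (checking $O_p=1$, $O^p=\bar\F_1$, $O^{p'}=\bar\F_1$ separately via focal subgroup and $\mu$-image computations), hence simple and exotic by Oliver's arguments. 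Then, assuming $\bar\F = N_\F(Z)/Z$ is realizable, it invokes \cite[Proposition~2.19]{DiazRuizViruel2007} to produce an almost simple realizing group $\bar G$, uses the Schreier conjecture to get $\bar G_0 = O^{p'}(\bar G)$ simple, and finally derives a contradiction from the existence of a proper nontrivial strongly closed subgroup via Flores--Foote in case (a)(ii), or from the structure of $\Aut_{\bar\F_0}(\bar A)$ via \cite{OliverRuizSimple} in case (a)(iii). None of this machinery appears in your outline, and without it the exoticness of $N_\F(Z)$ is unproved.

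A smaller point: in the $m=3$ subcase of (b), $\bar\F = N_\F(Z)/Z$ is a fusion system on $S/Z$, which has order $5^3$ and is extraspecial, not on the rank-two group $A/Z$. The paper argues that $\bar\F$ has a unique essential subgroup $Z(\bar S)\gen{\bar x}$, which is therefore normal, making $\bar\F$ constrained and identifiable with the fusion system of $5^2\!:\!GL_2(5)$.
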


\begin{proof}
Each of Oliver's systems is simple on $S$ with a unique abelian subgroup $A$ of
index $p$ which is not essential, so it satisfies our standing assumptions and
the hypotheses of Lemmas~\ref{L:indp-basic} and \ref{L:indp-aut}, and we can
continue the notation from above. In particular, $Z_0 = Z$ is of order $p$, $S'
= A_0$, and $S$ is of maximal class. Set \[\E:=C_\F(Z),\;\ov{S}=S/Z\mbox{ and
}\ov{\E}=\E/Z.\] If $G$ is a group realizing $N_\F(Z)$, then $C_G(Z)$ realizes
$\E=C_{N_\F(Z)}(Z)$ and so $C_G(Z)/Z$ realizes $\ov{\E}$. Hence
\begin{equation}\label{E:ovEexoticNFZexotic}
\mbox{if $\ov{\E}$ is exotic, then $N_\F(Z)$ is exotic.}
\end{equation}
The $\E$-automorphism group of $S$ is the centralizer of $Z$ in $\Aut_\F(S)$.
Thus, if $\im(\mu) = \Delta$, then by definition of the map $\mu$, we have
\[
\im(\mu_\E) = \{(r,1) \mid r \in (\ZZ/p\ZZ)^\times\} = \Delta_0,
\]
which implies $\im(\mu_{\bar{\E}}) = \Delta_{-1}$ by Lemma~\ref{L:indp-aut}. So
\begin{equation}\label{E:ImMuDelta}
 \mbox{if $\im(\mu) = \Delta$, then $\im(\mu_\E)=\Delta_0$ and $\im(\mu_{\bar{\E}}) = \Delta_{-1}$.}
\end{equation}
For each fully $\F$-normalized subgroup $X \leq S$ of order $p$ and not equal
to $Z$, $C_S(X)$ is abelian: if $X \leq A$ this follows since $C_S(X) = A$ ($X$
is not central), while if $X \nleq A$, this follows since $C_A(X) = Z$ by
Lemma~\ref{L:indp-basic}(b).  Thus $N_\F(X)$ is constrained in this case by
Lemma~\ref{ShowNormalizerConstrained}.  Hence
\begin{equation}\label{E:ShowCharpType}
\mbox{$\F$ is of characteristic
$p$-type if and only if $N_\F(Z)$ is constrained.}
\end{equation}
By Lemma~\ref{L:indp-basic}(e), if $\F^e \subseteq \H$, then $N_\F(Z)$ has no
essential subgroups. By the Alperin-Goldschmidt fusion theorem
\cite[I.3.5]{AschbacherKessarOliver2011}, each morphism in $N_\F(Z)$ extends in
this case to $S$, and hence $S$ is normal in $N_\F(Z)$. So
\begin{equation}\label{E:FeinH}
\mbox{if $\F^e \subseteq \H$, then
$N_\F(Z)$ is constrained.}
\end{equation}
In particular, the first part of (a) holds.

\smallskip
\noindent
\textbf{Case: $\F$ occurs in (a)(i), (a)(iv), or (b) of
\cite[Theorem~2.8]{Oliver2014}.} 
We have $\F^e \subseteq \H$ precisely in these cases. So $\F$ is of
characteristic $p$-type by \eqref{E:FeinH}. This completes the proof of (a).

\smallskip
\noindent
\textbf{Case: $\F$ occurs in (a)(ii).}
Here, $m \equiv -1 \pmod{p-1}$, $\im(\mu) = \Delta$, and $\F^e
= \B_0 \cup \H_*$.  By assumption $\F$ is exotic, so as $\F$ is the fusion
system of ${ }^3D_4(q)$ when $p = 3$, we have $p \geq 5$.

By Lemma~\ref{L:indp-basic}(e), the set of $N_\F(Z)$-essential subgroups is
$\F^e \cap \B=\B_0$. A straightforward argument shows now that the elements of
$\B_0$ are also essential in $\E=C_\F(Z)$, and their images in $\ov{S}$ are
essential in $\ov{\E}$. Thus, $\bar{\B}_0\subseteq \bar{\E}^e$, where
$\bar{\B}_0 = \{\bar{P} \mid P \in \B_0\}$. 

\smallskip
\noindent
\textbf{Subcase:} $m \geq 4$. By \eqref{E:ovEexoticNFZexotic}, it is sufficient
to show that $\ov{\E}$ is simple, exotic and occurs in case (a)(iv) of Oliver's
classification. Since $\bar{S}$ has order $p^m$, we know that $\bar{A}$ is the
unique abelian subgroup of $\bar{S}$ of index $p$ by
\cite[Lemma~1.9]{Oliver2014}. As $A$ is not $\F$-essential, it follows from the
Alperin--Goldschmidt fusion theorem that every element of $\Aut_\F(A)$ extends
to an $\F$-automorphism of $S$. From this one sees that $\ov{A}$ is not radical
and thus not essential in $\ov{\E}$.

We will prove first that $\bar{\E}$ is reduced.  Since $O_{p}(\bar{\E})$ is
contained in every $\bar{\E}$-essential subgroup, we have $O_{p}(\bar{\E}) \leq
\bigcap \bar{\B}_0 = Z(\bar{S})$. By Lemma~\ref{L:indp-basic}(c), $Z_2$ is not
$\Aut_{\E}(P)$-invariant for any $P \in \B_0$, and hence $\bar{Z}_2 =
Z(\bar{S})$ is not $\Aut_{\bar{\E}}(\bar{P})$-invariant for any $\bar{P} \in
\bar{\B}_0$. So $O_{p}(\bar{\E}) = 1$.

We next show that $O^{p}(\bar{\E}) = \bar{\E}$.  By
\cite[Proposition~1.3(c,d)]{Oliver2014}, the focal subgroup of $\bar{\E}$ is
generated by $[\bar{P}, \Aut_{\bar{\E}}(\bar{P})]$ for $\bar{P} \in \bar{\B}_0
\cup \{\bar{S}\}$, and $O^p(\bar{\E}) = \bar{\E}$ if and only if
$\foc(\bar{\E}) = \bar{S}$.  Since $\bar{P}$ is a natural module for
$O^{p'}(\Aut_{\bar{\E}}(\bar{P})) \cong SL_2(p)$ for each $\bar{P} \in
\bar{\B}_0$ (Lemma~\ref{L:indp-basic}(c)), the focal subgroup of $\bar{\E}$
contains $\gen{\bar{\B}_0} = \bar{A}_0\gen{\bar{x}}$.  Thus, $\foc(\bar{\E}) =
\bar{S}$ if $\bar{a} \in [\bar{S},\Aut_{\bar{\E}}(\bar{S})]$. By
\eqref{E:ImMuDelta}, $\im(\mu_{\bar{\E}})= \Delta_{-1}$.  Further, if
$\bar{\alpha}$ is an $\bar{\E}$-automorphism of $\bar{S}$ with
$\mu_{\bar{\E}}(\bar{\alpha}) = (r,r^{-1})$, then for the class $t \in
(\ZZ/p\ZZ)^{\times}$ with $(\bar{a}\bar{A}_0)^{\bar{\alpha}} =
\bar{a}^t\bar{A}_{0}$, we have $r^{-1} = tr^{m-2}$ by
\cite[Lemma~2.6(a)]{Oliver2014}, and hence $t = r^{-(m-1)}$.  As $m+1\equiv 0
\pmod{p-1}$ and $p \geq 5$, we have $-(m-1) \not\equiv 0 \pmod{p-1}$.  So
$\Aut_{\bar{\E}}(\bar{S})$ acts nontrivially on $\bar{A}/\bar{A}_0$. Hence
$\foc(\bar{\E}) = \bar{S}$ and $\ov{\E}=O^p(\ov{\E})$.

We next show that $O^{p'}(\bar{\E}) = \bar{\E}$ using Lemma~1.4 of
\cite{Oliver2014}. Set $\bar{P} = Z(\bar{S})\gen{\bar{x}} =
\bar{Z}_2\gen{\bar{x}} \in \bar{\B}_0$, and let $\bar{\alpha}$ be an
$\bar{\E}$-automorphism of $\bar{S}$. Recall that $x$ was chosen such that
$A_0\gen{x}$ is $\Aut_\F(S)$-invariant. Moreover, $\B$ is
$\Aut_\F(S)$-invariant and $\B_0$ consists of the elements of $\B$ that lie in
$A_0\gen{x}$. Hence, $\B_0$ is $\Aut_\F(S)$-invariant and so $\bar{\alpha}$
preserves the $\bar{S}$-class $\bar{\B}_{0}$ under conjugation. Thus, upon
adjusting $\bar{\alpha}$ by an inner automorphism of $\bar{S}$ (which doesn't
change the image of $\bar{\alpha}$ under $\mu_{\bar{\E}}$), we can assume that
$\bar{\alpha}$ normalizes $\bar{P}$. The restriction of $\bar{\alpha}$ to
$\bar{P}$ acts via an element of $SL_2(p)$ on $\bar{P}$ since
$\im(\mu_{\bar{\E}}) = \Delta_{-1}$, and so this restriction is contained in
$O^{p'}(\Aut_{\bar{\E}}(\bar{P}))$. Thus, $O^{p'}(\bar{\E}) = \bar{\E}$ by
Lemma~1.4 of \cite{Oliver2014}. 

Thus, $\bar{\E}$ is reduced. Step 1 of the proof of
\cite[Theorem~2.8]{Oliver2014} then shows that $\bar{\E}$ is the unique reduced
fusion system with the given data, and then Step 2 shows that $\bar{\E}$ is
simple. So $\bar{\E}$ is exotic and occurs in case (a)(iv) of Oliver's
classification, since $m-1 \equiv -2 \not\equiv 0,-1 \pmod{p-1}$.

\smallskip
\noindent
\textbf{Subcase: $m = 3$.}
Since $m \equiv -1 \pmod{p-1}$, we have $p = 5$.  So $\bar{S}$ is extraspecial
of order $5^3$ and exponent $5$. We saw above that $N_\F(Z)^e = \B_0$, which is
of size $1$ in this case. That is $Z_2\gen{x}$ is the unique essential subgroup
of $N_\F(Z)$, which is therefore invariant under
$\Aut_\F(S)=\Aut_{N_\F(Z)}(S)$. By the Alperin-Goldschmidt fusion theorem, this
subgroup is normal $N_\F(Z)$ and so $Z(\ov{S})\gen{\ov{x}}$ is normal in
$N_\F(Z)/Z$. This implies that $N_\F(Z)$ and $N_\F(Z)/Z$ are constrained. In
particular, $\F$ is of characteristic $5$-type by \eqref{E:ShowCharpType}. As
$\im(\mu_{N_\F(Z)})=\im(\mu)=\Delta$, it follows from Lemma~\ref{L:indp-aut}
that $\im(\mu_{N_\F(Z)/Z})=\Delta$. Using this and Lemma~\ref{L:indp-basic}(c),
one sees that $N_\F(Z)/Z$ is indeed isomorphic to the fusion system of
$5^2GL_2(5)$.  This completes the proof of (b).

\smallskip
\noindent
\textbf{Case: $\F$ occurs in (a)(iii).}
Then $m \equiv 0 \pmod{p-1}$, $\F^e = \H_0 \cup \B_*$, and $\im(\mu) = \Delta$.
Again, by \eqref{E:ovEexoticNFZexotic}, it is sufficient to show that $\ov{\E}$
is simple, exotic and occurs in case (a)(i) of Oliver's classification.
Similarly as in the previous case, by Lemma~\ref{L:indp-basic}(e), the set of
$N_\F(Z)$-essential subgroups is $\F^e \cap \B=\B_*$ and $\bar{\B}_*=\{\bar{P}
\mid P \in \B_*\}\subseteq \ov{\E}^e$.

Since $m \geq 3$, we have $p \geq 5$, and hence in fact $m \geq 4$. In
particular, $\bar{A}$ is the unique abelian subgroup of $\bar{S}$. Moreover,
$\bar{A}$ is not essential in $\bar{\E}$, and $O_p(\bar{\E}) = 1$ by a similar
argument as in the previous case. Also as the previous case, the focal subgroup
of $\bar{\E}$ contains $\gen{\bar{\B}_*}$, which this time is equal to
$\bar{S}$. So $O^p(\bar{\E}) = \bar{\E}$.

Notice that $m\equiv 0\pmod{p-1}$ implies $\Delta_0=\Delta_m$ and thus
$\im(\mu_\E)=\Delta_m$ by \eqref{E:ImMuDelta}. It follows therefore from
\cite[Lemma~2.6(b)]{Oliver2014} that $\B_i$ is $\Aut_\E(S)$-invariant for
$i=1,2,\dots,p-1$. Hence, arguing as in the previous step (but with some $\B_i$
instead of $\B_0$), one sees that $O^{p^\prime}(\bar{\E})=\bar{\E}$. Hence,
$\bar{\E}$ is reduced.

It follows now from Steps 1 and 2 of the proof of
\cite[Theorem~2.8]{Oliver2014} that $\ov{\E}$ is simple and uniquely
determined. As every essential subgroup of $\ov{\E}$ has order $p^2$ and
$|\bar{S}/Z(\bar{S})| = p^{m-1}$ where $m-1 \equiv -1 \pmod{p-1}$, it follows
moreover that $\bar{\E}$ occurs in case (a)(i) of Oliver's classification. In
particular, $\bar{\E}$ is exotic. This completes the proof of (c) and thus the
proof of the proposition.
\end{proof}

\subsection{The Clelland-Parker systems}\label{SS:CP}

We now describe the fusion systems constructed by Clelland and Parker in
\cite{ClellandParker2010}. Throughout we fix a power $q$ of the odd prime $p$,
a natural number $n\leq p-1$, and we set $k := \mathbb{F}_q$.  Let $A :=
A(n,k)$ be the $(n+1)$-dimensional space of homogeneous polynomials of degree
$n$ in two variables with coefficients in $k$. The group $D := k^\times \times
GL_2(k)$ acts on $A$ via $f(x,y)\cdot
(\lambda,[\begin{smallmatrix}a&b\\c&d\end{smallmatrix}]) = \lambda f(ax+b,
cy+d)$. The subgroup $SL_2(k)$ of $D$ acts irreducibly on $A$.  Write $G$ for
the semidirect product $DA$. Let $U$ be a Sylow $p$-subgroup of $D$ and let $S
:=S(n,k):=UA$ be the semidirect product of $A$ by $U$.

The center $Z := Z(S)$ is a one-dimensional $k$-subspace of $A$ and by
\cite[Lemma~4.2(iii)]{ClellandParker2010}, we have 
\begin{eqnarray}
\label{E:CSU}
\text{$C_A(X) = Z(S)$ for each subgroup $X$ not contained in $A$.}
\end{eqnarray} 
The second center $Z_2(S)$ is a $2$-dimension $k$-subspace of $A$.  Let $R =
ZU$ and $Q = Z_2(S)U$. Then $R \cong q^2$ and $Q$ is special of shape
$q^{1+2}$.  Let $H_R$ be the stabilizer in $GL_3(k)$ of a one dimensional
subspace, and identify its unipotent radical with $R$.  Let $H_Q$ be the
stabilizer in $GSp_4(k)$ of a one dimensional subspace and identify the
corresponding unipotent radical with $Q$. It is shown in
\cite{ClellandParker2010} that $N_G(R)$ is isomorphic to a Borel subgroup
$GL_3(k)$, and that $N_G(Q)$ is isomorphic to a Borel subgroup of $GSp_4(k)$.
This allows to form the free amalgamated products 
\[
F(1, n, k, R) := G *_{N_G(R)} H_R
\]
and 
\[
F(1,n,k,Q) := G *_{N_G(Q)} H_Q.
\]
Set 
\[
\F(1,n,k,R) := \F_S(F(1,n,k,R)) 
\]
and
\[
\F(1,n,k,Q) := \F_S(F(1,n,k,Q)).
\]
More generally, for each $X\in\{R,Q\}$ and each divisor $r$ of $q-1$, subgroup
$F(r,n,k,X)$ of $F(1,n,k,X)$ of index $r$, which contains $O^{p^\prime}(G)$ and
$O^{p^\prime}(H_X)$. They set then \[\F(r,n,k,X)=\F_S(F(r,n,k,X)).\] As they
show, distinct fusion systems are only obtained for distinct divisors $r$ of
$(n+2,q-1)$ when $X=R$, and for distinct divisors $r$ of $(n,q-1)$ when $X=Q$.
By \cite[Theorem~4.9]{ClellandParker2010}, for all $n\geq 1$ and each divisor
$r$ of $(n+2,q-1)$, $\F(r,n,k,R)$ is saturated. Similarly, $\F(r,n,k,Q)$ is
saturated for each $n\geq 2$ and each divisor $r$ of $(n,q-1)$. It is
determined in Theorem~5.1, Theorem~5.2 and Lemma~5.3 of
\cite{ClellandParker2010} which of these fusion systems are exotic. It turns
out that $\F(r,n,k,R)$ is exotic if and only if either $n>2$ or $n=2$  and
$q\not\in\{3,5\}$. Furthermore, $\F(r,n,k,Q)$ is exotic if and only if $n\geq
3$, in which case $p\neq 3$ as $n\leq p-1$. 

\textbf{For the remainder of this subsection, except in
Lemma~\ref{L:ClParkerExotic}, we use the notation introduced above.}

For the problems we will consider here, we will sometimes be able to reduce to
the case $r=1$ using the following lemma.

\begin{lem}\label{L:ReduceTo1}
For any divisor $r$ of $q-1$, the fusion system $\F(r,n,k,R)$ is a normal
subsystem of $\F(1,n,k,R)$ of index prime to $p$, and the fusion system
$\F(r,n,k,Q)$ is a normal subsystem of $\F(1,n,k,Q)$ of index prime to $p$.
\end{lem}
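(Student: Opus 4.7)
The approach is to apply the standard correspondence between normal subsystems of a saturated fusion system $\F$ on $S$ of index prime to $p$ and subgroups $T$ of $\Aut_\F(S)$ of $p'$-index containing $\Aut_S(S)\cdot O^{p'}(\Aut_\F(S))$, established in \cite{BCGLO2005} and recorded as \cite[Theorem~I.7.7]{AschbacherKessarOliver2011}. Both $\F(r,n,k,X)$ and $\F(1,n,k,X)$ are saturated fusion systems on the same $p$-group $S$ by \cite[Theorem~4.9]{ClellandParker2010} (note $S$ is Sylow in both amalgams because $r\mid q-1$ is prime to $p$, so $S\leq F(r,n,k,X)$), and the subsystem inclusion $\F(r,n,k,X)\subseteq \F(1,n,k,X)$ is immediate from $F(r,n,k,X)\leq F(1,n,k,X)$.

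The crucial step is to show that the two systems differ only in the $S$-automizer. From the structure of the Clelland--Parker amalgam, the $\F(1,n,k,X)$-essential subgroups $P$ properly contained in $S$ are (up to $\F$-conjugacy) $X$ itself together with possibly certain essential subgroups contained in $G$; in each case, $\Aut_{\F(1,n,k,X)}(P)$ is generated by $\Aut_S(P)$ together with automorphisms induced by $O^{p'}(H_X)$ (for $P=X$) or $O^{p'}(G)$ (for essential subgroups coming from $G$). Since both $O^{p'}(H_X)$ and $O^{p'}(G)$ are contained in $F(r,n,k,X)$ by construction, we conclude $\Aut_{\F(r,n,k,X)}(P)=\Aut_{\F(1,n,k,X)}(P)$ for every such $P$. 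Consequently $T:=\Aut_{\F(r,n,k,X)}(S)$ has index exactly $r$ (prime to $p$) in $\Aut_{\F(1,n,k,X)}(S)$, so $T$ contains $\Aut_S(S)\cdot O^{p'}(\Aut_{\F(1,n,k,X)}(S))$.

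By the correspondence, $T$ determines a unique normal subsystem $\F_T$ of $\F(1,n,k,X)$ of index prime to $p$. Alperin's fusion theorem \cite[Theorem~I.3.6]{AschbacherKessarOliver2011} applied to both $\F_T$ and $\F(r,n,k,X)$---which are saturated subsystems of $\F(1,n,k,X)$ agreeing on the $S$-automizer and on all essential automizers---forces $\F_T=\F(r,n,k,X)$, completing the proof. The main obstacle is the precise identification of the essential automizers of $\F(1,n,k,X)$ in terms of $O^{p'}(G)$ and $O^{p'}(H_X)$; this requires a careful reading of the amalgam construction in \cite{ClellandParker2010}, but is essentially forced by the design of $F(r,n,k,X)$ as the $p'$-index subgroup of $F(1,n,k,X)$ containing these two $p'$-cores.
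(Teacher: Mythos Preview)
Your approach is considerably more elaborate than the paper's, and it contains a genuine gap. The paper's proof is essentially two lines: since $F(r,n,k,X)$ is a \emph{normal} subgroup of $F(1,n,k,X)$ (by construction in \cite{ClellandParker2010}), conjugation by elements of $F(1,n,k,X)$ preserves $F(r,n,k,X)$, and this immediately yields that $\F(r,n,k,X)$ is $\F(1,n,k,X)$-invariant. Both subsystems are saturated over the same $p$-group $S$, and for subsystems on the full Sylow the extension condition in the definition of a normal subsystem is automatic, so normality of $p'$-index follows directly.

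Your route through the correspondence of \cite[Theorem~I.7.7]{AschbacherKessarOliver2011} is in principle workable, but the assertion in your step~3 is false as stated. You claim that for each $\F(1,n,k,X)$-essential subgroup $P$ properly contained in $S$ one has $\Aut_{\F(r,n,k,X)}(P)=\Aut_{\F(1,n,k,X)}(P)$, on the grounds that the latter is generated by $\Aut_S(P)$ together with automorphisms coming from $O^{p'}(H_X)$ or $O^{p'}(G)$. This is not so: for instance, when $P=A$ one has $\Aut_{\F(1,n,k,X)}(A)=\Aut_G(A)\cong D=k^\times\times GL_2(k)$, whereas $\langle \Aut_S(A),\Aut_{O^{p'}(G)}(A)\rangle$ lies in $SL_2(k)$. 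In general $\Aut_{\F(r,n,k,X)}(P)$ has index dividing $r$ in $\Aut_{\F(1,n,k,X)}(P)$, not index~$1$. Consequently your identification $\F_T=\F(r,n,k,X)$ via Alperin's theorem does not go through as written, because the two subsystems do not agree on essential automizers.

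The remedy is to abandon the detour through essential automizers and instead exploit directly the group-theoretic normality $F(r,n,k,X)\trianglelefteq F(1,n,k,X)$, as the paper does.
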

\begin{proof}
For $X\in\{R,Q\}$, the fusion systems $\F(r,n,k,X)$ and $\F(1,n,k,X)$ are both
saturated by the results cited above, As $F(r,n,k,X)$ is a normal subgroup of
$F(1,n,k,X)$, it is easy to check that $\F(r,n,k,X)$ is
$\F(1,n,k,X)$-invariant. As both $\F(1,n,k,X)$ and $\F(r,n,k,X)$ are fusion
systems over $S$, the claim follows. 
\end{proof}

\begin{prop}\label{P:CPR}
$\F(r,n,k,R)$ is of characteristic $p$-type for all $1 \leq n \leq p-1$ and for
all divisors $r$ of $(n+2,q-1)$. 
\end{prop}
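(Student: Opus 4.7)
The plan is to prove that for every fully $\F$-normalized non-identity subgroup $P \leq S$, the normalizer fusion system $N_\F(P)$ is constrained.

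I would begin by identifying the relevant structure. Using the construction of $\F := \F(r,n,k,R)$ as $\F_S(G \ast_{N_G(R)} H_R)$, the only $\F$-essential subgroup up to $\F$-conjugacy is $R = ZU$, which is abelian; $\Aut_\F(R)$ is an index-$r$ subgroup of $GL_2(k)$ acting naturally on $R \cong k^2$ and contains $SL_2(k)$. Every $\F$-conjugate of $R$ is also an $S$-conjugate and therefore contains $Z = Z(S) = C_A(U)$; moreover, $A$ is self-centralizing in $S$, so $A$ is $\F$-centric.

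Next I would handle the cases where Lemma~\ref{ShowNormalizerConstrained} applies directly. Using the commutator identity
\[
[ua,\, u'a'] \;=\; [u, a'] \cdot [a, u'] \qquad \text{(valid in } A\text{)}
\]
in $S = UA$, together with \eqref{E:CSU}, one verifies: $C_S(P) = A$ whenever $P \leq A$ with $P \not\leq Z$; and for $P \not\leq A$, one has $|C_S(P)| \leq q^2 = |R|$, and a direct computation shows $C_S(P)$ is contained in some $\F$-conjugate of the abelian group $R$. In both cases $C_S(P)$ is abelian, so Lemma~\ref{ShowNormalizerConstrained} shows $N_\F(P)$ is constrained.

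The main obstacle is the remaining case $P \leq Z$ with $P \neq 1$, where $C_S(P) = S$ is nonabelian. For this I would show that $N_\F(P)$ has no essential subgroups, whence Alperin's Fusion Theorem gives that $S$ is a normal $\F$-centric subgroup of $N_\F(P)$, so $N_\F(P)$ is constrained. Since $N_S(P) = S$ here, $N_\F(P)$-centric subgroups coincide with $\F$-centric subgroups. Any putative $N_\F(P)$-essential $Q$ is thus $\F$-centric; if $Q$ is not $\F$-essential then $\Out_\F(Q)$ has no proper strongly $p$-embedded subgroup, and a short analysis using that $\Aut_S(Q)$ is a normal Sylow of the subgroup of $\Out_\F(Q)$ generated by restrictions from $\Aut_\F(S)$ excludes such a $Q$. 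If $Q$ is $\F$-essential, then $Q$ is $\F$-conjugate to $R$, and since $Q \supseteq Z \supseteq P$, the group $\Out_{N_\F(P)}(Q)$ equals the stabilizer of $P$ inside $\Out_\F(Q)$. Because $P$ is a non-trivial $\mathbb{F}_p$-subspace of the $k$-line $Z \leq Q \cong k^2$ and $GL_2(k)$-elements are $k$-linear, this stabilizer is contained in the Borel subgroup of $\Out_\F(Q)$ stabilizing the line $Z$; this Borel is solvable with normal Sylow $p$-subgroup $\Aut_S(Q)$, so it coincides with its own Sylow normalizer and admits no proper strongly $p$-embedded subgroup (any such subgroup would contain the normal Sylow, forcing a conjugate intersection divisible by $p$). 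Thus no $\F$-essential is $N_\F(P)$-essential either, completing the proof.
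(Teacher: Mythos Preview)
There is a genuine gap. Your opening assertion that ``the only $\F$-essential subgroup up to $\F$-conjugacy is $R$'' is false: the abelian subgroup $A$ is also $\F$-essential. Indeed $\Aut_\F(A)$ contains the image of $O^{p'}(G) \geq SL_2(k)$ acting on the irreducible module $A$, so $\Out_\F(A)=\Aut_\F(A)$ has a strongly $p$-embedded subgroup (a Borel) and $O_p(\Aut_\F(A))=1$. The paper's generating set is accordingly $\F = \langle \Aut_{\F_1}(S),\,\Aut_{\F_1}(A),\,\Aut_{\F_2}(R)\rangle$, not just $\langle \Aut_\F(S),\,\Aut_\F(R)\rangle$. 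This error propagates to your key case $1\neq P\leq Z$: a putative $N_\F(P)$-essential subgroup $E$ can be (conjugate to) $A$, not only $R$. The paper rules out $E=A$ with a separate argument: for $1\neq P\leq Z$ one has $N_G(P)\leq N_G(S)$ (since $D$ acts $k$-linearly on $A$ and any $d\in D$ normalizing the $\mathbb{F}_p$-subspace $P$ must normalize the $k$-line $kP=Z$, hence lie in the Borel), so $\Aut_S(A)\trianglelefteq N_{\Aut_\F(A)}(P)=\Aut_{N_\F(P)}(A)$ and $A$ cannot be $N_\F(P)$-essential. Your outline omits this entirely.

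Two smaller points. First, in your case $P\not\leq A$ the claim that ``a direct computation shows $C_S(P)$ is contained in some $\F$-conjugate of $R$'' is at best nontrivial: for $x=(u,a)$ with $a\notin [A,U]$ the centralizer $C_S(x)$ is \emph{not} $S$-conjugate to $R$, and showing it is abelian (or an $\F$-conjugate of $R$) requires real work with the module structure of $A(n,k)$. The paper sidesteps this by splitting instead according to whether $P$ is $\F$-conjugate into $A$, into $R$, or into neither; in the last subcase every morphism in $N_\F(P)$ extends to $S$ by the (correct) generating set above, so $N_\F(P)=N_{N_\F(S)}(P)$ is constrained. Second, your dismissal of $N_\F(P)$-essential $Q$ that are not $\F$-essential is too quick: a subgroup of a group without a strongly $p$-embedded subgroup can certainly have one, so you need the generation statement (again involving $A$) to force $\Aut_S(Q)\trianglelefteq \Aut_{N_\F(P)}(Q)$ in that subcase.
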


\begin{proof}
Fix $1 \leq n \leq p-1$ and a divisor $r$ of $(n+2,q-1)$. Set $\F =
\F(1,n,k,R)$. By Lemma~\ref{L:ReduceTo1}, $\F(r,n,k,R)$ is a normal subsystem
of $\F$ of index prime to $p$. So by \cite[Proposition~2(c)]{Henke2019},
it suffices to show that $\F$ is of characteristic $p$-type. By
\cite[Lemma~5.3(i,ii)]{ClellandParker2010}, $\F$ is of realizable and of
characteristic $p$-type when $n = 1$, so we may and do assume $n \geq 2$.

Using the notation above, set $\F_1 = \F_S(G)$, $S_2 = N_S(R)$, and $\F_2 =
\F_{S_2}(H_R)$.  The fusion system $\F$ is generated by $\F_1$ and $\F_2$ by
\cite[Theorem~3.1]{ClellandParker2010}, and so as $\F_1$ and $\F_2$ are both
constrained with $O_p(\F_1) = A$ and $O_p(\F_2) = R$, it follows that $\F$ is
in turn generated by $\Aut_{\F_1}(A)$, $\Aut_{\F_1}(S)$, $\Aut_{\F_2}(R)$, and
$\Aut_{\F_2}(S_2)$. However, the last automorphism group is redundant, since
$N_{H_R}(S_2) = N_G(R)$ induces fusion in $\F_1$. Hence 

\begin{eqnarray}
\label{E:Fgen}
\F = \gen{\Aut_{\F_1}(S), \Aut_{\F_1}(A), \Aut_{\F_2}(R)}.
\end{eqnarray}
Observe also that the following property is a direct consequence of
\eqref{E:CSU}:

\begin{eqnarray}\label{E:Cases} 
&&\mbox{If $X\leq S$ with $X\not\leq Z$, then either $X\leq A$ and $C_S(X)=A$,
or $|C_S(X)|\leq q^2$.} 
\end{eqnarray}
We can now show that $\F$ is of characteristic $p$-type. Let first $X\in\F^f$
such that $X\not\leq Z$. We show that $N_\F(X)$ is constrained. If $X$ is not
$\F$-conjugate into $A$ or into $R$, then every morphism in $N_\F(X)$ extends
by \eqref{E:Fgen} to an automorphism of $S$. So $N_\F(X)=N_{N_\F(S)}(X)$. As
$N_\F(S)$ is constrained, it follows thus from
\cite[Lemma~2.11]{Henke2019} that $N_\F(X)$ is constrained. So we may
assume that there exists an $\F$-conjugate $Y$ of $X$ with $Y\leq A$ or $Y\leq
R$. We will show that $C_S(X)$ is abelian so that $N_\F(X)$ is constrained by
Lemma~\ref{ShowNormalizerConstrained}. Note that $|C_S(X)|\geq|C_S(Y)|$ as $X$
is fully normalized and thus fully centralized in $\F$. Since $X$ is not
contained in $Z=Z(S)$, we have in particular $Y\not\leq Z$. If $Y\leq A$, then
$A\leq C_S(Y)$ and, since $X$ is fully centralized and $n\geq 2$, $|C_S(X)|\geq
|C_S(Y)|\geq|A|>q^2$. So by \eqref{E:Cases}, $C_S(X)=A$ is abelian. Similarly,
by \eqref{E:Cases}, if $X\leq A$ then $C_S(X)=A$ is abelian. Thus we may assume
$Y\leq R$ and $X\not\leq A$. Then $R\leq C_S(Y)$ and \eqref{E:Cases} implies
$q^2\geq |C_S(X)|\geq |C_S(Y)|\geq |R|=q^2$. So the inequalities are
equalities, $C_S(Y)=R$ and $|C_S(X)|=q^2$. By the extension axiom, there exists
$\phi\in\Hom_\F(C_S(Y),C_S(X))$. So it follows that $C_S(X)\in R^\F$ is
abelian. This completes the proof that $N_\F(X)$ is constrained for every
$X\in\F^f$ with $X\not\leq Z$.

Let now $1\neq X\leq Z$. It remains to show that $N_\F(X)$ is constrained. If
$N_\F(X)\subseteq N_\F(S)$, then again by \cite[Lemma~2.11]{Henke2019},
$N_\F(X)=N_{N_\F(S)}(X)$ is constrained since $N_\F(S)$ is constrained. We will
finish the proof by showing that indeed $N_\F(X)\subseteq N_\F(S)$. Assume by
contradiction that $N_\F(X)\not\subseteq N_\F(S)$. Then there exists an
essential subgroup $E$ of $N_\F(X)$. Observe that $Z < E$, since $E$ is
$N_\F(X)$-centric. As $\Aut_S(E)$ is not normal in $\Aut_{N_\F(Z)}(E)$, there
exists an element of $\Aut_{N_\F(Z)}(E)$ which does not extend to an
$\F$-automorphism of $S$. So by \eqref{E:Fgen}, $E$ is $\F$-conjugate into $A$
or into $R$. Assume first that there exists an $\F$-conjugate $\widehat{E}$ of $E$
such that $\widehat{E}\leq A$. Property \eqref{E:CSU} yields that $R\cap A=Z$. So
$E$ is conjugate to $\widehat{E}\leq A$ via an element of $\Aut_{\F_1}(S)$ by
\eqref{E:Fgen}. Thus, as $C_S(E)\leq E$, we have $A\leq C_S(\widehat{E})\leq
\widehat{E}$. Hence $A=\widehat{E}$ by \eqref{E:Cases}. As $A$ is
$\Aut_{\F_1}(S)$-invariant, it follows $E=A$. Looking at the structure of $G$,
we observe now that $N_G(X)=N_G(S)$ and so $\Aut_S(A)$ is normal in
$\Aut_{N_\F(X)}(A)=N_{\Aut_{\F}(A)}(X)=N_{\Aut_{\F_1}(A)}(X)$. Hence, $A$
cannot be essential in $N_\F(X)$ and we have derived a contradiction. Thus, $E$
is not $\F$-conjugate into $A$. Therefore, again by \eqref{E:Fgen}, $E$ is
conjugate into $R$ under an element of $\Aut_{\F_1}(S)$. Let
$\alpha\in\Aut_{\F_1}(S)$ such that  $E^\alpha\leq R$. As $C_S(E)\leq E$, we
have then $C_S(E^\alpha)\leq E^\alpha$. Since $R$ is abelian, it follows
$E^\alpha=R$. Thus, we have
\[
\Aut_{N_\F(X)}(E)^\alpha = N_{\Aut_\F(E)}(X)^\alpha = 
N_{\Aut_\F(R)}(X^\alpha).
\]
As $1\neq X^\alpha\leq Z^\alpha=Z\leq R$ and $\Aut_\F(R)=\Aut_{\F_2}(R)$ acts
$k$-linearly on $R$, $N_{\Aut_\F(R)}(X^\alpha)$ has a normal Sylow
$p$-subgroup. Thus, $\Aut_{N_\F(X)}(E)\cong N_{\Aut_\F(R)}(X^\alpha)$ has a
normal Sylow $p$-subgroup, contradicting the fact that $E$ is essential in
$N_\F(X)$. This final contradiction shows that $N_\F(X)\subseteq N_\F(S)$ is
constrained. This completes the proof of the assertion.
\end{proof}

Our next goal will be to show that $\F:=\F(r,n,k,Q)$ does not have a punctured
group for $n\geq 3$ (i.e. in the case that $\F$ is exotic). For that we prove
that, using the notation introduced at the beginning of this subsection,
$N_\F(Z)/Z$ is exotic. The structure of $N_\F(Z)/Z$ resembles the structure of
$\F(r,n-1,k,R)$ except that the elementary abelian normal subgroup of index $q$
is not essential. Indeed, it will turn out that the problem of showing that
$N_\F(Z)/Z$ is exotic reduces to the situation treated in the following lemma,
whose proof of part (c) depends on the classification of finite simple groups.

\begin{lem}\label{L:ClParkerExotic}
Fix a power $q$ of $p$ as before. Let $S$ be an arbitrary $p$-group such that
$S=U\ltimes A$ splits as a semidirect product of an elementary abelian subgroup
$A$ with an elementary abelian subgroup $U$. Assume $|U|=q$, and $|A|=q^{n}$
for some $3\leq n\leq p-1$. Set $P:=Z(S)U$, $T:=[S,S]U$, and let $\F$ be a
saturated fusion system over $S$. Assume the following conditions hold: 
\begin{itemize}
\item [(i)] $Z(S)$ has order $q$, $[S,S]\not\leq Z(S)$, and $Z(S)=C_A(u)$ for
every $1\neq u\in U$.
\item [(ii)] $O^{p^\prime}(\Aut_\F(P))\cong \SL_2(q)$ and $P$ is a natural
$\SL_2(q)$-module for $O^{p^\prime}(\Aut_\F(P))$, 
\item [(iii)] $\F$ is generated by $\Aut_\F(P)$ and $\Aut_\F(S)$,
\item [(iv)] $\Aut_\F(S)$ acts irreducibly on $A/[S,S]$, $|A/[S,S]|\geq q$, and
\item [(v)] there is a complement to $\Inn(S)$ in $\Aut_\F(S)$ which normalizes
$U$. 
\end{itemize}
Then the following hold:
\begin{itemize}
\item[(a)] The non-trivial strongly closed subgroups of $\F$ are precisely $S$
and $T$.
\item[(b)] Neither $S$ nor $T$ can be written as the direct product of two
non-trivial subgroups.
\item[(c)] $\F$ is exotic.
\end{itemize}
\end{lem}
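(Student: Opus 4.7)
The plan is to handle (a), (b), (c) in sequence, with the bulk of the work in (a). For (a), the key reduction comes from (iii): since $\F$ is generated by $\Aut_\F(P)$ and $\Aut_\F(S)$, a subgroup $F\leq S$ is strongly $\F$-closed if and only if $F$ is $\Aut_\F(S)$-invariant and $F\cap P$ is $\Aut_\F(P)$-invariant. I will first verify that $T = \langle U^S\rangle = U[S,S]$ is strongly closed: it is the normal closure of $U$ in $S$ (using $[U,S]=[U,A]=[S,S]$); it is $\Aut_\F(S)$-invariant, since $[S,S]$ is characteristic in $S$ and a complement $H$ to $\Inn(S)$ in $\Aut_\F(S)$ normalizes $U$ by (v); and $T\cap P = P$ once I note that $Z\leq [S,S]$, which I will extract from the Jordan-block structure of the $U$-action on $A$ forced by (i) together with the natural-module hypothesis (ii). Conversely, for any nontrivial strongly closed $F$, I will argue $F\cap Z \neq 1$ (since $F\unlhd S$ is a nontrivial normal subgroup of a $p$-group), and then use the fact that the $\Aut_\F(P)$-stabilizer of $Z$ contains a diagonal torus of $\SL_2(q)$ acting on $Z\cong\FF_q$ via the full scalar group $\FF_q^{\times}$ (irreducibly) to upgrade this to $F\cap Z = Z$. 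Then $F\cap P$ is a nonzero $\Aut_\F(P)$-invariant submodule of the irreducible natural module $P$, forcing $F\cap P = P$ and hence $U\leq F$, so $T = \langle U^S\rangle\leq F$. Finally $F/T$ is an $\Aut_\F(S)$-invariant subgroup of $S/T\cong A/[S,S]$, and by (iv) it must be trivial or the whole quotient, giving $F\in\{T,S\}$.

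For (b) I will first compute $C_S(P) = C_A(U)\cdot U = ZU = P$, and similarly $C_T(P) = P$ and $Z(T) = Z$. Suppose $S = S_1\times S_2$ with both $S_i$ nontrivial. Using that $\Aut_\F(P)\supseteq\SL_2(q)$ acts irreducibly on $P$, I will argue that the ambient direct product decomposition of $S$ is incompatible with this irreducibility unless $P$ lies entirely in one factor, say $P\leq S_1$. But then $S_2\leq C_S(S_1)\leq C_S(P) = P\leq S_1$, and $S_1\cap S_2 = 1$ forces $S_2 = 1$, a contradiction. The same argument applies to $T$ via $P\leq T$ and $C_T(P) = P$.

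For (c), I will argue by contradiction: assume $\F = \F_S(G)$ for some finite group $G$ with Sylow $p$-subgroup $S$. By (a) the nontrivial strongly closed subgroups of $\F$ are precisely $T$ and $S$; both are non-abelian (since $[T,T] = [[S,S],U]\neq 1$ follows from $[S,S]\not\leq Z = C_A(U)$ by (i)), both are $\F$-centric (their centers all coincide with $Z$ by the computations above), and by (b) neither splits as a nontrivial direct product. These are the hypotheses of \cite[Proposition~2.19]{DiazRuizViruel2007}, which produces such a $G$ with $F^*(G)$ simple. Following the reduction from the proof of Proposition~\ref{P:Oliver} (passing to $G_0 := O^{p^\prime}(G)$ and using the hyperfocal subgroup together with the Schreier conjecture), I will show that $G_0$ is itself simple. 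Then $T$ is a proper strongly closed subgroup of the simple group $G_0$ at the odd prime $p$, and the Flores--Foote theorem (as cited in \cite[II.12.12]{AschbacherKessarOliver2011}, depending on the classification of finite simple groups) restricts $(G_0,T)$ to a short list of possibilities, none of which is compatible with $|T| = q^n$ for $n\geq 3$ together with the specific $\SL_2(q)$-essential structure at $P$ prescribed by (ii). The hard parts will be the direct-product indecomposability argument in (b) and, in (c), matching the essential structure of $\F$ against the Flores--Foote list to rule out every realizable possibility.
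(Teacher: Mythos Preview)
Your plan for (a) is essentially the paper's argument. The two real problems are in (b) and (c).

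\textbf{Part (b).} Your central claim, that the $\Aut_\F(P)$-irreducibility of $P$ forces $P$ to lie in a single direct factor $S_i$, is unjustified and, as stated, does not work. A direct-product decomposition $S=S_1\times S_2$ gives you subgroups $P\cap S_1$ and $P\cap S_2$ of $P$, but there is no reason these should be $\Aut_\F(P)$-invariant: $\Aut_\F(P)$ acts on $P$, not on $S$, and it does not respect the decomposition. So irreducibility of $P$ tells you nothing about $P\cap S_i$. The paper's argument is quite different: for $S^*\in\{S,T\}$ one first checks $Z=Z(S^*)$, then from $[S_1,S_1]\times[S_2,S_2]=[S^*,S^*]\not\le Z$ one finds (say) $S_1\cap A\not\le Z$, and then uses the upper central series of $S^*/Z$ to locate an element $s\in (S_1\cap A\cap Z_2(S^*))\setminus Z$. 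This $s$ normalizes $P$ but lies outside $P$, and the natural-module hypothesis (ii) gives $Z=[P,s]\le S_1$; since $S_2$ is nontrivial normal it meets $Z(S^*)=Z\le S_1$, contradicting $S_1\cap S_2=1$. The point is that one uses (ii) only through the commutator $[P,s]$ for a carefully chosen $s$, not through any ``irreducibility versus decomposition'' principle.

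\textbf{Part (c).} You propose to show $G_0:=O^{p'}(G)$ is simple and then finish with Flores--Foote. But the ``hyperfocal plus Schreier'' reduction from Proposition~\ref{P:Oliver} needs $O^p(\F)=\F$, and nothing in hypotheses (i)--(v) gives you $\mathfrak{foc}(\F)=S$ rather than $T$ (both are strongly closed by (a)). In fact the paper does \emph{not} prove $G_0$ is simple: it observes that if $G_0$ were simple, Flores--Foote would force $p=3=|T|$, contradicting $|T|\ge q^3$, and therefore $G_0$ is \emph{not} simple. One then uses Schreier to find a normal subgroup of index $p$ in $G_0$, deduces $q=p$ and $T=S\cap F^*(G)$, and passes to $G_1:=F^*(G)$ with Sylow $T$ having an abelian subgroup $[S,S]$ of index $p$. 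The contradiction is then obtained via Oliver's structure results on such fusion systems (for $n\ge 4$) and the Ruiz--Viruel classification over $p_+^{1+2}$ (for $n=3$). Your outline misses this entire second branch, which is where most of the work in (c) actually lies.
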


\begin{proof}
Observe first that (iii) implies that $P$ is fully normalized. In particular,
$\Aut_S(P)\in\Syl_p(\Aut_\F(P))$. As $Z:=Z(S)$ has order $q$, it follows from
(ii) that $Z(S)=C_P(N_S(P))=[P,N_S(P)]\leq [S,S]$. In particular, $P\leq T$. We
note also that $C_S(P)=P$ as $C_A(U)=Z(S)$ by (i).

\smallskip

\textbf{(a)} We argue first that $T$ is strongly closed. Observe that $T$ is
normal in $S$, since $T$ contains $[S,S]$. As $[S,S]$ is characteristic in $S$,
it follows thus from (v) that $T$ is $\Aut_\F(S)$-invariant. Thus, as $P\leq
T$, (iii) implies that $T$ is strongly closed in $\F$. Let now $S_0$ be a
non-trivial proper subgroup of $S$ strongly closed in $\F$. Since $S_0$ is
normal in $S$, it follows $1\neq S_0\cap Z(S)\leq P$. By (ii), $\Aut_\F(P)$
acts irreducibly on $P$. So $P\leq S_0$. Hence, $[S,S]=[A,U]\leq [S,P]\leq
[S,S_0]\leq S_0$ and thus $T=[S,S]U=[S,S]P\leq S_0$. Suppose $T<S_0$. As $U\leq
S_0\leq S=AU$, we have $S_0=(S_0\cap A)U$ and thus $[S,S]<S_0\cap A<A$. So
$\Aut_\F(S)$ does not act irreducibly on $A/[S,S]$, contradicting (iv). This
shows (a).

\smallskip

\textbf{(b)} Let $S^*\in\{S,T\}$ and assume by contradiction that
$S^*=S_1\times S_2$ where $S_1$ and $S_2$ are non-trivial subgroups of $S^*$.
Notice that in either case $Z=Z(S^*)$ by (i). Moreover, again using (i), we
note that $[S_1,S_1]\times [S_2,S_2]=[S,S]\not\leq Z=Z(S)$. So there exists in
either case $i\in\{1,2\}$ with $[S_i,S_i]\not\leq Z$ and thus $S_i\cap
A\not\leq Z$. We assume without loss of generality that $S_1\cap A\not\leq Z$.
Setting $\ov{S^*}=S^*/Z$, we note that $\ov{S_1\cap A}$ is a non-trivial normal
subgroup of $\ov{S^*}$, and intersects thus non-trivially with $Z=Z(\ov{S^*})$.
Hence, $((S_1\cap A)Z)\cap Z_2(S^*)\not\leq Z$ and so $S_1\cap A \cap
Z_2(S^*)\not\leq Z$. Choosing $s\in (S_1\cap A\cap Z_2(S^*))\backslash Z$, we
have $s\in N_S(P)\backslash P$ as $A\cap P=Z$ and $[s,P]\leq [Z_2(S^*),P]\leq
Z\leq P$. Using (ii) and $C_S(P)\leq P$, it follows $Z=[P,s]$. So $Z=[P,s]\leq
[P,S_1]\leq S_1$ as $P\leq S^*$ and $S_1$ is normal in $S^*$. Since $S_2$ is a
non-trivial normal subgroup of $S^*$, we have $S_2\cap Z=S_2\cap Z(S^*)\neq 1$.
This contradicts $S_1\cap S_2=1$. Thus, we have shown that $S^*$ cannot be
written as a direct product of two nontrivial subgroups, i.e., property (b)
holds.

\smallskip

\textbf{(c)} Part (c) follows now using the classification of
finite simple groups. Most notably, we use knowledge of the automorphism
groups of finite simple groups, Oliver's work on fusion systems over
$p$-groups with an abelian subgroup of index $p$ \cite{Oliver2014}, and the
work of Flores--Foote \cite{FloresFoote2009} determining the simple groups
having a Sylow $p$-subgroup with a proper non-trivial strongly closed subgroup.
To argue in detail, assume that $\F$ is realizable. By (b), neither $S$ nor $T$
can be written as a direct product of two non-trivial subgroups. By (a), $S$
and $T$ are the only non-trivial strongly closed subgroups. The subgroup $T$ is
$\F$-centric since $T$ is strongly closed and $P\leq T$ is self-centralizing in
$S$. Clearly, $S$ is $\F$-centric. So as $\F$ is realizable, it follows from
\cite[Proposition~2.19]{DiazRuizViruel2007} that $\F=\F_S(G)$ for some almost
simple group $G$ with $S\in\Syl_p(G)$. Set
\[
G_1:=F^*(G)\mbox{ and }\F_1:=\F_{S\cap G_1}(G_1).
\]
If $S\leq G_1$, then note that $T$ is a proper subgroup of $S$ which is
strongly closed in $S$ with respect to $G$ and thus with respect to $G_1$.
Hence, it follows work of Flores--Foote \cite{FloresFoote2009} that $p=|T|=3$,
which contradicts our assumption. (We refer the reader to
\cite[Theorem~II.12.12]{AschbacherKessarOliver2011}, which summarizes for us
the relevant part of the work of Flores--Foote.) Hence, $S\neq S\cap G_1$. As
$S\cap G_1$ is strongly closed in $\F$, it follows thus from (a) that
\[
S\cap G_1=T,
\]
i.e. $\F_1$ is a fusion system over $T$. In particular, since $T<S$, the prime
$p$ divides $G/G_1$ and thus the outer automorphism group of the simple group
$G_1$. Since $p\geq 5$, it follows that $G_1$ is not alternating or a sporadic
simple group. Hence, by the classification of finite simple groups, $G_1$ is of
Lie type. We identify $G$ now in the natural way with a subgroup of
$\Aut(G_1)$; in particular, we identify $G_1$ with $\Inn(G_1)$. Write $D$ for
the subgroup of $\Aut(G_1)$ generated by $G_1$ and the diagonal automorphisms
of $G_1$, and let $E$ be the subgroup of $\Aut(G_1)$ generated by $D$
and the group of field automorphisms of $G_1$ with respect to some
fixed maximal torus and root structure. By \cite[Theorem~2.5.12]{GLS3}, $D$
and $E$ are normal in $\Aut(G_1)$, $|\Aut(G_1):E|$ is not
divisible by $p\geq 5$,  $E/D$ is cyclic, and $D/G_1$ is either cyclic
or of order $4$. In particular, $G_0:=O^{p^\prime}(G)\leq E \cap G$,
$G_0/D\cap G_0$ is cyclic, and $D\cap G_0/G_1$ is cyclic or of order $4$.

\smallskip

If $p$ divides the order of $G_0/D\cap G_0$, then this group has a unique
subgroup of index $p$ whose preimage is then a normal subgroup of $G$ which has
index $p$ in $G_0$. Otherwise, $G_0=O^{p^\prime}(G)\leq D$ and $p$ must divide
the order of $G_0/G_1\leq D/G_1$. Thus, in this case $G_0/G_1$ is cyclic and so
there is a unique subgroup of $G_0/G_1$ of index $p$.  So in either case we
find a normal subgroup $N$ of $G$ which has index $p$ in $G_0=O^{p^\prime}(G)$.
As $N\cap S$ is strongly closed in $\F=\F_S(G)$, it follows now from (a) that
$N\cap S=T$ and $p=|S/T|$. Using (iv) we see now that $p=|S/T|=|A/[S,S]|\geq q$
and hence $q=p$. In particular, $A$ has index $p$ in $S$, and similarly,
$A_0:=[S,S]$ is an abelian subgroup of $T$ of index $p$.

\smallskip

Assume now first $n \geq 4$ and thus $|A_0|\geq p^3$. Our goal is to apply
\cite[Lemma~1.6]{Oliver2014} with $(G_1,T)$ in place of $(G,S)$, so we verify
now the hypotheses of this lemma. It follows from the last condition in (i)
that any abelian subgroup of $T$ is either contained in $A_0=[S,S]=A\cap T$ or
has order at most $p^2$. Hence, $A_0$ is the unique abelian subgroup of $T$ of
index $p$. Moreover, by (i), $Z(T)=Z$ has order $p$ and
$|[T,T]|=|[A_0,U]|=|A_0/C_{A_0}(U)|=|A_0/Z|$. This implies that $[T,T]$ has
index $p$ in $A_0$ and thus index $p^2$ in $T$. As $|A_0|\geq p^3>|P|$, it
follows from (iii) that every $\F$-automorphism of $A_0$ lifts to an
$\F$-automorphism of $S$. In particular, $\Aut_T(A_0)$ is normal in
$\Aut_\F(A_0)$ and thus also in $\Aut_{\F_1}(A_0)$. Therefore, $A_0$ is not
essential in $\F_1$. Now \cite[Lemma~1.6]{Oliver2014} implies $p=3$,
contradicting our assumption.

\smallskip

We have thus $n = 3$. So $|A_0|=p^2$ and $T$ is extraspecial of order $p^3$ and
exponent $p$. In particular, $P$ is normal in $T$ and so (ii) together with
$C_S(P)\leq P$ implies $T=N_S(P)$. As $T$ is strongly $\F$-closed, every
$\F$-conjugate of $P$ is in $T$. If $Q\in P^\F$ is fully normalized, then $P$
is conjugate to $Q$ under some element of $\Hom_\F(N_S(P),S)$, and thus by
(iii) under some element of $\Aut_\F(S)$. So  $P$ itself is fully normalized
and a similar argument  yields $P^\F=P^{\Aut_\F(S)}$. In particular, for every
$P^*\in P^\F$, the fact that $T$ is strongly closed implies that $N_S(P^*)=T$
and so
\[
O^{p^\prime}(\Aut_\F(P^*))=\<\Aut_T(P^*)^{\Aut_\F(P)}\>\leq \Aut_{\F_1}(P^*).
\]
In particular, $\Aut_{\F_1}(P^*)$ is isomorphic to a subgroup of $GL_2(p)$
containing $SL_2(p)$ and has thus a strongly $p$-embedded subgroup. Hence, the
elements of $P^\F$ are essential and thus centric radical in $\F_1$. Since
$A_0$ is normal in $S$, it follows moreover that $A_0$ is not conjugate to $P$.
As $|A_0|=|P|$, property (iii) implies thus that every element of
$\Aut_\F(A_0)$ extends to an $\F$-automorphism of $S$. In particular, $A_0$ is
not radical in $\F_1$.  As $T$ is extraspecial of order $p^3$ and exponent $p$,
$T$ has exactly $p+1$ subgroups of order $p^2$. Moreover, since $T=N_S(P)$ has
index $p$ in $S$, the conjugacy class $P^S$ has $p$ elements. This shows that
there are exactly $p$ subgroups of $T$ of order $p^2$ which are centric and
radical in $\F_1$, namely the elements of $P^S=P^\F$. However, by the
classification of Ruiz and Viruel \cite[Tables~1.1, 1.2]{RuizViruel2004}, there
is no saturated fusion system over $T$ with exactly $p$ essential subgroups of
order $p^2$. This contradiction completes the proof of (c) and the lemma.
\end{proof}

Recall that $\F(r,n,k,Q)$ is realizable in the case $n = 2$ and thus has a
punctured group. So the case $n\geq 3$, which we consider in the following
proposition, is actually the only interesting remaining case.

\begin{prop}\label{P:CPQ1}
Let $3\leq n\leq p-1$ (and thus $p\geq 5$), let $r$ be a divisor of $(n,q-1)$,
and set $\F=\F(r,n,k,Q)$. Then $N_\F(Z)$ and $N_\F(Z)/Z$ are exotic. In
particular, $\F$ does not have a punctured group.
\end{prop}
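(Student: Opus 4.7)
The plan is to show that the quotient $\bar{\F} := N_\F(Z)/Z$ satisfies the hypotheses (i)--(v) of Lemma~\ref{L:ClParkerExotic} with $\bar{S} = S/Z$ playing the role of $S$, $\bar{U}$ the image of $U$, $\bar{A} = A/Z$, and $\bar{P} = Q/Z$, so that $\bar{\F}$ is exotic by part~(c) of that lemma. Exoticity of $N_\F(Z)$ itself then follows by the standard quotient argument: if a finite group $M$ with Sylow $p$-subgroup $S$ realized $N_\F(Z)$, then $Z$ would be normal in $\F_S(M)$, so $N_M(Z)$ would again realize $N_\F(Z)$ and the quotient $N_M(Z)/Z$ would be a finite group realizing $\bar{\F}$, contradicting exoticity of the latter. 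That $\F$ admits no punctured group is then immediate from Lemma~\ref{L:normexotic}, since $Z = Z(S)$ is automatically fully $\F$-normalized.

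For (i), the orders $|\bar{S}| = q^{n+1}$, $|\bar{A}| = q^n$, $|\bar{U}| = q$, and $|Z(\bar{S})| = |Z_2(S)/Z| = q$ are immediate from Subsection~\ref{SS:CP}. Since $n \geq 3$, the subgroup $[S,S] = [A,U]$ has order $q^n$ by \eqref{E:CSU} (because $[u,-]\colon A \to A$ has kernel $C_A(u) = Z$ for $u \in U\setminus\{1\}$), so $[\bar{S},\bar{S}]$ has order $q^{n-1}\geq q^2$ and is not contained in $Z(\bar{S})$. The same commutator map induces an isomorphism $A/Z \xrightarrow{\cong} [S,S]$ whose preimage of $Z$ is exactly $Z_2(S)/Z$, giving $C_{\bar{A}}(\bar{u}) = Z(\bar{S})$. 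For (ii), $\bar{P} = Q/Z$ has order $q^2$; since $Z = [Q,Q]$ is $\Aut_\F(Q)$-invariant we have $\Aut_{N_\F(Z)}(Q) = \Aut_\F(Q)$, and this maps to $\Aut_{\bar{\F}}(\bar{P}) \leq \GL_2(q)$ with image containing $\SL_2(q)$ acting naturally, reading off the Levi structure of $H_Q \leq \operatorname{GSp}_4(k)$.

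For (iii), I would invoke Alperin's fusion theorem for $\bar{\F}$ and show that, up to conjugacy, $\bar{Q}$ is the only proper $\bar{\F}$-essential subgroup of $\bar{S}$; equivalently, that $Q$ is (up to $N_\F(Z)$-conjugacy) the only $N_\F(Z)$-essential subgroup $E$ strictly between $Z$ and $S$. By the amalgam presentation $\F = \langle \F_S(G), \F_{N_S(Q)}(H_Q)\rangle$ from \cite[Theorem~3.1]{ClellandParker2010}, the $\F$-essentials not already normalised in $S$ are, up to $S$-conjugacy, $Q$ and $A$; and $A$ fails to be essential in $N_\F(Z)$, since $\Aut_\F(A) \supseteq \SL_2(k)$ acts on $A$ so that the stabiliser of the highest-weight line $Z$ is a Borel-like subgroup with normal Sylow $p$-subgroup (in analogy with the argument at the end of Proposition~\ref{P:CPR}). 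For (iv), $\bar{A}/[\bar{S},\bar{S}] \cong A/[S,S]$ has order $q$, and the scalar factor $k^\times$ of $D = k^\times \times \GL_2(k)$ acts on this quotient by nontrivial scalars, giving $\FF_p$-irreducibility; for general $r$ dividing $(n,q-1)$, the index-$r$ subgroup $F(r,n,k,Q)$ still contains $O^{p'}(G)$ and hence enough scalars. For (v), the Levi torus $k^\times \times T \leq D$ (with $T$ the diagonal torus of $\GL_2(k)$) lies in $N_G(S)$, centralises $Z$, and normalises $U$, yielding the required complement to $\Inn(\bar{S})$ in $\Aut_{\bar{\F}}(\bar{S})$. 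The main technical obstacle is the identification of $\bar{\F}$-essentials in step (iii), requiring a careful Alperin-style bookkeeping adapted to the $Q$-amalgam, with $Q$ in place of the non-essential $R$ appearing in Proposition~\ref{P:CPR}.
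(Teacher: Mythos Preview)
Your overall strategy coincides with the paper's: verify hypotheses (i)--(v) of Lemma~\ref{L:ClParkerExotic} for $\bar{\F}=N_\F(Z)/Z$, deduce exoticity of $N_\F(Z)$ by the quotient argument, and conclude via Lemma~\ref{L:normexotic}. The checks for (i), (ii), and (v) are essentially as in the paper. However, there are real gaps in your treatment of (iii) and (iv).

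For (iii), you argue that the $\F$-essentials are $A$ and the $S$-conjugates of $Q$, and then rule out $A$. But an essential subgroup $E$ of $N_\F(Z)$ need not be $\F$-essential, so this is not enough. The paper handles this by first using the generation formula $\F=\langle \Aut_\F(S),\Aut_\F(A),\Aut_\F(Q)\rangle$ to see that any $E$ not $\Aut_\F(S)$-conjugate into $A$ or $Q$ has all its $\F$-automorphisms extending to $S$, and then separately eliminates the cases $E\leq A$ (forcing $E=A$, ruled out as you say) and $E\leq Q$. The latter case requires genuine work: if $Z<Z(E)$ one conjugates by $\Aut_\F(Q)$ to get $Z(E)\leq A\cap Q$ and derives a contradiction, while if $Z=Z(E)$ one shows $\Aut_Q(E)$ lies in the normal $p$-subgroup $C_{\Aut_{N_\F(Z)}(E)}(E/Z)\cap C_{\Aut_{N_\F(Z)}(E)}(Z)$, hence in $\Inn(E)$, forcing $E=Q$. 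Your sketch does not cover these subcases.

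For (iv), your justification does not work as stated. You propose using the first factor $k^\times$ of $D=k^\times\times\GL_2(k)$, and then assert that for general $r$ this is available because $F(r,n,k,Q)\supseteq O^{p'}(G)$ contains ``enough scalars''. But $O^{p'}(G)=A\rtimes \SL_2(k)$ does \emph{not} contain that $k^\times$ factor, so the inference fails. The paper instead produces an explicit element $g=\mathrm{diag}(1,\xi^{-1},\xi,1)\in O^{p'}(H_Q)$ (from the other side of the amalgam, guaranteed to lie in $F(r,n,k,Q)$ for every $r$), identifies it with $\bigl(1,\begin{psmallmatrix}1&0\\0&\xi\end{psmallmatrix},0\bigr)\in N_G(Q)$, and computes that it acts on $A/[S,S]$ by $\xi^n$; irreducibility over $\FF_p$ then follows from $n\leq p-1$. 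An easy fix along your lines would be to use the diagonal torus of $\SL_2(k)\leq O^{p'}(G)$ instead of the scalar factor, since $\mathrm{diag}(t,t^{-1})$ also acts on $A/[S,S]$ by $t^{\pm n}$; but you should make one of these choices explicit and carry out the irreducibility check.
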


\begin{proof}
By Lemma~\ref{L:normexotic}, $\F$ does not have a punctured group if $N_\F(Z)$
is exotic. Moreover, if $N_\F(Z)$ is realized by a finite group $H$,
then $N_\F(Z)$ is also realized by $N_H(Z)$, and $N_\F(Z)/Z$ is
realized by $N_H(Z)/Z$. So it is sufficient to show that $N_\F(Z)/Z$ is exotic.

Recall from above that $S=S(n,k)$, $A=A(n,k)$ and $Z:=Z(S)$. Set $\F_1 =
\F_S(G)$ and $\F_2 = \F_{S_2}(H_Q)$ with $S_2 = N_S(Q)$. Suppose first $r=1$.
Then one argues similarly as in the proof of Proposition~\ref{P:CPR} that $\F =
\gen{\Aut_{\F_1}(S), \Aut_{\F_1}(A), \Aut_{\F_2}(Q)}$. Namely, $\F$ is
generated by $\F_1$ and $\F_2$ by \cite[Theorem~3.1]{ClellandParker2010}, and
so as $\F_1$ and $\F_2$ are both constrained with $O_p(\F_1) = A$ and
$O_p(\F_2) = Q$, it follows that $\F$ is in turn generated by $\Aut_{\F_1}(A)$,
$\Aut_{\F_1}(S)$, $\Aut_{\F_2}(Q)$, and $\Aut_{\F_2}(S_2)$. However, the last
automorphism group is redundant, since $N_{H_R}(S_2) = N_G(Q)$ induces fusion
in $\F_1$. So indeed $\F = \gen{\Aut_{\F_1}(S), \Aut_{\F_1}(A),
\Aut_{\F_2}(Q)}$ if $r=1$. This implies $\Aut_{\F_1}(S)=\Aut_\F(S)$,
$\Aut_\F(A)=\Aut_{\F_1}(A)$ and (as $N_G(Q)=N_{H_Q}(S_2)$)
$\Aut_\F(Q)=\Aut_{\F_2}(Q)$. Moreover, the set of $\F$-essential subgroups
comprises $A$ and all $\Aut_\F(S)$-conjugates of $Q$. One easily checks
that, for any saturated fusion system $\G$, a normal subsystem of $\G$ of index
prime to $p$ has the same essential subgroups as $\G$ itself. Note moreover
that, for arbitrary $r$, $\F$ is a normal subsystem of $\F(1,n,k,Q)$ of index
prime to $p$ by Lemma~\ref{L:ReduceTo1}. Hence, in any case, the $\F$-essential
subgroups are $A$ and the $\Aut_{\F_1}(S)$-conjugates of $Q$.  Since there is
a complement to $S$ in $N_G(S)$ which normalizes $U$ and thus $Q$, the
$\Aut_{\F_1}(S)$-conjugates of $Q$ are precisely the $S$-conjugates of $Q$. So,
for arbitrary $r$, we have 

\begin{eqnarray}
\label{E:FgenQ}
\F = \gen{\Aut_\F(S), \Aut_\F(A), \Aut_\F(Q)}.
\end{eqnarray}
Moreover, $\Aut_\F(S)\leq \Aut_{\F_1}(S)$, $\Aut_\F(A)\leq \Aut_{\F_1}(A)$, and
$\Aut_\F(Q)\leq \Aut_{\F_2}(Q)$. Recall also that $O^{p^\prime}(H_Q)\leq
F(r,n,k,Q)$ and thus $\SL_2(q)\cong O^{p^\prime}(\Aut_{\F_2}(Q))\leq
\Aut_\F(Q)$. 

Note that $\Aut_\F(Q)$ normalizes $Z$ and lies thus in $N_\F(Z)$. We will show
next that $N_\F(Z)$ is generated by $\Aut_\F(S)$ and $\Aut_\F(Q)$. By the
Alperin--Goldschmidt fusion theorem, it suffices to show that every essential
subgroup of $N_\F(Z)$ is an  $\Aut_\F(S)$-conjugate of $Q$. So fix an essential
subgroup $E$ of $N_\F(Z)$ and assume that $E\not\in Q^{\Aut_\F(S)}$. As
$C_S(E)\leq E$, we have $Z<E$. If $E\leq A$ then $E=A$. However,
$\Aut_\F(A)\leq\Aut_{\F_1}(A)=\Aut_G(A)$ and one observes that $S$ is normal in
$N_G(Z)$. So $\Aut_{N_\F(Z)}(A)=N_{\Aut_\F(A)}(Z)$ has a normal Sylow
$p$-subgroup, which contradicts $E$ being essential. Assume now that $E \leq
Q$. Suppose first $Z<Z(E)$. The images of the maximal abelian subgroups of $Q$
are precisely the $1$-dimensional $k$-subspaces of $Q/Z$. As $\Aut_\F(Q)$ fixes
$Z$ and acts transitively on the one-dimensional $k$-subspaces of $Q/Z$, we see
that $Z(E)$ is conjugate into $Z_2(S)=A\cap Q$ under an element of
$\Aut_\F(Q)$. So replacing $E$ by a suitable $\Aut_\F(Q)$-conjugate, we may
assume $Z(E)\leq A\cap Q$. As $Z<Z(E)$, it follows then from \eqref{E:CSU} that
$E\leq A$. As $C_S(E)\leq E$ and $A\not\leq Q$, this is a contradiction. So we
have $Z=Z(E)$. As $[E,Q]\leq [Q,Q]\leq Z$, it follows $\Aut_Q(E)\leq
C:=C_{\Aut_{N_\F(Z)}(E)}(E/Z(E))\cap C_{\Aut_{N_\F(Z)}(E)}(Z(E))$.
However, $C$ is a normal $p$-subgroup of $\Aut_{N_\F(Z) }(E)$. Thus, as $E$ is
radical in $N_\F(Z)$, we have $\Aut_Q(E)\leq C\leq \Inn(E)$. As $C_S(E)\leq E$,
it follows $E=Q$ contradicting the choice of $E$. So we have shown that $E$
lies neither in $A$ nor in $Q$. Since the choice of $E$ was arbitrary, this
means that $E$ is not $\Aut_\F(S)$-conjugate into $A$ or $Q$. So by
\eqref{E:FgenQ}, every $\F$-automorphism of $E$ extends to an $\F$-automorphism
of $S$. This implies that $\Aut_S(E)$ is normal in $\Aut_\F(E)$ and thus in
$\Aut_{N_\F(Z)}(E)$.  Again, this contradicts $E$ being essential. So we have
shown that $N_\F(Z)$ is generated by $\Aut_\F(S)$ and $\Aut_\F(Q)$. 

Set $\ov{S}=S/Z$ and $\ov{\F}=N_\F(Z)/Z$. We will check that the hypotheses of
Lemma~\ref{L:ClParkerExotic} are fulfilled with $\ov{\F}$, $\ov{S}$, $\ov{A}$,
$\ov{U}$ and $\ov{Q}$ in place of $\F$, $S$, $A$, $U$ and $P$. Part (c) of this
Lemma will then imply that $N_\F(Z)/Z$ is exotic as required. Notice that
$|\ov{U}|=|U|=q$, $|A|=q^{n+1}$ and $|\ov{A}|=q^n$. As $Q=Z_2(S)U$, we have
$\ov{Q}=Z(\ov{S})\ov{U}$. By \cite[Lemma~4.2(i)\&(iii)]{ClellandParker2010},
hypothesis (i) of Lemma~\ref{L:ClParkerExotic} holds. Recall that
$O^{p^\prime}(\Aut_{H_Q}(Q))\cong \SL_2(q)$ lies in $N_\F(Z)$. In particular,
hypothesis (ii) in Lemma~\ref{L:ClParkerExotic} holds with $\ov{\F}$ and
$\ov{Q}$ in place of $\F$ and $P$. Since we have shown above that $N_\F(Z)$ is
generated by $\Aut_\F(S)$ and $\Aut_\F(Q)$, it follows that $\ov{\F}$ fulfills
hypothesis (iii) of Lemma~\ref{L:ClParkerExotic}. Observe that there exists a
complement $K$ of $S$ in $N_G(S)$ which normalizes $U$. Then $\Aut_\F(S)\leq
\Aut_G(S)=\Inn(S)\Aut_K(S)$. Thus $\Aut_\F(S)=\Inn(S)(\Aut_K(S)\cap
\Aut_\F(S))$ and $\Aut_K(S)\cap \Aut_\F(S)$ is a complement to $\Inn(S)$ in
$\Aut_\F(S)$ which normalizes $U$. This implies that hypothesis (v) of
Lemma~\ref{L:ClParkerExotic} holds for $\ov{\F}$. 

It remains to show hypothesis (iv) of Lemma~\ref{L:ClParkerExotic} for
$\ov{\F}$. Notice that $[\ov{S},\ov{S}]=[\ov{A},\ov{U}]$ is a proper
$\mathbb{F}_q$-subspace of $\ov{A}$ and has thus index at least $q$ in
$\ov{A}$. So it remains to show the first condition in (iv). Equivalently, we
need to show that $\Aut_\F(S)=\Aut_{N_\F(Z)}(S)$ acts irreducibly on $A/[S,S]$.
For the proof, we use the representations Clelland and Parker  give for $G$ and
$H_Q$, and the way they construct the free amalgamated product; see pp.~293 and
pp.~296 in \cite{ClellandParker2010}.  Let $\xi$ be a generator of $k^\times$.
We have 
\[
g:=
 \begin{pmatrix}
  1 & 0 & 0 & 0\\
  0 & \xi^{-1} & 0 & 0\\
  0 & 0 & \xi & 0\\
  0 & 0 & 0 & 1
\end{pmatrix}
\in O^{p^\prime}(H_Q)\leq N_{F(r,n,k,Q)}(Z).
\]
In the free amalgamated product $F(1,n,k,Q)$, the element $g\in H_Q$ is
identified with 
\[
\left(1,\begin{pmatrix} 1 & 0\\ 0 & \xi\end{pmatrix},0_{A(n,k)}\right)\in N_G(Q),
\]
and this element can be seen to act by scalar multiplication with $\xi^n$ on
$y^n\in A=A(n,k)$ and thus on $A/[S,S]$. As $n\leq p-1$ and $\xi$
has order $q-1$, the action of $g$ on $A(n,k)/[S,S]$ is thus irreducible.
Hence, the action of $\Aut_\F(S)$ on $A/[S,S]$ is irreducible. This shows that
the hypothesis of Lemma~\ref{L:ClParkerExotic} is fulfilled with $\ov{\F}$ in
place of $\F$, and thus $\ov{\F}=N_\F(Z)/Z$ is exotic as required.
\end{proof}

\subsection{The Parker-Stroth systems}

Let $p \geq 5$ be a prime and $m = p-4$.  Let $A = A(m,\mathbf{F}_p)$ and $D$
be as in \S\S\ref{SS:CP}. The Parker-Stroth systems are fusion systems over the
Sylow subgroup $S$ of a semidirect product $P=Q\rtimes D$, where $Q$ is
extraspecial of order $p^{1 + (p-3)}$ and of exponent $p$, and where $Q/Z(Q)
\cong A$ as an $\mathbf{F}_pD$-module.  Then $Z := Z(S)=Z(Q)$ is of order $p$,
while $Z_2(S) \leq Q$ is elementary abelian of order $p^2$.

It turns out that $C_D(Q)$ has order $p-1$ (cf.
\cite[Lemma~2.3(i)]{ParkerStroth2015} where our $D$ is called $L$) and so $S$
can be identified with its image in $P_1:=P/C_D(Q)$. Parker and Stroth find
then a subgroup $W$ of $S$ such that $W$ is elementary abelian of order $p^2$
and
\begin{eqnarray}
\label{E:W}
W\not\leq Q.
\end{eqnarray}
We refer to \cite[p.317]{ParkerStroth2015} for more details on the embedding of
$W$ in $S$, where our $W$ is denoted $W_0$. Choose a finite group $K$ with
$K\cong p^2:SL_2(p)$, and let $C$ be the normalizer in $K$ of a Sylow
$p$-subgroup of $K$ (cf. \cite[p.315]{ParkerStroth2015}). It turns out that
$N_{P_1}(W)$ can be identified with $C$ in such a way that $W$ is identified
with $O_p(K)$ (cf. \cite[p.319]{ParkerStroth2015}). The exotic Parker-Stroth
system $\F$ at the prime $p$ is then the fusion system over $S$ of the free
amalgamated product $P_1*_CK$, where we identify $S\in\Syl_p(P)$ with its image
in $P_1$ as before. Identifying further $N_{P_1}(W)$ with $C$ (and thus
$N_S(W)$ with a Sylow $p$-subgroup of $K$), it is shown in
\cite[Lemma~3.1]{ParkerStroth2015} that $\F$ is generated by
$\F_S(P_1)=\F_S(P)$ and $\F_{N_S(W)}(K)$. Here
$\Aut_K(N_S(W))=\Aut_C(N_S(W))=\Aut_{N_P(W)}(N_S(W))$ because of the
identification in the free amalgamated product. Note that $\F_S(P)$ is
generated by $\Aut_P(Q)$ and $\Aut_P(S)$, and that $\F_{N_S(W)}(K)$ is
generated by $\Aut_K(W)\cong SL_2(p)$ and $\Aut_K(N_S(W))$. Hence we obtain
that 
\begin{equation}\label{E:ParkerStrothGen}
\F=\<\Aut_\F(Q),\Aut_\F(S),\Aut_\F(W)\> 
\end{equation}
where $\Aut_\F(Q)=\Aut_P(Q)$, $\Aut_\F(S)=\Aut_P(S)$, and $\Aut_\F(W) \cong
SL_2(p)$. In particular, as $Q\unlhd P$, we have that
\begin{eqnarray}\label{E:QAutFSinvariant}
Q\mbox{ is $\Aut_\F(S)$-invariant.}
\end{eqnarray}

\begin{prop}\label{P:PS}
Each Parker-Stroth system is of characteristic $p$-type, and so has a punctured
group in the form of its subcentric linking system.
\end{prop}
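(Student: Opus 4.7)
My plan is to establish that $\F$ is of characteristic $p$-type; then Theorem~\ref{T:subcentric}(b) yields the subcentric linking system as the desired punctured group. So I need to show that $N_\F(X)$ is constrained for every non-trivial fully $\F$-normalized $X\leq S$, and my strategy parallels the proof of Proposition~\ref{P:CPR}, exploiting the listed generators $\Aut_\F(Q)$, $\Aut_\F(S)$, $\Aut_\F(W)$ together with the structural facts \eqref{E:W} and \eqref{E:W2}.

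First I would handle $X=Z$. The only candidate $N_\F(Z)$-essential is $Q$: each $\Aut_\F(S)$-conjugate $W'$ of $W$ fails to be $N_\F(Z)$-essential because the stabilizer of the line $Z$ in $\Aut_\F(W')\cong\SL_2(p)$ is a Borel subgroup, which is solvable with a normal Sylow $p$-subgroup, so it admits no strongly $p$-embedded subgroup. Since $Q$ is extraspecial with $C_S(Q)=Z\leq Q$ it is $\F$-centric; and $Q$ is preserved by every generator of $N_\F(Z)$ (the restrictions of $\Aut_\F(S)$ and $\Aut_\F(Q)$ preserve $Q$ trivially, while the Borel subgroup of $\Aut_\F(W)$ fixes $Q\cap W=Z$ setwise), so $Q$ is $N_\F(Z)$-invariant and $N_\F(Z)$ is constrained.

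For $X\neq Z$ fully $\F$-normalized I split into three subcases. If $X$ is $\F$-conjugate into an $\Aut_\F(S)$-conjugate $W'$ of $W$, then full-normalization rules out $|X|=p$ (since transitivity of $\SL_2(p)$ on the lines of $W'$ would give $X\sim Z$, hence $X=Z$), so $X\sim W$ with $|X|=p^2$. Taking the fully normalized representative $X=W$, an element $w\in W-Z$ lies in $S-Q$ by \eqref{E:W} and acts on $Q/Z\cong A(p-4,\FF_p)$ as a regular unipotent (being a generator of a Sylow $p$-subgroup of $\SL_2(\FF_p)$ acting on its $\mathrm{Sym}^{p-4}$ module); hence $C_{Q/Z}(w)$ is one-dimensional, $C_Q(w)$ is elementary abelian of order $p^2$, and $C_S(W)=C_S(w)=C_Q(w)\langle w\rangle$ is abelian, so Lemma~\ref{ShowNormalizerConstrained} applies. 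If $X\leq Q$ with $X\not\leq Z$, then the failures $X\not\sim Z$ (because $X$ is fully normalized and $X\neq Z$) and $X\not\sim W$ (because $W$ is not $\F$-conjugate into $Q$, as follows from \eqref{E:W2} and the structure of the generators) rule out any $\F$-conjugate of $X$ lying in a $W'$; so no $W'$ can be an $N_\F(X)$-essential, and the Alperin–Goldschmidt generators of $N_\F(X)$ all preserve $Q$, giving $N_\F(X)\subseteq N_\F(Q)$, which is constrained because $Q$ is $\F$-centric and normal in $N_\F(Q)$. Finally, if no $\F$-conjugate of $X$ lies in $Q$ or in any $W'$, then $N_\F(X)$ has no essentials above $X$ by Alperin–Goldschmidt, so $N_\F(X)=N_{N_\F(S)}(X)$ is constrained by \cite[Lemma~2.11]{Henke2019}.

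The main obstacle is the middle subcase $X\leq Q$ with $X\not\leq Z$, since for $p\geq 7$ the centralizer $C_Q(X)$ is generally non-abelian and the centralizer-based argument of Proposition~\ref{P:CPR} does not apply directly. The resolution is to exploit \eqref{E:W} and the absence of an $\F$-morphism bridging $W$ into $Q$ in order to exclude every $W'$ from carrying $\F$-conjugates of $X$, which reduces the analysis of $N_\F(X)$ to a subsystem of the constrained fusion system $N_\F(Q)$. Once the three subcases above are verified, $\F$ is of characteristic $p$-type, and Theorem~\ref{T:subcentric}(b) delivers the subcentric linking system as the desired punctured group.
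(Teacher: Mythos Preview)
Your approach is essentially the same as the paper's: both run a case analysis on the position of the subgroup relative to $Q$, $Z$, and $W$, exploiting the generating set $\{\Aut_\F(Q),\Aut_\F(S),\Aut_\F(W)\}$ together with \eqref{E:W}. The paper works with $C_\F(Y)$ for $|Y|=p$ (tacitly using that $\F^s$ is overgroup-closed and that $C_\F(Y)$ constrained implies $N_\F(Y)$ constrained), while you treat $N_\F(X)$ for general $X$; your case (a) and part of (c) together correspond to the paper's case $Y\nleq Q$, your case (b) to the paper's case $Y\leq Q$, $Y\nleq Z$, and your treatment of $X=Z$ mirrors the paper's.

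One point in your case (b) deserves a word of caution. From ``no $\F$-conjugate of $X$ lies in any $W'$'' you pass to ``no $W'$ is $N_\F(X)$-essential'' and then to ``$N_\F(X)\subseteq N_\F(Q)$''. The first implication is fine (any $N_\F(X)$-essential contains $X\leq O_p(N_\F(X))$), but the last needs care: objects of $N_\F(X)$ need not contain $X$, so an arbitrary morphism in $N_\F(X)$ could a priori have domain inside some $W'$. The fix is to argue at the level of essentials: every $N_\F(X)$-essential $E$ contains $X$, hence is not $\F$-conjugate into any $W'$, so every $\alpha\in\Aut_{N_\F(X)}(E)\leq\Aut_\F(E)$ extends either to $\Aut_\F(Q)$ (if $E\leq Q$) or to $\Aut_\F(S)$; in either case the extension normalizes $X$ (as $\alpha$ does) and $Q$, and one deduces that $N_Q(X)$ is normal and centric in $N_\F(X)$. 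This is exactly the mechanism behind the paper's ``$\alpha$ extends to $Q$, so $Q\leq E$'' in its analogous case.
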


\begin{proof}
We use the notation from above. Let $Y$ be a subgroup of order $p$ in $S$ which
is fully $\F$-centralized. We need to show that $C_\F(Y)$ is constrained. For
that purpose fix a subgroup $E\in C_\F(Y)^{cr}$.

\textbf{Case: $Y \nleq Q$.} Then $C_{Q/Z}(Y)$ is of order $p$, so $C_Q(Y)$ is
elementary abelian (of order $p^2$). Hence $C_S(Y)=C_Q(Y)Y$ is abelian in this
case, and so $C_\F(Y)$ is constrained.

\smallskip

\textbf{Case: $Y \leq Q$ but $Y \nleq Z$.} Then $C_{S}(Y)$ is abelian when $p=
5$ as then $m = 1$ and $YZ/Z$ is its own orthogonal complement with
respect to the symplectic form on $Q/Z$. We may therefore assume $p \geq 7$.

We consider now the possibilities for $E$.

\textbf{Subcase: $E\not\leq Q$.} As $E\cap Q$ contains $YZ(S) \cong C_p \times
C_p$, we have then $|E|\geq p^3$ and so $E$ is not $\F$-conjugate into $W$.
Thus, it follows in this case that from \eqref{E:ParkerStrothGen} and
\eqref{E:QAutFSinvariant} that $E$ is also not $\F$-conjugate into $Q$. Hence,
again by \eqref{E:ParkerStrothGen}, every $\F$-automorphism of $E$ extends to
an element of $\Aut_\F(S)$. In particular, every element of $\Aut_{C_\F(Y)}(E)$
extends to an element of $\Aut_{C_\F(Y)}(C_S(Y))$. As $E\in C_\F(Y)^{cr}$, this
implies $E=C_S(Y)$. It follows from  \eqref{E:QAutFSinvariant} that $C_Q(Y)$ is
invariant under $\Aut_{C_\F(Y)}(E)=\Aut_{C_\F(Y)}(C_S(Y))$.

\textbf{Subcase: $E\leq Q$.} As $W\cong C_p\times C_p\cong ZY\leq E$, it
follows now from \eqref{E:W}and \eqref{E:QAutFSinvariant} that $E$ is not
$\gen{\Aut_\F(S), \Aut_\F(Q)}$-conjugate into $W$.  So by
\eqref{E:ParkerStrothGen}, every morphism in a decomposition of
$\alpha\in\Aut_{C_\F(Y)}(E)$ lies in $\Aut_\F(Q)$ or $\Aut_\F(S)$.  Hence,
using again \eqref{E:QAutFSinvariant}, we conclude that $\alpha$ extends to $Q$
and thus to an element of $\Aut_{C_\F(Y)}(C_Q(Y))$. So $C_Q(Y) \leq E$ since
$\alpha$ was chosen arbitrarily and $E\in C_\F(Y)^{cr}$. Our assumption yields
$E\leq Q\cap C_S(Y)=C_Q(Y)$ and so $E=C_Q(Y)$.

\smallskip

In the case $Z\neq Y\leq Q$, we have thus shown that $C_S(Y)$ and $C_Q(Y)$ are
the only candidates for subgroups which are centric and radical in $C_\F(Y)$,
and that $C_Q(Y)$ is $\Aut_{C_\F(Y)}(C_S(Y))$-invariant. Thus, by Alperin's
fusion theorem, $C_Q(Y)$ is normal in $C_\F(Y)$. As $p\geq 7$ and so $m\geq 3$,
it follows from the construction of $Q$ and $P$ that $C_Q(Y)\neq Z_2(S)$ and so
$C_Q(Y)$ is self-centralizing in $C_S(Y)$ (see \cite[Lemma~2.2(i),
Lemma~2.3(iii)]{ParkerStroth2015}). Therefore, $C_\F(Y)$ is constrained.

\smallskip

\textbf{Case: $Y = Z$.} Assume first that $E$ is
$\gen{\Aut_\F(S),\Aut_\F(Q)}$-conjugate into $W$. Note that $E$ has order at
least $p^2$. Hence, $E$ is in fact $\gen{\Aut_\F(S),\Aut_\F(Q)}$-conjugate to
$W\cong C_p\times C_p$. We may therefore assume that $E = W$. By \eqref{E:W}
and \eqref{E:QAutFSinvariant}, $W$ is not $\Aut_\F(S)$-conjugate into $Q$ and
so, by \eqref{E:ParkerStrothGen}, every $\F$-conjugate of $W$ is
$\Aut_\F(S)$-conjugate to $W$. In particular, $W$ is fully $\F$-normalized. As
$\Aut_\F(W)\cong SL_2(p)$, we have $\Aut_{C_\F(Z)}(W)=C_{\Aut_\F(W)}(Z)=
N_{\Aut_\F(W)}(\Aut_S(W))$ and hence every element of $\Aut_{C_\F(Z)}(W)$
extends by the saturation axioms to an element of $\Aut_\F(N_S(W))$. As
$W\not\leq Q$, it follows from \eqref{E:ParkerStrothGen} and
\eqref{E:QAutFSinvariant} that every element of $\Aut_\F(N_S(W))$ extends to an
element of $\Aut_\F(S)$. This contradicts the assumption that $W=E\in
C_\F(Y)^{cr}=C_\F(Z)^{cr}$. So $E$ is not
$\gen{\Aut_\F(S),\Aut_\F(Q)}$-conjugate into $W$. Using again
\eqref{E:ParkerStrothGen} and \eqref{E:QAutFSinvariant} one sees now that
$\alpha\in\Aut_{C_\F(Y)}(E)$ extends to an element of $\Aut_\F(S)$ if
$E\not\leq Q$, and to an element of $\Aut_\F(Q)$ if $E\leq Q$. As $E\in
C_\F(Y)^{cr}$, it follows that $E\in\{Q,S\}$. As $E$ was arbitrary, it follows
from \eqref{E:QAutFSinvariant} that $Q\unlhd C_\F(Y)=C_\F(Z)$. As $C_S(Q)\leq
Q$, it follows that $C_\F(Y)$ is constrained.

Thus in all cases, $C_\F(Y)$ is constrained. We conclude that the Parker-Stroth
systems are of characteristic $p$-type and therefore have a punctured group.
\end{proof}

\section{Punctured groups over $p_+^{1+2}$}\label{S:RV}

The main purpose of this section is to illustrate that there can be several
punctured groups associated to the same fusion system, and that the nerves of
such punctured groups (regarded as transporter systems) might not be homotopy
equivalent to the nerve of the centric linking system.  Indeed, working in the
language of localities, we will see that there can be several punctured groups
extending the centric linking locality. This is the case even though we
consider examples of fusion systems of characteristic $p$-type, and so in each
case, the subcentric linking locality exists as the ``canonical'' punctured
group extending the centric linking locality. On the other hand, we will see
that in many cases, the subcentric linking locality is indeed the only
$p^\prime$-reduced punctured group over a given fusion system. Thus,
``interesting'' punctured groups seem still somewhat rare.

\smallskip

More concretely, we will look at fusion systems over a $p$-group $S$ which is
isomorphic to $p_+^{1+2}$. Here $p_+^{1+2}$ denotes the extraspecial group of
order $p^3$ and exponent $p$ if $p$ is an odd prime, and (using a somewhat
non-standard notation) we write $p_+^{1+2}$ for the dihedral group of order $8$
if $p=2$. Note that every subgroup of order at least $p^2$ is self-centralizing
in $S$ and thus centric in every fusion system over $S$. Thus, if $\F$ is a
saturated fusion system over $S$ with centric linking locality $(\L,\Delta,S)$,
we just need to add the cyclic groups of order $p$ as objects to obtain a
punctured group. We will again use Chermak's iterative procedure, which gives a
way of expanding a locality by adding one $\F$-conjugacy class of new objects
at the time. If all subgroups of order $p$ are $\F$-conjugate, we thus only
need to complete one step to obtain a punctured group. Conversely, we will see
in this situation that a punctured group extending the centric linking locality
is uniquely determined up to a rigid isomorphism by the normalizer of an
element of order $p$. Therefore, we will restrict attention to this particular
case. More precisely, we will assume the following hypothesis.

\begin{hypothesis}\label{H:extraspecialp3}
Assume that $p$ is a prime and $S$ is a $p$-group such that $S\cong p_+^{1+2}$
(meaning here $S\cong D_8$ if $p=2$). Set $Z:=Z(S)$. Let $\F$ be a saturated
fusion system over $S$ such that all subgroups of $S$ of order $p$ are
$\F$-conjugate. 
\end{hypothesis}

It turns out that there is a fusion system $\F$ fulfilling
Hypothesis~\ref{H:extraspecialp3} if and only if $p\in\{2,3,5,7\}$; for odd $p$
this can by seen from the classification theorem by Ruiz and Viruel
\cite{RuizViruel2004}. More precisely, we obtain the following lemma.

\begin{lem}\label{L:extraspecialFusList}
Hypothesis~\ref{H:extraspecialp3} holds if and only if we are in one of the following cases:
\begin{itemize}
 \item $p=2$ and $\F$ is realized by $A_6$.
 \item $p=3$ and $\F$ is realized by $^2\!F_4(2)'$ or $J_4$.
 \item $p=5$ and $\F$ is realized by $Th$.
 \item $p=7$ and $\F$ is one of the three exotic fusion systems discovered by
 Ruiz and Viruel \cite{RuizViruel2004}.
\end{itemize}
\end{lem}

\begin{proof}
Suppose first that $p=2$ so that $S\cong D_8$. Then there are precisely two
elementary abelian subgroups of order $4$, which are the only candidates for
$\F$-essential subgroups. Indeed, all involutions are $\F$-conjugate if and
only if both of them are essential, in which case $\F$ is the $2$-fusion system
of $A_6$.

\smallskip

Suppose now that $p$ is odd. In this case the claim follows essentially from
the list provided by Ruiz-Viruel \cite[Table~1.1]{RuizViruel2004}. To see this
it should be noted that all elements of order $p$ are $\F$-conjugate if and
only if all of the $p+1$ elementary abelian subgroups of $S$ of order $p^2$ are
in the set $\F^{ec}$-rad of $\F$-centric $\F$-radical elementary abelian
subgroups.  
\end{proof}

Assume  Hypothesis~\ref{H:extraspecialp3}. One easily observes that the
$2$-fusion system of $A_6$ is of characteristic $2$-type. Therefore, it follows
from Lemma~\ref{L:rvext} that the fusion  system $\F$ is always of
characteristic $p$-type and thus the associated subcentric linking locality is
a punctured group. As discussed in Remark~\ref{R:pprimereduced}, this leads to
a host of examples for punctured groups $\L^+$ over $\F$ which are modulo a
partial normal $p^\prime$-subgroup isomorphic to a subcentric linking locality
over $\F$. One can ask whether there are more examples. Indeed, the next
theorem tells us that this is the case if and only if $p=3$. For
$p\in\{5,7\}$ our two theorems below depend on the classification of finite
simple groups.

\begin{thm}\label{T:p=3interesting}
Under Hypothesis~\ref{H:extraspecialp3} there exists a punctured group
$(\L^+,\Delta^+,S)$ over $\F$ such that $\L^+/O_{p^\prime}(\L^+)$ is not a
subcentric linking locality if and only if $p=3$.
\end{thm}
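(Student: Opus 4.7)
The plan is to reduce the question to the structure of the finite group $L := N_{\L^+}(Z)$, where $Z = Z(S)$.  Under Hypothesis~\ref{H:extraspecialp3}, every subgroup of $S$ of order $p$ is $\F$-conjugate to $Z$ and $Z$ is fully $\F$-normalized, so a punctured group $\L^+$ is determined above the centric linking locality by the structure of $L$.  By Lemma~\ref{L:rvext}, $\F_S(L) = N_\F(Z) = N_\F(S)$, so $S$ is normal in the fusion system $\F_S(L)$ even though $S$ need not be normal in the group $L$.

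First I would establish that $\L^+/O_{p'}(\L^+)$ is a subcentric linking locality if and only if $L$ is $p$-constrained, which for $S \cong p^{1+2}$ is equivalent to $|O_p(L)| \geq p^2$.  For $P \leq S$ of order at least $p^2$, the subgroup $P$ is $\F$-centric and $N_{\L^+}(P)$ lies in the centric linking locality, so it is already of characteristic $p$; hence by Proposition~\ref{P:GetLocalityObjectiveCharp} and Lemma~\ref{L:pconstrained}, $L$ being $p$-constrained forces $\L^+/O_{p'}(\L^+)$ to be of objective characteristic $p$, and so by Theorem~\ref{T:subcentric} a subcentric linking locality.  Conversely, if $\L^+/O_{p'}(\L^+)$ is of objective characteristic $p$, then $L/(L \cap O_{p'}(\L^+))$ is of characteristic $p$; since $L \cap O_{p'}(\L^+) \leq O_{p'}(L)$ and characteristic-$p$ groups have no nontrivial normal $p'$-subgroup, $O_{p'}(L) = L \cap O_{p'}(\L^+)$ and hence $L/O_{p'}(L)$ is of characteristic $p$, which by Lemma~\ref{L:pconstrained} makes $L$ $p$-constrained.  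Since $Z \leq O_p(L) \leq S$, an inspection of the normal subgroups of $S$ shows that $L$ is $p$-constrained if and only if $|O_p(L)| \geq p^2$, so the problem reduces to determining when $O_p(L) = Z$ can occur.

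For the direction $(\Rightarrow)$ (no such $\L^+$ when $p \in \{2,5,7\}$), I would perform a case analysis based on the classification of the saturated fusion systems on $S$ satisfying the hypothesis.  For $p = 2$ the only such $\F$ is $\F_S(A_6)$, for which $\Out_\F(S) = 1$; this forces $L \cong S \times O_{p'}(L)$ and hence $O_p(L) = S$.  For $p \in \{5, 7\}$ the Ruiz--Viruel classification~\cite{RuizViruel2004} enumerates the admissible $\F$; in each case the forced embeddings $N_\L(U) \hookrightarrow L$ for the $p+1$ centric subgroups $U$ of order $p^2$ containing $Z$, together with the structure of $\Aut_\F(S)$ as a $p'$-subgroup of $\GL_2(p)$, will force $O_p(L)$ to have order at least $p^2$, precluding the anomalous case.

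For the direction $(\Leftarrow)$ (construction at $p = 3$), I would take $L$ built from the triple cover $3.A_6$, possibly extended by an outer automorphism of $A_6$ in order to realize the full $\Aut_\F(S)$ and to fuse the $S$-classes of subgroups of order $3$ into a single $\F$-class as Hypothesis~\ref{H:extraspecialp3} demands.  Since $3.A_6$ has Sylow $3$-subgroup isomorphic to $3^{1+2}_+$, trivial $3'$-core, and $O_3(L) = Z$ (because $O_3(A_6) = 1$), the group $L$ is not $3$-constrained.  Choosing a fusion system $\F$ from the Ruiz--Viruel list at $p = 3$ with $N_\F(S) = \F_S(L)$ and applying Chermak descent (Theorem~5.14 of~\cite{Chermak2013}) to the centric linking locality of $\F$, with $L$ playing the role of the normalizer of $Z$, produces the desired punctured group $\L^+$, and $\L^+/O_{3'}(\L^+)$ fails to be of objective characteristic $3$ at $Z$.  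The main obstacle is this last construction: identifying a compatible fusion system $\F$ and verifying the hypotheses of Chermak descent so that the candidate $L$ genuinely extends the centric linking locality to a punctured group for $\F$; a secondary difficulty is uniformly completing the case analysis at $p \in \{5,7\}$ through the Ruiz--Viruel classification.
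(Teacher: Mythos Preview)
Your overall reduction is right, but there is a slip and then a real gap. The slip: for an \emph{arbitrary} punctured group $\L^+$ (the theorem does not assume $\L^+|_{\F^c}$ is a centric linking locality), the normalizers $N_{\L^+}(P)$ with $|P|\geq p^2$ need not lie in any centric linking locality and need not be of characteristic $p$; they are only $p$-constrained, because $C_S(P)\leq P$ in $p^{1+2}_+$. This is exactly the paper's Lemma~\ref{L:p=3interesting}(a), and via Lemma~\ref{L:pconstrained} it still suffices for Proposition~\ref{P:GetLocalityObjectiveCharp}, so your biconditional reducing the question to ``is $L=N_{\L^+}(Z)$ $p$-constrained?'' survives after this correction.

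The genuine gap is your handling of $p\in\{5,7\}$. The ``forced embeddings $N_\L(U)\hookrightarrow L$'' you invoke do not exist in a general $\L^+$ (there is no centric linking locality sitting inside it), and knowing only that each $N_{\L^+}(U)\leq L$ is $p$-constrained does not by itself force $O_p(L/O_{p'}(L))$ to be large; the argument you sketch is not completable as stated. The paper instead proves a uniform structural dichotomy (Lemma~\ref{OneComponent}): for any finite $M$ with Sylow $S\cong p^{1+2}_+$, $Z\unlhd M$, and $O_{p'}(M)=1$, either $S\unlhd M$ or $p=3$ and $F^*(M)$ is quasisimple with $Z(F^*(M))=Z$. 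The proof analyzes $F^*(M)$: if $S$ is not normal one shows $E(M)\neq 1$, then that some component has $p$ dividing the order of its center, then that this component equals $F^*(M)$; for $p\geq 5$ the Schur-multiplier tables force the simple quotient to be $L_m(q)$ or $U_m(q)$ with Sylow of order at least $p^4$, a contradiction, and for $p=2$ a Burnside normal-complement argument rules it out. Applying this with $M=L/O_{p'}(L)$ settles all $p\neq 3$ at once, with no case analysis over the Ruiz--Viruel list. Your $p=3$ construction via a triple cover of $A_6$ and Chermak descent is essentially what the paper does in proving Theorem~\ref{T:esclass}; note that the paper also checks $O_{p'}(\L^+)=1$ for the resulting $\L^+$ (via Lemmas~\ref{L:OpprimeM} and~\ref{L:MProp}), so that $\L^+$ itself, and not merely a quotient, fails to be a subcentric linking locality.
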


It seems that for $p=3$, the number of $3^\prime$-reduced punctured groups over
$\F$ is probably also severely limited. However, since we don't want to get
into complicated and lengthy combinatorial arguments, we will not attempt to
classify them all. Instead, we will prove the following theorem, which leads
already to the construction of interesting examples. 

\begin{thm}\label{T:esclass}
Assume Hypothesis~\ref{H:extraspecialp3}. Suppose that $\L^+$ is a punctured
group over $\F$ such that $\L^+|_{\F^c}$ is a centric linking locality over $\F$.
Then $\L^+$ is $p^\prime$-reduced. Moreover, up to a rigid isomorphism, $\L^+$
is uniquely determined by the isomorphism type of $N_{\L^+}(Z)$, and one of the
following holds.  
\begin{itemize}
\item[(a)] $\L^+$ is the subcentric linking system for $\F$; or 
\item[(b)] $p = 3$, $\F$ is the $3$-fusion system of the Tits group ${
}^2\!F_4(2)'$ and $N_{\L^+}(Z) \cong 3S_6$; or
\item[(c)] $p = 3$, $\F$ is the $3$-fusion system of $Ru$ and of $J_4$, and
$N_{\L^+}(Z) \cong 3\#\Aut(A_6)$ or an extension of $3L_3(4)$ by a field or
graph automorphism. 
\end{itemize}
Conversely, each of the cases listed in (a)-(c) occurs in an example for $\L^+$.
\end{thm}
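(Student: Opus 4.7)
The plan is to reduce the classification to the determination of $M := N_{\L^+}(Z)$ and then to enumerate the possibilities for $M$ for each admissible fusion system. Set $\L := \L^+|_{\F^c}$; by hypothesis this is the centric linking locality over $\F$, which is unique up to rigid isomorphism by Theorem~\ref{T:centric}. Since every non-$\F$-centric nonidentity subgroup of $S$ has order $p$, and by Hypothesis~\ref{H:extraspecialp3} all such subgroups form a single $\F$-conjugacy class represented by $Z$, the object set $\Delta^+$ of $\L^+$ is obtained from $\F^c$ by adjoining exactly one $\F$-orbit. I would invoke the expansion theorem \cite[Theorem~5.14]{Chermak2013}, as used in the proof of Theorem~\ref{T:solq}, to conclude that $\L^+$ is determined up to rigid isomorphism by $\L$ together with $M$ and the canonical rigid embedding of $N_\L(Z)$ into $M$; since $\L$ is fixed by $\F$, this reduces the problem to enumerating the possibilities for $M$ as a finite group with Sylow $p$-subgroup $S$, fusion system $\F_S(M) = N_\F(Z)$, and containing a prescribed copy of $N_\L(Z)$.

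Next I would establish $p^\prime$-reducedness. By Corollary~\ref{C:proveOpprimeTrivial} it suffices to show $O_{p^\prime}(N_{\L^+}(P)) = 1$ for every $P \in \Delta^+$. For $P \in \F^c$ this is immediate, as $N_{\L^+}(P) = N_\L(P)$ and $\L$ is a linking locality (Corollary~\ref{C:LinkingLocOpprimeTrivial}). For $P$ of order $p$, $\F$-conjugacy to $Z$ reduces the verification to the single check $O_{p^\prime}(M) = 1$, which will be visible from the explicit list of possibilities in (a)--(c).

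The heart of the proof is the enumeration of admissible $M$. By Lemma~\ref{L:rvext}, $N_\F(Z) = N_\F(S)$ is constrained, so the fusion of $M$ at $S$ is prescribed by $\Aut_\F(S)$. Using the Ruiz--Viruel classification \cite{RuizViruel2004} for odd $p$, together with the known uniqueness of the $2$-fusion system of $A_6$ for $p = 2$, Hypothesis~\ref{H:extraspecialp3} forces $p \in \{2,3,5,7\}$ and limits $\F$ to an explicit finite list. For each such $\F$, the model of $N_\F(S)$, which is a finite group of characteristic $p$, furnishes one admissible $M$ and yields the subcentric linking locality of case (a). Any further admissible $M$ must admit a nontrivial central $p$-extension structure or a quasisimple composition factor whose center meets $Z$ nontrivially; appealing to the classification of finite quasisimple groups with extraspecial Sylow $p$-subgroup of order $p^3$ shows that such $M$ exist only for $p = 3$ and correspond exactly to the groups listed in (b) and (c).

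Finally, for existence, case (a) is provided by Theorem~\ref{T:subcentric}, while cases (b) and (c) are realized by identifying the three fusion systems with the $3$-fusion systems of the Tits group ${}^2F_4(2)'$, the Rudvalis group $Ru$, and Janko's fourth group $J_4$ respectively, and taking the corresponding restrictions of their transporter categories at the prime $3$ to the object set $\Delta^+$. The main obstacle is the classification step: verifying that the list in (b) and (c) is exhaustive requires careful use of the classification of quasisimple groups with extraspecial Sylow $p$-subgroup of order $p^3$, combined with the constraint that the resulting $M$ be compatible with the embedding of $N_\L(Z)$ inherited from the centric linking locality.
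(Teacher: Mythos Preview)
Your overall strategy parallels the paper's, but the existence argument for cases (b) and (c) is incorrect. You propose to realize these by restricting the transporter categories of ${}^2F_4(2)'$, $Ru$, and $J_4$, but this fails in most cases: for $G = {}^2F_4(2)'$ the normalizer $N_G(Z)$ is solvable \cite[Proposition~1.2]{Malle1989}, not $3S_6$; for $G = J_4$ one has $N_G(Z) \cong (6M_{22}).2$, which does not appear in (c). Only the $Ru$ case works directly, and the paper remarks after the theorem that this is the \emph{only} example in (b)--(c) arising from a finite group. The paper instead constructs all the examples via Chermak's expansion theorem: for each candidate $M$ it verifies (Lemma~\ref{L:GetL+Help}) that $\L_{\F^c}(M) = N_M(S)$, uses the model theorem to produce a rigid isomorphism $\lambda \colon N_\L(Z) \to N_M(S)$, and then applies \cite[Theorem~5.14]{Chermak2013} to build $\L^+(\lambda)$ with $N_{\L^+(\lambda)}(Z) \cong M$.

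Two further gaps: First, your uniqueness argument is incomplete. Chermak's theorems give uniqueness relative to a chosen rigid embedding $N_\L(Z) \hookrightarrow M$, not merely the abstract isomorphism type of $M$. The paper closes this gap by proving (in Lemma~\ref{L:MDetermine}) that every element of $N_{\Aut(S)}(\Aut_\F(S))$ extends to an automorphism of $M$, which allows one to adjust an arbitrary isomorphism $M \to N_{\L^*}(Z)$ to one that is the identity on $S$ before invoking \cite[Theorem~5.15(a)]{Chermak2013}. Second, your $p'$-reducedness argument defers $O_{p'}(M) = 1$ to the classification, but the classification step (Lemma~\ref{OneComponent}) already assumes $O_{p'}(M) = 1$; the paper avoids this circularity by first proving $O_{p'}(M) = 1$ directly from the hypothesis that $\L^+|_{\F^c}$ is a centric linking locality, via the condition $C_M(V) \leq V$ for all $V \leq S$ of order at least $p^2$ (Lemmas~\ref{L:OpprimeM} and~\ref{L:MProp}).
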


Here the notation $A\# B$ is as in \cite[p.261]{GLS3}, namely it describes a
group $X$ with normal subgroup $N \cong A$ and quotient $X/N \cong B$, and such
that $N \nleq Z(X)$ and $X$ does not split over $N$. In the case
$3\#\Aut(A_6)$, this is the unique extension of the quasisimple group $3A_6$ by
$\Out(3A_6) \cong C_2 \times C_2$.

Before beginning the proof, we make some remarks. The $3$-fusion systems of
$Ru$ and $J_4$ are isomorphic. For $G = Ru$ and $S$ a Sylow $3$-subgroup of
$G$, one has $N_G(Z(S)) \cong 3\#\Aut(A_6)$ \cite[Table~5.3r]{GLS3}, so the
punctured group $\L^+$ in Theorem~\ref{T:esclass}(c) is the punctured group of
$Ru$ at the prime $3$ (for example, since our theorem tells us that $\L^+$ is
uniquely determined by the isomorphism type of $N_{\L^+}(Z)$). Using the
classification of finite simple groups, this can be shown to be the only
punctured group in (b) or (c) that is isomorphic to the punctured group of a
finite group.  For example, when $G = J_4$, one has $N_G(Z(S)) \cong
(6M_{22})\cdot 2$.  The $3$-fusion system of $6M_{22}$ is constrained and
isomorphic to that of $3M_{21} = 3L_3(4)$ and also that of $3M_{10}=3(A_6.2)$,
where the extension $A_6.2$ is non-split  (see \cite[Table~5.3c]{GLS3}).  If we
are in the situation of Theorem~\ref{T:esclass}(c) and $N_{\L^+}(Z(S))$ is an
extension of $3L_3(4)$ by a field automorphism, then $N_{\L^+}(Z(S))$ is a
section of $N_G(Z(S))$. Also, for $G = { }^2F_4(2)'$, the normalizer in $G$ of
a subgroup of order $3$ is solvable \cite[Proposition~1.2]{Malle1989}. 

By Lemma~\ref{L:extraspecialFusList}, for $p\in\{2,5,7\}$, there are
also saturated fusion systems over $S$, in which all subgroups of order $p$ are
conjugate. Moreover, for $p=5$, the only such fusion system is the fusion
system of the Thompson sporadic group. It should be noted here that the
Thompson group is of local characteristic $5$, and thus its punctured group is
just the subcentric linking locality. The three exotic fusion systems at
the prime $7$, which were discovered by Ruiz and Viruel, are of characteristic
$7$-type. As our theorem shows, for each of these fusion systems, the
subcentric linking locality is the only associated punctured group extending
the centric linking locality.

\smallskip

We will now start to
prove Theorem~\ref{T:p=3interesting} and Theorem~\ref{T:esclass} in a series of
lemmas. If  Hypothesis~\ref{H:extraspecialp3} holds and $\L^+$ is a punctured group over
$\F$, then $M_0:=N_{\L^+}(Z)$ is a finite group containing $S$ as a Sylow
$p$-subgroup. Moreover, $Z$ is normal in $M_0$. These properties are preserved
if we replace $M_0$ by $M:=M_0/O_{p^\prime}(M_0)$ and identify $S$ with its
image in $M$. Moreover, we have $O_{p^\prime}(M)=1$. We analyze the structure
of such a finite group $M$ in the following lemma. Most of our arguments are
elementary. However, for $p\geq 5$, we need the classification of finite simple
groups in the form of knowledge about the Schur multipliers of finite simple
groups to show in case (b) that $p=3$.

\begin{lem}\label{OneComponent}
Let $M$ be a finite group with a Sylow $p$-subgroup $S\cong p_+^{1+2}$. Assume
that $Z:=Z(S)$ is normal in $M$ and $O_{p^\prime}(M)=1$. Then one of the
following holds.
\begin{itemize}
\item[(a)] $S \norm M$ and $C_M(S)\leq S$, or
\item[(b)] $p=3$, $S\leq F^*(M)$, and $F^*(M)$ is quasisimple with
$Z(F^*(M)) = Z$. 
\end{itemize}
\end{lem}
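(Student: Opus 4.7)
The plan is to exploit the generalized Fitting subgroup $F^*(M)$. Set $Q := O_p(M)$; since $Z \unlhd M$ is a $p$-group and $O_{p'}(M) = 1$, we have $Z \leq Q \leq S$, $F(M) = Q$, and $F^*(M) = Q \cdot E(M)$ with $C_M(F^*(M)) \leq F^*(M)$. The argument splits on whether $E(M) = 1$. In the case $E(M) = 1$, we have $C_M(Q) \leq Q$, and I rule out $Q < S$: if $Q = Z$ then $S \leq C_M(Z) \leq Z$ is absurd, while if $|Q| = p^2$ then $Q$ is abelian with $C_M(Q) = Q$, so $M/Q \hookrightarrow \Aut(Q)$ fixes the line $Z$. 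For $p$ odd the Borel stabilizer of $Z$ in $\GL_2(p)$ has a normal Sylow $p$-subgroup $U$ of order $p$, and the Sylow $S/Q$ of $M/Q$ coincides with $U$, forcing $S \unlhd M$; for $p = 2$ a direct order count gives $M = S$. Either way $S \leq O_p(M) = Q$ contradicts $|Q| < |S|$. Thus $Q = S$ and case (a) holds.

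In the case $E(M) \neq 1$, set $N := E(M) \unlhd M$. I aim to show $F^*(M) = N$ is quasisimple with $Z(N) = Z$ and $S \leq N$. First, $Z \leq Z(N)$: since $N \cap Q \leq O_p(N) \leq Z(N)$ and $Z \leq Q$, either $Z \leq Z(N)$ or $Z \cap N = 1$. In the latter case $[Z,N] \leq Z \cap N = 1$, so $Z$ commutes with $N$ and $Z \cdot (N \cap S) = Z \times (N \cap S) \leq S$ yields $|N \cap S| \leq p^2$. If $|N \cap S| = p^2$, then $N \cap S$ is one of the abelian maximal subgroups of $S$ and all such contain $Z$; if $|N \cap S| = p$, then $N \cap S$ is a normal order-$p$ subgroup of $S$ and hence equals $Z$. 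Both cases contradict $Z \cap N = 1$. Having $Z \leq Z(N)$, I would argue that $N$ is quasisimple (a single component): if $N$ had two components $L_1, L_2$, their commuting Sylow $p$-subgroups $T_1, T_2$ each containing $Z$ would force a configuration incompatible with $S \cong p_+^{1+2}$ being non-abelian of order $p^3$. Writing $N = L$ quasisimple, comparing $S/(L \cap S) \hookrightarrow \Out(L)$ against $F^*$-constraints combined with $O_{p'}(M) = 1$ forces $L \cap S = S$, i.e.\ $S \leq L$; and then $Q \leq L \cap Q \leq Z(L)$, together with $Z(L) \unlhd M$ being a $p$-group in $S$ with $Z \leq Z(L)$, yields $Z(L) = Z$.

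The main obstacle is the reduction to $p = 3$. The quasisimple $L$ has Sylow $p$-subgroup $S \cong p_+^{1+2}$ and center $Z$ of order $p$, so $L/Z$ is a finite simple group with Sylow $p$-subgroup $S/Z$ elementary abelian of rank $2$, and the central extension $Z \to L \to L/Z$ forces the Schur multiplier of $L/Z$ to contain a factor of $p$. For $p \geq 5$, inspecting the classification of finite simple groups against the Schur multipliers tabulated in \cite[Theorem~6.1.4]{GLS3} shows that no such $L/Z$ exists. For $p = 2$, the simple groups with Sylow $V_4$ are $\PSL_2(q)$ for $q \equiv \pm 3 \pmod 8$, whose double covers $\SL_2(q)$ have generalized quaternion (rather than dihedral) Sylow $2$-subgroups of order $8$, ruling out $p = 2$. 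Therefore $p = 3$, completing case (b).
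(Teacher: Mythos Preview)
Your overall strategy matches the paper's: split on whether $E(M) = 1$, handle the first case by analyzing $O_p(M)$, and in the second show $E(M)$ is a single quasisimple component containing $S$ with center $Z$, then invoke Schur multiplier information to force $p = 3$. The $E(M) = 1$ case is fine and essentially identical to the paper's.

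The main gap is your argument that $S \leq L$. You propose using $S/(S \cap L) \hookrightarrow \Out(L)$ together with unspecified ``$F^*$-constraints'', but $\Out(L)$ can certainly have $p$-torsion, and it is not clear what constraint would rule out $S \cap L < S$ along these lines. The paper's argument is different and sharper: once you know $p$ divides $|Z(K)|$ for a component $K$ (which you do, since you have $Z \leq Z(N)$), then $p$ divides $|K/Z(K)|$ by \cite[33.12]{AschbacherFGT}. If $S \not\leq K$, then $K/Z(K)$ is a perfect group with \emph{cyclic} Sylow $p$-subgroup, and by \cite[33.14]{AschbacherFGT} such a group has $p'$-center, contradicting $p \mid |Z(K)|$. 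That is the missing ingredient.

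Your single-component argument is also sketchy. The claim that each $T_i$ contains $Z$ needs the fact that $S$ normalizes each component individually; this holds for $p$ odd since a $p$-group cannot permute two components nontrivially, but requires care for $p = 2$. The paper takes a slightly different route: if no component has $p \mid |Z(K)|$, then $Z(E(M))$ is a $p'$-group and hence trivial, $E(M)$ is a direct product of simples with $Z \cap E(M) = 1$, there is at most one component $J$ by $p$-rank, and then $[S \cap J, S] \leq Z \cap J = 1$ gives a contradiction. Also, in your $Z \leq Z(N)$ step you omit the case $|N \cap S| = 1$; it is immediate (then $N \leq O_{p'}(M) = 1$) but should be said.

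Finally, for $p = 2$ you invoke the classification of simple groups with Klein four Sylow subgroup, which works but is heavier than needed. The paper instead observes that $\Aut(D_8)$ is a $2$-group, so $N_K(S) = S\,C_K(S)$; passing to $\bar K = K/Z$ gives $N_{\bar K}(\bar S) = C_{\bar K}(\bar S)$, and Burnside's normal $p$-complement theorem then contradicts simplicity of $\bar K$.
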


\begin{proof}
Assume first that $S\norm M$. In this case we have $[S,E(M)]=1$ and thus $S\cap E(M)\leq Z(E(M))$. 
So by \cite[33.12]{AschbacherFGT}, $E(M)$ is a
$p^\prime$-group. Since we assume $O_{p^\prime}(M)=1$, this implies $E(M)=1$
and $F^*(M)=O_p(M)=S$. Therefore (a) holds. 

Thus, for the remainder of the proof, we will assume that $S$ is not normal in
$M$, and we will show (b). First we prove
\begin{eqnarray}
E(M) \neq 1.
\label{E:E(M)neq1}
\end{eqnarray}
Suppose $E(M) = 1$ and set $P = O_p(M)$. Note that $Z \leq P$. As
$O_{p^\prime}(M)=1$, we have $P=F^*(M)$, so $C_M(P) \leq P$ and $P \neq Z$. As
we assume that $S$ is not normal in $M$, we have moreover $P\neq S$. If $P$ is
elementary abelian of order $p^2$, then $M/P$ acts on $P$ and normalizes $Z$,
thus it embeds into a Borel subgroup of $GL_2(p)$. If $p=2$ and $P$ is cyclic
of order $4$ then $\Aut(P)$ is a $2$-group.  So $S$ is in any case normal in
$M$ and this contradicts our assumption. Thus \eqref{E:E(M)neq1} holds.

We can now show that
\begin{eqnarray}
\text{$p$ divides $|Z(K)|$ for some component $K$ of $M$.}
\label{E:pmidZ(K)}
\end{eqnarray}
First note that $p$ divides $|K|$ for each component $K$ of $M$. For otherwise,
if $p$ doesn't divide $|K|$ for some $K$, then $1 < K \leq O_{p'}(E(M))\leq
O_{p^\prime}(M)=1$, a contradiction. 

Supposing \eqref{E:pmidZ(K)} is false, $Z(E(M))$ is a $p^\prime$-group  and
thus by assumption trivial. Hence, $E(M)$ is a direct product of simple groups.
Since $Z$ is normal in $M$, $[Z,E(M)]=1$ and thus $Z\cap E(M)=1$. As the
$p$-rank of $M$ is two and $p$ divides $|K|$ for each component $K$, there can
be at most one component, call it $J$, which is then simple and normal in $M$.
As $p$ divides $|J|$ and $J$ is normal in $M$, it follows that $S \cap J \neq
1$. But then $[S \cap J, S] \leq J \cap Z = 1$ and so $S \cap J = Z$ is normal
in $J$, a contradiction.  Thus, \eqref{E:pmidZ(K)} holds.

Next we will show that
\begin{eqnarray}\label{E:almostb}
\text{$K=F^*(M)$ is quasisimple with $S\leq K$ and $Z(K)=Z$.}
\end{eqnarray}

To prove this fix a component $K$ of $M$ such that $p$ divides $|Z(K)|$.  Then
$p$ divides $|K|/|Z(K)|$ by \cite[33.12]{AschbacherFGT}. If $S$ is not a
subgroup of $K$, then $K/Z(K)$ is a perfect group with cyclic Sylow
$p$-subgroups, so $Z(K)$ is a $p'$-group by \cite[33.14]{AschbacherFGT}, a
contradiction. Therefore $S \leq K$. If there were a component $L$ of $M$
different from $K$, then we would have $[S\cap L,L]\leq [K,L]=1$, i.e. $L$
would have a central Sylow $p$-subgroup. However, we have seen above that $p$
divides the order of each component, so we would get a contradiction to
\cite[33.12]{AschbacherFGT}.  Hence, $K=F^*(G)$ is the unique component of $M$.
Note that $O_{p^\prime}(Z(K))\leq O_{p^\prime}(M)=1$ and thus $Z(K)$ is a
$p$-group. Since $[Z,K]=1$, this implies $Z=Z(K)$. Thus \eqref{E:almostb}
holds. 

To prove (b), it remains to show that $p=3$. Assume first that $p=2$ so that
$S\cong D_8$. Then $\Aut(S)$ is a $2$-group and thus $N_K(S)=SC_K(S)$. Hence,
with $\bar{K} = K/Z$, we have $N_{\bar{K}}(\bar{S})=C_{\bar{K}}(\bar{S})$.
Therefore, $\bar{K}$ has a normal $p$-complement by Burnside's Theorem (see
e.g. \cite[7.2.1]{KurzweilStellmacher2004}), a contradiction which establishes
$p\neq 2$. 

For $p \geq 5$, we appeal to the account of the Schur multipliers of the finite
simple groups given in Chapter~6 of \cite{GLS3} to conclude that, by the
classification of the finite simple groups, $K/Z(K)$ must be isomorphic to
$L_m(q)$ with $p$ dividing $(m, q-1)$, or to $U_m(q)$ with $p$ dividing $(m,
q+1)$. But each group of this form has Sylow $p$-subgroups of order at least
$p^4$, a contradiction.  
\end{proof}

\begin{lem}\label{L:p=3interesting}
Assume Hypothesis~\ref{H:extraspecialp3} and let $(\L^+,\Delta^+,S)$ be a
punctured group over $\F$. Then the following hold:
\begin{itemize}
\item [(a)] If $P\in\Delta^+$ with $|P|\geq p^2$, then $N_{\L^+}(P)$ is Sylow
$p$-constrained and thus $p$-constrained.
\item [(b)] If $p\neq 3$ then, upon identifying $S$ with its image in
$\L^+/O_{p^\prime}(\L^+)$, the triple $(\L^+/O_{p^\prime}(\L^+),\Delta^+,S)$ is
a subcentric linking locality over $\F$.  
\end{itemize}
\end{lem}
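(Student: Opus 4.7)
The plan is to combine the strong structural constraints imposed by $S\cong p_+^{1+2}$ with the signalizer functor machinery of Proposition~\ref{P:GetLocalityObjectiveCharp}. The central structural observation is that every subgroup $P\leq S$ of order at least $p^2$ is self-centralizing: if $P=S$ this is immediate, and if $|P|=p^2$ then $P\supseteq [S,S]=Z$ is abelian of index $p$, so $P=C_S(y)=\langle y,Z\rangle$ for any $y\in P\setminus Z$. For (a), after replacing $P$ by a fully $\F$-normalized conjugate---which preserves $p$-constraint because Lemma~\ref{L:NFPNLP}(a) and Lemma~\ref{L:LocalitiesProp}(b) yield a group isomorphism between the normalizers---we may assume $N_S(P)$ is a Sylow $p$-subgroup of $G:=N_{\L^+}(P)$. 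Since $P\norm G$ forces $P\leq O_p(G)$, we obtain
\[
C_{N_S(P)}(O_p(G))\leq C_{N_S(P)}(P)\leq C_S(P)=P\leq O_p(G),
\]
so $G$ is $p$-constrained.

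For (b), we verify the hypothesis of Proposition~\ref{P:GetLocalityObjectiveCharp}, namely that $N_{\L^+}(P)/O_{p^\prime}(N_{\L^+}(P))$ is of characteristic $p$ for every $P\in\Delta^+$. When $|P|\geq p^2$, part~(a) together with Lemma~\ref{L:pconstrained} gives this at once. When $|P|=p$, the hypothesis that all subgroups of order $p$ in $S$ are $\F$-conjugate together with the same normalizer-isomorphism argument reduces us to $P=Z$.

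The main (and only nontrivial) step is the analysis of $M:=N_{\L^+}(Z)/O_{p^\prime}(N_{\L^+}(Z))$, with $S$ identified with its Sylow image. Here $Z\norm M$, $O_{p^\prime}(M)=1$, and $S\cong p_+^{1+2}$, so Lemma~\ref{OneComponent} applies, and the hypothesis $p\neq 3$ excludes case (b) of that lemma. Hence $S\norm M$ with $C_M(S)\leq S$, so $O_p(M)=S$ and $C_M(O_p(M))\leq O_p(M)$, i.e., $M$ is of characteristic $p$. With this in hand, Proposition~\ref{P:GetLocalityObjectiveCharp} yields $\widehat\Theta=O_{p^\prime}(\L^+)$, and $(\L^+/O_{p^\prime}(\L^+),\Delta^+,S)$ is a locality over $\F$ of objective characteristic $p$. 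Since $\Delta^+\supseteq\F^{cr}$ it is a linking locality, and since $\F$ is of characteristic $p$-type by Lemma~\ref{L:rvext} the set $\Delta^+$ of nonidentity subgroups of $S$ coincides with $\F^s$, identifying $\L^+/O_{p^\prime}(\L^+)$ with the subcentric linking locality over $\F$.
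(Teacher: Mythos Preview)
Your proof is correct and follows essentially the same route as the paper's. The only superfluous step is in (a): since $|S|=p^3$ and $|P|\geq p^2$, $P$ has index at most $p$ in $S$, so $P\norm S$ and $N_S(P)=S$ is automatically a Sylow $p$-subgroup of $N_{\L^+}(P)$; no passage to a fully $\F$-normalized conjugate is needed.
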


\begin{proof}
If $P\in\Delta^+$ with $|P|\geq p^2$, then $S=N_S(P)$ is a Sylow $p$-subgroup
of $N_{\L^+}(P)$. As $P$ is normal in $N_{\L^+}(P)$ and $C_S(P)\leq P$, it
follows that $N_{\L^+}(P)$ is Sylow $p$-constrained. Thus (a) holds by
Lemma~\ref{L:pconstrained}.

\smallskip

Assume now $p\neq 3$. As all subgroups of order $p$ are by assumption
$\F$-conjugate, we have by Lemma~\ref{L:LocalitiesProp}(b) and
Lemma~\ref{L:NFPNLP}(a) that $N_{\L^+}(P)\cong M:=N_{\L^+}(Z)$ for every
$P\in\Delta^+$ with $|P|=p$. Moreover, by Lemma~\ref{OneComponent},
$M/O_{p^\prime}(M)$ has a normal Sylow $p$-subgroup and is thus in particular
Sylow $p$-constrained. Hence, using (a) and Lemma~\ref{L:pconstrained}, we can
conclude that $N_{\L^+}(P)$ is $p$-constrained for every $P\in\Delta^+$.  Therefore, by
Proposition~\ref{P:GetLocalityObjectiveCharp}, the triple
$(\L^+/O_{p^\prime}(\L^+),\Delta^+,S)$ is a locality over $\F$ of objective
characteristic $p$. Since $\Delta^+=\F^s$ by Lemma~\ref{L:rvext}, part (b)
follows.  
\end{proof}

Note that Lemma~\ref{L:p=3interesting} proves one direction of
Theorem~\ref{T:p=3interesting}, whereas the other direction would follow from
Lemma~\ref{L:extraspecialFusList} and Theorem~\ref{T:esclass}.
Therefore, we will focus now on the proof of Theorem~\ref{T:esclass} and thus
consider punctured groups which restrict to the centric linking system. If
$\L^+$ is such a punctured group, then we will apply Lemma~\ref{OneComponent}
to $N_{\L^+}(Z)$. In order to do this, we need the following two lemmas.

\begin{lem}\label{L:OpprimeM}
Let $M$ be a finite group with a Sylow $p$-subgroup $S\cong p_+^{1+2}$. Assume
that $Z:=Z(S)$ is normal in $M$ and $C_M(V)\leq V$ for every subgroup $V$ of
$S$ of order at least $p^2$. Then $O_{p^\prime}(M)=1$.
\end{lem}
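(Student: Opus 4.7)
The plan is to set $N := O_{p'}(M)$ and argue that the coprime action of $S$ on $N$ is so restrictive that $N$ must be trivial. The first observation is that $Z$ centralizes $N$: both $Z$ and $N$ are normal in $M$, so $[Z,N] \leq Z \cap N$, which is trivial since $Z$ is a $p$-group and $N$ a $p'$-group. In particular, the induced action of $S$ on $N$ factors through $S/Z$.

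Next I would establish that $C_N(v) = 1$ for every $v \in S \setminus Z$. For such $v$, let $V := \langle Z,v\rangle$. Since $v \notin Z$, $\langle v\rangle$ is not contained in $Z$, so $V$ properly contains $Z$ and has order at least $p^2$ (this works uniformly: for odd $p$, $V = Z \times \langle v\rangle$; for $p=2$, $V$ is either $\langle r\rangle \cong C_4$ or a Klein four group containing $Z$). The hypothesis gives $C_M(V) \leq V$, and because $N \cap V = 1$ by coprime order we obtain $C_N(V) \leq N \cap V = 1$. As $N$ already centralizes $Z \leq V$, we have $C_N(V) = C_N(v)$, yielding $C_N(v) = 1$.

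Finally, suppose toward a contradiction that $N \neq 1$. Then by the previous step the induced map $S/Z \to \Aut(N)$ is injective (any $vZ$ in the kernel would produce $v \in S \setminus Z$ with $C_N(v) = N \neq 1$, contradicting Step~2), and moreover every nontrivial element of $S/Z$ acts on $N$ without fixed points. This is exactly the condition for the semidirect product $N \rtimes (S/Z)$ to be a Frobenius group with kernel $N$ and complement $S/Z$. But $S/Z$ is elementary abelian of order $p^2$ when $p$ is odd, and isomorphic to the Klein four group when $p = 2$; in neither case is it cyclic or generalized quaternion, which contradicts the classical fact that every Sylow subgroup of a Frobenius complement is cyclic or generalized quaternion. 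Hence $N = 1$.

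The argument is essentially routine coprime-action bookkeeping, and the only nontrivial external input is the structure theorem for Frobenius complements; I do not anticipate any serious obstacle.
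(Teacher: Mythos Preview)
Your proof is correct. The first two steps---that $Z$ centralizes $N:=O_{p'}(M)$ and that $C_N(v)=1$ for every $v\in S\setminus Z$---coincide exactly with the paper's argument. The divergence is only in the final step: the paper quotes the standard coprime-action generation lemma (Kurzweil--Stellmacher 8.3.4(b)), which says that for a noncyclic elementary abelian $p$-group $\bar S=S/Z$ acting coprimely on $N$ one has $N=\langle C_N(\bar x)\mid \bar x\in\bar S^{\#}\rangle$, and concludes $N=1$ immediately since all these centralizers vanish. You instead package the same fixed-point-free information as a Frobenius group $N\rtimes(S/Z)$ and invoke the structure theorem for Frobenius complements to rule out the noncyclic abelian complement $S/Z$. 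Both external facts are classical and of comparable depth; the generation lemma is perhaps the more direct fit here (it uses exactly the noncyclic-abelian hypothesis and nothing more), while your route has the mild advantage of being memorable and requiring no bookkeeping about which elements generate what.
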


\begin{proof}
Set $U = O_{p'}(M)$. As $Z$ is normal in $M$, it centralizes $U$. So $\bar{S} =
S/Z$ acts on $U$. Let $x \in S - Z(S)$. Then setting $V = \gen{x, z}$, the
centralizer $C_M(V)$ contains the $p'$-group $C_U(\bar{x})$. So our hypothesis
implies $C_{U}(\bar{x}) = 1$. Hence, by
\cite[8.3.4]{KurzweilStellmacher2004}(b), $U=\langle C_U(\bar{x}):\bar{x}\in
\bar{S}^\#\rangle=1$. 
\end{proof}

\begin{lem}\label{L:MProp}
Assume Hypothesis~\ref{H:extraspecialp3} and let $(\L^+,\Delta^+,S)$ be a
punctured group over $\F$ such that $\L^+|_{\F^c}$ is a centric linking
locality over $\F$. If we set $M:=N_{\L^+}(Z)$ the following conditions hold:
\begin{itemize}
\item[(a)] $S$ is a Sylow $p$-subgroup of $M$ and $Z$ is normal in $M$,
\item[(b)] $\F_S(M) =N_\F(Z)=N_\F(S)$, and
\item[(c)] $C_M(V) \leq V$ for each subgroup $V$ of $S$ of order $p^2$.
\end{itemize}
\end{lem}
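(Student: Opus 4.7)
The plan is to handle each of the three parts in turn, with the substance concentrated in (c). For part (a), I would simply observe that $Z = Z(S)$ is central in $S$, so every $s \in S$ satisfies $Z^s = Z$, giving $S \leq N_{\L^+}(Z) = M$; since $S$ is a maximal $p$-subgroup of $\L^+$ by axiom (L1), it remains maximal among $p$-subgroups of any subgroup of $\L^+$ containing it, so $S \in \Syl_p(M)$. Normality of $Z$ in $M$ is built into the definition of $M = N_{\L^+}(Z)$.

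For part (b), the identity $\F_S(M) = N_\F(Z)$ would come from a direct application of Lemma~\ref{L:NFPNLP}(b) to the object $P = Z$ (which lies in $\Delta^+$ by assumption), using $N_S(Z) = S$. The remaining equality $N_\F(Z) = N_\F(S)$ is exactly Lemma~\ref{L:rvext} when $p$ is odd. The case $p = 2$ of Hypothesis~\ref{H:extraspecialp3}, where $S \cong D_8$ and $\F$ is forced to be the $2$-fusion system of $A_6$, has to be handled separately, but it is routine: the centralizer of an involution in $A_6$ is already a Sylow $2$-subgroup, so $N_\F(Z) = \F_S(S) = N_\F(S)$.

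The main work lies in part (c). First I would show that every subgroup $V \leq S$ of order $p^2$ is $\F$-centric. This is a purely $p$-group-theoretic fact: $V$ is abelian (elementary abelian of order $p^2$ when $p$ is odd, since $S$ has exponent $p$; either cyclic of order $4$ or a Klein four subgroup in the dihedral case $p = 2$), has index $p$ in $S$, and is therefore self-centralizing in $S$, so $V \in \F^c$. Next I would transfer the centric linking property from $\L^+|_{\F^c}$ to $\L^+$: for any $g \in C_{\L^+}(V)$, the subgroup $V$ itself is a witness object showing $g \in \L^+|_{\F^c}$ (since $V \subseteq \D^+(g)$, $V^g = V \leq S$, and $V \in \F^c$), so $C_{\L^+}(V) = C_{\L^+|_{\F^c}}(V)$, and the latter lies in $V$ by the centric linking hypothesis. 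Thus $C_M(V) \leq C_{\L^+}(V) \leq V$ as required. The main (and only real) obstacle is this last identification of centralizers across the passage between $\L^+$ and its restriction to $\F^c$; once that compatibility is noted, no further computation is needed.
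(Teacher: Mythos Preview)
Your proposal is correct and follows essentially the same approach as the paper's proof: part (a) is immediate, part (b) is Lemma~\ref{L:NFPNLP}(b) combined with Lemma~\ref{L:rvext}, and part (c) uses that subgroups of order $p^2$ are self-centralizing in $S$ (hence $\F$-centric) together with $C_{\L^+}(V) = C_{\L^+|_{\F^c}}(V) \leq V$ from the definition of the restriction and the centric-linking property. Your treatment is in fact slightly more careful than the paper's in two places: you handle the $p=2$ case of $N_\F(Z)=N_\F(S)$ explicitly (the paper just cites Lemma~\ref{L:rvext}, which as stated is for odd $p$, relying on an earlier remark in the text for $p=2$), and you spell out why the centralizers in $\L^+$ and in the restriction agree.
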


\begin{proof}
Property (a) is clearly true. Moreover, by Lemma~\ref{L:NFPNLP}(b) and
Lemma~\ref{L:rvext}, we have $\F_S(M)=N_\F(Z)=N_\F(S)$, so (b) holds. Set
$\Delta=\F^c$. By assumption $\L:=\L^+|_{\Delta}$ is a centric linking
locality. So by \cite[Proposition~1(d)]{Henke2019}, we have
$C_\L(V)\subseteq V$ for every $V\in\Delta$. Hence, for every subgroup
$V\in\Delta$, we have $C_M(V)\subseteq C_{\L^+}(V)=C_\L(V)\subseteq V$, where
the equality follows from the definition of $\L=\L^+|_\Delta$. As every
subgroup of $S$ of order at least $p^2$ contains its centralizer in $S$, each
such subgroup is $\F$-centric. Therefore (c) holds.
\end{proof}

\begin{lem}\label{L:MDetermine}
Assume Hypothesis~\ref{H:extraspecialp3} and let $(\L^+,\Delta^+,S)$ be a
punctured group over $\F$ such that $\L:=\L^+|_{\F^c}$ is a centric linking
locality over $\F$. Set $M:=N_{\L^+}(Z)$. Then one of the following conditions
holds:
\begin{itemize}
\item [(a)] $S\unlhd M$, the group $M$ is a model for $N_\F(Z)=N_\F(S)$, and
$(\L^+,\Delta^+,S)$ is a subcentric linking locality over $\F$.
\item[(b)] $p = 3$, $\F$ is the $3$-fusion system of the Tits group ${
}^2\!F_4(2)'$ and $M \cong 3S_6$; or
\item[(c)] $p = 3$, $\F$ is the $3$-fusion system of $Ru$ and of $J_4$, and $M
\cong 3\#\Aut(A_6)$ or an extension of $3L_3(4)$ by a field or graph
automorphism. 
\end{itemize}
Moreover, in either of the cases (b) and (c), $N_{\Out(S)}(\Out_\F(S))$ is a
Sylow $2$-subgroup of $\Out(S)\cong GL_2(3)$, and every element of
$N_{\Aut(S)}(\Aut_\F(S))$ extends to an automorphism of $M$. 
\end{lem}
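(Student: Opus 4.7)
The approach is to reduce, via the preceding lemmas, to analyzing the single finite group $M=N_{\L^+}(Z)$, and then to classify the possibilities for $M$ using the classification of finite simple groups together with the Ruiz--Viruel classification \cite{RuizViruel2004} of saturated fusion systems over $p^{1+2}_+$.

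First I would invoke Lemma~\ref{L:MProp}, which gives $S\in\Syl_p(M)$, $Z\unlhd M$, $\F_S(M)=N_\F(Z)=N_\F(S)$, and $C_M(V)\leq V$ for each subgroup $V\leq S$ of order $p^2$. Combined with Lemma~\ref{L:OpprimeM} this yields $O_{p'}(M)=1$, so Lemma~\ref{OneComponent} applies and produces a dichotomy. In case (a) of that lemma, $S\unlhd M$ and $C_M(S)\leq S$, so $M$ has characteristic $p$ and hence is a model for the constrained fusion system $N_\F(S)=\F_S(M)$ by the model theorem. Combining Lemma~\ref{L:p=3interesting}(a) with Proposition~\ref{P:GetLocalityObjectiveCharp}---applied to the signalizer functor $P\mapsto O_{p'}(N_{\L^+}(P))$, whose values vanish because by $\F$-conjugacy of order-$p$ subgroups together with Lemmas~\ref{L:LocalitiesProp}(b) and \ref{L:NFPNLP}(a) every $N_{\L^+}(P)$ with $|P|=p$ is isomorphic to $M$---forces $(\L^+,\Delta^+,S)$ to be of objective characteristic $p$ on $\Delta^+=\F^s$, and hence to be the subcentric linking locality. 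This yields conclusion (a) of the lemma.

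In case (b) of Lemma~\ref{OneComponent} we have $p=3$ and $K:=F^*(M)$ is quasisimple with $Z(K)=Z$ and $S\leq K$. The central---and hardest---step is now to list the quasisimple groups with $|Z(K)|=3$ and Sylow $3$-subgroup $\cong 3^{1+2}_+$. Using Alperin's classification of simple groups with a Sylow $3$-subgroup of order $27$ together with the known Schur multipliers (cf.\ \cite[Theorem~6.1.4, Table~6.3.1]{GLS3}), the candidates for $K$ are $3A_6$, $3A_7$, $3L_3(4)$, and $3M_{22}$. For each candidate and each extension $K\leq M\leq\Aut(K)$ compatible with the condition $C_M(V)\leq V$ for all $V\leq S$ of order $9$, I would compute $\F_S(M)$ and then check, against the Ruiz--Viruel classification, whether $\F_S(M)$ can appear as $N_\F(S)$ for a saturated fusion system $\F$ over $S$ in which all thirteen nonidentity cyclic subgroups of order $3$ are $\F$-conjugate. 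This rules out the $3A_7$ and $3M_{22}$ branches and leaves precisely the groups listed in (b) and (c), with the corresponding $\F$ being the $3$-fusion system of ${}^2\!F_4(2)'$ and the common $3$-fusion system of $Ru$ and $J_4$, respectively.

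Uniqueness of $\L^+$ up to rigid isomorphism follows because $\L^+|_{\F^c}$ is the unique centric linking locality \cite{Chermak2013}, and by Chermak descent \cite[Theorem~5.14]{Chermak2013} its extension to a punctured group is controlled by the isomorphism type of $M=N_{\L^+}(Z)$; that $\L^+$ is $p'$-reduced then follows from $O_{p'}(M)=1$ via Lemma~\ref{C:proveOpprimeTrivial}. The ``moreover'' clauses are a direct calculation: in each surviving case $\Out_\F(S)=\Aut_M(S)/\Inn(S)$ is an explicit subgroup of $\Out(S)\cong GL_2(3)$, and one checks that its normalizer is a semidihedral Sylow $2$-subgroup of $GL_2(3)$, while liftability of $N_{\Aut(S)}(\Aut_\F(S))$ to $\Aut(M)$ is visible from the known outer automorphism groups of the quasisimple candidates. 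The main obstacle throughout is the quasisimple case analysis, which requires care in discarding those extensions of $K$ incompatible with the $\F$-conjugacy of all order-$p$ subgroups of $S$.
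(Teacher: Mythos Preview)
Your reduction via Lemmas~\ref{L:MProp}, \ref{L:OpprimeM} and \ref{OneComponent}, and your treatment of the case $S\unlhd M$, match the paper. The gap is in the quasisimple case. Your claimed list of quasisimple $K$ with $Z(K)\cong C_3$ and Sylow $3$-subgroup $3^{1+2}_+$ is incomplete: for every prime power $q$ with $3$ exactly dividing $q-1$ the group $SL_3(q)$ has these properties, as does $SU_3(q)$ whenever $3$ exactly divides $q+1$, so there is no finite list at this stage and no classification of the form you cite. Finiteness only appears once one exploits the hypothesis $C_M(V)\leq V$, and that is precisely what the paper does first. Setting $G=K/Z$ and $\bar S=S/Z$, one deduces $C_G(\bar x)=\bar S$ for every $1\neq\bar x\in\bar S$: a $3'$-element of the preimage of $C_G(\bar x)$ in $K$ centralizes both $Z$ and $\langle\bar x\rangle$, hence by coprime action centralizes the order-$9$ preimage $V$, hence lies in $C_M(V)\leq V$ and is trivial. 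With this self-centralizing Sylow condition in hand the paper never enumerates quasisimple groups at all. It observes that $\Aut_G(\bar S)\leq GL_2(3)$ must have $2$-rank one (any Klein four subgroup of $GL_2(3)$ contains a noncentral involution, which fixes a nonzero $\bar x$ and would witness $C_G(\bar x)>\bar S$), hence is cyclic of order at most $8$ or $Q_8$; Smith--Tyrer \cite{SmithTyrer1973} rules out $C_2$, and then Higman \cite[Theorem~13.3]{Higman1968} and Fletcher \cite[Lemma~1]{Fletcher1971} force $G\cong A_6$ or $L_3(4)$ directly. Only afterwards is Ruiz--Viruel invoked to identify $\F$ and the precise extension $M$. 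Your route could be salvaged by first deriving $C_G(\bar x)=\bar S$ and then running a (now genuinely finite) enumeration, but as written the candidate list is wrong.

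Two smaller points. The assertions about uniqueness of $\L^+$ up to rigid isomorphism and about $\L^+$ being $p'$-reduced are not part of this lemma; they belong to the proof of Theorem~\ref{T:esclass}. And the paper handles the ``moreover'' clause by an explicit computation rather than an appeal to outer automorphism tables: in case~(b), $\Out_\F(S)\cong D_8$ has normalizer $SD_{16}\in\Syl_2(GL_2(3))$ and the order-$2$ outer automorphism group of $3S_6$ supplies the nontrivial extension; in case~(c), $\Out_\F(S)\cong SD_{16}$ is already Sylow and self-normalizing in $GL_2(3)$, so inner automorphisms of $M$ already realise all of $N_{\Aut(S)}(\Aut_\F(S))$.
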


\begin{proof}
Set $\Delta=\F^c$. By Lemma~\ref{L:OpprimeM} and Lemma~\ref{L:MProp}, we have
$O_{p^\prime}(M)=1$, $\F_S(M)=N_\F(Z)=N_\F(S)$ and $C_M(S)\leq S$. In
particular, if $S\unlhd M$, then $M$ is a model for $N_\F(Z)=N_\F(S)$. For any
$P\in\Delta$, the group $N_{\L^+}(P)=N_\L(P)$ is of characteristic $p$. As
$\Delta^+=\Delta\cup Z^\F$, if $S\unlhd M$, the punctured group
$(\L^+,\Delta^+,S)$ is of objective characteristic $p$ and thus (a) holds. 

So assume now that $S$ is not normal in $M$. By Lemma~\ref{OneComponent}, we
have then $p=3$, $K:=F^*(M)$ is quasisimple, $S\leq K$ and $Z=Z(K)$.  Set
$\ov{M}:=M/Z$ and $G:=\ov{K}$. Let $1\neq \ov{x}\in\ov{S}$. Then the preimage
$V$ of $\<\ov{x}\>$ in $S$ has order at least $3^2$. Thus, by
Lemma~\ref{L:MProp}(c), we have $C_M(V)\leq V$. A $3^\prime$-element in the
preimage of $C_{G}(\ov{x})=C_{G}(\ov{V})$ in $K$ acts trivially on $\ov{V}$ and
$Z=Z(K)$. Thus, it is contained in $C_M(V)\leq V$ and therefore trivial. Hence,
we have
\begin{eqnarray}\label{E:Covx}
C_G(\ov{x})=\ov{S}\mbox{ for every }1\neq \ov{x}\in\ov{S}.  
\end{eqnarray}
Notice also that $G$ is a simple group with Sylow $3$-subgroup $\ov{S}$, which
is elementary abelian of order $3^2$. Moreover, $\Aut_G(\ov{S})$ is contained
in a Sylow $2$-subgroup of $\Aut(\ov{S})\cong GL_2(3)$, and such a Sylow
$2$-group is semidihedral of order $16$. In particular, if $\Aut_G(\ov{S})$ has
$2$-rank at least $2$, then $\Aut_G(\ov{S})$ contains a conjugate of every
involution in $\Aut(\ov{S})$, which is impossible because of \eqref{E:Covx}.
Hence, $\Aut_G(\ov{S})$ has $2$-rank one, and is thus either cyclic of order at
most $8$ or quaternion of order $8$ (and certainly nontrivial by
\cite[7.2.1]{KurzweilStellmacher2004}). By a result of Smith and Tyrer
\cite{SmithTyrer1973}, $\Aut_G(\ov{S})$ is not cyclic of order $2$. Using
\eqref{E:Covx}, it follows from  \cite[Theorem~13.3]{Higman1968} that $G\cong
L_2(9)\cong A_6$ if $\Aut_G(\ov{S})$ is cyclic of order $4$, and from a result
of  Fletcher \cite[Lemma~1]{Fletcher1971} that $G\cong L_3(4)$ (and thus
$\Aut_G(\ov{S})$ is quaternion) if $\Aut_G(\ov{S})$ is of order $8$. 

It follows from Lemma~\ref{L:MProp}(b) that $\Aut_M(S)=\Aut_\F(S)$. Since
$C_M(S)=Z$ and $C_G(\ov{S})=\ov{S}$ by \eqref{E:Covx}, we have
$\Aut_G(\ov{S})\cong N_G(\ov{S})/C_G(\ov{S})=\ov{N_K(S)}/\ov{S}\cong
\Aut_K(S)/\Inn(S)=\Out_K(S)$. Hence, \[\Out_G(\ov{S})\cong \Aut_G(\ov{S})\cong
\Out_K(S)\leq \Out_M(S)=\Out_\F(S).\] As $p=3$, it follows from
Lemma~\ref{L:extraspecialFusList} that $\F$ is the $3$-fusion system of the
Tits group or the $3$-fusion system of $J_4$.

Consider first the case that $\F$ is the $3$-fusion system of the Tits group ${
}^2\!F_4(2)$, which has $\Out_\F(S) \cong D_8$. Then $\Out_G(\ov{S})$ cannot be
quaternion, i.e. we have $\Out_G(\ov{S})\cong C_4$ and $G = A_6$. So conclusion
(b) of the lemma holds, as $S_6$ is the only two-fold extension of $A_6$ whose
Sylow $3$-normalizer has dihedral Sylow $2$-subgroups. By
\cite[Lemma~3.1]{RuizViruel2004}, we have $\Out(S)\cong GL_2(3)$. It follows
from the structure of this group that $N_{\Out(S)}(\Out_\F(S))\cong SD_{16}$ is
a Sylow $2$-subgroup of $\Out(S)$. As $M\cong 3S_6$ has an outer automorphism
group of order $2$, it follows that every element of $N_{\Aut(S)}(\Aut_\F(S))$
extends to an automorphism of $M$. 

Assume now that $\F$ is the $3$-fusion system of $J_4$, so that $\Out_\F(S)
\cong SD_{16}$.  An extension of $3A_6$ with this data must be $3\#\Aut(A_6)$.
Suppose now $\Aut_G(\ov{S})\cong Q_8$ and $G\cong L_3(4)$. Then $\ov{M}$ must
be a two-fold extension of $L_3(4)$. However, a graph-field automorphism
centralizes a Sylow $3$-subgroup, and so $M$ must be an extension of $L_3(4)$
by a field or a graph automorphism. Hence, (c) holds in this case. If (c)
holds, then $\Out_M(S)=\Out_\F(S)\cong SD_{16}$ is always a self-normalizing
Sylow $2$-subgroup in $\Out(S)\cong GL_2(3)$. In particular, every element of
$N_{\Aut(S)}(\Aut_\F(S))$ extends to an inner automorphism of $M$. This proves
the assertion.
\end{proof}

Note that the previous lemma shows basically that, for any punctured group
$(\L^+,\Delta^+,S)$ over $\F$ which restricts to a centric linking locality,
one of the conclusions (a)-(c) in Theorem~\ref{T:esclass} holds. To give a
complete proof of  Theorem~\ref{T:esclass}, we will also need to show that each
of these cases actually occurs in an example. To construct the examples, we
will need the following two lemmas. The reader might want to recall the
definition of $\L_\Delta(M)$ from Example~\ref{E:LDeltaM}

\begin{lem}\label{L:GetL+Help}
Let $M$ be a finite group isomorphic to $3S_6$ or $3\#\Aut(A_6)$ or an
extension of $3L_3(4)$ by a field or graph automorphism. Let $S$ be a Sylow
$3$-subgroup of $M$. Then $S\cong 3_+^{1+2}$ and, writing $\Delta$ for all
subgroups of $S$ of order $3^2$, we have $\L_\Delta(M)=N_M(S)$. Moreover,
$\F_S(M)=\F_S(N_M(S))$. 
\end{lem}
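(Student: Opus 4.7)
The approach is to first verify the structural claim on $S$, then to prove the key fact that any two distinct Sylow $3$-subgroups of $M$ intersect exactly in $Z$, and finally to deduce both equalities from this observation.

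In each of the three cases for $M$, the generalized Fitting subgroup $F^*(M)$ is quasisimple with $Z(F^*(M))=Z$ of order $3$ and with $F^*(M)/Z$ isomorphic to $A_6$ or $L_3(4)$. Since the index $[M:F^*(M)]$ is prime to $3$ and $|A_6|_3=|L_3(4)|_3=9$, we have $|S|=27$. The Sylow $3$-subgroups of the triple covers $3A_6$ and $3L_3(4)$ are well known to be isomorphic to $3_+^{1+2}$, giving $S\cong 3_+^{1+2}$.

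The decisive step is to show that the centralizer in $M/Z$ of any element of order $3$ coincides with a Sylow $3$-subgroup of $M/Z$, so that distinct Sylow $3$-subgroups of $M/Z$ intersect trivially. Since $M/Z$ is a $3'$-extension of $A_6$ or $L_3(4)$, its Sylow $3$-subgroups lie inside the socle, and it suffices to check this in $A_6$ and in $L_3(4)$. In $A_6$ a direct cycle-type computation shows that the two conjugacy classes of order-$3$ elements both have centralizer of order $18$ in $S_6$, hence of order $9$ in $A_6$. In $L_3(4)$ the same centralizer order is recorded in the \textsc{Atlas}: the unique class 3A has centralizer of order $9$. Lifting, distinct Sylow $3$-subgroups of $M$ meet exactly in $Z$, so $|S\cap S^g|\in\{3,27\}$ for every $g\in M$, and $|S\cap S^g|\geq 9$ forces $g\in N_M(S)$. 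This proves $\L_\Delta(M)=N_M(S)$.

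For the fusion equality, the nontrivial inclusion is $\F_S(M)\subseteq\F_S(N_M(S))$. Take a morphism $c_g\colon P\to S$ of $\F_S(M)$, so $P\leq S$ and $P^g\leq S$. If $|P|\geq 9$, then $P\in\Delta$ and $g\in\L_\Delta(M)=N_M(S)$. If $|P|=3$ and $P\neq Z$, then $PZ\in\Delta$ has order $9$, and $Z\norm M$ gives $Z^g=Z$, so $(PZ)^g=P^gZ\leq S$, again forcing $g\in N_M(S)$. The remaining case $P=Z$ concerns only $\Aut_{\F_S(M)}(Z)\leq\Aut(Z)\cong C_2$: since $[M:C_M(Z)]\leq 2$ is coprime to $3$, we have $S\in\Syl_3(C_M(Z))$, and a Frattini argument yields $M=C_M(Z)\cdot N_M(S)$; hence $N_M(S)$ already realizes the whole image of $M\to\Aut(Z)$. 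Together these cases give $\F_S(M)=\F_S(N_M(S))$.

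The main obstacle is locating the centralizer data for $L_3(4)$ (and checking it by hand in $A_6$); once the pairwise-intersection statement for Sylow $3$-subgroups is secured, every remaining step is essentially formal, relying on the normality of $Z$ in $M$ to promote morphisms on subgroups of order $3$ outside $Z$ into morphisms on subgroups in $\Delta$.
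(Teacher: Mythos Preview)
Your proof is correct, and it rests on the same key fact as the paper's: in $\bar{K}=A_6$ or $L_3(4)$ the centralizer of any element of order $3$ is a Sylow $3$-subgroup. However, you deploy this fact differently. You use it to show that Sylow $3$-subgroups of $M$ form a TI-set modulo $Z$, and then the equality $\L_\Delta(M)=N_M(S)$ falls out immediately from $|S\cap S^{g^{-1}}|\ge 9\Rightarrow g\in N_M(S)$. The paper instead writes $g=kh$ via a Frattini argument, invokes Burnside's fusion theorem for the abelian Sylow subgroup $\bar{S}\le\bar{K}$ to find $x\in N_K(S)$ with $\overline{kx^{-1}}\in C_{\bar{K}}(\bar{P})=\bar{S}$, and concludes $k\in N_M(S)$. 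For the fusion statement the paper appeals to Alperin's fusion theorem, whereas you give a direct case analysis on $|P|$, promoting the case $P\ne Z$ of order $3$ to $PZ\in\Delta$ using $Z\trianglelefteq M$. Your route is arguably more transparent and avoids naming Burnside and Alperin explicitly.

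One small wording issue: your stated ``decisive step,'' that the centralizer \emph{in $M/Z$} of an order-$3$ element equals a Sylow $3$-subgroup, is literally false (e.g.\ in $S_6$ the centralizer of a $3$-cycle has order $18$). What you actually need---and what your reduction to the socle correctly establishes---is that distinct Sylow $3$-subgroups of $M/Z$ intersect trivially; this follows because Sylow $3$-subgroups of $M/Z$ lie in the socle $\bar{K}$ and there the centralizer claim does hold. Rewording the first sentence of that paragraph to state the TI property directly would remove the inaccuracy.
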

\begin{proof}
It is well-known that $M$ has in all cases a Sylow $3$-subgroup isomorphic to
$3_+^{1+2}$. By definition of $\L_\Delta(M)$, clearly
$N_M(S)\subseteq\L_\Delta(M)$. Moreover, if $g\in\L_\Delta(M)$, then there
exists $P\in\Delta$ such that $P^g\leq S$.   Note that $Z:=Z(S)\unlhd M$ and
$\ov{M}:=M/Z$ has a normal subgroup $\ov{K}$ isomorphic to $A_6$ or $L_3(4)$.
Denote by $K$ the preimage of $\ov{K}$ in $M$. Then $S\leq K$ and by a Frattini
argument, $M=KN_M(S)$. Hence we can write $g=kh$ with $k\in K$ and $h\in
N_M(S)$. In order to prove that $g\in N_M(S)$ and thus $\L_\Delta(M)\subseteq
N_M(S)$, it is sufficient to show that $k\in N_M(S)$. Note that
$P^k=(P^g)^{h^{-1}}\leq S$. As $\ov{S}$ is abelian, fusion in $\ov{K}$ is
controlled by $N_{\ov{K}}(\ov{S})$. So there exists $x\in K$ such that
$\ov{kx^{-1}}\in C_{\ov{K}}(\ov{P})$. As $\ov{K}\cong A_6$ of $L_3(4)$ and
$\ov{P}$ is a non-trivial $3$-subgroup of $\ov{K}$, one sees that
$C_{\ov{K}}(\ov{P})=\ov{S}$. Hence $kx^{-1}\in S$ and $k\in N_M(S)$. This shows
$\L_\Delta(M)=N_M(S)$. By Alperin's fusion theorem, we have
$\F_S(M)=\F_S(\L_\Delta(M))=\F_S(N_M(S))$. 
\end{proof}

\begin{lem}\label{L:NLZgroup}
Assume Hypothesis~\ref{H:extraspecialp3}. If $(\L,\Delta,S)$ is a centric
linking locality over $\F$, then $N_\L(Z) = N_\L(S)$. In particular, $N_\L(Z)$
is a group which is a model for $N_\F(S)$. 
\end{lem}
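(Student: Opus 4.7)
The inclusion $N_\L(S) \subseteq N_\L(Z)$ is immediate since $Z$ is characteristic in $S$, so the plan is to establish the reverse. Given $g \in N_\L(Z)$, I will set $P := S_g$; by Lemma~\ref{L:LocalitiesProp}(d) we have $P \in \Delta = \F^c$, and since every $\F$-centric subgroup of $S$ has order at least $p^2$, $|P| \geq p^2$. The case $P = S$ is immediate: $S^g \leq S$ together with the bijectivity of $c_g$ on $S_g$ forces $S^g = S$, so $g \in N_\L(S)$. The remaining case $|P| = p^2$ is the substantive one.

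In this case, the plan is to produce an element $\tilde g \in N_\L(S)$ inducing the same conjugation as $g$ on $P$, and then use the defining property $C_\L(P) \leq P$ of a centric linking locality to conclude that $g$ and $\tilde g$ differ by an element of $S$. Since $Z \leq P$ and $g$ normalizes $Z$, the morphism $c_g|_P$ lies in $N_\F(Z)$; Lemma~\ref{L:rvext} gives $N_\F(Z) = N_\F(S)$ (with the analogous $D_8$-statement for $p=2$ verified directly from the $\F_S(A_6)$ structure), so $c_g|_P$ extends to some $\phi \in \Aut_\F(S)$. Lifting $\phi$ via Lemma~\ref{L:NFPNLP}(a) yields $\tilde g \in \L$ with $S \leq S_{\tilde g}$, which forces $\tilde g \in N_\L(S)$.

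Now, since $c_g|_P = c_{\tilde g}|_P$ and in particular $P^g = P^{\tilde g}$, axiom (L2) applied via $P, P^g, P$ shows $(\tilde g, g^{-1}) \in \D$. Setting $h := \Pi(\tilde g, g^{-1})$, Lemma~\ref{L:LocalitiesProp}(c) gives $c_h|_P = \id_P$, so $h \in C_\L(P) \leq P \leq S$. Rearranging the partial product using that $h \in S \subseteq N_\L(S)$ and $\tilde g \in N_\L(S)$ together with the partial group axioms yields $g = h^{-1}\tilde g$ as an element of the group $N_\L(S)$, and hence $g \in N_\L(S)$. The ``in particular'' statement then follows since $N_\L(S)$ is a group by Lemma~\ref{L:LocalitiesProp}(a), $S$ is a normal Sylow $p$-subgroup with $C_{N_\L(S)}(S) = Z$ by the centric linking property, and $\F_S(N_\L(S)) = N_\F(S)$ by Lemma~\ref{L:NFPNLP}(b).

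The main technical obstacle will be the rearrangement step inside the partial group: writing $g$ explicitly as a product inside $N_\L(S)$ starting from $\Pi(\tilde g, g^{-1}) = h$. This proceeds by first verifying $(\tilde g, g^{-1}, g) \in \D$ via (L2) using the chain $P, P^g, P, P^g$, then applying the substitution axiom $\Pi(u\circ v\circ w) = \Pi(u\circ \Pi(v)\circ w)$ to pass between $(\tilde g, g^{-1}, g)$ and $(h, g)$, and once more between $(h^{-1}, h, g)$ and $(h^{-1}, \tilde g)$. These manipulations are routine but require keeping track of the objects in $\Delta$ witnessing membership in $\D$.
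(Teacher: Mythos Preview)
Your proof is correct, but it takes a different route from the paper's. The paper argues at a higher level: since $N_\F(Z)=N_\F(S)$ by Lemma~\ref{L:rvext}, $Z$ is a fully normalized subgroup whose every proper overgroup lies in $\Delta$ and with $O_p(N_\F(Z))=S\in\Delta$; hence \cite[Lemma~7.1]{Henke2019} gives directly that $N_\L(Z)$ is a subgroup of $\L$ which is a model for $N_\F(Z)=N_\F(S)$. Then $N_\L(S)\subseteq N_\L(Z)$ is also a model for $N_\F(S)$ by Lemma~\ref{L:NFPNLP}(b), and the uniqueness of models (from \cite[Theorem~III.5.10]{AschbacherKessarOliver2011}) forces these two groups to have the same order and hence to coincide.

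Your approach instead performs a direct element chase in the partial group, exploiting the defining property $C_\L(P)\leq P$ of a centric linking locality to pin $g$ down as $h^{-1}\tilde g$ with $h\in P$. This is more hands-on and entirely self-contained within the paper (modulo Lemma~\ref{L:rvext}), avoiding the appeal to the external result \cite[Lemma~7.1]{Henke2019} and the model theorem. The paper's argument is shorter and more conceptual, but yours has the virtue of being elementary and making transparent exactly where the linking-locality condition is used. Both routes hinge on the same key fusion-theoretic input, namely $N_\F(Z)=N_\F(S)$.
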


\begin{proof}
By Lemma~\ref{L:rvext}, we have $N_\F(Z)=N_\F(S)$. So $Z\unlhd S$ is a fully
$\F$-normalized subgroup such that every proper overgroup of $Z$ is in $\Delta$
and $O_p(N_\F(Z))=S\in\Delta$. Hence, by \cite[Lemma~7.1]{Henke2019},
$N_\L(Z)$ is a subgroup of $\L$ which is a model for $N_\F(Z)=N_\F(S)$. Since
$N_\L(S)\subseteq N_\L(Z)$ is by Lemma~\ref{L:NFPNLP}(b) a model for $N_\F(S)$,
and a model for a constrained fusion system is by
\cite[Theorem~III.5.10]{AschbacherKessarOliver2011} unique up to isomorphism,
it follows that $N_\L(Z)=N_\L(S)$.  
\end{proof}

We are now in a position to complete the proof of Theorem~\ref{T:esclass}.

\begin{proof}[Proof of Theorem~\ref{T:esclass}] 
Assume Hypothesis~\ref{H:extraspecialp3}. By Lemma~\ref{L:MDetermine}, for
every punctured group $(\L^+,\Delta^+,S)$ over $\F$ which restricts to a
centric linking locality, one of the cases (a)-(c) of Theorem~\ref{T:esclass}
holds. It remains to show that each of these cases actually occurs in an
example and that moreover the isomorphism type of $N_{\L^+}(Z)$ determines
$(\L^+,\Delta^+,S)$ uniquely up to a rigid isomorphism. 

By Lemma~\ref{L:rvext}, we have $N_\F(Z)=N_\F(S)$ and $\F^s$ is the set of
non-trivial subgroups of $S$. Hence, the subcentric linking locality
$(\L^s,\F^s,S)$ over $\F$ is always a punctured group over $S$. Moreover, it
follows from Lemma~\ref{L:NFPNLP}(b) that $N_{\L^s}(Z)$ is a model for
$N_\F(Z)=N_\F(S)$ and thus $S$ is normal in $N_{\L^s}(Z)$ by
\cite[Theorem~III.5.10]{AschbacherKessarOliver2011}. So case (a) of
Theorem~\ref{T:esclass} occurs in an example. Moreover, if $(\L^*,\Delta^+,S)$
is a punctured group such that $\L^*|_{\Delta}$ is a centric linking locality
and $N_{\L^*}(Z)\cong N_{\L^s}(Z)$, then $N_{\L^*}(Z)$ has a normal Sylow
$p$-subgroup and is thus by Lemma~\ref{L:MDetermine} a subcentric linking
locality. Hence, by Theorem~\ref{T:subcentric}, $(\L^*,\Delta^+,S)$ is rigidly
isomorphic to $(\L^s,\F^s,S)$. 

We are now reduced to the case that $p=3$ and we are looking at punctured
groups in which the normalizer of $Z$ does not have a normal Sylow
$3$-subgroup. So assume now $p=3$.  By
Lemma~\ref{L:extraspecialFusList}, $\F$ is the $3$-fusion system of the
Tits group or of $J_4$. Let $M$ always be a finite group containing $S$ as a
Sylow $3$-subgroup and assume that one of the following holds: 
\begin{itemize} \item[(b')] $\F$ is the $3$-fusion system of the Tits group ${
}^2\!F_4(2)'$ and $M \cong 3S_6$; or 
\item[(c')] $\F$ is the $3$-fusion system of $J_4$, and
$M \cong 3\#\Aut(A_6)$ or an extension of $3L_3(4)$ by a field or graph
automorphism.  
\end{itemize} 
In either case, one checks that $C_M(S)\leq S$.  Moreover, if (b') holds, then
$\Out_\F(S)\cong D_8$ and $N_M(S)\cong 3_+^{1+2}:D_8$. As $\Out(S)\cong
GL_2(3)$ has Sylow $2$-subgroups isomorphic to $SD_{16}$ and moreover,
$SD_{16}$ has a unique subgroup isomorphic to $D_8$, it follows that
$\Out_M(S)$ and $\Out_\F(S)$ are conjugate in $\Out(S)$.  Similarly, if (c')
holds, then $\Out_\F(S)\cong SD_{16}$ and $\Out_M(S)$ are both Sylow
$2$-sugroups of $\Out(S)$ and thus conjugate in $\Out(S)$. Hence, $N_M(S)$ is
always isomorphic to a model for $N_\F(S)$ and, replacing $M$ by a suitable
isomorphic group, we can and will always assume that $N_M(S)$ is a model for
$N_\F(S)$. We have then in particular that $N_\F(S)=\F_S(N_M(S))$. 

Pick now a centric linking locality $(\L,\Delta,S)$ over $S$. By
Lemma~\ref{L:NLZgroup}, $N_\L(Z)$ is a model for $N_\F(S)$. Hence, by the model
theorem \cite[Theorem~III.5.10(c)]{AschbacherKessarOliver2011}, there exists a
group isomorphism $\lambda\colon N_\L(Z)\rightarrow N_M(S)$ which restricts to
the identity on $S$. By Lemma~\ref{L:GetL+Help}, we have $N_M(S)=\L_\Delta(M)$
and $\F_S(M)=\F_S(N_M(S))=N_\F(S)=N_\F(Z)$. Note that $N_M(S)$ and
$\L_\Delta(M)$ are actually equal as partial groups and the group isomorphism
$\lambda$ can be interpreted as a rigid isomorphism from $N_\L(Z)$ to
$\L_\Delta(M)$. So Hypothesis~5.3 in \cite{Chermak2013} holds with $Z$ in place
of $T$. Since $\Delta=\F^c$ is the set of all subgroups of $S$ of order at
least $3^2$ and as all subgroups of $S$ of order $3$ are $\F$-conjugate, the
set $\Delta^+$ of non-identity subgroups of $S$ equals $\Delta\cup Z^\F$. So by
\cite[Theorem~5.14]{Chermak2013}, there exists a punctured group
$(\L^+(\lambda),\Delta^+,S)$ over $\F$ with $N_{\L^+(\lambda)}(Z)\cong M$. Thus
we have shown that all the cases listed in (a)-(c) of Theorem~\ref{T:esclass}
occur in an example.

Let now $(\L^*,\Delta^+,S)$ be any punctured group over $\F$ such that
$\L':=\L^*|_{\Delta}$ is a centric linking locality and $N_{\L^*}(Z)\cong M$.
Pick a group isomorphism $\phi\colon M\rightarrow M^*:=N_{\L^*}(Z)$ such that
$S^\phi=S$. Then $\phi|_S$ is an automorphism of $S$ with
$(\phi|_S)^{-1}\Aut_M(S)\phi|_S=\Aut_{M^*}(S)$. Recall that $\F_S(M)=N_\F(S)$,
Moreover, by Lemma~\ref{L:NFPNLP}(b),  we have $\F_S(M^*)=N_\F(Z)=N_\F(S)$.
Hence, $\Aut_M(S)=\Aut_\F(S)=\Aut_{M^*}(S)$ and $\phi|_S\in
N_{\Aut(S)}(\Aut_\F(S))$. So by Lemma~\ref{L:MDetermine}, there exists
$\psi\in\Aut(M)$ such that $\psi|_S=\phi|_S$. Then $\mu:=\psi^{-1}\phi$ is an
isomorphism from $M$ to $M^*=N_{\L^*}(Z)$ which restricts to the identity on
$S=N_S(Z)$. Moreover, by Theorem~\ref{T:centric}, there exists a rigid
isomorphism $\beta\colon \L\rightarrow \L'$. Therefore by
\cite[Theorem~5.15(a)]{Chermak2013}, there exists a rigid isomorphism from
$(\L^+(\lambda),\Delta^+,S)$ to $(\L^*,\Delta^+,S)$. This shows that a
punctured group $(\L^+,\Delta^+,S)$ over $\F$, which restricts to a centric
linking locality, is up to a rigid isomorphism uniquely determined by the
isomorphism type of $N_{\L^+}(Z)$. 
\end{proof}

\begin{proof}[Proof of Theorem~\ref{T:p=3interesting}]
Assume Hypothesis~\ref{H:extraspecialp3}. If $p\neq 3$, then it follows from
Lemma~\ref{L:p=3interesting} that $\L^+/O_{p^\prime}(\L^+)$ is a subcentric
linking locality for every every punctured group $(\L^+,\Delta^+,S)$ over $S$.
On the other hand, if $p=3$, then Theorem~\ref{T:esclass} together with
Lemma~\ref{L:extraspecialFusList} gives the existence of a punctured group
$(\L^+,\Delta^+,S)$ over $\F$ such that $O_{p^\prime}(\L^+)=1$ and
$N_{\L^+}(Z)$ is not of characteristic $p$, i.e.  such that
$\L^+/O_{p^\prime}(\L^+)$ is not a subcentric linking locality.  
\end{proof}

\appendix

\section{Notation and background on groups of Lie type}\label{S:lie}

We record here some generalities on algebraic groups and finite groups of Lie
type which are needed in Section~\ref{S:sol}.  Our main references are
\cite{Carter1972}, \cite{GLS3}, and \cite{BrotoMollerOliver2019}, since these
references contain proofs for all of the background lemmas we need. 

Fix a prime $p$ and a semisimple algebraic group $\ol G$ over $\ol{\FF}_p$. Let
$\ol T$ be a maximal torus of $\ol G$, $W = N_{\ol G}(\ol T)/\ol T$ the Weyl
group, and let $X(\ol T) = \Hom(\ol T, \ol{\FF}_p^\times)$ be the character
group.  Let $\ol X_\alpha = \{x_\alpha(\lambda) \mid \lambda \in \ol{\FF}_p\}$
denote a root subgroup, namely a closed $\ol T$-invariant subgroup isomorphic
$\ol{\FF}_p$.  The root subgroups are indexed by the roots of $\ol T$, the
characters $\alpha \in X(\ol T)$ with $x_{\alpha}(\lambda)^t =
x_{\alpha}(\alpha(t)\lambda)$ for each $t \in \ol T$.  The character group
$X(\ol T)$ is written additively: for each $\alpha, \beta \in X(\ol T)$ and
each $t \in \ol T$, we write $(\alpha+\beta)(t) = \alpha(t)\beta(t)$. For each
$n \in N_{\ol G}(\ol T)$, $\alpha \in X(\ol T)$, and $t \in \ol T$ we write $({
}^n\alpha)(t) = \alpha(t^n)$ for the induced action of $N_{\ol G}(\ol T)$
action on $X(\ol T)$. 

Let $\Sigma(\ol{T})$ be the set of $\ol T$-roots $\alpha \in X(\ol{T})$, and
let $V = \RR \otimes_{\ZZ} X(\ol T)$ be the associated real inner product space
with $W$-invariant inner product $( , )$.  We regard $X(\ol T)$ as a subset of
$V$, and write $w_\alpha \in W$ for the reflection in the hyperplane
$\alpha^\perp$.

For each root $\alpha \in \Sigma(\ol T)$ and each $\lambda \in
\ol{\FF}_p^\times$, let $n_{\alpha}(\lambda), h_\alpha(\lambda) \in \gen{\ol
X_\alpha, \ol X_{-\alpha}}$ be the images of the elements
$[\begin{smallmatrix}0&-\lambda^{-1}\\\lambda&0\end{smallmatrix}]$,
$[\begin{smallmatrix}\lambda&0\\0&\lambda^{-1}\end{smallmatrix}]$ under the
homomorphism $SL_2(\ol{\FF}_p) \to G$ which sends
$[\begin{smallmatrix}1&0\\u&1\end{smallmatrix}]$ to $x_{\alpha}(u)$ and
$[\begin{smallmatrix}1&v\\0&1\end{smallmatrix}]$ to $x_{-\alpha}(v)$.  Thus
\begin{eqnarray}
\label{E:nalphahalpha}
n_\alpha(\lambda) = x_{\alpha}(\lambda)x_{-\alpha}(-\lambda^{-1})x_{\alpha}(\lambda) \quad \text{ and } \quad
h_{\alpha}(\lambda) = n_{\alpha}(1)^{-1}n_{\alpha}(\lambda),
\end{eqnarray}
and $n_\alpha(1)$ represents $w_\alpha$ for each $\alpha \in \Sigma$.  We
assume throughout that parametrizations of the root groups have been chosen so
that the Chevalley relations of \cite[1.12.1]{GLS3} hold. 

Although $\Sigma(\ol T)$ is defined in terms of characters of the maximal torus
$\ol T$, it will be convenient to identify $\Sigma(\ol T)$ with an abstract
root system $\Sigma$ inside some standard Euclidean space $\RR^{l}$, $( , )$,
via a $W$-equivariant bijection which preserves sums of roots
\cite[1.9.5]{GLS3}. We'll write also $V$ for this Euclidean space. The symbol
$\Pi$ denotes a fixed but arbitrary base of $\Sigma$.

The maps $h_\beta \colon \ol\FF_p^{\times} \to \ol T$, defined above for each
$\beta \in \Sigma$, are algebraic homomorphisms lying in the group of
cocharacters $X^\vee(\ol T) := \Hom(\ol\FF_p^\times, \ol T)$. Composition
induces a $W$-invariant perfect pairing $X(\ol T) \otimes_{\ZZ} X^\vee(\ol T)
\to \ZZ$ defined by $\alpha \otimes h \mapsto \gen{\alpha,h}$, where
$\gen{\alpha,h}$ is the unique integer such that $\alpha(h(\lambda)) =
\lambda^{\gen{\alpha,h}}$ for each $\lambda \in \ol\FF_p^\times$. Since
$\Sigma$ contains a basis of $V$, we can identify $V^*$ with $\RR \otimes_{\ZZ}
X^{\vee}(\ol T)$, and view $X^{\vee}(\ol T) \subseteq V^*$ via this 
pairing. Under the identification of $V$ with $V^*$ via $v \mapsto (-,v)$,
for each $\beta \in \Sigma$ there is $\beta^\vee \in V$ such that $(-,
\beta^\vee) = \gen{-, h_\beta}$ in $V^*$, namely the unique element such that
$(\beta,\beta^\vee) = 2$ and such that $w_\beta$ is reflection in the
hyperplane $\ker((-,\beta^\vee))$.  Thus, when viewed in $V$ in this way (as
opposed to in the dual space $V^*$),
$\beta^\vee = 2\beta/(\beta,\beta)$ is the abstract coroot corresponding to
$\beta$. Write $\Sigma^\vee = \{\beta^\vee \mid \beta \in \Sigma\} \subseteq
V$ for the dual root system of $\Sigma$. 

If we set $\gen{\alpha,\beta} = (\alpha,\beta^\vee) = 2(\alpha,\beta)/(\beta,
\beta)$ for each pair of roots $\alpha, \beta \in \Sigma$, then 
\begin{eqnarray}
\label{E:pairing}
\gen{\alpha,\beta} = \gen{\alpha, h_{\beta}}
\end{eqnarray}
where the first is computed in $\Sigma$, and the second is the pairing
discussed above. Equivalently,
\begin{eqnarray}
\label{E:pairingequiv}
x_{\alpha}(\mu)^{h_{\beta}(\lambda)} = x_{\alpha}(\lambda^{\gen{\alpha,\beta}}\mu)
\end{eqnarray}
for each $\alpha,\beta \in \Sigma$, each $\mu \in \ol\FF_p$, and each $\lambda
\in \ol\FF_p^\times$. 

Additional Chevalley relations we need are
\begin{align}
\label{E:xn}
x_{\alpha}(\lambda)^{n_{\beta}(1)} &= x_{w_{\beta}(\alpha)}(c_{\alpha,\beta}\lambda),\\
\label{E:hn}
h_{\alpha}(\lambda)^{n_{\beta}(1)} &= h_{w_{\beta}(\alpha)}(\lambda),\\
\label{E:nn}
n_{\alpha}(\lambda)^{n_{\beta}(1)} &= n_{w_{\beta}(\alpha)}(c_{\alpha,\beta}\lambda),\\
\label{E:n2}
n_{\alpha}(1)^2 &= h_{\alpha}(-1).
\end{align}
where 
\[
w_\beta(\alpha) = \alpha-\gen{\alpha,\beta}\beta,
\]
is the usual reflection in the hyperplane $\beta^\perp$, and where the
$c_{\alpha,\beta} \in \{\pm 1\}$, in the notation of
\cite[Theorem~1.12.1]{GLS3}, are certain signs which depend on the choice of
the Chevalley generators. This notation is related to the signs
$\eta_{\alpha,\beta}$ in \cite[Chapter~6]{Carter1972} by $c_{\alpha,\beta} =
\eta_{\beta,\alpha}$. 

Important tools for determining the signs $c_{\alpha,\beta}$ in certain cases
are proved in \cite[Propositions~6.4.2, 6.4.3]{Carter1972}, and we record
several of those results here.

\begin{lem}\label{L:signs}
Let $\alpha, \beta \in \Sigma$ be linearly independent roots. 
\begin{itemize}
\item[(1)] $c_{\alpha,\alpha} = -1$ and $c_{-\alpha, \alpha} = -1$. 
\item[(2)] $c_{-\alpha, \beta} = c_{\alpha, \beta}$. 
\item[(3)] $c_{\alpha,\beta}c_{w_\beta(\alpha), \beta} = (-1)^{\gen{\alpha,\beta}}$.
\item[(4)] If the $\beta$-root string through $\alpha$ is of the form 
\[
\alpha-s\beta, \dots, \alpha, \dots, \alpha+s\beta
\]
for some $s \geq 0$, that is, if $\alpha$ and $\beta$ are orthogonal, then
$c_{\alpha,\beta} = (-1)^s$. 
\end{itemize}
\end{lem}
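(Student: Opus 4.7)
The plan is to prove each part by direct appeal to the Chevalley relations \eqref{E:nalphahalpha}--\eqref{E:n2} together with the pairing formula \eqref{E:pairingequiv}, reducing to rank-one computations in $SL_2(\ol\FF_p)$ when possible.

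For part (1), I would work in the rank-one subgroup $\gen{\ol X_\alpha, \ol X_{-\alpha}}$, which is the image of $SL_2(\ol\FF_p)$ under the homomorphism sending $\left[\begin{smallmatrix}1&0\\u&1\end{smallmatrix}\right]$ to $x_\alpha(u)$ and $\left[\begin{smallmatrix}1&v\\0&1\end{smallmatrix}\right]$ to $x_{-\alpha}(v)$, so that $n_\alpha(1)$ becomes $\left[\begin{smallmatrix}0&-1\\1&0\end{smallmatrix}\right]$. A one-line matrix conjugation gives $x_\alpha(u)^{n_\alpha(1)} = x_{-\alpha}(-u)$ and $x_{-\alpha}(v)^{n_\alpha(1)} = x_\alpha(-v)$, and comparison with \eqref{E:xn} (using $w_\alpha(\alpha) = -\alpha$) yields $c_{\alpha,\alpha} = c_{-\alpha,\alpha} = -1$.

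For part (3), I would apply \eqref{E:xn} twice to conjugate $x_\alpha(\lambda)$ by $n_\beta(1)^2$, obtaining
\[
x_\alpha(\lambda)^{n_\beta(1)^2} = x_{w_\beta^2(\alpha)}\bigl(c_{w_\beta(\alpha),\beta}\,c_{\alpha,\beta}\,\lambda\bigr) = x_\alpha\bigl(c_{w_\beta(\alpha),\beta}\,c_{\alpha,\beta}\,\lambda\bigr).
\]
On the other hand, \eqref{E:n2} gives $n_\beta(1)^2 = h_\beta(-1)$, and \eqref{E:pairingequiv} then yields $x_\alpha(\lambda)^{h_\beta(-1)} = x_\alpha((-1)^{\langle\alpha,\beta\rangle}\lambda)$. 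Equating the two gives (3). For part (2), I would use the fact from (1) that conjugation by $n_\alpha(1)$ interchanges $\ol X_\alpha$ and $\ol X_{-\alpha}$ up to a sign to relate the $n_\beta(1)$-conjugation formulas for $x_\alpha$ and $x_{-\alpha}$; tracking signs through the twofold conjugation in $N_{\ol G}(\ol T)$ shows that the signs introduced by passing through $n_\alpha(1)$ cancel from both sides of the resulting identity, forcing $c_{-\alpha,\beta} = c_{\alpha,\beta}$.

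The hard part will be (4). When $(\alpha,\beta)=0$ we have $\langle\alpha,\beta\rangle=0$ and $w_\beta(\alpha)=\alpha$, so (3) only tells us $c_{\alpha,\beta}^2=1$, consistent with $c_{\alpha,\beta}\in\{\pm 1\}$ but not pinning down the sign. I would argue by induction on the half-length $s$ of the $\beta$-root string through $\alpha$. The base case $s=0$ means $\alpha\pm\beta\notin\Sigma$; here the Chevalley commutator formula \cite[1.12.1(b)]{GLS3} forces $[x_\beta(u),x_\alpha(t)] = 1$, and conjugating $x_\alpha(t)$ by $n_\beta(1) = x_\beta(1)x_{-\beta}(-1)x_\beta(1)$ and using that the three factors each commute with $x_\alpha(t)$ (for which one also needs $-\beta$ at distance $\le 1$ from $\alpha$, which again follows from $s=0$ and symmetry of root strings) shows $x_\alpha(t)^{n_\beta(1)} = x_\alpha(t)$, yielding $c_{\alpha,\beta}=1=(-1)^0$. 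For the inductive step I would apply the commutator formula to $[x_\alpha(t), x_\beta(u)]$, conjugate both sides by $n_\beta(1)$, and read off a recursion expressing $c_{\alpha+k\beta,\beta}$ in terms of $c_{\alpha+(k-1)\beta,\beta}$ times a structure constant from the commutator formula; the main obstacle is showing that in an orthogonal pair with symmetric root string the structure constants contribute exactly a factor of $-1$ at each step, which is the delicate bookkeeping underlying Carter's \cite[Proposition~6.4.3]{Carter1972}.
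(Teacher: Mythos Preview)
Your arguments for (1) and (3) are correct and are essentially what Carter does in \cite[Proposition~6.4.3]{Carter1972}; the paper simply cites that reference for (1)--(3) rather than spelling out the computations. Your sketch for (2) is vague, but since the paper also defers to Carter there, this is not a point of divergence.

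The real difference is in (4). Your plan is to induct on the half-length $s$, with a recursion along the root string obtained by conjugating the commutator formula. This is awkward: $s$ is not a free parameter you can decrement for a fixed orthogonal pair $(\alpha,\beta)$, and moving to $\alpha+\beta$ destroys orthogonality, so the inductive structure is not really there. What you would end up doing is reproving Carter's formula from scratch. The paper instead extracts from the \emph{proof} of \cite[Proposition~6.4.3]{Carter1972} the explicit expression
\[
c_{\alpha,\beta} \;=\; (-1)^{s}\,\frac{\epsilon_0\cdots\epsilon_{s-1}}{\epsilon_0\cdots\epsilon_{r-1}},
\]
valid whenever the $\beta$-string through $\alpha$ runs from $\alpha-s\beta$ to $\alpha+r\beta$, where the $\epsilon_i\in\{\pm 1\}$ are certain structure-constant signs. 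When $\alpha$ and $\beta$ are orthogonal one has $r-s=\gen{\alpha,\beta}=0$, so the two products are over identical index sets and cancel, giving $c_{\alpha,\beta}=(-1)^s$ in one line. That cancellation is the whole point, and your inductive scheme obscures it; you should replace your approach to (4) with this direct appeal to Carter's formula.
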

\begin{proof}
The first three listed properties are proved in Proposition~6.4.3 of
\cite{Carter1972}. By the proof of that proposition, there are signs
$\epsilon_i \in \{\pm 1\}$ such that $c_{\alpha,\beta} = (-1)^s\frac{\epsilon_0
\cdots \epsilon_{s-1}}{\epsilon_0 \cdots \epsilon_{r-1}}$, whenever the
$\beta$-root string through $\alpha$ is of the form $\alpha-s\beta, \dots,
\alpha, \dots, \alpha+r\beta$. When $\alpha$ and $\beta$ are orthogonal, we
have $r-s = \gen{\alpha,\beta} = 0$, and hence $c_{\alpha,\beta} = (-1)^s$. 
\end{proof}

\begin{lem}
\label{L:liebasic}
The following hold. 
\begin{itemize}
\item[(1)] For each $\alpha, \beta \in \Sigma$, we have 
\[
\alpha(h_\beta(\lambda)) = \lambda^{\gen{\alpha, \beta}}.
\]
\item[(2)] The maximal torus $\ol T$ is generated by the $h_{\alpha}(\lambda)$ for
$\alpha \in \Sigma$ and $\lambda \in \ol\FF_{p}^\times$. If $\ol G$ is simply
connected, and $\lambda_{\alpha} \in \FF_p^\times$ are such that $\prod_{\alpha
\in \Pi} h_\alpha(\lambda_\alpha) = 1$, then $\lambda_\alpha = 1$ for all
$\alpha \in \Pi$. Thus,
\[
\ol T = \prod_{\alpha \in \Pi} h_{\alpha}(\ol \FF_p^{\times}), 
\]
and $h_\alpha$ is injective for each $\alpha$. 
\item[(3)] If $\beta, \alpha_1,\dots, \alpha_k \in \Sigma$ and $n_1,\dots,n_k \in
\ZZ$ are such that $\beta^\vee = n_1\alpha_1^\vee + \cdots + n_k\alpha_k^\vee$,
then
\[
h_{\beta}(\lambda) = h_{\alpha_1}(\lambda^{n_1})\cdots h_{\alpha_k}(\lambda^{n_k}). 
\]
\item[(4)] Define 
\[
\Phi \colon \ZZ\Sigma^{\vee} \times \ol\FF_p^{\times} \longrightarrow \ol T \quad \text{ by } \quad \Phi(\alpha^\vee, \lambda) = h_{\alpha}(\lambda).
\]
Then $\Phi$ is bilinear and $\ZZ[W]$-equivariant. It induces a surjective
$\ZZ[W]$-module homomorphism $\ZZ\Sigma^{\vee} \otimes_{\ZZ}
\ol\FF_{p}^{\times} \to \ol T$ which is an isomorphism if $\ol G$ is of
universal type. 
\end{itemize}
\end{lem}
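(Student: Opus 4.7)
The plan is to prove the four parts in order. For part (1), I would combine the defining property of a root group, $x_\alpha(\mu)^t = x_\alpha(\alpha(t)\mu)$ for $t \in \ol T$, with the Chevalley relation \eqref{E:pairingequiv}. Setting $t = h_\beta(\lambda)$ gives $x_\alpha(\alpha(h_\beta(\lambda))\mu) = x_\alpha(\lambda^{\gen{\alpha,\beta}}\mu)$ for every $\mu \in \ol\FF_p$, and the injectivity of $\mu \mapsto x_\alpha(\mu)$ yields $\alpha(h_\beta(\lambda)) = \lambda^{\gen{\alpha,\beta}}$, as required.

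For part (3), I would check the equality character-by-character on $\ol T$. Fix $\gamma \in X(\ol T)$; by part (1), $\gamma(h_\beta(\lambda)) = \lambda^{\gen{\gamma,\beta}}$ and $\gamma\bigl(\prod_i h_{\alpha_i}(\lambda^{n_i})\bigr) = \lambda^{\sum_i n_i \gen{\gamma,\alpha_i}}$. Since $\gen{\gamma,\alpha} = (\gamma,\alpha^\vee)$ is linear in the second slot, the hypothesis $\beta^\vee = \sum_i n_i \alpha_i^\vee$ makes these two exponents equal. Because characters separate points of $\ol T$, this forces the two elements to agree.

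For part (2), I would invoke the standard fact (e.g.\ \cite[Theorem~1.10.1]{GLS3}) that a semisimple algebraic group is generated by its root groups, and a standard commutator argument using $[\ol X_\alpha, \ol X_{-\alpha}] \subseteq \gen{h_\alpha(\lambda)}$ to see that the $h_\alpha(\ol\FF_p^\times)$ generate $\ol T$. By part (3) each such $h_\alpha$ lies in the product of the $h_\gamma(\ol\FF_p^\times)$ for $\gamma \in \Pi$, giving the claimed factorization. The uniqueness assertion uses the simply connected hypothesis: in that case $X^\vee(\ol T) = \ZZ\Sigma^\vee$, so $\Pi^\vee$ is a $\ZZ$-basis of $X^\vee(\ol T)$ with dual basis in $X(\ol T)$ given by the fundamental weights $\omega_\alpha$, satisfying $\gen{\omega_\beta,\alpha^\vee} = \delta_{\alpha,\beta}$. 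Applying $\omega_\beta$ to a relation $\prod_{\alpha \in \Pi} h_\alpha(\lambda_\alpha) = 1$ via part (1) yields $\lambda_\beta = 1$; the injectivity of each individual $h_\alpha$ is then a special case.

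For part (4), bilinearity of $\Phi$ is the combination of (3) (additivity in the first argument) and the homomorphism property of each $h_\alpha$ (additivity in the second argument). The $\ZZ[W]$-equivariance is immediate from \eqref{E:hn}, since conjugation by a representative of $w_\beta$ sends $h_\alpha(\lambda)$ to $h_{w_\beta(\alpha)}(\lambda) = \Phi(w_\beta \cdot \alpha^\vee, \lambda)$. Surjectivity of the induced map $\ZZ\Sigma^\vee \otimes_\ZZ \ol\FF_p^\times \to \ol T$ follows from part (2); in the universal case $\ZZ\Sigma^\vee = X^\vee(\ol T)$, and the map coincides with the standard evaluation isomorphism $X^\vee(\ol T) \otimes_\ZZ \ol\FF_p^\times \xrightarrow{\cong} \ol T$ available for any torus. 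The main obstacle, such as it is, is organizing the bookkeeping so that the simply connected assumption is invoked cleanly in exactly the two places it is needed (the uniqueness half of (2) and the isomorphism clause of (4)); no delicate computation is required beyond the Chevalley framework already set up in the appendix.
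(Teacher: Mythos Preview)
Your proposal is correct. The paper's own proof consists entirely of citations to \cite{GLS3}, \cite{Carter1972}, and \cite{BrotoMollerOliver2019}, so you have in fact supplied more detail than the paper does; your arguments are the standard ones underlying those references. One small remark: your derivation of (1) from \eqref{E:pairingequiv} is formally just the equivalence between \eqref{E:pairing} and \eqref{E:pairingequiv} that the paper already asserts in the text preceding the lemma, so the real content of (1) (that conjugation by $h_\beta(\lambda)$ scales $\ol X_\alpha$ by $\lambda^{\gen{\alpha,\beta}}$) is being taken as part of the Chevalley framework rather than proved --- which is also exactly what the paper does.
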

\begin{proof}
(1) is the statement in \eqref{E:pairing} and is part of
\cite[Remark~1.9.6]{GLS3}. We refer to
\cite[Lemma~2.4(c)]{BrotoMollerOliver2019} for a proof, which is based on the
treatment in Carter \cite[pp.97-100]{Carter1972}. Part (2) is proved in
\cite[Lemma~2.4(b)]{BrotoMollerOliver2019}, and part (3) is
\cite[Lemma~2.4(d)]{BrotoMollerOliver2019}. Finally, part (4) is proved in
\cite[Lemma~2.6]{BrotoMollerOliver2019}. 
\end{proof}

\begin{prop}\label{P:subtorus-cent}
For each subgroup $X \leq \ol T$, 
\[
C_{\ol G}(X) = C_{\ol G}(X)^\circ C_{N_{\ol G}(\ol T)}(X). 
\]
The connected component $C_{\ol G}(X)^\circ$ is generated by $\ol T$ and the
root groups $\ol X_{\alpha}$ for those roots $\alpha \in \Sigma$ whose kernel
contains $X$. In particular, if $X = \gen{h_{\beta}(\lambda)}$ for some $\beta
\in \Sigma$ and some $\lambda \in \ol \FF_p^\times$ having multiplicative order
$r$, then 
\[
C_{\ol G}(X)^{\circ} = \gen{\ol T, \ol X_{\alpha} \mid \alpha \in \Sigma,\,\, r \text{ divides } \gen{\alpha,\beta}}. 
\]
\end{prop}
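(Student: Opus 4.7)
My plan is to first reduce to identifying the connected component and then deal with the components using conjugacy of maximal tori. The initial observation is that $\ol T \leq C_{\ol G}(X)$ because $X \leq \ol T$ and $\ol T$ is commutative, and in fact $\ol T$ is a maximal torus of $C_{\ol G}(X)^\circ$, since a maximal torus of $\ol G$ contained in a subgroup is necessarily maximal in that subgroup. Let $H = \gen{\ol T,\, \ol X_\alpha \mid \alpha \in \Sigma,\, X \leq \ker(\alpha)}$. Clearly $\ol T \leq C_{\ol G}(X)^\circ$ and, since $X$ acts on each $\ol X_\alpha$ via the character $\alpha$, $\ol X_\alpha \leq C_{\ol G}(X)^\circ$ whenever $\alpha$ is trivial on $X$. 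Hence $H$ is a closed connected subgroup of $C_{\ol G}(X)^\circ$.

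To show the reverse inclusion, I would pass to Lie algebras. The root space decomposition reads $\mathfrak g = \mathfrak t \oplus \bigoplus_{\alpha \in \Sigma} \mathfrak g_\alpha$, with $X$ acting trivially on $\mathfrak t$ and by the character $\alpha$ on $\mathfrak g_\alpha$. Since $X$ is contained in the diagonalizable group $\ol T$, its conjugation action on $\ol G$ is semisimple, so the centralizer $C_{\ol G}(X)$ is smooth at the identity and its Lie algebra equals the fixed points $\mathfrak g^X = \mathfrak t \oplus \bigoplus_{\alpha \in \Sigma,\, X \leq \ker(\alpha)} \mathfrak g_\alpha$. But this is exactly $\mathrm{Lie}(H)$, and since $H$ and $C_{\ol G}(X)^\circ$ are closed connected subgroups with the same Lie algebra, they coincide. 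This yields the second assertion.

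For the component decomposition, I would use conjugacy of maximal tori: given $g \in C_{\ol G}(X)$, both $\ol T$ and $\ol T^g$ are maximal tori of the connected group $C_{\ol G}(X)^\circ$, hence there is $h \in C_{\ol G}(X)^\circ$ with $\ol T^{gh} = \ol T$. Then $gh \in N_{\ol G}(\ol T) \cap C_{\ol G}(X) = C_{N_{\ol G}(\ol T)}(X)$, and $g = (gh)\cdot h^{-1}$ gives the required factorization $C_{\ol G}(X) = C_{\ol G}(X)^\circ \cdot C_{N_{\ol G}(\ol T)}(X)$.

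For the final ``in particular'' assertion I would apply Lemma~\ref{L:liebasic}(1): for each $\alpha \in \Sigma$ we have $\alpha(h_\beta(\lambda)) = \lambda^{\gen{\alpha,\beta}}$, and since $\lambda \in \ol\FF_p^\times$ has multiplicative order $r$, the condition $h_\beta(\lambda) \in \ker(\alpha)$ is equivalent to $r \mid \gen{\alpha,\beta}$. The main technical point is the smoothness/Lie algebra identification $\mathrm{Lie}(C_{\ol G}(X)) = \mathfrak g^X$; this is where one uses that $X$ is a subgroup of a torus and therefore acts semisimply, and it could alternatively be bypassed by quoting the classical description of $C_{\ol G}(s)^\circ$ for semisimple $s$ (as in Carter or Steinberg) and intersecting over $s \in X$ together with a comparison of dimensions.
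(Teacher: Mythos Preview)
Your argument is correct. The paper itself does not give a direct proof but simply cites \cite[Proposition~2.5]{BrotoMollerOliver2019} (based on \cite[Lemma~3.5.3]{Carter1985}) for the first two assertions, and then deduces the ``in particular'' clause from Lemma~\ref{L:liebasic}(1) exactly as you do. Your route is thus a self-contained version of what those references contain: the smoothness of the centralizer of a diagonalizable subgroup gives $\mathrm{Lie}(C_{\ol G}(X)) = \mathfrak g^X$, a dimension count identifies $C_{\ol G}(X)^\circ$ with $H$, and conjugacy of maximal tori in the connected centralizer handles the component group. One small remark: the step ``connected with the same Lie algebra implies equal'' is safe here because $H$ is already known to be a closed subgroup of $C_{\ol G}(X)^\circ$, so equality of Lie algebras forces equality of dimensions and hence of the groups; in positive characteristic this would not follow without the containment.
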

\begin{proof}
See \cite[Proposition~2.5]{BrotoMollerOliver2019}, which is based on
\cite[Lemma~3.5.3]{Carter1985}. The referenced result covers all but the last
statement, which then follows from the previous parts and
Lemma~\ref{L:liebasic}(1), given the definition of $r$. 
\end{proof}

\begin{prop}\label{P:faithful}
Let $\ol G$ be a simply connected, simple algebraic group over $\ol{\FF}_p$,
let $\ol T$ be a maximal torus of $\ol G$, and let $T_{r} = \{t \in \ol T \mid
t^r = 1\}$ with $r > 1$ prime to $p$. Then one of the following holds. 
\begin{itemize}
\item[(1)] $C_{\ol G}(T_{r}) = \ol T$ and $N_{\ol G}(T_{r}) = N_{\ol G}(\ol T)$. 
\item[(2)] $r = 2$, $C_{\ol G}(T_{r}) = \ol T\gen{w_0}$ for some element $w_0 \in
N_{\ol G}(\ol T)$ inverting $\ol T$, and $N_{\ol G}(T_{r}) = N_{\ol G}(\ol T)$,
\item[(3)] $r = 2$, and $\ol G = Sp_{2n}(\ol{\FF}_p)$ for some $n \geq 1$. 
\end{itemize}
\end{prop}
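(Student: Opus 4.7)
The plan is to deduce both the centralizer and the normalizer from Proposition~\ref{P:subtorus-cent}, applied with $X = T_r$. That proposition yields
\[
C_{\ol G}(T_r) = C_{\ol G}(T_r)^\circ \cdot C_{N_{\ol G}(\ol T)}(T_r),
\]
and identifies $C_{\ol G}(T_r)^\circ$ as the subgroup generated by $\ol T$ together with those root subgroups $\ol X_\alpha$ for which $\alpha$ is trivial on $T_r$. Since $\ol G$ is simply connected, Lemma~\ref{L:liebasic}(4) gives an isomorphism $\ZZ\Sigma^\vee \otimes \ol{\FF}_p^\times \cong \ol T$, so $T_r \cong \ZZ\Sigma^\vee \otimes \mu_r$ is generated by the elements $h_\beta(\zeta)$ with $\beta \in \Sigma$ and $\zeta$ an $r$-th root of unity. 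Combining with Lemma~\ref{L:liebasic}(1), $\alpha(h_\beta(\zeta)) = \zeta^{\gen{\alpha,\beta}}$, so $\alpha$ is trivial on $T_r$ precisely when $r$ divides $\gen{\alpha, \beta}$ for all $\beta \in \Sigma$.

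For $r \geq 3$, no root $\alpha$ meets this condition, because $\gen{\alpha, \alpha} = 2$ and $r \nmid 2$. Thus $C_{\ol G}(T_r)^\circ = \ol T$. For the component group, the induced action of $W = N_{\ol G}(\ol T)/\ol T$ on $T_r \cong \ZZ\Sigma^\vee / r\ZZ\Sigma^\vee$ is deduced from the faithful action of $W$ on $\ZZ\Sigma^\vee$. By Minkowski's lemma, the principal congruence subgroup of level $r \geq 3$ in $GL(\ZZ\Sigma^\vee)$ is torsion-free, so $W$ injects into $\Aut(T_r)$. Hence $C_{\ol G}(T_r) = \ol T$; since $\ol T$ is characteristic in $C_{\ol G}(T_r)$ and $T_r$ is characteristic in $\ol T$, I conclude $N_{\ol G}(T_r) = N_{\ol G}(\ol T)$, placing us in case (a).

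For $r = 2$, the condition becomes: $\gen{\alpha, \beta}$ is even for every $\beta \in \Sigma$. A short inspection of the irreducible root systems shows that such an $\alpha$ exists if and only if the type is $C_n$ (for some $n \geq 1$, with the conventions $A_1 = C_1$ and $B_2 = C_2$), where the long roots $\pm 2e_i$ qualify; in every other irreducible type an explicit offending pair is easy to produce (for instance in type $B_n$ with $n \geq 3$, $\gen{e_1 + e_2, e_2 + e_3} = 1$, and similar direct checks handle $A_n$ with $n \geq 2$, $D_n$, $E_6$, $E_7$, $E_8$, $F_4$, $G_2$). When $\ol G \cong Sp_{2n}$ the extra root subgroups strictly enlarge $C_{\ol G}(T_2)^\circ$ beyond $\ol T$, placing us in case (c). Otherwise $C_{\ol G}(T_2)^\circ = \ol T$, and the previous normalizer argument again yields $N_{\ol G}(T_2) = N_{\ol G}(\ol T)$.

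The remaining and main obstacle is to identify the kernel $K$ of $W \to \Aut(T_2)$ in the non-$C_n$ cases. Explicitly $K = \{w \in W : (w - I)\ZZ\Sigma^\vee \subseteq 2\ZZ\Sigma^\vee\}$. When $-1 \in W$, any lift $w_0 \in N_{\ol G}(\ol T)$ inverts $\ol T$, and so acts as $t \mapsto t^{-1} = t$ on $T_2$; hence $-1 \in K$. For the reverse inclusion, any $w \in K$ has the form $I + 2A$ with $A \in \operatorname{End}(\ZZ\Sigma^\vee)$; squaring gives $w^2 \equiv I \pmod{4}$, and Minkowski at level $4$ then forces $w^2 = I$, so $w$ is an involution. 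A type-by-type inspection of which involutions in $W$ act trivially on $\ZZ\Sigma^\vee/2\ZZ\Sigma^\vee$, using the explicit description of the coroot lattice and reflection action in each irreducible type other than $C_n$, will then give $K = \{\pm 1\} \cap W$. If $-1 \in W$ this yields case (b) with $w_0$ a chosen lift; if $-1 \notin W$ (so $\ol G$ is of type $A_n$ with $n \geq 2$, $D_{2n+1}$, or $E_6$) we have $K = 1$ and remain in case (a).
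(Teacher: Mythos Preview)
Your overall skeleton matches the paper's: both start from Proposition~\ref{P:subtorus-cent}, determine which root subgroups lie in $C_{\ol G}(T_r)^\circ$ via Lemma~\ref{L:liebasic}(1), and then analyse the image of $W$ in $\Aut(T_r)$. For $r\geq 3$ your Minkowski argument is exactly what the paper invokes (via \cite[Lemma~2.7]{BrotoMollerOliver2019}), and your normalizer argument is the same (note that for $r=2$ you should phrase it as ``$\ol T = C_{\ol G}(T_2)^\circ$ is characteristic in $C_{\ol G}(T_2)$'' rather than ``$\ol T$ is characteristic in $C_{\ol G}(T_r)$'', since in case~(b) the full centralizer is larger than $\ol T$).

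Where you diverge is in the two classification steps at $r=2$, and in both places the paper replaces your proposed case-by-case check with a uniform argument. For the question of which roots $\alpha$ satisfy $\gen{\alpha,\beta}\equiv 0\pmod 2$ for all $\beta$, the paper observes that the set $\Sigma_2$ of such $\alpha$ is $W$-stable and hence, by transitivity of $W$ on each root length, is empty or contains a full length class; an inspection of the Cartan integers between adjacent simple roots then forces type $C_n$. For the kernel $K=C_W(T_2)$, the paper first shows (via the quadratic action on $T_4\cong\Lambda/4\Lambda$) that $K$ is a $2$-group, then argues that if $K\neq\langle -1_V\rangle$ one can find an involution $b\in K$ with $b\neq -1_V$, whose $\pm 1$-eigenspace decomposition gives a nontrivial direct sum splitting $\Lambda=\Lambda_+\oplus\Lambda_-$ of the coroot lattice; an external lemma (\cite[Lemma~2.8]{BrotoMollerOliver2019}) then says such a splitting exists only in type $C_n$.

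Your route is valid in principle, but as written it is incomplete: the phrase ``a type-by-type inspection \dots\ will then give $K=\{\pm 1\}\cap W$'' is precisely the hard part, and you have not carried it out. Classifying all involutions $w\in W$ with $(w-I)\ZZ\Sigma^\vee\subseteq 2\ZZ\Sigma^\vee$ across $B_n$, $D_n$, $E_7$, $E_8$, $F_4$, $G_2$ is a nontrivial exercise (for instance in $D_n$ one must check that no product of an even number of sign changes, nor any permutation involution, lies in $K$ unless it is $\pm 1$). The paper's eigenspace-decomposition argument sidesteps all of this, so if you want a complete proof without that lemma you will need to actually perform the inspection.
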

\begin{proof}
By Lemma~\ref{L:liebasic}(2) and since $\ol G$ is simply connected, the torus
is direct product of the images of the coroots for fundamental roots:
\begin{eqnarray}
\label{E:dp-coroots}
\ol T = \prod_{\alpha \in \Pi} h_{\alpha}(\ol\FF_p^\times).
\end{eqnarray}
Thus, if $\lambda \in \ol{\FF}_p^\times$ is a fixed element of order $r$, then
$T_{r}$ is the direct product of $\gen{h_{\alpha}(\lambda)}$ as $\alpha$ ranges
over $\Pi$.

We first look at $C_{\ol G}(T_r)^\circ$, using
Proposition~\ref{P:subtorus-cent}.  By Lemma~\ref{L:liebasic}(1), $T_{r}$ is
contained in the kernel of a root $\beta$ if and only if
$\beta(h_{\alpha}(\lambda)) = \lambda^{\gen{\beta,\alpha}} = 1$ for all simple
roots $\alpha$, i.e., if $\gen{\beta,\alpha}$ is divisible by $r$ for each
fundamental root $\alpha$. Let $\Sigma_r$ be the set of all such roots $\beta$.
For each $\alpha \in \Pi$, the reflection $w_{\alpha}$ sends a root $\beta$ to
$\beta-\gen{\beta,\alpha}\alpha$. Hence $\beta \in \Sigma_r$ if and only if
$w_{\alpha}(\beta) \in \Sigma_r$ since $\gen{-,-}$ is linear in the first
component. Since the Weyl group is generated by $w_\alpha$, $\alpha \in \Pi$,
it follows that $\Sigma_r$ is invariant under the Weyl group. By
\cite[Lemma~10.4C]{Humphreys1972}, and since $\ol G$ is simple, $W$ is
transitive on all roots of a given length, and so either $\Sigma_r =
\varnothing$, or $\Sigma_r$ contains all long roots or all short ones.  Thus,
by \cite[Table~1]{Humphreys1972}, we conclude that either $\Sigma_r =
\varnothing$, or  $r = 2$, each root in $\Pi \cap \Sigma_r$ is long, and each
$\alpha \in \Pi$ not orthogonal to $\beta$ is short and has angle $\pi/4$ or
$3\pi/4$ with $\beta$. Now by inspection of the Dynkin diagrams corresponding
to irreducible root systems, we conclude that the latter is possible only if
$\Sigma = A_1 = C_1$, $C_2$, or $C_3$. Thus, either $C_{\ol G}(T_{r})^\circ =
\ol T$ or (3) holds. 

So we may assume that $C_{\ol G}(T_r)^\circ = \ol T$. Now $N_{\ol G}(\ol T)
\leq N_{\ol G}(T_r)$ since $T_{r}$ is characteristic in $T$. As $C_{\ol
G}(T_r)^\circ = \ol T$, also $\ol T$ is normalized by $N_{\ol G}(T_{r})$, so
$N_{\ol G}(\ol T) = N_{\ol G}(T_r)$. For $r \geq 3$, it follows from
\cite[Lemma~2.7]{BrotoMollerOliver2019} that $C_{N_{\ol G}(\ol T)}(T_{r}) = \ol
T$, completing the proof of (1) in this case.

Assume now that $r = 2$ and (1) does not hold.  Let $B := C_{W}(T_2) \leq W =
N_{\ol G}(\ol T)/\ol T$. To complete the proof, we need to show $B =
\gen{-1_V}$ or else (3) holds. Here we argue as in Case 1 of the proof of
\cite[Proposition~5.13]{BrotoMollerOliver2019}. 

Let $\Lambda = \ZZ\Sigma^\vee$ be the lattice of coroots, and fix $\lambda \in
\ol{\FF}_p^\times$ of order $4$. The map $\Phi_\lambda \colon \Lambda \to \ol
T$ defined by $\Phi_{\lambda}(\alpha^\vee) = h_{\alpha}(\lambda)$ is a
$W$-equivariant homomorphism by Lemma~\ref{L:liebasic}(3). Since $\ol G$ is
simply connected, this homomorphism has kernel $4\Lambda$, image $T_4$, and it
identifies $\Lambda/2\Lambda$ with $T_2$, by Lemma~\ref{L:liebasic}(2). 

Since $B$ acts on $T_4$ and centralizes $T_2$, we have $[T_4,B] \leq T_2 \leq
C_{\ol T}(B)$, so $B$ acts quadratically on $T_4$. Since $B$ acts faithfully on
$T_4$ by (1), it follows that $B$ is a $2$-group.  

Assume that $B \neq \gen{-1_V}$.  If $B$ is of $2$-rank $1$ with center
$\gen{-1_V}$ then by assumption there is some $b \in B$ with $b^2 = -1_V$.  In
this case, $b$ endows $V$ with the structure of a complex vector space, and so
$b$ does not centralize $\Lambda/2\Lambda$, a contradiction.  Thus, there is an
involution $b \in B$ which is not $-1_V$. Let $V = V_+ \oplus V_-$ be the
decomposition of $V$ into the sum of the eigenspaces for $b$, and set
$\Lambda_{\pm} = \Lambda \cap V_{\pm}$.  Fix $v \in \Lambda$, and write $v =
v_+ + v_-$ with $v_{\pm} \in V_{\pm}$. Then $2v_{-} = v - v^b = [v,b] \in V_-
\cap 2\Lambda = 2\Lambda_-$. So $v_- \in \Lambda_{-}$, and then $v_+ \in
\Lambda_+$. This shows that $\Lambda = \Lambda_+ \oplus \Lambda_-$ with
$\Lambda_{\pm} \neq 0$. The hypotheses of
\cite[Lemma~2.8]{BrotoMollerOliver2019} thus hold, and so $G =
Sp_{2n}(\ol{\FF}_p)$ for some $n \geq 2$ by that lemma. 
\end{proof}

\bibliographystyle{amsalpha}{}
\bibliography{/home/justin/work/math/research/mybib}

\newcommand{\etalchar}[1]{$^{#1}$}
\def\cprime{$'$}
\providecommand{\bysame}{\leavevmode\hbox to3em{\hrulefill}\thinspace}
\providecommand{\MR}{\relax\ifhmode\unskip\space\fi MR }
% \MRhref is called by the amsart/book/proc definition of \MR.
\providecommand{\MRhref}[2]{%
  \href{http://www.ams.org/mathscinet-getitem?mr=#1}{#2}
}
\providecommand{\href}[2]{#2}
\begin{thebibliography}{BCG{\etalchar{+}}05}

\bibitem[AC10]{AschbacherChermak2010}
Michael Aschbacher and Andrew Chermak, \emph{A group-theoretic approach to a
  family of 2-local finite groups constructed by {L}evi and {O}liver}, Ann. of
  Math. (2) \textbf{171} (2010), no.~2, 881--978.

\bibitem[AKO11]{AschbacherKessarOliver2011}
Michael Aschbacher, Radha Kessar, and Bob Oliver, \emph{Fusion systems in
  algebra and topology}, London Mathematical Society Lecture Note Series, vol.
  391, Cambridge University Press, Cambridge, 2011. \MR{2848834}

\bibitem[Asc93]{AschbacherFGT}
Michael Aschbacher, \emph{Finite group theory}, Cambridge Studies in Advanced
  Mathematics, vol.~10, Cambridge University Press, Cambridge, 1993, Corrected
  reprint of the 1986 original. \MR{1264416 (95b:20002)}

\bibitem[BCG{\etalchar{+}}05]{BCGLO2005}
Carles Broto, Nat{\`a}lia Castellana, Jesper Grodal, Ran Levi, and Bob Oliver,
  \emph{Subgroup families controlling {$p$}-local finite groups}, Proc. London
  Math. Soc. (3) \textbf{91} (2005), no.~2, 325--354.

\bibitem[Ben94]{Benson1994}
Dave Benson, \emph{Conway's group {C}o$_3$ and the {D}ickson invariants},
  Manuscripta Math. \textbf{85} (1994), no.~2, 177--193.

\bibitem[BK72]{BousfieldKan1972}
A.~K. Bousfield and D.~M. Kan, \emph{Homotopy limits, completions and
  localizations}, Lecture Notes in Mathematics, Vol. 304, Springer-Verlag,
  Berlin-New York, 1972. \MR{0365573}

\bibitem[BLO03a]{BrotoLeviOliver2003b}
Carles Broto, Ran Levi, and Bob Oliver, \emph{Homotopy equivalences of
  {$p$}-completed classifying spaces of finite groups}, Invent. Math.
  \textbf{151} (2003), no.~3, 611--664.

\bibitem[BLO03b]{BrotoLeviOliver2003}
\bysame, \emph{The homotopy theory of fusion systems}, J. Amer. Math. Soc.
  \textbf{16} (2003), no.~4, 779--856 (electronic).

\bibitem[BMO19]{BrotoMollerOliver2019}
Carles Broto, Jesper M{\o}ller, and Bob Oliver, \emph{Automorphisms of fusion
  systems of finite simple groups of {L}ie type}, Mem. Amer. Math. Soc.
  \textbf{262} (2019), no.~1267, iii+117. \MR{4044462}

\bibitem[Bro82]{Brown1982}
Kenneth~S. Brown, \emph{Cohomology of groups}, Graduate Texts in Mathematics,
  vol.~87, Springer-Verlag, New York-Berlin, 1982. \MR{672956}

\bibitem[Car72]{Carter1972}
Roger~W. Carter, \emph{Simple groups of {L}ie type}, John Wiley \& Sons,
  London-New York-Sydney, 1972, Pure and Applied Mathematics, Vol. 28.
  \MR{0407163}

\bibitem[Car85]{Carter1985}
\bysame, \emph{Finite groups of {L}ie type}, Pure and Applied Mathematics (New
  York), John Wiley \& Sons, Inc., New York, 1985, Conjugacy classes and
  complex characters, A Wiley-Interscience Publication. \MR{794307}

\bibitem[CE56]{CartanEilenberg1956}
Henri Cartan and Samuel Eilenberg, \emph{Homological algebra}, Princeton
  University Press, Princeton, N. J., 1956. \MR{0077480 (17,1040e)}

\bibitem[Che13]{Chermak2013}
Andrew Chermak, \emph{Fusion systems and localities}, Acta Math. \textbf{211}
  (2013), no.~1, 47--139. \MR{3118305}

\bibitem[Che22]{Chermak2022}
\bysame, \emph{Finite localities {I}}, Forum Math. Sigma \textbf{10} (2022),
  Paper No. e43, 31. \MR{4439780}

\bibitem[COS17]{COS2017}
David~A. Craven, Bob Oliver, and Jason Semeraro, \emph{Reduced fusion systems
  over {$p$}-groups with abelian subgroup of index {$p$}: {II}}, Adv. Math.
  \textbf{322} (2017), 201--268. \MR{3720798}

\bibitem[CP10]{ClellandParker2010}
Murray Clelland and Christopher Parker, \emph{Two families of exotic fusion
  systems}, J. Algebra \textbf{323} (2010), no.~2, 287--304. \MR{2564839
  (2011e:20025)}

\bibitem[DP15]{DiazPark2015}
Antonio Diaz and Sejong Park, \emph{Mackey functors and sharpness for fusion
  systems}, Homology Homotopy Appl. \textbf{17} (2015), no.~1, 147--164.
  \MR{3338545}

\bibitem[DRV07]{DiazRuizViruel2007}
Antonio D{\'{\i}}az, Albert Ruiz, and Antonio Viruel, \emph{All {$p$}-local
  finite groups of rank two for odd prime {$p$}}, Trans. Amer. Math. Soc.
  \textbf{359} (2007), no.~4, 1725--1764 (electronic).

\bibitem[Dwy98]{Dwyer1998}
W.~G. Dwyer, \emph{Sharp homology decompositions for classifying spaces of
  finite groups}, Group representations: cohomology, group actions and topology
  ({S}eattle, {WA}, 1996), Proc. Sympos. Pure Math., vol.~63, Amer. Math. Soc.,
  Providence, RI, 1998, pp.~197--220. \MR{1603159}

\bibitem[Fei82]{Feit1982}
Walter Feit, \emph{The representation theory of finite groups}, North-Holland
  Mathematical Library, vol.~25, North-Holland Publishing Co., Amsterdam, 1982.

\bibitem[FF09]{FloresFoote2009}
Ram{\'o}n~J. Flores and Richard~M. Foote, \emph{Strongly closed subgroups of
  finite groups}, Adv. Math. \textbf{222} (2009), no.~2, 453--484. \MR{2538017
  (2011b:20075)}

\bibitem[Fle71]{Fletcher1971}
L.~R. Fletcher, \emph{A characterisation of {${\rm PSL}(3,\,4)$}}, J. Algebra
  \textbf{19} (1971), 274--281. \MR{0304482 (46 \#3617)}

\bibitem[GL16]{GlaubermanLynd2016}
George Glauberman and Justin Lynd, \emph{Control of fixed points and existence
  and uniqueness of centric linking systems}, Invent. Math. \textbf{206}
  (2016), no.~2, 441--484. \MR{3570297}

\bibitem[GL21]{GlaubermanLynd2021}
\bysame, \emph{Rigid automorphisms of linking systems}, Forum Math. Sigma
  \textbf{9} (2021), 1--20, e23.

\bibitem[GLS98]{GLS3}
Daniel Gorenstein, Richard Lyons, and Ronald Solomon, \emph{The classification
  of the finite simple groups. {N}umber 3. {P}art {I}. {C}hapter {A}},
  Mathematical Surveys and Monographs, vol.~40, American Mathematical Society,
  Providence, RI, 1998, Almost simple $K$-groups. \MR{1490581 (98j:20011)}

\bibitem[Hen17]{Henke2017}
Ellen Henke, \emph{Direct and central products of localities}, J. Algebra
  \textbf{491} (2017), 158--189. \MR{3699092}

\bibitem[Hen19]{Henke2019}
\bysame, \emph{Subcentric linking systems}, Trans. Amer. Math. Soc.
  \textbf{371} (2019), no.~5, 3325--3373. \MR{3896114}

\bibitem[Hig68]{Higman1968}
Graham Higman, \emph{Odd characterizations of finite simple groups}, Lecture
  Notes of University of Michigan. Ann Arbor (1968), 76pp.

\bibitem[HL22]{HenkeLynd2022}
Ellen Henke and Justin Lynd, \emph{Fusion systems with {B}enson-{S}olomon
  components}, Duke Math. J. \textbf{171} (2022), no.~3, 673--737. \MR{4382978}

\bibitem[Hum72]{Humphreys1972}
James~E. Humphreys, \emph{Introduction to {L}ie algebras and representation
  theory}, Springer-Verlag, New York-Berlin, 1972, Graduate Texts in
  Mathematics, Vol. 9. \MR{0323842}

\bibitem[JM92]{JackowskiMcClure1992}
Stefan Jackowski and James McClure, \emph{Homotopy decomposition of classifying
  spaces via elementary abelian subgroups}, Topology \textbf{31} (1992), no.~1,
  113--132. \MR{1153240 (92k:55026)}

\bibitem[JMO95]{JackowskiMcClureOliver1995}
Stefan Jackowski, James McClure, and Bob Oliver, \emph{Self-homotopy
  equivalences of classifying spaces of compact connected {L}ie groups}, Fund.
  Math. \textbf{147} (1995), no.~2, 99--126. \MR{1341725}

\bibitem[KS04]{KurzweilStellmacher2004}
Hans Kurzweil and Bernd Stellmacher, \emph{The theory of finite groups},
  Universitext, Springer-Verlag, New York, 2004, An introduction, Translated
  from the 1998 German original. \MR{2014408 (2004h:20001)}

\bibitem[LO02]{LeviOliver2002}
Ran Levi and Bob Oliver, \emph{Construction of 2-local finite groups of a type
  studied by {S}olomon and {B}enson}, Geom. Topol. \textbf{6} (2002), 917--990
  (electronic).

\bibitem[LO05]{LeviOliver2005}
\bysame, \emph{Correction to: ``{C}onstruction of 2-local finite groups of a
  type studied by {S}olomon and {B}enson'' [{G}eom. {T}opol. {\bf 6} (2002),
  917--990 (electronic); mr1943386]}, Geom. Topol. \textbf{9} (2005),
  2395--2415 (electronic).

\bibitem[Mal91]{Malle1989}
Gunter Malle, \emph{The maximal subgroups of {${}^2F_4(q^2)$}}, J. Algebra
  \textbf{139} (1991), no.~1, 52--69. \MR{1106340}

\bibitem[Oli04]{Oliver2004}
Bob Oliver, \emph{Equivalences of classifying spaces completed at odd primes},
  Math. Proc. Cambridge Philos. Soc. \textbf{137} (2004), no.~2, 321--347.

\bibitem[Oli06]{Oliver2006}
\bysame, \emph{Equivalences of classifying spaces completed at the prime two},
  Mem. Amer. Math. Soc. \textbf{180} (2006), no.~848, vi+102pp.

\bibitem[Oli10]{Oliver2010}
\bysame, \emph{Extensions of linking systems and fusion systems}, Trans. Amer.
  Math. Soc. \textbf{362} (2010), no.~10, 5483--5500. \MR{2657688
  (2011f:55032)}

\bibitem[Oli13]{Oliver2013}
\bysame, \emph{Existence and uniqueness of linking systems: {C}hermak's proof
  via obstruction theory}, Acta Math. \textbf{211} (2013), no.~1, 141--175.
  \MR{3118306}

\bibitem[Oli14]{Oliver2014}
\bysame, \emph{Simple fusion systems over {$p$}-groups with abelian subgroup of
  index {$p$}: {I}}, J. Algebra \textbf{398} (2014), 527--541. \MR{3123784}

\bibitem[Oli23]{OliverLOcorr}
\bysame, \emph{Construction of $2$-local finite groups of a type studied by
  {S}olomon and {B}enson: correction}, Geom. Topol., to appear (2023).

\bibitem[OR20]{OliverRuiz2020}
Bob Oliver and Albert Ruiz, \emph{Reduced fusion systems over {$p$}-groups with
  abelian subgroup of index {$p$}: {III}}, Proc. Roy. Soc. Edinburgh Sect. A
  \textbf{150} (2020), no.~3, 1187--1239. \MR{4091058}

\bibitem[OV07]{OliverVentura2007}
Bob Oliver and Joana Ventura, \emph{Extensions of linking systems with
  {$p$}-group kernel}, Math. Ann. \textbf{338} (2007), no.~4, 983--1043.
  \MR{2317758 (2008k:55029)}

\bibitem[PS15]{ParkerStroth2015}
Chris Parker and Gernot Stroth, \emph{A family of fusion systems related to the
  groups {${\rm Sp}_4(p^a)$} and {${\rm G}_2(p^a)$}}, Arch. Math. (Basel)
  \textbf{104} (2015), no.~4, 311--323. \MR{3325764}

\bibitem[Pui06]{Puig2006}
Lluis Puig, \emph{Frobenius categories}, J. Algebra \textbf{303} (2006), no.~1,
  309--357.

\bibitem[RV04]{RuizViruel2004}
Albert Ruiz and Antonio Viruel, \emph{The classification of {$p$}-local finite
  groups over the extraspecial group of order {$p\sp 3$} and exponent {$p$}},
  Math. Z. \textbf{248} (2004), no.~1, 45--65.

\bibitem[Sol74]{Solomon1974}
Ronald Solomon, \emph{Finite groups with {S}ylow {$2$}-subgroups of type
  {$.3$}}, J. Algebra \textbf{28} (1974), 182--198.

\bibitem[Spr09]{Springer2009}
T.~A. Springer, \emph{Linear algebraic groups}, second ed., Modern
  Birkh\"{a}user Classics, Birkh\"{a}user Boston, Inc., Boston, MA, 2009.
  \MR{2458469}

\bibitem[ST73]{SmithTyrer1973}
Stephen~D. Smith and A.~P. Tyrer, \emph{On finite groups with a certain {S}ylow
  normalizer. {I}, {II}}, J. Algebra \textbf{26} (1973), 343--365; ibid. 26
  (1973), 366--367. \MR{0323888 (48 \#2241)}

\end{thebibliography}

\end{document}